\newcommand{\mat}[4]
{\left(
\begin{array}{cc}
#1 & #2 \\
#3 & #4
\end{array}
\right)}
\def\Ga{\Gamma}
\def\Mat{{\rm Mat}}
\def\bean{\begin{eqnarray*}}
\def\eean{\end{eqnarray*}}
\numberwithin{equation}{section}
\definecolor{refkey}{gray}{.85}
\definecolor{labelkey}{gray}{.85}
\renewcommand{\d}{u}
\begin{document}

\author{Jean Bourgain}
\thanks{Bourgain is partially supported by NSF grant DMS-0808042.}
\email{bourgain@ias.edu}
\address{IAS, Princeton, NJ}
\author[Alex Kontorovich]{Alex Kontorovich\\
With an Appendix by P\'eter P. Varj\'u}
\thanks{Kontorovich is partially supported by  NSF grants DMS-1209373, DMS-1064214 and DMS-1001252.}
\email{alex.kontorovich@yale.edu}
\address{Yale University, New Haven, CT}

\thanks{%
Varj\'u is partially supported by the Simons Foundation and the European Research Council (Advanced Research Grant 267259).}
\email{pv270@dpmms.cam.ac.uk}
\address{University of Cambridge, UK}

\title[Integral Apollonian Gaskets]
{On the Local-Global Conjecture for Integral Apollonian 
Gaskets}

\begin{abstract}
We prove that a set of density one satisfies the
 local-global conjecture for integral Apollonian 
 gaskets.
That is, for a fixed integral, primitive Apollonian gasket, almost every (in the sense of density) admissible (passing local obstructions) integer is the curvature of some circle 
in the gasket. 
\end{abstract}
\date{\today}
\maketitle
\tableofcontents

\newpage

\section{Introduction}

\begin{figure}
\includegraphics[width=
2in]{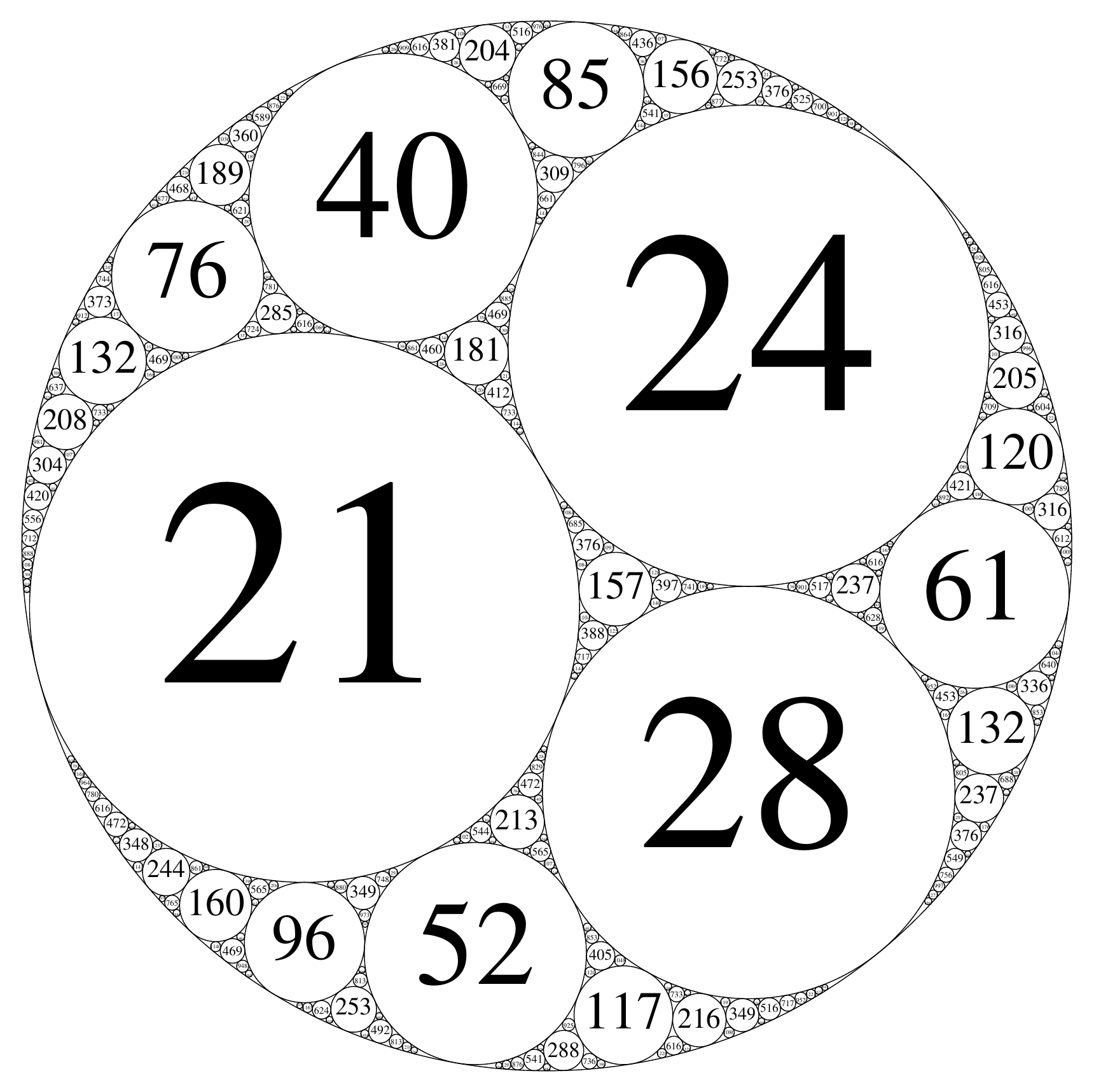}
\vskip-1.25in
\hskip-2.5in
{\Huge -11}
\vskip1in

\caption{The Apollonian gasket with root quadruple $v_{0}=(-11, 21,24,28)^{t}$.}
\label{fig1}
\end{figure}

\subsection{The 
Local-Global
Conjecture}\

Let $\sG$ be an Apollonian 
gasket, see Fig. \ref{fig1}. The number $b(C)$ shown inside a circle $C\in\sG$ is its curvature, that is, the reciprocal of its radius (the bounding circle has 
negative
orientation). Soddy \cite{Soddy1937} first observed the existence of {\it integral} 
gaskets $\sG$, meaning
  ones for which $b(C)\in\Z$ for all $C\in\sG$. Let 
$$
\sB=\sB_{\sG}:=\{b(C):C\in\sG\}
$$ 
be the set of all 
curvatures in $\sG$. We call a 
gasket {\it primitive} if $\gcd(\sB)=1$. From now on, we restrict our attention to a fixed primitive integral Apollonian gasket $\sG$.

Graham, Lagarias, Mallows, Wilks, and Yan \cite{LagariasMallowsWilks2002, GrahamLagarias2003} initiated a detailed study of Diophantine properties of $\sB$, with two separate families of problems (see also e.g. \cite{KontorovichOh2011, FuchsSanden2011, Sarnak2011}): studying $\sB$ with multiplicity (that is, studying circles), or without multiplicity (studying the integers which arise). 
In the present paper, we are concerned with the latter.

In particular,
 the following striking local-to-global conjecture for $\sB$
 is given in 
 \cite[p. 37]{GrahamLagarias2003}, \cite{FuchsSanden2011}.
Let $\sA=\sA_{\sG}$ denote the {\it admissible} integers, that is, those passing
all
 local (congruence) obstructions:
$$
\sA:=\{n\in\Z:n\in\sB(\mod q),\text{ for all $q\ge1$}\}.
$$
\begin{conj}[Local-Global Conjecture]\label{conj}
%
Fix 
 a primitive, integral Apollonian 
gasket $\sG$.
Then every sufficiently large admissible number is the curvature of a circle in $\sG$.
That is, if $n\in\sA$ and $n\gg
1$, then $n\in\sB$.
%
\end{conj}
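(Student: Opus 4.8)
The plan is to prove positivity of a suitable representation count by the Hardy--Littlewood circle method, adapted to the thin group governing $\sG$. Fix a Descartes quadruple $v_{0}=(a_{0},b_{0},c_{0},d_{0})^{t}$ in $\sG$, lying on the cone of the Descartes form $Q(a,b,c,d)=2(a^{2}+b^{2}+c^{2}+d^{2})-(a+b+c+d)^{2}=0$, and let $\Ga<O_{Q}(\Z)$ be the Apollonian group generated by the four Descartes reflections, so that every circle of $\sG$ is a coordinate of some $\gamma v_{0}$, $\gamma\in\Ga$. It is convenient to exploit the tangency substructure: it is classical that for a circle $C\in\sG$ of curvature $a$ together with a marked adjacent circle, the curvatures of \emph{all} circles of $\sG$ tangent to $C$ are the values of a quadratic polynomial $\{a k^{2}+\beta k+\gamma:k\in\Z\}$, so that $n$ is such a curvature if and only if $4an+(\beta^{2}-4a\gamma)$ is a perfect square; and, as $C$ and its marked neighbor range over $\sG$, the data $(a,\beta,\gamma)$ run over a $\Ga$-orbit. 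Thus for a size parameter $X$ (ultimately $X\asymp n^{1/2}$) one forms
\[
R_{X}(n)=\#\bigl\{(\gamma,k)\ :\ \gamma\in\Ga,\ \|\gamma\|\le X,\ |k|\le X,\ a(\gamma)\,k^{2}+\beta(\gamma)\,k+\gamma(\gamma)=n\bigr\},
\]
where $a(\gamma),\beta(\gamma),\gamma(\gamma)$ are the induced coordinates; it suffices to show $R_{X}(n)>0$ for every sufficiently large admissible $n$. The ``expected'' size of $R_{X}(n)$ is a positive power of $n$, in line with the known asymptotic $\#\{C\in\sG:b(C)<N\}\asymp N^{\delta}$ with $\delta\approx 1.3057$.

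Next, write $R_{X}(n)=\int_{0}^{1}S(\theta)\,e(-n\theta)\,d\theta$ with $e(x)=e^{2\pi i x}$ and $S(\theta)=\sum_{\gamma,k}e\bigl(\theta\,(a(\gamma)k^{2}+\beta(\gamma)k+\gamma(\gamma))\bigr)$ over the same ranges, and split $[0,1]$ into major arcs $\mathfrak{M}$ (neighborhoods of $a/q$ with $q\le X^{\varepsilon_{0}}$) and minor arcs $\mathfrak{m}$. On $\mathfrak{M}$ one extracts the main term $R_{X}(n)\ \approx\ \mathcal{S}(n)\cdot\mathcal{I}(n)$, a product of a singular series $\mathcal{S}(n)=\prod_{p}(\text{local density at }p)$ and an archimedean singular integral $\mathcal{I}(n)$ of order $n^{\delta-1}$. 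Here $\mathcal{S}(n)>0$ precisely when $n$ passes every congruence obstruction, i.e.\ exactly when $n$ is admissible: this uses that the obstructions for $\sG$ occur only modulo a fixed modulus (a bounded power of $24$) and that the $p$-adic closure of the $\Ga$-orbit stabilizes, giving $\mathcal{S}(n)\gg(\log n)^{-O(1)}$ for admissible $n$. Hence the major-arc contribution is $\gg n^{\delta-1}/(\log n)^{O(1)}$, which exceeds $1$ for all large admissible $n$ since $\delta>1$.

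The crux is the minor arcs: one must show $\bigl|\int_{\mathfrak{m}}S(\theta)\,e(-n\theta)\,d\theta\bigr|=o\bigl(n^{\delta-1}/(\log n)^{O(1)}\bigr)$ \emph{uniformly in $n$}. Because $S(\theta)$ has product structure --- a classical quadratic Weyl sum in $k$ against a linear exponential sum $\sum_{\|\gamma\|\le X}e(\theta\cdot(\text{coordinates of }\gamma v_{0}))$ over the group --- one handles the $k$-sum by Weyl differencing and van der Corput, which wins genuine saving exactly when $\theta$ avoids rationals with small denominator, and handles the $\Ga$-sum via the expansion property of $\Ga$ and its congruence quotients (Bourgain--Gamburd--Sarnak) together with spectral decay of the relevant matrix coefficients / bounds on the thin-group Kloosterman-type sums attached to the cone. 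The target is a pointwise bound $|S(\theta)|\ll X^{\delta-\eta}$ for every $\theta\in\mathfrak{m}$, with $\eta$ strictly larger than the loss incurred in integrating $d\theta$ over $\mathfrak{m}$, so that the error is swallowed by the main term for every large admissible $n$.

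This last step is where I expect the real difficulty, and it is the honest reason the conjecture is not yet a theorem in full. What the available technology delivers is the mean-square (large-sieve) estimate $\int_{0}^{1}|S(\theta)|^{2}\,d\theta\ll X^{2\delta-1+\varepsilon}$, obtained from expansion; this controls the variance over $n$ and therefore yields only that the set of admissible $n\le X$ which fail to be curvatures has size $o(X)$ --- a density-one statement, not a statement about \emph{every} large $n$. Upgrading this $L^{2}$-in-$n$ control to an $L^{\infty}$ (worst-case over $\mathfrak{m}$) bound of the required strength would follow from an essentially optimal spectral gap along the congruence tower of $\Ga$ --- equivalently, square-root cancellation in the associated thin-group Kloosterman sums, uniformly in the modulus --- or from a new amplification mechanism that plays the arithmetic structure of admissible $n$ against the geometry of the orbit. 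Absent such an input, the uniform minor-arc bound, and hence the full local-global statement, remains out of reach; the scheme above should be read as a roadmap whose single missing girder is precisely this estimate.
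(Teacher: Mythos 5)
The statement you were asked to prove is a \emph{conjecture}, not a theorem of this paper, and you quite correctly do not claim to prove it. Your proposal lays out a circle-method roadmap and then identifies the missing piece: a pointwise-in-$n$ minor-arc bound $|S(\theta)|\ll X^{\delta-\eta}$ uniform over $\theta\in\mathfrak m$, as opposed to the mean-square (Parseval) control that expansion actually delivers. This diagnosis is exactly right and matches the paper: the paper never purports to prove Conjecture~\ref{conj}; it proves Theorem~\ref{thm:Main}, a density-one version with exceptional set of size $O(N^{1-\eta})$, and that theorem rests precisely on the $L^2$-in-$n$ minor-arc estimate \eqref{eq:cEN2bnd}. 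So there is no ``paper's proof'' of the conjecture to compare against, and your honest account of the $L^2$ versus $L^\infty$ gap is the correct one.

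Within that framing, one structural error and a few differences from the paper's density-one argument are worth flagging. You assert that the curvatures of all circles of $\sG$ tangent to a fixed circle $C$ of curvature $a$ are given by a one-variable quadratic polynomial $ak^{2}+\beta k+\gamma$. That is false: the one-parameter family describes circles tangent to \emph{two} fixed mutually tangent circles (the single unipotent orbit $C_{1}^{n}$, cf.\ \eqref{eq:oneParab} and Fig.~\ref{fig2}); the circles tangent to a single fixed $C$ are parametrized by the shifted \emph{binary} quadratic form $\ff_{\gamma}(2x,y)=4A_{\gamma}x^{2}+4B_{\gamma}xy+C_{\gamma}y^{2}-a_{\gamma}$ of discriminant $-4a_{\gamma}^{2}$ with $(2x,y)=1$, arising from the rank-two unipotent subgroup $\Xi=\langle C_{1},C_{2}\rangle<\Gamma$ (Sarnak's observation, \eqref{eq:binQuad}). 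The paper's ensemble is built on this binary form, with a small $\Gamma$-ball of norm $T=N^{1/100}$ producing the forms $\ff_{\gamma}$ with $a_{\gamma}\asymp T$ and a large $\Xi$-ball producing $(x,y)\asymp X=N^{99/200}$, whence the main term is of size $\fS(n)T^{\delta-1}$ rather than your $n^{\delta-1}$. The two-variable structure is not cosmetic: it is what turns the modular sum \eqref{eq:cSfDef} into a genuine Gauss sum in one variable and then a Kloosterman/Sali\'e-type sum after summing over $r$ (see \eqref{eq:SfEval}--\eqref{eq:cS1}), and this paired-Gauss-sum mechanism, combined with the Nullstellensatz/Bezout argument of Lemma~\ref{lem:spec3} and the bisector count of Theorem~\ref{thm:Vin}, is the actual engine of the minor-arc savings. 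A single one-variable quadratic Weyl sum in $k$ would not reproduce this. So: same circle-method skeleton, same honest conclusion that the conjecture itself is out of reach, but the effective arithmetic input in the paper is the binary-form Gauss/Kloosterman analysis rather than Weyl differencing.
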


The purpose of this paper is to prove  the following

\begin{thm}\label{thm:Main}
 Almost every admissible number is the curvature of a circle in 
 $\sG$. Quantitatively, the number of exceptions up to $N$ is bounded by
 $O(N^{1-\eta})$, where $\eta>0$ is effectively computable.
\end{thm}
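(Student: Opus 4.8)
The plan is to recast the curvature set $\sB$ in group-theoretic terms and run a Hardy--Littlewood circle method on the Apollonian orbit, the decisive point being a power-saving bound on the minor arcs obtained from a bilinear decomposition combined with the expansion property of the Apollonian group.

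\textbf{Setup and circle method.} Write $Q$ for the Descartes quadratic form, $\Ga=\langle S_1,\dots,S_4\rangle<O_Q(\Z)$ for the Apollonian group, and $v_0$ for a root quadruple of $\sG$, so that (after symmetrizing over the four coordinates) $\sB=\{\langle e_i,\gamma v_0\rangle:\gamma\in\Ga,\ 1\le i\le 4\}$. Fix $N$, let $T=T(N)$ be a word-length cutoff, and set $e(x):=e^{2\pi i x}$ and
\[
\mathcal R_N(n)\ =\ \sum_{\gamma\in\Ga}\Psi_T(\gamma)\,\mathbf{1}\{\langle e_1,\gamma v_0\rangle=n\},
\]
where $\Psi_T\ge 0$ is a smooth bump on the word ball of radius $T$, further localized so that $\Psi_T(\gamma)\ne 0$ forces $\langle e_1,\gamma v_0\rangle\asymp N$. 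Then $\mathcal R_N(n)>0\ \Longrightarrow\ n\in\sB$, so it suffices to show $\mathcal R_N(n)>0$ for all but $O(N^{1-\eta})$ admissible $n\le N$. Expanding, $\mathcal R_N(n)=\int_0^1\widehat{\mathcal R}_N(\theta)\,e(-n\theta)\,d\theta$ with exponential sum $\widehat{\mathcal R}_N(\theta)=\sum_\gamma\Psi_T(\gamma)\,e(\theta\langle e_1,\gamma v_0\rangle)$, and split $[0,1]$ into major arcs (around $a/q$ with $q\le Q_0=N^\varepsilon$) and complementary minor arcs.

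\textbf{Major arcs.} On the major arcs the sum essentially separates into a finite-level count of $\gamma\bmod q$ times an archimedean integral. The key input is that the congruence quotients of $\Ga$ form an expander family (property $\tau$ for the thin group $\Ga$) with uniformity in $q$, via the Bourgain--Gamburd--Sarnak expansion/sum--product machinery together with the auxiliary estimates of the Appendix. This produces a main term
\[
\int_{\mathrm{maj}}\widehat{\mathcal R}_N(\theta)\,e(-n\theta)\,d\theta\ \gg\ \mathcal S(n)\,\mathcal M_N,
\]
where $\mathcal M_N=N^{\kappa}$ is a fixed positive power of $N$ (independent of $n\le N$) and $\mathcal S(n)\ge 0$ is a singular series which is ${\gg}_\varepsilon N^{-\varepsilon}$ precisely when $n$ is admissible. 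Thus for admissible $n$ the major arcs already contribute $\gg N^{\kappa-\varepsilon}$ to $\mathcal R_N(n)$.

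\textbf{Minor arcs --- the crux.} It remains to prove $\int_{\mathrm{min}}|\widehat{\mathcal R}_N(\theta)|^2\,d\theta\ \ll\ \mathcal M_N^2\,N^{1-\eta'}$ for some $\eta'>0$, i.e.\ to beat the trivial bound $\asymp\mathcal M_N^2 N$ by a power. Here one exploits the word structure of $\Ga$: decompose $\gamma=\gamma_1\gamma_2$ into two subwords of comparable length, so that $\langle e_1,\gamma v_0\rangle=\langle\gamma_1^{t}e_1,\gamma_2 v_0\rangle$ becomes a bilinear form in $\xi:=\gamma_1^{t}e_1$ and $\eta:=\gamma_2 v_0$, each lying on an integral quadric cone. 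Then $\widehat{\mathcal R}_N(\theta)=\sum_{\xi,\eta}a(\xi)b(\eta)\,e(\theta\langle\xi,\eta\rangle)$; Cauchy--Schwarz in $\eta$ and opening the square reduce matters to counting triples $(\xi,\xi',\eta)$ with $\langle\xi-\xi',\eta\rangle$ in a short interval, the diagonal $\xi=\xi'$ reproducing the expected size and the off-diagonal being governed by (i) a bound for lattice points on a cone inside a ball and (ii) an archimedean dispersion estimate forcing cancellation away from degenerate configurations. To make the two-sided decomposition genuinely gain, one runs this uniformly over an ensemble of word-length cutoffs, using the spectral gap on the short factor and the lattice-point bound on the long factor. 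I expect this minor-arc power saving to be the main obstacle: unlike the classical circle method there is no a priori cancellation in a bilinear sum over two thin orbits on quadric cones, and extracting $\eta'>0$ requires carefully balancing expansion against lattice-point counting across all scales.

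\textbf{Assembly.} If $n\le N$ is admissible but $n\notin\sB$, then $\mathcal R_N(n)=0$, so the minor-arc piece must cancel the major-arc main term: $\big|\int_{\mathrm{min}}\widehat{\mathcal R}_N(\theta)e(-n\theta)\,d\theta\big|\gg N^{\kappa-\varepsilon}$. Summing over exceptional $n$ and applying Parseval,
\[
\#\{n\le N:\ n\in\sA,\ n\notin\sB\}\cdot N^{2\kappa-2\varepsilon}\ \ll\ \int_{\mathrm{min}}|\widehat{\mathcal R}_N(\theta)|^2\,d\theta\ \ll\ N^{2\kappa}\,N^{1-\eta'},
\]
so the number of exceptions up to $N$ is $\ll N^{1-\eta'+2\varepsilon}=O(N^{1-\eta})$ on choosing $\varepsilon<\eta'/3$ and $\eta=\eta'-3\varepsilon>0$. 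Since every ingredient (expansion with explicit rate, sum--product, lattice-point counts, archimedean volumes) is effective, $\eta$ is effectively computable.
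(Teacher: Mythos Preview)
Your outline captures the overall circle-method architecture and the assembly step correctly, but the decisive minor-arc input is not what you propose, and your version would not (as you yourself suspect) yield the needed power saving.

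The paper does \emph{not} decompose $\gamma=\gamma_1\gamma_2$ into two comparable thin subwords. Instead it exploits Sarnak's observation that the Apollonian group $\Gamma$ contains an \emph{arithmetic} subgroup $\Xi\cong\Lambda(2)<\PSL_2(\Z)$, and writes the orbit element as $\xi_{x,y}\cdot\gamma$ with $\xi_{x,y}\in\Xi$ parametrized by coprime integers $(2x,y)$ and with $\gamma$ ranging over a \emph{tiny} ball in $\Gamma$ (norm $T=N^{1/100}$, while $x,y\asymp X=N^{99/200}$). The point is that
\[
\langle e_4,\xi_{x,y}\gamma v_0\rangle=\ff_\gamma(2x,y)=A_\gamma(2x)^2+2B_\gamma(2x)y+C_\gamma y^2-a_\gamma
\]
is a \emph{shifted binary quadratic form} of discriminant $-4a_\gamma^2$, evaluated at a pair of integers running over a full box. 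Thus one side of the bilinear structure is not thin at all, and the minor-arc exponential sum $\sum_{x,y}e(\theta\ff_\gamma(2x,y))$ is handled by completely classical tools: Poisson summation, Gauss sums, and Kloosterman/Sali\'e sums (Lemmata around \eqref{eq:cSfDef}--\eqref{eq:modBndaEqAp}), together with stationary phase for the archimedean piece. Only after Cauchy--Schwarz and opening the square does the thin $\gamma$-variable enter, and there one needs the comparatively soft savings of Lemma~\ref{lem:spec3} (a small power of $q$ from a congruence restriction on $a_\gamma$), plus some counting for equivalent binary forms (Proposition around \eqref{eq:eqeq}).

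Your proposed route---two thin orbits on a cone, Cauchy--Schwarz, and lattice-point counting---does not supply any mechanism for cancellation on the minor arcs beyond trivial counting, and indeed no such general ``bilinear over two thin orbits'' estimate is known. The paper sidesteps this by arranging that the large factor is a genuine $\Z^2$-sum, so that Weyl/Gauss/Kloosterman cancellation is available. That structural input (the existence of $\Xi$) is the missing idea in your sketch. Likewise, on the major arcs the paper needs more than expansion: it uses Vinogradov's effective bisector count (Theorem~\ref{thm:Vin}) and the uniform spectral gap of the Appendix to get equidistribution of $\ff_\gamma(2x,y)$ at scale $N/K$ (Lemmata~\ref{lem:spec1}--\ref{lem:spec2}); the mere expander property of the Cayley graphs would not suffice here.
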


Admissibility is completely explained in Fuchs's thesis \cite{FuchsThesis}, and is a condition restricting to certain residue classes modulo $24$, cf. Lemma \ref{lem:GGq}.
E.g. for the gasket in Fig. \ref{fig1}, $n\in\sA$ iff 
\be\label{eq:locA}
n\equiv0, 4, 12, 13, 16,\text{ or  }21(\mod 24).
\ee
Thus $\sA$ contains one of every four numbers (six admissible residue classes out of $24$), and Theorem \ref{thm:Main} can be restated in this case as
$$
\#(\sB\cap[1,N])={N\over4}\left(1+O(N^{-\eta})\right).
$$
In general, the local obstructions are easily determined (see Remark \ref{rmk:locA}) from the so-called {\it root quadruple} 
\be\label{eq:v0Def}
v_{0}=v_{0}(\sG)
,
\ee
which is the column vector of the four smallest curvatures in $\sB$. For the gasket in Fig. \ref{fig1}, $v_{0}=(-11, 21, 24, 28)$.
\\

The history of this problem is as follows. The first progress 
towards 
the Conjecture
was already made in \cite{GrahamLagarias2003}, who showed
that
\be\label{eq:GLbnd}
\#(\sB\cap[1,N])\gg N^{1/2}.
\ee
Sarnak \cite{SarnakToLagarias} 
improved this to
\be\label{eq:SarBnd}
\#(\sB\cap[1,N])\gg {N\over (\log N)^{1/2}},
\ee
and then Fuchs \cite{FuchsThesis} 
showed
$$
\#(\sB\cap[1,N])\gg {N\over (\log N)^{0.150\dots}}.
$$
Finally Bourgain and Fuchs \cite{BourgainFuchs2010} settled the so-called ``Positive Density Conjecture,'' that
$$
\#(\sB\cap[1,N])\gg N.
$$

\subsection{Methods}\

Our main approach is through the 
Hardy-Littlewood 
circle method, combining two new ingredients. The first, applied to the major arcs, is effective bisector counting 
in infinite volume 
hyperbolic $3$-
folds, recently achieved by I. Vinogradov \cite{Vinogradov2013}, as well as the uniform spectral gap over congruence towers of such, 
see the Appendix by P\'eter Varj\'u.
The second ingredient is the minor arcs analysis, 
inspired by
that given recently by the first-named author in \cite{Bourgain2012}, where it was proved that the  prime curvatures in a 
gasket
constitute  a positive proportion of the primes.
(Obviously Theorem \ref{thm:Main} implies that 
 $100\%$ of the admissible prime curvatures appear.)

\subsection{Plan for the Paper}\

A more detailed outline of the proof, as well as the setup of some relevant exponential sums, is given in \S\ref{sec:outline}. Before we can do this, we need to recall the Apollonian group 
and some of its subgroups in \S\ref{sec:prelim}. After the outline in \S\ref{sec:outline}, we use \S\ref{sec:preII} to collect some background
 from the spectral and representation theory of infinite volume hyperbolic quotients.
 Then some lemmata are reserved for \S\ref{sec:lems}, the major arcs are estimated in \S\ref{sec:Maj}, and the minor arcs are dealt with in \S\S\ref{sec:qQ0}-\ref{sec:QXT}.
The Appendix, by P\'eter Varj\'u, extracts the spectral gap property for the Apollonian group from that of its arithmetic subgroups.

\subsection{Notation}\

We use the following standard notation. Set $e(x)=e^{2\pi i x}$ and $e_{q}(x)=e(\frac xq)$. We use $f\ll g$ and $f=O(g)$ interchangeably; moreover $f\asymp g$ means $f\ll g\ll f$.
Unless otherwise specified, the implied constants may depend at most on the 
gasket
  $\sG$ (or equivalently on the root quadruple $v_{0}$), which is treated as fixed. 
The symbol $\bo_{\{\cdot\}}$ is the indicator function of the event $\{\cdot\}$. 
The greatest common divisor of $n$ and $m$ is written $(n,m)$,  their least common multiple is $[n,m]$, and $\gw(n)$ denotes the number of distinct prime factors of $n$. The cardinality of a finite set $S$ is denoted $|S|$ or $\# S$.
The transpose of a matrix $g$ is written $g^{t}$. The prime symbol $'$ in $\underset{r(q)}{\gS} {}'$ means the range of $r(\mod q)$ is restricted to $(r,q)=1$. Finally, $p^{j}\| q$ denotes $p^{j}\mid q$ and $p^{j+1}\nmid q$.

\subsection*{Acknowledgements} The authors are grateful to Peter Sarnak for illuminating discussions, and many detailed comments improving the exposition of an earlier version of this paper. We 
thank
Tim Browning, Sam Chow,  Hee Oh, Xin Zhang, and 
the referee for numerous 
corrections and suggestions. 

\newpage

\section{Preliminaries I: The Apollonian Group and Its Subgroups}\label{sec:prelim}


\subsection{Descartes Theorem and Consequences}\


 Descartes' Circle Theorem states that a quadruple $v$
 of (oriented) curvatures of four mutually tangent circles 
  lies on the cone
\be\label{eq:Fv}
F(v)=0,
\ee
where 
$F$ is the Descartes quadratic form:
\be\label{eq:Fdef}
F(a,b,c,d)
=
2(a^{2}+b^{2}+c^{2}+d^{2})
-(a+b+c+d)^{2}
.
\ee
Note that $F$ has signature $(3,1)$ over $\R$,
and let 
$$
G:=\SO_{F}(\R)=\{g\in\SL(4,\R):F(g v)=F(v),\text{ for all }v\in\R^{4}\}
$$ 
be the real special orthogonal group preserving $F$.

It follows immediately that for $b,c$ and $d$ fixed, there are two solutions $a,a'$ to \eqref{eq:Fv}, and
$$
a+a'=2(b+c+d).
$$
Hence we observe that $a$ can be changed into $a'$ by a reflection, that is,
$$
(a,b,c,d)^{t}=S_{1}\cdot (a',b,c,d)^{t},
$$
where the reflections
$$
S_{1}=
\bp
-1&2&2&2\\
&1&&\\
&&1&\\
&&&1
\ep
,\qquad
S_{2}=
\bp
1&&&\\
2&-1&2&2\\
&&1&\\
&&&1
\ep
,
$$
$$
S_{3}=
\bp
1&&&\\
&1&&\\
2&2&-1&2\\
&&&1
\ep
,\qquad
S_{4}=
\bp
1&&&\\
&1&&\\
&&1&\\
2&2&2&-1
\ep
,
$$
generate the  so-called {\it Apollonian group} 
\be\label{eq:GamDef}
\cA
=
\<S_{1},S_{2},S_{3},S_{4}
\>
.
\ee
It is a Coxeter group, free 
except for
the relations $S_{j}^{2}=I$, $1\le j\le 4$. We immediately pass to the 
index two
subgroup
$$
\G:=\cA\cap \SO_{F}
$$
of orientation preserving transformations, that is, even words in the generators.
Then $\G$ is freely generated by $S_{1}S_{2}$, $S_{2}S_{3}$ and $S_{3}S_{4}$.
%
It is known 
that  $\G$ is Zariski dense in  $G$ but {\it thin}, that is,  of infinite index in $G(\Z)$; equivalently, the Haar measure of $\G\bk G$ is infinite.

\subsection{Arithmetic Subgroups%
}\

Now we review the arguments from \cite{GrahamLagarias2003, SarnakToLagarias} which 
lead to
\eqref{eq:GLbnd} and \eqref{eq:SarBnd}, as our setup depends critically on them.

Recall that for any fixed 
gasket
 $\sG$, there is a root quadruple $v_{0}$ of the four smallest 
curvatures in $\sG$, cf. \eqref{eq:v0Def}.
It follows 
 from 
\eqref{eq:Fv} and \eqref{eq:GamDef}
 that 
 the set $\sB$ of all 
 curvatures can be realized as the orbit of the root quadruple $v_{0}$ under $\cA$.
Let 
$$
\sO=\sO_{\sG}:=\G\cdot v_{0}
$$
be the orbit of $v_{0}$ under $\G$.
Then 
the set of all 
curvatures 
certainly contains
\be\label{eq:setB}
\sB
\supset
\bigcup_{j=1}^{4}\<e_{j},\sO\>
=
\bigcup_{j=1}^{4}\<e_{j},\G\cdot v_{0}\>
,
\ee
where $e_{1}=(1,0,0,0)^{t},\dots, e_{4}=(0,0,0,1)^{t}$ constitute the standard basis for $\R^{4}$, and the inner product above is the standard one. Recall we are treating $\sB$ as a set, that is, without multiplicities.
\\

It was observed in \cite{GrahamLagarias2003} that $\G$ contains unipotent elements, and hence one can use these to furnish an injection of affine space in the otherwise intractable orbit $\sO$, as follows.
Note first that 
\be\label{eq:C1def}
C_{1}:=
S_{4}S_{3}
=
\left(
\begin{array}{cccc}
 1 &  &  &  \\
 & 1 &  &  \\
 2 & 2 & -1 & 2 \\
 6 & 6 & -2 & 3
\end{array}
\right)
\in\G
,
\ee
and 
after conjugation by
$$
J:=
\left(
\begin{array}{cccc}
 1 &  &  &  \\
 -1 & 1 &  &  \\
 -1 & 1 & -2 & 1 \\
 -1 &  &  & 1
\end{array}
\right)
,
$$
we have
$$
\tilde C_{1}:=
J^{-1}\cdot C_{1}\cdot J
=
\left(
\begin{array}{cccc}
 1 &  &  &  \\
  & 1 &  &  \\
  & 2 & 1 &  \\
  & 4 & 4 & 1
\end{array}
\right)
.
$$
Recall the spin homomorphism $\rho: 
\SL_{2}\to\SO(2,1)$, embedded for our purposes in $\SL_{4}$, given explicitly by
\be\label{eq:spin}
\rho
:
\mattwo
\ga\gb
\g\gd
\mapsto
{1\over \ga\gd-\gb\g}
\bp
1&&&\\
&{\ga^{2}}&{2\ga\g}&{\g^{2}}\\
&{\ga\gb}&{\ga\gd+\gb\g}&{\g\gd}\\
&{\gb^{2}}&{2\gb\gd}&{\gd^{2}}
\ep
.
\ee
In fact $\SL_{2}$ is a double cover of $\SO(2,1)$ under $\rho$, with kernel $\pm I$. 
It is clear from inspection that 
$$
\rho:
\mattwo1201=:T_{1}
\mapsto 
\tilde C_{1}
.
$$
Since
$T_{1}^{n}=\mattwo1{2n}01$, 
 for each $n\in\Z$,  $\G$ contains the element
$$
C_{1}^{n}=
J\cdot \rho(T_{1}^{n})\cdot J^{-1}
=
\left(
\begin{array}{cccc}
 1 & 0 & 0 & 0 \\
 0 & 1 & 0 & 0 \\
 4 n^2-2 n & 4 n^2-2 n & 1-2 n & 2 n \\
 4 n^2+2 n & 4 n^2+2 n & -2 n & 2 n+1
\end{array}
\right).
$$
(Of course this can be seen directly from \eqref{eq:C1def}; these transformations will be more enlightening below.)

Thus if 
$v=(a,b,c,d)^{t}\in\sO$ is 
a
 quadruple 
in
 the orbit
 , then $\sO$ also contains
$
C_{1}^{n}\cdot v
$
for all $n$.
From \eqref{eq:setB}, we then have that the set $\sB$ of curvatures contains 
\be
\label{eq:oneParab}
\sB\ni\<e_{4},C_{1}^{n}\cdot v\>
=
4( a
+ b) n^2
+2
( a 
+ b 
- c 
+ d) n
+d
.
\ee
The circles thus generated 
are all tangent to two fixed circles, which explains the square 
curvatures in Fig. \ref{fig2}.
Of course \eqref{eq:oneParab} immediately implies \eqref{eq:GLbnd}.
\\

\begin{figure}
\includegraphics[width=2.5in]{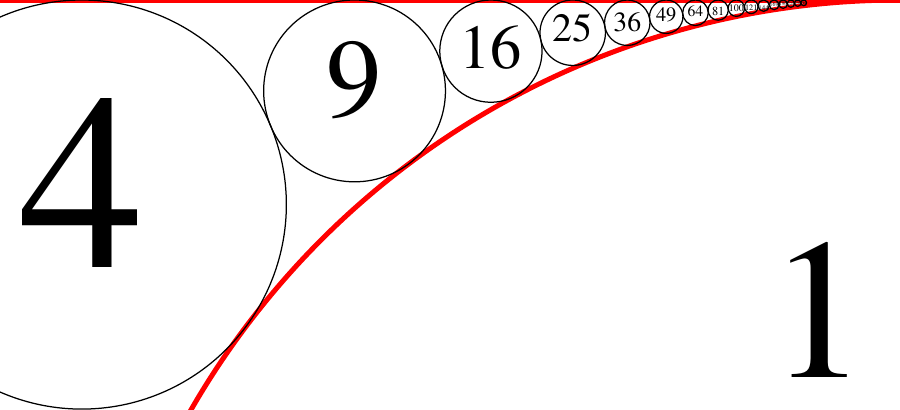}
\caption{Circles tangent to two fixed circles.}
\label{fig2}
\end{figure}

Observe further that 
$$
C_{2}
:=
S_{2}S_{3}
=
\left(
\begin{array}{cccc}
 1 &  &  &  \\
 6 & 3 & -2 & 6 \\
 2 & 2 & -1 & 2 \\
  &  &  & 1
\end{array}
\right)
$$
is another unipotent element, with 
$$
\tilde C_{2}:=
J^{-1}\cdot C_{2}\cdot J
=
\left(
\begin{array}{cccc}
 1 &  &  &  \\
 & 1 & 4 & 4 \\
 &  & 1 & 2 \\
 &  &  & 1
\end{array}
\right)
,
$$
and 
$$
\rho:
\mattwo1021
=:T_{2}
\mapsto 
\tilde C_{2}
.
$$

Since 
 $T_{1}$ and $T_{2}$ generate $\gL(2)$, the principal $2$-congruence subgroup of $\PSL(2,\Z)$, we see that
the Apollonian group $\G$ contains the subgroup 
\be\label{eq:G1}
\Xi:=
\<C_{1},C_{2}\>
=
J\cdot
\rho\bigg(
\gL(2)
\bigg)
\cdot
J^{-1}
<
\G
.
\ee
%
In particular,
whenever $(2x,y)=1$, 
there is an element 
$$
\mattwo*{2x}*{y}\in\gL(2)
,
$$ 
and thus $
\Xi
$ contains
the element
\bea
\label{eq:xiXYdef}
\xi_{x,y}
&:=&
J\cdot
\rho
\mattwo*{2x}*{y}
\cdot
J^{-1}
\\
\nonumber
&=&
\left(
\begin{array}{cccc}
 1 & 0 & 0 & 0 \\
*&*&*&*\\
*&*&*&*\\
4x ^2+2xy +y ^2-1 & 4x ^2+2xy &-2xy  &
2xy+  y ^2
\end{array}
\right)
.
\eea
Write
\bea
\label{eq:wxyDef}
w_{x,y}
&=&\xi_{x,y}^{t}\cdot e_{4}
\\
\nonumber
&=&
(4x^{2}+2xy+y^{2}-1,4x^{2}+2xy,-2xy,2xy+y^{2})^{t}.
\eea
Then again by \eqref{eq:setB}, we have shown the following \pagebreak
\begin{lem}[\cite{SarnakToLagarias}]
Let $x,y\in\Z$ with $(2x,y)=1$, 
and take any element $\g\in\G$ with corresponding quadruple 
\be\label{eq:vgIs}
v_{\g}=(a_{\g},b_{\g},c_{\g},d_{\g})^{t}=\g\cdot v_{0}\in\sO.
\ee
Then the number
\be
\label{eq:binQuad}
\<e_{4},\xi_{x,y}\cdot \g\cdot v_{0}\>
=
\<w_{x,y},\g\cdot v_{0}\>
=
4A_{\g}x^{2}
+4B_{\g}xy
+C_{\g}y^{2} 
 -a _{\g}
\ee
is the curvature of some circle in $\sG$,
where we have defined 
\bea
\label{eq:ABCdef}
A_{\g}
&:=&
a_{\g}+b_{\g},
\\
\nonumber
B_{\g}
&:=&
 {a _{\g}
+ b  _{\g}
  - c _{\g}
+ d _{\g}\over 2},
\\
\nonumber
C_{\g}
&:=&
 a_{\g}
 + d_{\g}
 .
\eea
Note from \eqref{eq:Fv} that $B_{\g}$ is integral.
\end{lem}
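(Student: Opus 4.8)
The plan is to derive the stated formula purely by linear algebra, using only the already-established structural facts: that $\xi_{x,y}\in\Xi<\G$ whenever $(2x,y)=1$ (equation \eqref{eq:xiXYdef}), that $\sB\supset\bigcup_{j}\<e_{j},\G\cdot v_{0}\>$ (equation \eqref{eq:setB}), and the explicit shape of the bottom row of $\xi_{x,y}$. The key observation is the adjoint trick: $\<e_{4},\xi_{x,y}\cdot\g\cdot v_{0}\> = \<\xi_{x,y}^{t}e_{4},\g\cdot v_{0}\> = \<w_{x,y},v_{\g}\>$, which moves all the $x,y$-dependence into the fixed vector $w_{x,y}$ and lets us pair it against the quadruple $v_{\g}=(a_{\g},b_{\g},c_{\g},d_{\g})^{t}$.

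\textbf{Step 1.} Since $\g\in\G$ and $\xi_{x,y}\in\Xi<\G$, the product $\xi_{x,y}\cdot\g\in\G$, so $\xi_{x,y}\cdot\g\cdot v_{0}\in\sO$; by \eqref{eq:setB} its fourth coordinate $\<e_{4},\xi_{x,y}\cdot\g\cdot v_{0}\>$ is a curvature in $\sG$, so the quantity in \eqref{eq:binQuad} genuinely lies in $\sB$. \textbf{Step 2.} Read off $w_{x,y}=\xi_{x,y}^{t}e_{4}$ from the explicit bottom row in \eqref{eq:xiXYdef}, giving \eqref{eq:wxyDef}. \textbf{Step 3.} Compute the inner product
$$
\<w_{x,y},v_{\g}\>
=(4x^{2}+2xy+y^{2}-1)a_{\g}+(4x^{2}+2xy)b_{\g}+(-2xy)c_{\g}+(2xy+y^{2})d_{\g},
$$
and regroup the terms by the monomials $x^{2}$, $xy$, $y^{2}$, and the constant: the $x^{2}$-coefficient is $4(a_{\g}+b_{\g})=4A_{\g}$; the $xy$-coefficient is $2(a_{\g}+b_{\g}-c_{\g}+d_{\g})=4B_{\g}$; the $y^{2}$-coefficient is $a_{\g}+d_{\g}=C_{\g}$; and the constant term is $-a_{\g}$. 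This is exactly \eqref{eq:binQuad} with the definitions \eqref{eq:ABCdef}. \textbf{Step 4.} For integrality of $B_{\g}$, note $v_{\g}\in\sO$ lies on the Descartes cone \eqref{eq:Fv}, $F(a_{\g},b_{\g},c_{\g},d_{\g})=0$; reducing the identity $2(a^2+b^2+c^2+d^2)=(a+b+c+d)^2$ modulo $2$ forces $a_{\g}+b_{\g}+c_{\g}+d_{\g}\equiv0\pmod 2$, hence $a_{\g}+b_{\g}-c_{\g}+d_{\g}$ is even and $B_{\g}\in\Z$.

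\textbf{There is essentially no obstacle here} — the lemma is a bookkeeping consequence of the matrix identities set up in the preceding pages. The only place to be slightly careful is making sure the congruence condition $(2x,y)=1$ is precisely what guarantees the existence of a matrix $\left(\begin{smallmatrix}*&2x\\ *&y\end{smallmatrix}\right)\in\gL(2)$ (the principal congruence subgroup of level $2$): one needs $2x$ and $y$ coprime so that the partial row extends to an element of $\SL(2,\Z)$, and then the level-$2$ structure is automatic from the parity pattern. Once that is in hand, Steps 1–4 are immediate, and the lemma is proved.
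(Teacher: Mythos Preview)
Your proof is correct and follows essentially the same approach as the paper: the lemma is stated there as an immediate consequence of the preceding explicit computation of $\xi_{x,y}$ and $w_{x,y}$ together with \eqref{eq:setB}, and your Steps~1--4 just make that derivation explicit. Your Step~4 spelling out why the Descartes relation forces $a_{\g}+b_{\g}+c_{\g}+d_{\g}$ to be even is exactly the content of the paper's parenthetical ``Note from \eqref{eq:Fv} that $B_{\g}$ is integral.''
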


Observe that, by construction, the value of $a_{\g}$ is unchanged under the orbit of 
the group
\eqref{eq:G1}, and 
the circles whose 
curvatures are generated by \eqref{eq:binQuad}  are all tangent to the circle corresponding to $a_{\g}$. 
It is classical (see \cite{Bernays1912})
that the number of distinct primitive values 
up to $N$
 assumed by a 
 positive-definite binary quadratic form  is of order at least $ N(\log N)^{-1/2}$, proving \eqref{eq:SarBnd}.

To fix notation, we define the binary quadratic appearing in \eqref{eq:binQuad} and its shift by
\be\label{eq:ffVdef}
f_{\g}(x,y)
:=
A_{\g}x^{2}+2B_{\g} xy+C_{\g}y^{2}
,
\qquad
\ff_{\g}(x,y):=
f_{\g}(x,y)
-a_{\g}
,
\ee
so that
\be\label{eq:ffToInProd}
\<
w_{x,y},
\g\cdot v_{0}\>
=
\ff_{\g}(2x,y)
.
\ee
Note
from \eqref{eq:ABCdef} and \eqref{eq:Fv}
 that the discriminant of $f_{\g}$ is
\be
\label{eq:disc}
\gD_{\g}
=
4(B_{\g}^{2}-A_{\g}C_{\g})
=
-4 a_{\g}^2
.
\ee

When convenient, we will drop the subscripts $\g$ in all the above.

\subsection{Congruence Subgroups}\

For each $q\ge1$, define the ``principal'' $q$-congruence subgroup
\be\label{eq:GqDef}
\G(q)
:=
\{
\g\in\G:
\g\equiv I(\mod q)
\}
.
\ee
These groups all have infinite index in $G(\Z)$, but finite index in $\G$. The quotients $\G/\G(q)$ have been determined 
completely
by Fuchs \cite{FuchsThesis} 
by proving an explicit
Strong Approximation theorem (see \cite{MatthewsVasersteinWeisfeiler1984}), Goursat's Lemma, and other ingredients, as we explain below. 
Since
$G$ does not itself have
the
 Strong Approximation
Property, 
we pass to its connected spin double cover $\SL_{2}(\C)$. We will need the covering map explicitly later, so record it here. 

First change variables from the Descartes form $F$ to 
$$
\tilde F(x,y,z,w):=xw+y^{2}+z^{2}.
$$
Then there is a homomorphism 
$\iota_{0}:\SL(2,\C)\to \SO_{\tilde F}(\R)$, sending 
$$
g=
\mattwo{a+\ga i}{b+\gb i}{c+\g i}{d+\gd i} \in\SL(2,\C)
$$
to
$$
{1\over |\det(g)|^{2}}
\left(
\begin{array}{cccc}
 a^2+\alpha ^2 & 2 (a c+\alpha  \gamma ) & 2 (c \alpha - a \gamma)  & -c^2-\gamma ^2 \\
 a b+\alpha  \beta  & b c+a d+\beta  \gamma +\alpha  \delta  & d \alpha +c \beta -b \gamma -a \delta  & -c d-\gamma  \delta 
   \\
 a \beta -b \alpha  & -d \alpha +c \beta -b \gamma +a \delta  & -b c+a d-\beta  \gamma +\alpha  \delta  & d \gamma -c \delta
    \\
 -b^2-\beta ^2 & -2 (b d+\beta  \delta ) & 2( b \delta -d \beta ) & d^2+\delta ^2
\end{array}
\right)
.
$$
To map from $\SO_{\tilde F}$ to $\SO_{F}$, we apply a
conjugation, see \cite[(4.1)]{GrahamLagarias2003}. Let 
\be\label{eq:iota}
\iota:\SL(2,\C)\to\SO_{F}(\R)
\ee 
be
the composition of this conjugation with $\iota_{0}$. Let $\tilde\G$ be the preimage of $\G$ under $\iota$.
\begin{lem}[\cite{GrahamLagariasMallowsWilksYanI, FuchsThesis}]
The group $\tilde\G$ is generated by 
$$
\pm
\mattwo1{4i}{}1,\quad
\pm
\mattwo{-2}ii{},\quad
\pm
\mattwo{2+2i}{4+3i}{-i}{-2i}.
$$
\end{lem}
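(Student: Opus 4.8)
The statement to prove is the explicit generation of $\tilde\G$, the $\iota$-preimage in $\SL(2,\C)$ of the Apollonian rotation group $\G$, by the three stated matrices (together with $\pm I$). The plan is to transport everything through the covering map $\iota$ and reduce to the already-established fact (equation \eqref{eq:G1} and the surrounding discussion, together with the definitions \eqref{eq:C1def}, \eqref{eq:spin}, \eqref{eq:iota}) that $\G$ is freely generated by $S_1S_2,\ S_2S_3,\ S_3S_4$, equivalently by the three conjugates that one reads off from the Coxeter presentation. Concretely, I would first pin down generators of $\G$ in a form adapted to the spin cover: since $\G=\langle S_1S_2,S_2S_3,S_3S_4\rangle$ and each $S_iS_j\in\SO_F$, each lifts to an element of $\SL(2,\C)$ well-defined up to sign. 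I would compute these three lifts explicitly by inverting $\iota$ (i.e. composing the conjugation of \cite[(4.1)]{GrahamLagarias2003} with $\iota_0^{-1}$ applied to the $4\times4$ matrices $S_1S_2$, $S_2S_3$, $S_3S_4$), obtaining $2\times 2$ complex matrices; the claim is precisely that these three lifts, up to sign, are (conjugate/equal to) the three displayed matrices $\pm\mattwo1{4i}{}1$, $\pm\mattwo{-2}ii{}$, $\pm\mattwo{2+2i}{4+3i}{-i}{-2i}$.

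The key structural point is that $\iota$ is a two-to-one covering with kernel $\{\pm I\}$, so $\tilde\G$ is an extension
\[
1\to\{\pm I\}\to\tilde\G\xrightarrow{\ \iota\ }\G\to1.
\]
Hence $\tilde\G$ is generated by $\pm I$ together with \emph{any} choice of lifts of a generating set of $\G$: if $g_1,g_2,g_3$ are lifts of $S_1S_2,S_2S_3,S_3S_4$, then any $\tilde\gamma\in\tilde\G$ maps to a word $w(S_1S_2,S_2S_3,S_3S_4)$, the same word in the $g_i$ lifts $\tilde\gamma$ up to a sign in the kernel, and $-I$ is manifestly in $\tilde\G$ (it is the nontrivial kernel element, and it is visibly one of the "$\pm$" choices). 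So the entire content reduces to the identity-level verification that the three displayed matrices \emph{are} lifts of $S_1S_2$, $S_2S_3$, $S_3S_4$ (in some order), i.e. that $\iota$ sends each displayed matrix to the corresponding $S_iS_j$. Given the explicit formula for $\iota_0$ reproduced in the excerpt and the explicit conjugation from \cite[(4.1)]{GrahamLagarias2003}, this is a finite matrix computation: apply $\iota$ to each of the three $2\times2$ matrices and check the output equals the relevant product $S_iS_j$ (read off from the $S_i$ displayed in \S\ref{sec:prelim}).

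I would organize the computation by using the already-computed landmarks in the paper to shortcut the algebra: the excerpt already records $C_1=S_4S_3$, its $J$-conjugate $\tilde C_1=\rho(T_1)$ with $T_1=\mattwo1201$, and $C_2=S_2S_3$ with $\tilde C_2=\rho(T_2)$, $T_2=\mattwo1021$. Pulling these back through the real form change $F\leftrightarrow\tilde F$ and through $\iota_0$ should directly produce two of the three generators (up to sign and up to the fixed conjugation), namely the unipotent-type matrices $\mattwo1{4i}{}1$ and $\mattwo{-2}ii{}$ — indeed $\mattwo1{4i}{}1$ has the shape of a $\rho$-image of $T_1$ after passing to the complex cover, matching the "$4n^2\pm2n$" entries of $C_1^n$. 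The third generator, coming from $S_3S_4$ (or equivalently the remaining free generator not expressible via $\Xi=\langle C_1,C_2\rangle$), is the genuinely new one; its lift $\mattwo{2+2i}{4+3i}{-i}{-2i}$ must be verified by the direct substitution into $\iota$.

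\textbf{Main obstacle.} The only real difficulty is bookkeeping, not ideas: one must fix the conjugation intertwining $\SO_{\tilde F}$ and $\SO_F$ exactly as in \cite[(4.1)]{GrahamLagarias2003} (an off-by-conjugacy error would change the explicit $2\times2$ matrices), and then carry out the $4\times4$ matrix identity checks $\iota(\text{displayed matrix})=S_iS_j$ without arithmetic slips, being careful about the $\pm I$ ambiguity throughout. There is no analytic or structural hard step — the surjectivity of $\iota:\tilde\G\to\G$ is automatic from the definition of $\tilde\G$ as a preimage, and the presence of $-I\in\tilde\G$ is immediate — so once the three substitution identities are confirmed, the generation statement follows formally from the short exact sequence above.
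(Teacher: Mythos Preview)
The paper does not prove this lemma at all; it is stated with a citation to \cite{GrahamLagariasMallowsWilksYanI, FuchsThesis} and no argument is given in the text. So there is no ``paper's own proof'' to compare against.

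Your structural argument is correct and is the natural way to recover the statement: since $\tilde\G:=\iota^{-1}(\G)$ and $\ker\iota=\{\pm I\}$, one has $-I\in\tilde\G$ automatically, and any choice of $\iota$-lifts of a generating set of $\G$ (here $S_1S_2,S_2S_3,S_3S_4$) together with $-I$ generates $\tilde\G$. The content is then exactly the three finite matrix identities $\iota(g_j)\in\{S_1S_2,S_2S_3,S_3S_4\}^{\pm1}$, which is what you identify as the only obstacle.

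One caution on your proposed ``shortcut'': the map $\rho$ in \eqref{eq:spin} is the real spin cover $\SL_2(\R)\to\SO(2,1)$, conjugated into $\SO_F$ by $J$, whereas $\iota$ in \eqref{eq:iota} is the complex spin cover $\SL_2(\C)\to\SO_F$ built from $\iota_0$ and a different conjugation. These are not the same map, and the relations $\tilde C_j=\rho(T_j)$ do not directly give you $\iota$-preimages. The cleanest route is to forgo the $\rho$ detour and simply plug each of the three displayed $2\times2$ matrices into the explicit formula for $\iota_0$, apply the $\tilde F\to F$ conjugation from \cite[(4.1)]{GrahamLagarias2003}, and check that the outputs are (up to reordering and inversion) the products $S_iS_j$. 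Also drop the word ``conjugate'' in your description --- the lifts must equal the displayed matrices up to sign, not merely be conjugate to them.
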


With this explicit realization of $\tilde\G$ (and hence $\G$), Fuchs was able to 
  explicitly
 determine
   the images of $\tilde\G$ in $\SL(2,\Z[i]/(q))$, and hence understand the quotients $\G/\G(q)$ for all $q$.
\begin{lem}[\cite{FuchsThesis}]\label{lem:GGq}\

(1) The quotient groups $\G/\G(q)$ are multiplicative, that is, if $q$ factors as 
$$
q=p_{1}^{\ell_{1}}\cdots p_{r}^{\ell_{r}},
$$
then
$$
\G/\G(q)\cong \G/\G(p_{1}^{\ell_{1}})\times\cdots\times \G/\G(p_{r}^{\ell_{r}}).
$$

(2) If $(q,6)=1$ then 
\be\label{eq:Fuc}
\G/\G(q)\cong\SO_{F}(\Z/q\Z).
\ee

(3)
If $q=2^{\ell}$, $\ell\ge3$, then $\G/\G(q)$ is the full preimage of $\G/\G(8)$ under the projection $\SO_{F}(\Z/q\Z)\to \SO_{F}(\Z/8\Z)$. That is, the powers of $2$ stabilize at $8$. 
Similarly, the powers of $3$ stabilize at $3$, meaning that for $q=3^{\ell}$, $\ell\ge1$, the quotient $\G/\G(q)$ is the preimage of $\G/\G(3)$ under the corresponding projection map.
\end{lem}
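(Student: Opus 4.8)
The plan is to deduce all three assertions from Strong Approximation, which fails for $\G\subset G=\SO_F$ directly (the group $G$ is not simply connected) but holds on the spin cover. So the first move is to work with the covering $\iota:\SL(2,\C)\to\SO_F(\R)$ of \eqref{eq:iota} and the preimage $\tilde\G=\iota^{-1}(\G)$, which by the lemma of \cite{GrahamLagariasMallowsWilksYanI,FuchsThesis} is the explicitly generated subgroup of $\SL(2,\Z[i])$ recorded above. Since $\G$ is Zariski dense in $\SO_F$ and the kernel $\{\pm I\}$ of $\iota$ is finite and central, $\tilde\G$ is a finitely generated, Zariski dense subgroup of the connected, simply connected, $\Q$-simple group $\mathbf G:=\mathrm{Res}_{\Q(i)/\Q}\SL_2$, with $\mathbf G(\Z_p)=\SL(2,\Z_p[i])$ for every $p$. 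By the theorem of Matthews--Vaserstein--Weisfeiler \cite{MatthewsVasersteinWeisfeiler1984} the closure of $\tilde\G$ in $\prod_p\SL(2,\Z_p[i])$ is open; equivalently there is a finite set $S$ of rational primes so that $\tilde\G\to\SL(2,\Z[i]/(q))$ is onto for every $q$ coprime to $S$, while for general $q$ the image of $\tilde\G$ is the full preimage of its image at the $S$-part of $q$ under the natural projection. Passing through $\iota$, dividing by $\{\pm I\}$ (harmless since $-I\in\tilde\G$), and comparing with the explicit covering formula, this identifies $\G/\G(q)$ with the orthogonal group of \eqref{eq:Fuc} whenever $q$ is coprime to $6$ and to $S$, and in general reduces the determination of $\G/\G(q)$ to the primes in $S\cup\{2,3\}$. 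It therefore remains to prove $S\subseteq\{2,3\}$ and to analyse the $2$- and $3$-parts.

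\emph{The bad primes $2$ and $3$.} That no prime $p\ge5$ lies in $S$ is checked by a direct computation: such $p$ are unramified in $\Q(i)$, and the reductions modulo $p$ of the three explicit generators generate all of $\SL(2,\Z[i]/(p))$ (one may drive this using the abundance of unipotents in $\tilde\G$, e.g.\ those coming from $\Xi$ in \eqref{eq:G1} and from $\gL(2)\subset\SL(2,\Z)$). One then verifies by explicit matrix arithmetic that $2,3\in S$, so $S=\{2,3\}$, which gives part (2). Part (3) is the \emph{stabilization} statement, a $p$-adic computation: the kernel of $\SL(2,\Z[i]/(p^{\ell+1}))\to\SL(2,\Z[i]/(p^{\ell}))$ is the abelian group of trace-zero $2\times2$ matrices over the residue ring, and one exhibits, from suitable powers and commutators of the explicit generators, elements of $\tilde\G$ whose reductions span this kernel once $\ell\ge1$ at $p=3$, and once the level (measured in powers of $1+i$, since $(2)=(1+i)^2$) reaches $8$ at $p=2$. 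Together with the direct checks that $\G/\G(8)$ and $\G/\G(3)$ are \emph{proper} in $\SO_F$ modulo $8$ resp.\ modulo $3$, this gives part (3).

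\emph{Multiplicativity.} For $q=p_1^{\ell_1}\cdots p_r^{\ell_r}$, the Chinese Remainder Theorem on matrix entries identifies $\bigcap_i\G(p_i^{\ell_i})$ with $\G(q)$, so $\G/\G(q)$ embeds diagonally in $\prod_i\G/\G(p_i^{\ell_i})$; surjectivity onto the product is Goursat's Lemma, which reduces to showing that for $i\ne j$ the groups $\G/\G(p_i^{\ell_i})$ and $\G/\G(p_j^{\ell_j})$ have no common nontrivial quotient. By the previous steps each of these has composition factors among the nonabelian simple groups $\PSL_2$ over the residue field or its quadratic extension, cyclic groups of order the residue characteristic (from the congruence filtration), and an abelian piece of bounded order; hence a common quotient would be a cyclic group of bounded order, and this is excluded by a finite computation of the quotients $\G/\G(q)$ for the few composite $q$ supported on $\{2,3\}$ and one auxiliary prime (equivalently, by computing the relevant abelianizations from the explicit generators). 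With the previous steps this establishes all of (1)--(3).

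\emph{Main obstacle.} The substantive difficulty is the stabilization claim (3): showing the $2$- and $3$-adic images of $\tilde\G$ become the full preimage of a finite level, and do so at precisely level $8$ and level $3$. This is a lifting-the-exponent argument — one must produce enough explicit elements of $\tilde\G$ reducing to a spanning set of the congruence Lie algebra over $\Z[i]/(p)$ — and it is delicate at $p=2$ because the ramification of $\Q(i)$ forces one to work with modules over $\Z[i]/(1+i)^k$ rather than over $\Z/2^k$; pinning the exact level of stabilization to $8$ requires exhibiting witnesses at that level and confirming nothing new appears beyond it. By comparison the Strong Approximation input and the Goursat bookkeeping are essentially formal once the spin cover is set up.
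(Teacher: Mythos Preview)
The paper does not prove this lemma; it is quoted from Fuchs's thesis \cite{FuchsThesis}, with the surrounding text summarizing her method as ``proving an explicit Strong Approximation theorem (see \cite{MatthewsVasersteinWeisfeiler1984}), Goursat's Lemma, and other ingredients.'' Your proposal is precisely this strategy: pass to the spin cover $\tilde\G<\SL(2,\Z[i])$ where Strong Approximation applies, identify the exceptional primes as $\{2,3\}$ by direct computation, establish stabilization at levels $8$ and $3$ by a $p$-adic lifting argument, and deduce multiplicativity via Goursat. So your plan matches the method the paper attributes to Fuchs.

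Two points deserve tightening. First, in deducing \eqref{eq:Fuc} you pass from surjectivity of $\tilde\G\to\SL(2,\Z[i]/(q))$ to $\G/\G(q)\cong\SO_F(\Z/q\Z)$ by ``comparing with the explicit covering formula,'' but this hides a genuine step: one must check that the map $\iota$ induces a surjection $\SL(2,\Z[i]/(q))\to\SO_F(\Z/q\Z)$ for $(q,6)=1$, which is a spinor-norm computation over $\Z/q\Z$ and is not automatic. Second, your Goursat argument for multiplicativity is sketched at the level of composition factors, but the actual obstruction to a common quotient between the $p_i$- and $p_j$-parts could be an abelian piece at the top of the congruence filtration; ruling this out requires the explicit knowledge of $\G/\G(p^\ell)$ you are simultaneously trying to establish, so the logical order of (1)--(3) in your write-up needs care. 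You correctly flag the stabilization computation at $p=2$ as the main obstacle, and indeed that is where the bulk of Fuchs's work lies; your sketch of it as producing elements spanning the graded congruence Lie algebra is the right shape, but the exact level $8$ (rather than, say, $4$ or $16$) cannot be argued abstractly and must be verified by explicit matrix computation.
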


\begin{rmk}\label{rmk:locA}
This of course explains all local obstructions, cf. \eqref{eq:locA}. 
The admissible numbers are precisely those
 residue classes $(\mod 24)$ which appear  as some entry in the orbit of $v_{0}$ under $\G/\G(24)$.
\end{rmk}

\newpage


\section{Setup and Outline of the Proof}\label{sec:outline}

In this section, we introduce the main exponential sum and give an outline of the rest of the argument. 
Recall  the fixed 
gasket
 $\sG$ having curvatures $\sB$ and root quadruple $v_{0}$.
Let $\G$ be  the Apollonian subgroup with subgroup $\Xi$, see \eqref{eq:G1}. Let $\gd\approx1.3$ be the Hausdorff dimension of the gasket $\sG$; see \S\ref{sec:preII} for the important role played by this geometric invariant.
Recall also from \eqref{eq:binQuad} that for any $\g\in\G$ and $\xi\in\Xi$, 
$$
\<e_{4},\xi\g v_{0}\>\in\sB.
$$
Our approach, mimicing \cite{BourgainKontorovich2010, BourgainKontorovich2011a}, is to exploit the bilinear (or multilinear) structure above.

We first give an informal description of the main ensemble from which we will form an exponential sum.
Let $N$ be our main growing parameter. 
We construct our ensemble by decomposing a ball in $\G$ of norm $N$ into two balls, 
a small
one in all of $\G$ of norm $T$, and a larger one of norm $X^{2}$ in $\Xi$, corresponding to $x,y\asymp X$. Specifically, we take
\be\label{eq:TXNis}
T=N^{1/100}\quad\text{and}\quad X=N^{99/200},\qquad\text{so that}\qquad TX^{2}=N.
\ee
See \eqref{eq:gS2p} and \eqref{eq:gS12p} where these numbers are used.

We further need the technical condition that in the $T$-ball, the value of $a_{\g}=\<e_{1},\g\, v_{0}\>$ (see \eqref{eq:vgIs}) is of order $T$. This is used crucially in 
\eqref{eq:cJfbnd}
and 
\eqref{eq:ACisT}. 

Finally, for technical reasons (see Lemma \ref{lem:spec3} below), we need to further split the $T$-ball into two: a small ball of norm $T_{1}$, and a big ball of norm $T_{2}$. Write
\be\label{eq:TT1T2}
T=T_{1}T_{2},\qquad T_{2}=T_{1}^{\cC},
\ee
where $\cC$ is a large constant depending only on the spectral gap for $\G$; it is determined in \eqref{eq:cCis}. We now make formal the above discussion.

\subsection{Introducing the Main Exponential Sum}\

 Let $N,X, T, T_{1}$, and $T_{2}$ be as in \eqref{eq:TXNis} and \eqref{eq:TT1T2}. Define the family
\be\label{eq:fFdef}
\fF=\fF_{T}:=
\left\{
\g=\g_{1}\g_{2}:
\begin{array}{c}
\g_{1},\g_{2}\in\G,\\
T_{1}<\|\g_{1}\|<2T_{1},\\
T_{2}<\|\g_{2}\|<2T_{2},\\
\<e_{1},\g_{1}\,\g_{2}\,v_{0}\>>T/100
\end{array}
\right\}
.
\ee
From Lax-Phillips \cite{LaxPhillips1982} 
(or see \eqref{eq:Vin}),
we have the bound
\be\label{eq:fFTbnd}
\#\fF_{T}\ll T^{\gd}.
\ee

From \eqref{eq:ffToInProd}, we can identify $\g\in\fF$ with a shifted binary quadratic form $\ff_{\g}$ of discriminant $-4a_{\g}^{2}$ via
$$
\ff_{\g}(2x,y)=\<w_{x,y},\g\, v_{0}\>.
$$
Recall from \eqref{eq:binQuad} that whenever $(2x,y)=1$, the above is a curvature in the 
gasket. We sometimes drop $\g$, writing simply $\ff\in\fF$; then the latter can also be thought of as a family of shifted quadratic forms. Note also that the decomposition $\g=\g_{1}\g_{2}$ in \eqref{eq:fFdef} need not be unique, so some forms may appear with multiplicity.

One final technicality is to smoothe the sum on $x,y\asymp X$. To this end, we fix a smooth, nonnegative function 
 $\gU$, supported in $[1,2]$ and having unit mass,
$
\int_{\R}\gU(x)dx=1.
$
\\

Our main object of study is then the representation number
\be\label{eq:cRNis}
\cR_{N}(n):=\sum_{\ff\in\fF_{T}}\sum_{(2x,y)=1}
\gU\left(\frac {2x}X\right)
\gU\left(\frac yX\right)
\bo_{\{n=\ff(2x,y)\}}
,
\ee
and the corresponding exponential sum, its Fourier transform
\be\label{eq:cRNhatIs}
\widehat{\cR_{N}}(\gt):=\sum_{\ff\in\fF}\sum_{(2x,y)=1}
\gU\left(\frac {2x}X\right)
\gU\left(\frac yX\right)
e(\gt\, \ff(2x,y))
.
\ee
Clearly $\cR_{N}(n)\neq0$ implies that $n\in\sB$. Note also from \eqref{eq:fFTbnd} that the total mass satisfies
\be\label{eq:totMass}
\widehat{\cR_{N}}(0)\ll T^{\gd}X^{2}.
\ee

The condition $(2x,y)=1$ will be a technical nuisance,
and can be freed by a standard use of the M\"obius inversion formula. To this end,
  we introduce another parameter 
\be\label{eq:Uis}
 U=N^{\fu},
\ee
a small power of $N$, with $\fu>0$  depending only on the spectral gap of $\G$; it is determined in \eqref{eq:fuIs}. Then by truncating M\"obius inversion, define
\be\label{eq:cRNhatUis}
\widehat{\cR_{N}^{U}}(\gt)
:=
\sum_{\ff\in\fF}\sum_{x,y\in\Z}
\gU\left(\frac {2x}X\right)
\gU\left(\frac yX\right)
e(\gt\, \ff(2x,y))
\sum_{u\mid(2x,y)\atop u<U}\mu(u)
,
\ee
with corresponding ``representation 
function'' $\cR_{N}^{U}$ (which could be negative).
\\

\subsection{Reduction to the Circle Method}\

We are now in position to outline the argument in the rest of the paper. Recall that $\sA$ is the set  of admissible numbers. We first reduce our main Theorem \ref{thm:Main} to the following
\begin{thm}\label{thm:RNU} 
There exists an $\eta>0$ and a function $\fS(n)$ with the following properties. For $\foh N<n<N$, the singular series $\fS(n)$ is nonnegative, vanishes only when $n\notin\sA$, and is otherwise 
$
\gg_{\vep}N^{-\vep}
$ 
for any $\vep>0$. Moreover, for $\foh N<n<N$ and admissible,
\be\label{eq:RNUbnd}
\cR_{N}^{U}(n)\gg \fS(n) T^{\gd-1},
\ee
except for a set of cardinality $\ll N^{1-\eta}$.
\end{thm}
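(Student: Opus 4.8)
The plan is to establish Theorem~\ref{thm:RNU} by running the Hardy--Littlewood circle method on the exponential sum $\widehat{\cR_N^U}(\gt)$, with the bilinear structure of $\fF_T$ driving the major-arc analysis and the minor arcs controlled via a Weyl/van der Corput differencing in the $x,y$ variables followed by a count in the thin group $\G$. First I would write
$$
\cR_N^U(n)=\int_0^1 \widehat{\cR_N^U}(\gt)\, e(-n\gt)\, d\gt
$$
and partition $[0,1)$ (thought of as the circle) into major arcs $\fM$ near rationals $a/q$ with small denominator $q<Q_0=T^{\text{small}}$ and height controlled by a parameter depending on $X$, and minor arcs $\fm$ being the complement. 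On the major arcs, the strategy is to evaluate $\widehat{\cR_N^U}(a/q+\gb)$ by separating the arithmetic and archimedean pieces: the $\ff_\g(2x,y)\pmod q$ behavior over the smoothed sum in $x,y$ produces a complete exponential sum in two variables (a Gauss/Salié-type sum attached to the binary quadratic form $f_\g$ of discriminant $-4a_\g^2$), while summation over $\ff\in\fF_T$ reduces, after unfolding $\g=\g_1\g_2$, to counting lattice points $\g\in\G$ in a ball of radius $T$ with a congruence constraint mod $q$ and with $a_\g\asymp T$. This last count is exactly where I would invoke the effective bisector/lattice-point count of I.~Vinogradov \cite{Vinogradov2013} together with the uniform spectral gap over the congruence tower $\G(q)$ furnished by Varj\'u's appendix; this gives a main term of size $T^\gd/[\G:\G(q)]$ plus a power-saving error, and the resulting arithmetic sums reassemble (using Lemma~\ref{lem:GGq} and multiplicativity) into the singular series $\fS(n)$. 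Carrying out the $\gb$-integral over the archimedean factor with the smoothing $\gU$ then yields the claimed main term $\gg \fS(n) T^{\gd-1}$, and standard bounds on $\fS$ via its Euler product (using the admissibility condition mod $24$ to see it is nonzero exactly when $n\in\sA$, and $\gg_\vep N^{-\vep}$ otherwise) complete this half.

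The minor arcs are the heart of the matter and I expect them to be the main obstacle. The goal is an $L^2$ (mean-value) bound: one wants
$$
\int_{\fm} |\widehat{\cR_N^U}(\gt)|^2\, d\gt \ll N^{-\eta}\, \big(\text{main term}\big)^2 \cdot \frac{1}{N}\cdot(\text{something}),
$$
more precisely one shows $\int_{\fm}|\widehat{\cR_N^U}|^2 d\gt$ is smaller than $ (T^{\gd-1})^2 N^{1-\eta}$ by opening the square, so that by Chebyshev the set of $n$ with $\cR_N^U(n)$ not matching its major-arc value has size $\ll N^{1-\eta}$. To bound $\widehat{\cR_N^U}(\gt)$ pointwise on $\fm$, I would fix $\ff=\ff_\g\in\fF$ and treat the inner double sum over $x,y$ as a two-dimensional Weyl sum for the quadratic form $\ff_\g(2x,y)$: completing the square in one variable and applying van der Corput / Weyl differencing linearizes the phase, and one gets cancellation unless $\gt$ is close to a rational with denominator dividing a divisor of $4A_\g=4(a_\g+b_\g)$. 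The subtle point, following Bourgain \cite{Bourgain2012}, is that although each individual form could conspire with $\gt$, the forms in $\fF_T$ have leading coefficients $A_\g$ that are spread out (one needs that $A_\g$ does not have too many small divisors on average, and that the map $\g\mapsto (A_\g,B_\g,C_\g)$ is close to injective), so summing over $\ff\in\fF_T$ recovers square-root-type cancellation off the major arcs. Quantifying this — controlling the number of $\g\in\fF_T$ for which $f_\g$ has a given residue-class structure, equivalently an equidistribution statement for $\G$-orbits refined enough to beat the loss in Weyl differencing — is where the spectral gap re-enters, and this is the step most likely to require real work and the careful choice of the parameters $T_1,T_2,U$ in \eqref{eq:TT1T2},\eqref{eq:Uis}.

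Finally I would assemble the two pieces: the major-arc contribution gives $\cR_N^U(n)= \fS(n)T^{\gd-1}(c+o(1)) + E(n)$ with $\sum_{N/2<n<N}|E(n)|^2 \ll (T^{\gd-1})^2 N^{1-\eta}$ coming from the minor arcs plus the truncation error in M\"obius inversion (the parameter $U$ is chosen so that freeing the condition $(2x,y)=1$ costs only a negligible power of $N$, again using the spectral gap to sum the $u$-dilated sub-ensembles). Then for all but $\ll N^{1-\eta}$ admissible $n\in(\foh N,N)$ we have $|E(n)| \ll \fS(n)T^{\gd-1}$ and hence $\cR_N^U(n)\gg \fS(n)T^{\gd-1}>0$, which is \eqref{eq:RNUbnd}; since $\cR_N^U(n)\neq 0$ forces $n\in\sB$ (as $\cR_N(n)\neq0$ does, modulo handling the possibly-negative M\"obius-truncated terms, which the choice of $U$ controls), a dyadic decomposition in $N$ then upgrades this to Theorem~\ref{thm:Main}. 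The reduction of Theorem~\ref{thm:Main} to Theorem~\ref{thm:RNU} itself is routine: run over dyadic ranges $N=2^k$ and observe that an admissible $n\ll N$ missed in every scale contributes to the exceptional set, whose total size is $\sum_k O(2^{k(1-\eta)})=O(N^{1-\eta})$.
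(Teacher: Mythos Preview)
Your plan is structurally the same as the paper's: decompose $\cR_N^U(n)$ via the circle method, produce the main term $\fS(n)T^{\gd-1}$ on the major arcs using Vinogradov's bisector count and the uniform spectral gap, bound the minor arcs in $L^2$, and conclude by Chebyshev.  The reduction you sketch (Chebyshev on the $L^2$ error, then dyadic summation for Theorem~\ref{thm:Main}) is exactly what the paper does.

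Where you diverge is in the minor-arc mechanics.  You propose a pointwise Weyl/van der Corput differencing in $x,y$, with the heuristic that cancellation fails only near rationals with denominator dividing $4A_\g$.  The paper does something rather different and more robust: it applies Poisson summation (or, in the large-$Q$ range, sum completion) to the $x,y$ sum, producing a Gauss-type sum $\cS_f(q_0,r;n,m)$ times an archimedean integral; after opening the square in the $L^2$ norm this yields a Kloosterman/Sali\'e-type sum in $r\ (\mathrm{mod}\ q)$ with phase $e_q(r(a'-a))$.  The $r$-sum is then bounded by Kloosterman's $3/4$ bound, and the remaining sum over pairs $\ff,\ff'\in\fF$ is split into the diagonal $a=a'$ and off-diagonal $a\neq a'$.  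The spectral-gap input you anticipate enters here, as a bound on $\#\{\g\in\fF: a_\g\equiv r\ (\mathrm{mod}\ q)\}$ (Lemma~\ref{lem:spec3}), but there is a further ingredient you do not mention: in the largest range $X\le Q<M$, the diagonal $a=a'$ requires a separate representation-counting argument for binary forms of discriminant $-4a^2$ (the lemmata in \S\ref{sec:lems}.2 culminating in \eqref{eq:eqeq}), which controls how many pairs $(\ff,m,n)$ and $(\ff',m',n')$ can satisfy $\ff(m,-n)=\ff'(m',-n')$.  Your Weyl-differencing sketch would have to reproduce something equivalent to this, and it is not clear it does; this is the place where the plan is thinnest.
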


\pf[Proof of Theorem \ref{thm:Main} assuming Theorem \ref{thm:RNU}:]\

We first show that the difference between $\cR_{N}$ and $\cR_{N}^{U}$ is small in $\ell^{1}$. Using \eqref{eq:fFTbnd}
we have
\beann
\sum_{n<N}|\cR_{N}(n)-\cR_{N}^{U}(n)|
&=&
\sum_{n<N}
\left|
\sum_{\ff\in\fF}\sum_{x,y\in\Z}
\gU\left(\frac {2x}X\right)
\gU\left(\frac yX\right)
\bo_{\{n=\ff(2x,y)\}}
\sum_{u\mid(2x,y)\atop u\ge U}\mu(u)
\right|
\\
&\ll&
\sum_{\ff\in\fF}
\sum_{u
\ge U}
\sum_{y\ll X\atop y\equiv0(\mod u)}
\sum_{x\ll X\atop 2x\equiv0(\mod u)}
1
\\
&\ll
&
T^{\gd}
{X^{2}\over U}
,
\eeann
for any $\vep>0$. Recall from \eqref{eq:Uis} that $U$ is a fixed power of $N$, so the above saves a power from the total mass \eqref{eq:totMass}.

Now let $Z$ be the ``exceptional'' set of admissible $n<N$ for which $\cR_{N}(n)=0$. Futhermore, let $W$ be the set of admissible $n<N$ for which \eqref{eq:RNUbnd} is satisfied. Then
\beann
T^{\gd}
{X^{2}\over U}
&\gg
&
\sum_{n<N} |\cR_{N}^{U}(n)-\cR_{N}(n)|
\ge
\sum_{n\in Z\cap W} |\cR_{N}^{U}(n)-\cR_{N}(n)|
\\
&\gg_{\vep}&
| Z\cap W|
\cdot
 T^{\gd-1}
N^{-\vep}
 .
\eeann
Note also from Theorem \ref{thm:RNU} that $|Z\cap W^{c}|
\le |W^{c}|\ll N^{1-\eta}$.
Hence by \eqref{eq:TXNis} and \eqref{eq:Uis},
\be\label{eq:nZbnd}
|Z|
=
| Z\cap W^{c}|
+
| Z\cap W|
\ll_{\vep}
N^{1-\eta}
+
{N^{1+\vep}\over U}
,
\ee
which
 is a power savings since $\vep>0$ is arbitrary. This completes the proof.
\epf

To establish \eqref{eq:RNUbnd}, we decompose $\cR_{N}^{U}$ into ``major'' and ``minor'' arcs, reducing Theorem \ref{thm:RNU} to the following
\begin{thm}\label{thm:CircMeth}
There exists an $\eta>0$ and a decomposition 
\be\label{eq:RNcMcE}
\cR_{N}^{U}(n)=\cM_{N}^{U}(n)+\cE_{N}^{U}(n)
\ee
with the following properties. For $\foh N<n<N$ and admissible, $n\in\sA$, we have
\be\label{eq:cMNUbnd}
\cM_{N}^{U}(n)\gg \fS(n) T^{\gd-1},
\ee
except for a set of cardinality $\ll N^{1-\eta}$. The singular series $\fS(n)$ is the same as in Theorem \ref{thm:RNU}. Moreover,
\be\label{eq:cEN2bnd}
\sum_{n<N}|\cE_{N}^{U}(n)|^{2}
\ll 
N\, T^{2(\gd-1)} N^{-\eta}
.
\ee
\end{thm}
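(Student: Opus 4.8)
The plan is to prove Theorem~\ref{thm:CircMeth} by a Hardy--Littlewood circle method dissection of the exponential sum $\widehat{\cR_N^U}(\gt)$. Writing $\cR_N^U(n) = \int_0^1 \widehat{\cR_N^U}(\gt)\, e(-n\gt)\, d\gt$, I would first fix the dissection parameters: a modulus cutoff $Q_0$ (a small power of $N$, e.g. $Q_0 = T^{\kappa}$ for a suitable small $\kappa$ tied to the spectral gap) and set the major arcs to be $\fM = \bigcup_{q < Q_0} \bigcup_{(r,q)=1} \{\gt : |\gt - r/q| < K/(TX^2)\}$ for an appropriate $K$, with $\fm = [0,1]\setminus\fM$ the minor arcs. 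Then $\cM_N^U(n)$ is the integral of $\widehat{\cR_N^U}$ over $\fM$ and $\cE_N^U(n)$ the integral over $\fm$; this is the decomposition \eqref{eq:RNcMcE}.

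For the major arc contribution \eqref{eq:cMNUbnd}, on each arc near $r/q$ I would factor $\widehat{\cR_N^U}(r/q + \gb)$ into an arithmetic part and an archimedean part. The arithmetic part is a complete exponential sum over residues modulo $q$ of $\ff_\gg(2x,y)$, summed against the orbit of $v_0$ under $\fF$ reduced mod $q$; here Lemma~\ref{lem:GGq} (multiplicativity and stabilization of $\G/\G(q)$) controls the group-theoretic side and lets one assemble the singular series $\fS(n)$ as a convergent Euler-type product, nonnegative and vanishing exactly off $\sA$, with the lower bound $\gg_\vep N^{-\vep}$ coming from the standard bound on each local density away from the bad set. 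The archimedean part is a smooth oscillatory integral in $\gb$ against the quadratic form; the delicate point is that the "coefficient" of the main term is not simply $T^\gd X^2$ over $n$ but requires genuine input beyond counting: this is where effective bisector counting in the infinite-volume hyperbolic $3$-manifold $\G\backslash\Hyp^3$ (I.~Vinogradov's theorem) and the uniform spectral gap (Varj\'u's appendix) enter, to show that the $\gg$ over $\fF_T$ of the relevant orbital quantity, restricted to the congruence condition, is $\gg \fS(n) T^{\gd-1}$ for all admissible $n$ outside an exceptional set of size $\ll N^{1-\eta}$ (the exceptional set absorbs those $n$ for which the form $\ff_\gg$ represents $n$ too rarely, handled via a second-moment/Cauchy--Schwarz argument over the family).

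For the minor arc bound \eqref{eq:cEN2bnd}, the strategy is an $\ell^2$ (Parseval) estimate: $\sum_{n<N}|\cE_N^U(n)|^2 = \int_{\fm}\int_{\fm}\widehat{\cR_N^U}(\gt_1)\overline{\widehat{\cR_N^U}(\gt_2)}\,(\text{kernel})\, d\gt_1 d\gt_2 \ll \int_\fm |\widehat{\cR_N^U}(\gt)|^2 d\gt$ plus lower-order terms, so it suffices to save a power of $N$ in the mean square of $\widehat{\cR_N^U}$ over the minor arcs against the trivial bound $T^{2\gd}X^4 \asymp N\,T^{2(\gd-1)}\cdot T^2 X^2$. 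This is where the bilinear structure of \eqref{eq:cRNhatIs} is exploited, following Bourgain \cite{Bourgain2012}: one separates the $\g_1$-sum (length $T_1^\gd$) from the $\g_2$ and $x,y$ sums, applies Cauchy--Schwarz in the $\g_1$ variable to convert the problem into counting near-coincidences of the shifted quadratic forms $\ff_{\g_2}(2x,y)$ with shifted rational approximations, and uses the Weyl-type / large-sieve cancellation in the quadratic exponential sum $\sum_{x,y}\gU(2x/X)\gU(y/X)e(\gt \ff(2x,y))$ for $\gt$ badly approximable. The decomposition $T = T_1 T_2$ with $T_2 = T_1^{\cC}$ is precisely what makes the spectral gap quantitatively sufficient in this Cauchy--Schwarz step (the role of Lemma~\ref{lem:spec3}).

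The main obstacle I expect is the minor arc estimate \eqref{eq:cEN2bnd}: unlike classical Waring/Goldbach settings, here the "variables" live in a thin group $\G$ of Hausdorff dimension $\gd \approx 1.3$, so the only available equidistribution input is the spectral gap for the congruence tower, and one must extract genuine cancellation (a power saving $N^{-\eta}$) from a quadratic exponential sum twisted by an orbit of a thin group. Making the bilinear Cauchy--Schwarz interact correctly with both the archimedean oscillation of $f_\gg(x,y)$ and the arithmetic of the $U$-truncated M\"obius coprimality condition — while keeping all parameter exponents $(\tfrac1{100}, \tfrac{99}{200}, \cC, \fu, \kappa)$ mutually consistent — is the technical heart of the paper; the major arc analysis, by contrast, is conceptually more routine given Vinogradov's counting and Fuchs's description of $\G/\G(q)$.
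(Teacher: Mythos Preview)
Your high-level architecture matches the paper: a (smooth) major/minor dissection of $\widehat{\cR_N^U}$, spectral input for the major arcs, and Parseval followed by dyadic decomposition in $q$ for the minor arcs. But two of your proposed mechanisms diverge from the paper in ways that matter.

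On the major arcs, you locate the exceptional set in a ``rare representation'' second-moment argument over the family. The paper does not do this: the bound $\cM_N(n)\gg\fS(n)T^{\gd-1}$ (Theorem~\ref{thm:cMNis}, without the $U$) holds for \emph{every} admissible $n\in(\tfrac12 N,N)$, with no exceptions. The exceptional set in \eqref{eq:cMNUbnd} comes entirely from the transfer $\cM_N\to\cM_N^U$ (Theorem~\ref{thm:cMNtoMNU}), an $\ell^1$ argument identical in shape to the $\cR_N\to\cR_N^U$ transfer. The archimedean lower bound is Lemma~\ref{lem:spec2}, and the modular piece is handled by Lemma~\ref{lem:spec1}; both use the $T_1,T_2$ split, which is why it is introduced.

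On the minor arcs, you propose Cauchy--Schwarz in the $\g_1$-variable followed by Weyl-type cancellation in the $(x,y)$-sum. This is not the paper's route and is unlikely to close as stated: the quadratic sum over $x,y\asymp X$ with coefficients of size $T$ does not by itself save a power of $N$ uniformly over the minor arcs, and Cauchy--Schwarz in $\g_1$ does not decouple $\g_2$ from $(x,y)$ since $\ff_{\g_1\g_2}$ depends on both. The paper instead applies Cauchy--Schwarz in the \emph{M\"obius variable $u$} (see \eqref{eq:cRsq}, \eqref{eq:cRsq2}), then opens the square over the whole family $\fF$, producing pairs $(\ff,\ff')$. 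After Poisson in $(x,y)$ the $r$-sum becomes the Kloosterman/Sali\'e-type sum $\cS$ of \eqref{eq:cSdef}; the savings come from Gauss-sum evaluation \eqref{eq:SfEval}, Kloosterman's $3/4$-bound \eqref{eq:modBndaNeqAp}--\eqref{eq:modBndaEqAp}, and Lemma~\ref{lem:spec3} for the diagonal and near-diagonal cases $a=a'$ or $(a,a')$ large. The $T_1/T_2$ split enters only inside Lemma~\ref{lem:spec3} (via a Nullstellensatz argument when $q$ is large), not as the Cauchy--Schwarz pivot. The analysis further splits into three $q$-ranges (\S\S\ref{sec:qQ0}--\ref{sec:QXT}), with the top range $X\le Q<M$ requiring completion of the $(x,y)$-sums and an additional dichotomy on whether $\ff(m,-n)=\ff'(m',-n')$, handled by the class-number counting of \eqref{eq:eqeq}.
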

\pf[Proof of Theorem \ref{thm:RNU} assuming Theorem \ref{thm:CircMeth}:]\

We restrict our attention to the set of admissible $n<N$ so that \eqref{eq:cMNUbnd} holds (the remainder having sufficiently small cardinality). Let $Z$ denote the subset of these $n$ for which $\cR_{N}^{U}(n)<\foh \cM_{N}^{U}(n)$; hence for $n\in Z$, 
$$
1\ll
{|\cE_{N}^{U}(n)|\over N^{-\vep}T^{\gd-1}}
.
$$ 
Then by \eqref{eq:cEN2bnd},
\beann
|Z|
&\ll_{\vep}&
\sum
_{n<N}
{|\cE_{N}^{U}(n)|^{2}
\over 
N^{-\vep}T^{2(\gd-1)}}
\ll
N^{1-\eta+\vep}
,
\eeann 
whence the claim follows, since $\vep>0$ is arbitrary.
\epf

\subsection{Decomposition into Major and Minor Arcs}\

Next we 
explain
the decomposition \eqref{eq:RNcMcE}. Let $M$ be a parameter controlling  the depth of approximation in Dirichlet's theorem:
 for any irrational $\gt\in[0,1]$, there exists some $q<M$ and $(r,q)=1$ so that $|\gt-r/q|<1/(qM)$. We will eventually set 
\be\label{eq:Mis}
M=XT,
\ee
see \eqref{eq:nmRest} 
where this value is used. (Note that $M$ is a bit bigger than $N^{1/2}=XT^{1/2}$.) 

Writing $\gt=r/q+\gb$, we introduce parameters 
\be\label{eq:Q0K0intro}
Q_{0}, K_{0},
\ee 
small powers of $N$ as determined in  \eqref{eq:Q0is}
, so that the ``major arcs'' correspond to $q<Q_{0}$ and $|\gb|<K_{0}/N$.
In fact, we need a smooth version of this decomposition. 

To this end, recall the ``hat'' function and its Fourier transform
\be\label{eq:hatFunc}
\ft(x):=\min(1+x,1-x)^{+},\qquad
\hat\ft(y)=\left({\sin(\pi y)\over \pi y}\right)^{2}.
\ee
Localize $\ft$ to the width $K_{0}/N$, periodize it to the circle, and put this spike on each fraction in the major arcs:
\be\label{eq:hatFuncN}
\fT(\gt)
=
\fT_{N,Q_{0},K_{0}}(\gt):=\sum_{q<Q_{0}}\sum_{(r,q)=1}\sum_{m\in\Z}\ft\left({N\over K_{0}}\left(\gt+m-\frac rq\right)\right)
.
\ee
By construction, $\fT$ lives on the circle $\R/\Z$ and is supported within $K_{0}/N$ of fractions $r/q$ with small denominator, $q<Q_{0}$, as desired.

Then define the ``main term''
\be\label{eq:cMNUdef}
\cM_{N}^{U}(n):=
\int_{0}^{1}
\fT(\gt)
\widehat{
\cR_{N}^{U}}
(\gt)
e(-n\gt)
d\gt
,
\ee
and ``error term''
\be\label{eq:cENUdef}
\cE_{N}^{U}(n):=
\int_{0}^{1}
(1-\fT(\gt))
\widehat{
\cR_{N}^{U}}
(\gt)
e(-n\gt)
d\gt
,
\ee
so that \eqref{eq:RNcMcE} obviously holds.

Since $\cR_{N}^{U}$ could be negative, the same holds for $\cM_{N}^{U}$. Hence we will establish \eqref{eq:cMNUbnd} by first proving a related result for 
\be\label{eq:cMNdef}
\cM_{N}(n):=
\int_{0}^{1}
\fT(\gt)
\widehat{
\cR_{N}}
(\gt)
e(-n\gt)
d\gt
,
\ee
and then showing that $\cM_{N}$ and $\cM_{N}^{U}$ cannot differ by too much for too many values of $n$. This is the same (but in reverse) as the transfer from $\cR_{N}$ to $\cR_{N}^{U}$ in \eqref{eq:nZbnd}. See Theorem \ref{thm:cMNis} for the lower bound on $\cM_{N}$, and Theorem \ref{thm:cMNtoMNU} for the transfer.

To prove \eqref{eq:cEN2bnd}, we apply Parseval and decompose dyadically:
\beann
\sum_{n}|\cE_{N}^{U}(n)|^{2}
&=&
\int_{0}^{1}
|1-\fT(\gt)|^{2}
\left|
\widehat{\cR_{N}^{U}}(\gt)
\right|^{2}
d\gt
\\
&\ll&
\cI_{Q_{0},K_{0}}
+
\cI_{Q_{0}}
+
\sum_{Q_{0}\le Q<M\atop \text{dyadic}}
\cI_{Q}
,
\eeann
where we have dissected the circle into the following regions (using that $|1-\ft(x)|=|x|$ on $[-1,1]$):
\bea
\label{eq:IQ0K0}
\cI_{Q_{0},K_{0}}
&:=&
\int\limits_{\gt=\frac rq+\gb \atop q<Q_{0},(r,q)=1,|\gb|<K_{0}/N}
\left|\gb\frac N{K_{0}}\right|^{2}
\left|
\widehat{\cR_{N}^{U}}(\gt)
\right|^{2}
d\gt
,
\\
\label{eq:IQ0}
\cI_{Q_{0}}
&:=&
\int\limits_{\gt=\frac rq+\gb \atop q<Q_{0},(r,q)=1,K_{0}/N<|\gb|<1/(qM)}
\left|
\widehat{\cR_{N}^{U}}(\gt)
\right|^{2}
d\gt
,
\\
\label{eq:IQdef}
\cI_{Q}
&:=&
\int\limits_{\gt=\frac rq+\gb \atop Q\le q<2Q,(r,q)=1,|\gb|<1/(qM)}
\left|
\widehat{\cR_{N}^{U}}(\gt)
\right|^{2}
d\gt
.
\eea

Bounds of the quality \eqref{eq:cEN2bnd}  are given for \eqref{eq:IQ0K0} and \eqref{eq:IQ0} in \S\ref{sec:qQ0}, see Theorem \ref{thm:IQ0}. Our estimation of \eqref{eq:IQdef} decomposes further into two cases, whether $Q<X$ or $X\le Q<M$, and are handled separately in \S\ref{sec:QX} and \S\ref{sec:QXT}; see Theorems \ref{thm:IQX} and \ref{thm:IQM}, respectively.

We point out again that our averaging on $n$ in the minor arcs makes this quite crude as far as individual $n$'s (the subject of Conjecture \ref{conj}) are concerned. 

\subsection{The Rest of the Paper}\

The only section not yet described is \S\ref{sec:lems}, where we furnish some  lemmata  which are useful in the sequel. These decompose into two categories: one set of lemmata is related to
some infinite-volume counting problems, for which the background in \S\ref{sec:preII} is indispensable. 
The other lemma is of a classical flavor, corresponding to a local analysis for the shifted binary form $\ff$; this studies a certain exponential sum which is dealt with via Gauss and Kloosterman/Sali\'e sums.
\\

This completes our outline of the rest of the paper.


\newpage

\section{Preliminaries II: Automorphic Forms and Representations}\label{sec:preII}


\begin{figure}
\begin{center}

\vskip-.3in
\includegraphics[width=3in]{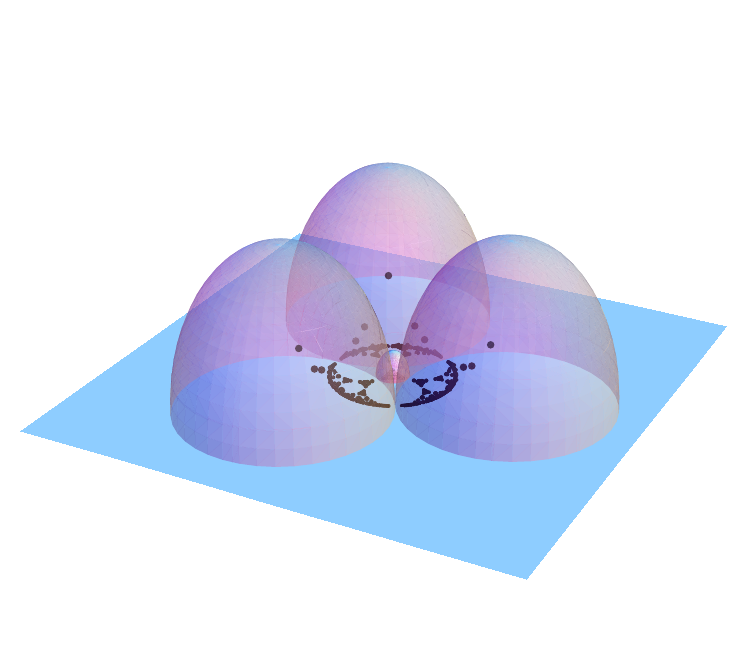}
\end{center}
\vskip-.3in
\caption{The orbit of a point in hyperbolic space under the Apollonian group.}
\label{fig:lim}
\end{figure}

\subsection{Spectral Theory}\

Recall the general spectral theory in our present context. 
We abuse notation (in this 
section only), passing from $G=\SO_{F}(\R)$ to its spin double cover $G=\SL(2,\C)$. Let $\G<G$ be a geometrically finite discrete group. (The Apollonian group is such, being a Schottky group, see Fig. \ref{fig:lim}.) Then $\G$ acts discontinuously on the upper half space $\bH^{3}$, and any $\G$ orbit has a limit set $\gL_{\G}$ in the boundary $\dd\bH^{3}\cong
S^{2}
$ of some Hausdorff dimension $\gd=\gd(\G)\in[0,2]$. We assume that $\G$ is non-elementary (not virtually abelian), so $\gd>0$, and moreover that $\G$ is not a lattice,
that is, the quotient $\G\bk\bH^{3}$ has infinite hyperbolic volume; 
then $\gd<2$. 
 The hyperbolic Laplacian $\gD$ acts on the space $L^{2}(\G\bk \bH^{3})$ of functions automorphic under $\G$ and square integrable on the quotient; we choose the Laplacian to be positive definite. The spectrum is controlled via the following, see \cite{Patterson1976, Sullivan1984, LaxPhillips1982}.
\begin{thm}[Patterson, Sullivan, Lax-Phillips]
The spectrum above $1$ is purely continuous, and the spectrum below $1$ is purely discrete. The latter is empty unless $\gd>1$, in which case, ordering the eigenvalues by
\be\label{eq:gls}
0<\gl_{0}<\gl_{1}
\le
\cdots\le
\gl_{max}<1,
\ee
the base eigenvalue $\gl_{0}$ is given by
$$
\gl_{0}=\gd(2-\gd).
$$
\end{thm}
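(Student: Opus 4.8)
The statement to be proven is the description of the $L^2$-spectrum of the hyperbolic Laplacian on $\G\bk\bH^3$ for $\G$ geometrically finite, non-elementary, of infinite covolume: the spectrum above $1$ is purely continuous, the spectrum below $1$ is purely discrete and finite, it is nonempty exactly when $\gd>1$, and the bottom eigenvalue equals $\gd(2-\gd)$. Let me sketch how I would assemble this.

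\emph{Step 1: Construct the base eigenfunction from the Patterson–Sullivan measure.} The plan is first to build the $\gd$-dimensional conformal density $\{\mu_x\}_{x\in\bH^3}$ supported on the limit set $\gL_\G$, following Patterson's construction: take the Poincaré series $\sum_{\g\in\G} e^{-s\, d(x,\g y)}$, which converges for $s>\gd$ and (after Patterson's modification trick when it converges at $s=\gd$) produces, in the weak-$*$ limit as $s\downarrow\gd$, a family of finite measures transforming by $\frac{d\mu_x}{d\mu_y}(\xi) = e^{-\gd\, \beta_\xi(x,y)}$ under the Busemann cocycle $\beta_\xi$, and $\G$-equivariantly. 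From this density one forms the function $\phi_0(x) = \mu_x(\gL_\G) = \int_{\gL_\G} 1\, d\mu_x$; a standard computation shows $\gD\phi_0 = \gd(2-\gd)\phi_0$ (this is where the exponent $\gd$ of the density matches the eigenvalue via $s(2-s)$ with $s=\gd$), and $\phi_0$ is automorphic under $\G$ by equivariance. One then checks via the shadow lemma (Sullivan) that $\phi_0\in L^2(\G\bk\bH^3)$ precisely when $\gd>1$: the global integral decomposes over a fundamental domain and its cusp/funnel ends, and the decay rate of $\phi_0$ down the funnels, governed by $\gd$, is square-integrable iff $\gd>1$. Positivity of $\phi_0$ then forces it to be the ground state, so $\gl_0=\gd(2-\gd)$ when $\gd>1$, and there is no $L^2$ eigenvalue at all when $\gd\le 1$.

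\emph{Step 2: Discreteness and finiteness below $1$.} For this I would invoke the Lax–Phillips framework: for a geometrically finite $\G$ of infinite covolume, one shows the resolvent $(\gD - s(2-s))^{-1}$ continues meromorphically across the line $\Re s = 1$ (equivalently, the spectrum in $[0,1)$ is discrete), with only finitely many poles in $(1,2]$, i.e. finitely many eigenvalues in $[0,1)$. The mechanism is that the continuous spectrum of $\gD$ on the infinite-volume quotient starts exactly at $1$ (the bottom of the spectrum of the hyperbolic plane/space at "dimension $1$"), coming from the funnel ends which are modeled on warped products; a Rellich-type compactness argument on the geometrically finite quotient (finitely many funnels and cusps, each with explicit geometry) shows that any spectrum below $1$ must be $L^2$-discrete with finite-dimensional eigenspaces, and a pigeonhole/trace estimate bounds the number of eigenvalues below $1$. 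This gives the ordering $0<\gl_0<\gl_1\le\cdots\le\gl_{\max}<1$, with the strict inequality $\gl_0<\gl_1$ because $\phi_0>0$ has no nodal domains, hence is simple.

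\emph{Step 3: Purity of the continuous spectrum above $1$.} Here one argues there are no embedded eigenvalues and no singular continuous spectrum in $[1,\infty)$; this again follows from the Lax–Phillips scattering picture on the funnel ends — an Agmon-type exponential-decay estimate shows an $L^2$ eigenfunction with eigenvalue $\ge 1$ must vanish on each funnel, hence (unique continuation) vanish identically — so the spectrum above $1$ is purely absolutely continuous. The main obstacle in the whole argument is Step 1's $L^2$ dichotomy: making the shadow-lemma estimate on the Patterson–Sullivan measure precise enough to pin down that $\phi_0\in L^2$ exactly at the threshold $\gd>1$, and simultaneously ruling out any eigenfunction when $\gd\le 1$, is the technical heart; the rest is a fairly standard application of infinite-volume hyperbolic spectral theory. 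Since all three steps are classical (Patterson, Sullivan, Lax–Phillips), in the paper itself one would simply cite \cite{Patterson1976, Sullivan1984, LaxPhillips1982} rather than reproduce them.
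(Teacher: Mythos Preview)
Your proposal is correct, and you have in fact anticipated exactly what the paper does: it states the theorem with attribution to Patterson, Sullivan, and Lax--Phillips, cites \cite{Patterson1976, Sullivan1984, LaxPhillips1982}, and gives no proof. Your sketch of the underlying arguments (Patterson--Sullivan density and base eigenfunction, the $L^2$ threshold at $\gd=1$, Lax--Phillips discreteness/finiteness below $1$ and purity of the continuous spectrum above $1$) is an accurate summary of the classical route, and your closing remark that one would simply cite rather than reproduce is precisely what the paper does.
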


\begin{rmk}
In our application to the Apollonian group, the limit set is precisely the underlying gasket, see Fig. \ref{fig:lim}. It has dimension
\be\label{eq:dim}
\gd\approx 1.3...>1.
\ee
\end{rmk}

Corresponding to $\gl_{0}$ is the Patterson-Sullivan base eigenfunction, 
 $\varphi_{0}$, which can be realized explicitly as the integral of a Poisson kernel against the so-called Patterson-Sullivan measure $\mu$.
Roughly speaking, $\mu$ is the weak$^{*}$ limit as $s\to\gd^{+}$ of the measures
\be\label{eq:muPS}
\mu_{s}(x):=
{
\sum_{\g\in\G}\exp({-s\,d(\fo,\g\cdot\fo)){\bf 1}_{x=\g\fo}}
\over
\sum_{\g\in\G}\exp({-s\,d(\fo,\g\cdot\fo))}
}
,
\ee
where $d(\cdot,\cdot)$ is the hyperbolic distance, and $\fo$ is any fixed point in $\bH^{3}$.
\\

\subsection{Spectral Gap}\label{sec:specGap}\

We assume henceforth that $\G$ 
moreover 
satsifies
 $\G<\SL(2,
\cO
)$, where $\cO=\Z[i]$. Then we have a tower of congruence subgroups: for any integer $q\ge1$, define $\G(q)$ to be the kernel of the projection map $\G\to \SL(2,
\cO
/\fq)$, with $\fq=(q)$ the principal ideal. As in \eqref{eq:gls}, write
\be\label{eq:glQs}
0<\gl_{0}(q)<\gl_{1}(q)
\le
\cdots\le
\gl_{max(q)}(q)<1,
\ee
for the discrete spectrum of $\G(q)\bk\bH^{3}$. The groups $\G(q)$, while of infinite covolume, have finite index in $\G$, and hence 
\be\label{eq:gl0Qgl0}
\gl_{0}(q)=\gl_{0}=\gd(2-\gd).
\ee 
But the second eigenvalues $\gl_{1}(q)$ could {\it a priori} encroach on the base. The fact that this does not happen is the spectral gap property for $\G$.
\begin{thm}
\label{thm:specGap}
Given $\G$ as above, there exists some $\vep=\vep(\G)>0$ such that for all $q\ge1$,
\be\label{eq:specGap}
\gl_{1}(q)\ge \gl_{0}+\vep.
\ee
\end{thm}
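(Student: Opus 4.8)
The plan is to deduce the spectral gap for $\Gamma$ from the known spectral gap for its \emph{arithmetic} congruence thickening, which is precisely the content of the Appendix. Let me sketch the strategy in two stages: reducing the infinite-volume problem to a combinatorial/expander statement, and then importing the arithmetic input.

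\textbf{Step 1: From eigenvalues to expansion.} First I would recall the transference principle (due to Brooks, and in the present infinite-volume setting to Bourgain--Gamburd--Sarnak): a uniform lower bound $\gl_{1}(q)\ge\gl_{0}+\vep$ for all $q$ (it suffices to treat squarefree $q$, or even $q$ a product of primes, by the multiplicativity in Lemma~\ref{lem:GGq}(1) together with a standard argument reducing the general case to the squarefree one) is \emph{equivalent} to the statement that the Cayley graphs of the finite quotients $\Gamma/\Gamma(q)$, with respect to the fixed generating set $\{S_{1}S_{2},S_{2}S_{3},S_{3}S_{4}\}^{\pm1}$, form a family of expanders. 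The point is that $\gl_{0}=\gd(2-\gd)$ is the \emph{same} for every $\Gamma(q)$ by \eqref{eq:gl0Qgl0}, so the $\varphi_{0}$-eigenspace behaves like the constant function in the lattice case, and the Patterson--Sullivan eigenfunction plays the role of the invariant vector: one models $L^{2}(\Gamma(q)\bk\bH^{3})$ near the bottom of the spectrum by functions on the finite set $\Gamma/\Gamma(q)$ weighted against $\varphi_{0}$, and a spectral gap for $\gD$ translates into a two-sided bound on the adjacency operator of the Cayley graph. This reduction is insensitive to the infinite volume; it only uses that $\Gamma$ is geometrically finite with $\gd>1$ so that there is a genuine discrete base eigenvalue to separate from the continuous spectrum at $1$.

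\textbf{Step 2: Expansion from the arithmetic group.} The remaining task is to establish the expander property for the Cayley graphs of $\Gamma/\Gamma(q)$. Here I would invoke the lift to the spin cover: $\widetilde\Gamma<\SL(2,\Z[i])$ is Zariski dense in $\SL_{2}$ (over $\mathbb{C}$, hence as a $\mathbb{Q}$-algebraic group after restriction of scalars), and by the Bourgain--Gamburd--Sarnak / Varj\'u / Salehi-Golsefidy--Varj\'u machinery for thin groups, the congruence quotients of a Zariski-dense subgroup of $\SL_{2}$ over a number ring form an expander family — uniformly over all moduli $q$, squarefree or not, using the recent square-free-to-arbitrary-modulus extensions. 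Combined with Fuchs's explicit determination of the images $\Gamma/\Gamma(q)$ in Lemma~\ref{lem:GGq} (which identifies them with $\SO_{F}(\Z/q\Z)$ for $(q,6)=1$ and pins down the stable behavior at $2$ and $3$), this gives the expansion for the $\SO_{F}$-side Cayley graphs as well, since the covering map $\iota$ is a fixed finite-to-one map and transports expansion. Feeding this back through Step~1 yields \eqref{eq:specGap}.

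\textbf{Main obstacle.} The genuinely hard input is the expander property itself (Step 2) — this is exactly why the paper relegates it to an Appendix rather than proving it in the body: it rests on sum-product theory in $\SL_{2}$ over $\Z[i]/(q)$ and nontrivial additive-combinatorial estimates, and the uniformity over \emph{all} $q$ (not just primes) requires the more delicate escape-from-subvarieties and multiscale arguments. The transference in Step~1, by contrast, is essentially formal once one has the Patterson--Sullivan theory of \S\ref{sec:preII} in hand; the only care needed is to handle the continuous spectrum (there is no spectral measure to worry about below $1$, so the separation is clean) and to check that the implied constant $\vep$ can be taken independent of $q$, which follows because the combinatorial gap is uniform. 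A secondary technical point is reducing arbitrary $q$ to squarefree $q$ compatibly with the stabilization phenomena at the primes $2,3$ described in Lemma~\ref{lem:GGq}(3), but this is routine given that part.
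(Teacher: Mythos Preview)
Your Step~1 (the transference between the geometric gap $\gl_{1}(q)\ge\gl_{0}+\vep$ and a uniform combinatorial expansion bound for the Cayley graphs of $\G/\G(q)$) matches the paper's reduction exactly, citing the same Brooks/Burger and Bourgain--Gamburd--Sarnak inputs.

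Your Step~2, however, contains a genuine gap. You propose to feed $\tilde\G<\SL(2,\Z[i])$ into the Bourgain--Gamburd--Sarnak / Varj\'u / Salehi-Golsefidy--Varj\'u machinery and read off expansion uniformly over \emph{all} moduli. The Appendix explicitly addresses this and explains why it does \emph{not} work: the Bourgain--Varj\'u result for arbitrary $q$ is stated for Zariski-dense subgroups of $\SL(d,\Z)$ and does not apply to the Apollonian group, while the Salehi-Golsefidy--Varj\'u result for perfect arithmetic groups covers only square-free $q$. The ``recent square-free-to-arbitrary-modulus extensions'' you invoke for $\SL_{2}$ over a number ring simply were not available; this is precisely the lacuna the Appendix was written to fill. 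Relatedly, your parenthetical claim that one can reduce to square-free $q$ via the multiplicativity of Lemma~\ref{lem:GGq}(1) is not correct: multiplicativity reduces to prime \emph{powers}, not to primes, and lifting expansion from $\G/\G(p)$ to $\G/\G(p^{m})$ is exactly the nontrivial step. (The stabilization at $2$ and $3$ in Lemma~\ref{lem:GGq}(3) does not help here; for $p\ge5$ there is no stabilization.)

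The paper's actual approach is quite different and more elementary, exploiting a feature special to the Apollonian group. After a change of variables one finds two subgroups $\G_{1}=\langle\g_{1},\g_{2}\rangle$ and $\G_{2}=\langle\g_{1},\g_{3}\rangle$ of $\tilde\G$ that are \emph{arithmetic lattices} in embedded copies of $\SL(2,\R)$; by Selberg's classical theorem these have geometric, hence combinatorial, spectral gap over all congruence quotients. A short transference lemma (if $G=G_{1}\cdots G_{k}$ with $k$ bounded, then expansion of the $G_{i}$'s transfers to $G$) plus a bounded-generation statement (every element of $\bar\G/\bar\G(q)$ is a product of boundedly many elements from $\G_{1}/\G_{1}(q)$ and $\G_{2}/\G_{2}(q)$, proved by direct computation at each prime power including a Hensel-type induction for $p=2,3$) then yields combinatorial expansion for $\bar\G/\bar\G(q)$. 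This sidesteps sum-product entirely; the hard input is Selberg rather than Helfgott.
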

This is proved in the Appendix by P\'eter Varj\'u.

\subsection{Representation Theory and Mixing Rates}\

By the Duality Theorem of Gelfand, Graev, and Piatetski-Shapiro \cite{GelfandGraevPS1966}, the spectral decomposition above is equivalent to the decomposition into irreducibles of the right regular representation 
acting on $L^{2}(\G\bk G)$. That is, we identify $\bH^{3}\cong G/K$, with $K=\SU(2)$ a maximal compact subgroup, and lift functions from $\bH^{3}$ to (right $K$-invariant) functions on $G$. Corresponding to \eqref{eq:gls} is the decomposition
\be\label{eq:RepDecomp}
L^{2}(\G\bk G)=
V_{\gl_{0}}\oplus
V_{\gl_{1}}\oplus
\cdots
\oplus
V_{\gl_{max}}\oplus
V_{temp}
.
\ee
Here $V_{temp}$ contains the tempered spectrum (for $\SL_{2}(\C)$, every non-spherical irreducible representation is tempered), and each $V_{\gl_{j}}$ is an infinite dimensional vector space, isomorphic as a $G$-representation to a complementary series representation with parameter $s_{j}\in(1,2)$ determined by $\gl_{j}=s_{j}(2-s_{j})$.
Obviously, a similar decomposition holds for $L^{2}(\G(q)\bk G)$, corresponding to \eqref{eq:glQs}.

We also have the following well-known general fact about mixing rates of matrix coefficients, see e.g. \cite{CowlingHaagerupHowe1988}. 
First we recall the relevant Sobolev norm. Let $(\pi,V)$ be a unitary $G$-representation, and 
let $\{X_{j}\}$ denote an orthonormal basis of the Lie algebra $\fk$ of $K$ with respect to an $Ad$-invariant scalar product. For a smooth vector $v\in V^{\infty}$, define 
the (second order) Sobolev norm $\cS$ of $v$ by
$$
\cS v
 := 
 \|v\|_{2}
+\sum_{j} \|d\pi(X_{j}).v\|_{2}
 +\sum_{j} \sum_{j'} \|d\pi(X_{j})d\pi(X_{j'}).v\|_{2}
.
$$ 
\begin{thm}[{\cite[Prop. 5.3]{KontorovichOh2011}}]
Let $\gT>1$ and $(\pi,V)$ be a unitary representation of $G$ which does not weakly contain any complementary series representation with parameter $s>\gT$. Then for any smooth vectors $v,w\in V^{\infty}$,
\be\label{eq:decayMtrx}
\left|
\<\pi(g).v,w\>
\right|
\ll
\|g\|^{-2(2-\gT)}
\cdot
\cS v\cdot
\cS w
.
\ee
Here $\|\cdot\|$ is the standard Frobenius matrix norm.
\\
\end{thm}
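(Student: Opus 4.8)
The plan is to follow the Cowling--Haagerup--Howe strategy \cite{CowlingHaagerupHowe1988}: first bound the matrix coefficient for $K$-finite vectors by a Harish-Chandra-type spherical function, and then pass to arbitrary smooth vectors by decomposing into $K$-types and using elliptic regularity on the compact group $K=\SU(2)$. We may assume $\gT<2$, the asserted bound being trivial otherwise. I would begin by recording the relevant geometry of $G=\SL(2,\C)$: using the Cartan decomposition, write $g=k_{1}a_{r}k_{2}$ with $k_{1},k_{2}\in K$ and $a_{r}=\mathrm{diag}(e^{r/2},e^{-r/2})$, $r\ge0$; then $\|g\|^{2}=\mathrm{tr}(gg^{*})=2\cosh r$, so $\|g\|\asymp e^{r/2}$. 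For a parameter $\gT\in(1,2)$ let $\Xi_{\gT}$ denote the spherical function on $\bH^{3}=G/K$ attached to the complementary series representation of Laplace eigenvalue $\gT(2-\gT)$. For the rank-one space $\bH^{3}$ this function is explicit, $\Xi_{\gT}(a_{r})=\sinh((\gT-1)r)\big/\big((\gT-1)\sinh r\big)$, so that $\Xi_{\gT}(a_{r})\asymp e^{-(2-\gT)r}$ as $r\to\infty$ — note that for $\gT>1$ no logarithmic factor appears, unlike the tempered case $\gT=1$, where $\Xi_{1}(a_{r})=r/\sinh r\asymp r\,e^{-r}$ — and hence $\Xi_{\gT}(g)\ll\|g\|^{-2(2-\gT)}$.

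Next I would invoke the $K$-finite matrix-coefficient bound: if $(\pi,V)$ does not weakly contain any complementary series representation with parameter $s>\gT$, then by \cite{CowlingHaagerupHowe1988} its matrix coefficients are dominated by the $\gT$-shifted spherical function, in the sense that for vectors $v,w$ lying in single $K$-isotypic subspaces $V(\sigma),V(\sigma')$ one has
\[
\left|\<\pi(g)v,w\>\right|
\ \le\
\sqrt{\dim\sigma\cdot\dim\sigma'}\,\|v\|\,\|w\|\ \Xi_{\gT}(g).
\]
This is the substantive input, and it is the only place the hypothesis on the complementary-series parameter is used: it comes from comparing $\pi$ with the envelope of the unitary dual truncated at $\gT$ (equivalently, from temperedness of a suitable tensor power of $\pi$ together with the Harish-Chandra bound for tempered $K$-finite coefficients), and the factor $\sqrt{\dim\sigma\cdot\dim\sigma'}$ is the standard loss incurred in restricting a $K$-fixed estimate to a fixed $K$-type.

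Finally I would pass to general smooth vectors. Decompose $v=\sum_{\sigma}v_{\sigma}$ and $w=\sum_{\sigma'}w_{\sigma'}$ over the irreducible $K=\SU(2)$-types, indexed by highest weight $n_{\sigma}\ge0$ with $\dim\sigma=n_{\sigma}+1$. Summing the displayed bound,
\[
\left|\<\pi(g)v,w\>\right|
\ \le\
\Xi_{\gT}(g)\Big(\sum_{\sigma}\sqrt{\dim\sigma}\,\|v_{\sigma}\|\Big)\Big(\sum_{\sigma'}\sqrt{\dim\sigma'}\,\|w_{\sigma'}\|\Big).
\]
The Casimir $\Omega_{K}$ of $\fk$ acts on $V(\sigma)$ by a scalar $c_{\sigma}\asymp 1+n_{\sigma}^{2}$, and the second-order Sobolev norm dominates $\|(1+\Omega_{K})v\|_{2}=\big(\sum_{\sigma}(1+c_{\sigma})^{2}\|v_{\sigma}\|^{2}\big)^{1/2}$; so by Cauchy--Schwarz,
\[
\sum_{\sigma}\sqrt{\dim\sigma}\,\|v_{\sigma}\|
\le
\Big(\sum_{\sigma}\frac{\dim\sigma}{(1+c_{\sigma})^{2}}\Big)^{1/2}
\Big(\sum_{\sigma}(1+c_{\sigma})^{2}\|v_{\sigma}\|^{2}\Big)^{1/2}
\ll\cS v,
\]
since $\sum_{n\ge0}(n+1)(1+n^{2})^{-2}<\infty$; this convergence is precisely why, for the $3$-dimensional group $K$, second-order derivatives already suffice. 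The identical estimate holds with $w$ and $\cS w$, and combining these with $\Xi_{\gT}(g)\ll\|g\|^{-2(2-\gT)}$ yields \eqref{eq:decayMtrx}.

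I expect the main obstacle to be the $K$-finite matrix-coefficient estimate of the second step: the geometric preliminaries and the $K$-type bookkeeping are routine, but that bound is where one must package the Cowling--Haagerup--Howe comparison of $\pi$ with the complementary-series envelope together with the explicit $\SL(2,\C)$ spherical-function asymptotics, and it is the sole point at which the spectral hypothesis enters.
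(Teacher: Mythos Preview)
The paper does not prove this statement; it is quoted as a known result, with an explicit citation to \cite[Prop.~5.3]{KontorovichOh2011} and a pointer to \cite{CowlingHaagerupHowe1988} for the underlying method. Your sketch is correct and is precisely the standard Cowling--Haagerup--Howe argument those references carry out: the explicit $\SL(2,\C)$ spherical-function asymptotics, the $K$-finite bound with the $\sqrt{\dim\sigma\cdot\dim\sigma'}$ loss, and the passage to smooth vectors via the $K$-Casimir and Cauchy--Schwarz are exactly the ingredients used in \cite{KontorovichOh2011}, so there is nothing to compare.
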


\subsection{Effective Bisector Counting}\

The next ingredient which we require is the recent work by Vinogradov \cite{Vinogradov2013} on effective bisector counting for such infinite volume quotients. Recall the following sub
(semi)groups
 of $G$: 
$$
A=\left\{a_{t}:=\mattwo{e^{t/2}}{}{}{e^{-t/2}}:t\in\R
\right\},
A^{+}=\left\{a_{t}:t\ge0
\right\},
$$
$$
M=\left\{\mattwo{e^{2\pi i\gt}}{}{}{e^{-2\pi i\gt}}:\gt\in\R/\Z
\right\},
K=\SU(2)
.
$$
We have the Cartan decomposition $G=KA^{+}K$, unique up to the normalizer $M$ of $A$ in $K$. We require it in the following more precise form. Identify $K/M$ with the sphere $S^{2}\cong\dd\bH^{3}.$ Then for every $g\in G$ not in $K$, there is a unique decomposition
\be\label{eq:gDecomp}
g=s_{1}(g)\cdot a(g)\cdot m(g) \cdot s_{2}(g)^{-1}.
\ee
with $s_{1},s_{2}\in K/M$, $a\in A^{+}$ and  $m\in M$, corresponding to
$$
G=K/M\times A^{+}\times M\times M\bk K,
$$
see, e.g., \cite[(3.4)]{Vinogradov2013}. The following theorem follows easily 
from \cite[Thm 2.2]{Vinogradov2013}.

\begin{thm}[{\cite{Vinogradov2013}}]\label{thm:Vin}
Let $\Phi,\Psi\subset S^{2}$ be 
spherical caps
 and let $\cI\subset\R/\Z$ be an interval. Then under the above hypotheses on $\G$ (in particular $\gd>1$), and using the decomposition 
\eqref{eq:gDecomp},
we have
\be\label{eq:Vin}
\sum_{\g\in\G}
\bo
{
\left\{
\begin{array}{c}
s_{1}(\g)\in\Phi
\\
s_{2}(\g)\in \Psi
\\
\|a(\g)\|^{2}<T
\\
m(\g)\in\cI
\end{array}
\right\}
}
=
c_{\gd}\cdot
\mu(\Phi)
\mu(\Psi)
\ell(\cI)
T^{\gd}
+
O\big(
T^{
\gT
}
\big)
,
\ee
as $T\to\infty$.
Here $c_{\gd}>0$, $\|\cdot\|$ is the  Frobenius norm, $\ell$ is Lebesgue measure, $\mu$ is Patterson-Sullivan measure (cf. \eqref{eq:muPS}),
  and 
\be\label{eq:ThIs}
\gT<\gd
\ee
depends only on the spectral gap for $\G$. The implied constant does not depend on $\Phi,\Psi,$ or $\cI$.
\end{thm}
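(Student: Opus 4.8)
\pf[Proof sketch of Theorem \ref{thm:Vin}]
The plan is to deduce the sharp-cutoff count \eqref{eq:Vin} from the smoothed bisector count of \cite[Thm.\ 2.2]{Vinogradov2013} by a standard sandwiching argument. That result provides, for smooth test functions $\psi_{1},\psi_{2}$ on $K/M$, a smooth $\psi_{3}$ on $M$, and a smooth radial cutoff $\psi_{0}$ on $[0,\infty)$, an asymptotic of the shape
\[
\sum_{\g\in\G}\psi_{1}(s_{1}(\g))\,\psi_{2}(s_{2}(\g))\,\psi_{3}(m(\g))\,\psi_{0}\!\left(\tfrac{\|a(\g)\|^{2}}{T}\right)
=
c_{\gd}\,T^{\gd}\Big(\!\int_{0}^{\infty}\!\psi_{0}(t)\,t^{\gd-2}\,dt\Big)\mu(\psi_{1})\,\mu(\psi_{2})\,\ell(\psi_{3})+O\big(T^{\gT_{0}}\big),
\]
for some $\gT_{0}<\gd$ depending only on the spectral gap for $\G$, where $\mu(\psi)=\int\psi\,d\mu$ (with $\mu$ the Patterson--Sullivan measure), $\ell(\psi_{3})=\int\psi_{3}\,d\ell$, and the implied constant depends at most polynomially on fixed-order Sobolev norms of the $\psi_{i}$.

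Given spherical caps $\Phi,\Psi\subset S^{2}$, an interval $\cI\subset\R/\Z$, and the sharp radial truncation $\bo_{\{\|a\|^{2}<T\}}=\bo_{[0,1)}(\|a\|^{2}/T)$, fix a small scale $\vep>0$ and choose smooth minorants and majorants $\psi_{i}^{-}\le\bo_{(\cdot)}\le\psi_{i}^{+}$ for $i=0,1,2,3$, each agreeing with the relevant indicator outside an $\vep$-neighborhood of its boundary and having Sobolev norms $\ll\vep^{-C}$ for an absolute constant $C$; these are produced by mollifying at scale $\vep$, uniformly in the location and size of $\Phi$, $\Psi$, $\cI$ (and at scale $\vep$ near the cutoff point for the radial weight). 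Sandwiching the left side of \eqref{eq:Vin} between the two smoothed versions and applying the displayed asymptotic to each, the two main terms differ from $c_{\gd}\,\mu(\Phi)\mu(\Psi)\ell(\cI)\,T^{\gd}$ by
\[
\ll T^{\gd}\big(\mu(\Phi^{+\vep}\!\setminus\Phi^{-\vep})+\mu(\Psi^{+\vep}\!\setminus\Psi^{-\vep})+\ell(\cI^{+\vep}\!\setminus\cI^{-\vep})+\vep\big)\ll \vep^{\gd-1}T^{\gd},
\]
using that an $\vep$-interval has $\ell$-measure $O(\vep)$, and that by the standard $\gd$-regularity of $\mu$ (see \cite{Sullivan1984}) an $\vep$-neighborhood of the boundary circle of a spherical cap has $\mu$-measure $\ll\vep^{\gd-1}$ (cover a length-$O(1)$ curve by $O(\vep^{-1})$ balls of $\mu$-mass $O(\vep^{\gd})$; here $\gd>1$). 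The error terms from the smoothed asymptotic are $\ll\vep^{-C}T^{\gT_{0}}$.

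Choosing $\vep=T^{-a}$ with $a>0$ small enough that $\gT_{0}+Ca<\gd$, the main-term discrepancy $\ll T^{\gd-a(\gd-1)}$ and the error $\ll T^{\gT_{0}+Ca}$ are both $\ll T^{\gT}$ for some $\gT<\gd$, which is \eqref{eq:Vin}. Since $a$, $C$, $\gT_{0}$ and the mollification recipe do not depend on $\Phi,\Psi,\cI$, neither does the implied constant. The one point needing care is the polynomial dependence of the error in \cite[Thm.\ 2.2]{Vinogradov2013} on the Sobolev norms of the test functions, which is precisely what permits taking the smoothing scale $\vep$ to be a small power of $T$; the replacement of the smooth radial weight $\psi_{0}$ by the sharp truncation $\|a\|^{2}<T$ is handled in exactly the same way.
\epf
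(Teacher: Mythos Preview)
The paper does not give its own proof of this theorem; it simply records it as a direct consequence, writing that it ``follows easily from \cite[Thm 2.2]{Vinogradov2013}.'' Your sketch supplies exactly the standard sandwiching argument one would use to pass from a smoothed effective bisector count to the sharp-cutoff version stated here, and it is correct: mollify the indicators at scale $\vep$, invoke the polynomial Sobolev dependence in Vinogradov's error term, bound the boundary losses via the upper $\gd$-regularity of the Patterson--Sullivan measure (your covering bound $\mu(\partial\Phi^{\vep})\ll\vep^{\gd-1}$ is the right one, using $\gd>1$), and optimize $\vep$ as a small power of $T$. So there is no discrepancy to report---you have simply made explicit what the paper leaves implicit.
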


This generalizes from $\SL(2,\R)$ to $\SL(2,\C)$ the main result of \cite{BourgainKontorovichSarnak2010}, which is itself a 
generalization (with weaker exponents) to
 our infinite volume setting
  of \cite[Thm 4]{GoodBook}.

\newpage


\section{Some Lemmata}\label{sec:lems}

\subsection{Infinite Volume Counting Statements}\label{sec:count}\

Equipped with the tools of 
\S\ref{sec:preII}, we isolate here some consequences 
which
will be needed in the sequel. 
%
We return to the notation $G=\SO_{F}$, with $F$ the Descartes
form
 \eqref{eq:Fdef}, 
$\G=\cA\cap G$, the orientation preserving Apollonian subgroup,
and $\G(q)$ its principal congruence subgroups.
Moreover, 
we import all the notation from the previous section.


First we use the spectral gap to see that summing over a coset of a congruence group can be reduced to summing over the original group.

\begin{lem}\label{lem:spec0}
Fix $\g_{1}\in\G$, $q\ge1$, and  any ``congruence'' group $\tilde\G(q)$ satisfying
\be\label{eq:G1G}
\G(q)<\tilde\G(q)<\G.
\ee
Then as $Y\to\infty$,
\bea
\label{eq:lemLHS}
&&
\hskip-.5in
\#\{
\g\in\tilde\G(q)
:
\|\g_{1}\g\|<Y
\}
\\
\label{eq:lemRHS}
&&
=
{1\over [\G:\tilde\G(q)]}
\cdot
\#\{
\g\in\G
:
\|\g\|<Y
\}
+
O(Y^{\gT_{0}})
,
\eea
where $\gT_{0}<\gd$ depends only on the spectral gap for $\G$. The implied constant above 
does not depend on $q$ or $\g_{1}$.  
The same holds with $\g_{1}\g$ in \eqref{eq:lemLHS} replaced by $\g\g_{1}$.
\end{lem}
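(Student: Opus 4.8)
\textbf{Proof plan for Lemma \ref{lem:spec0}.}

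The plan is to realize the left-hand side of \eqref{eq:lemLHS} as a smoothed automorphic sum on $\G(q)\bk G$ (equivalently on the finite cover $\tilde\G(q)\bk G$), expand it in the spectral decomposition \eqref{eq:RepDecomp}, and show that all terms except the base representation $V_{\gl_0}$ contribute only $O(Y^{\gT_0})$ thanks to the uniform spectral gap of Theorem \ref{thm:specGap}. Concretely, I would pass to the spin cover $G=\SL(2,\C)$ and count $\g\in\tilde\G(q)$ with $\|\g_1\g\|<Y$ by integrating the automorphic kernel $\sum_{\g\in\tilde\G(q)} f_Y(g_1^{-1}\cdot\g\cdot h)$ against suitable bump functions in the $g$ and $h$ variables near the identity; here $f_Y$ is (a smoothed version of) the indicator of the ball $\{\|g\|<Y\}$, which by Cartan decomposition $G=KA^+K$ is essentially a bi-$K$-invariant function supported on $\|a(g)\|^2\lesssim Y^2$. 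The point of inserting $\g_1$ is harmless: left-translation by a fixed $g_1\in G$ is an isometry for the counting, and after unfolding it only shifts the test function, not the spectral expansion, so the main term and error are independent of $\g_1$; this is exactly the mechanism behind the last sentence of the statement, and the $\g\g_1$ variant is identical by symmetry (or by taking transposes/inverses).

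The key steps, in order: (1) Set up the automorphic realization — replace the sharp ball by $f_Y^{\pm}$, smooth approximants from above and below agreeing with $\bo_{\{\|g\|<Y\}}$ outside a thin shell, so that the error from smoothing is $O(Y^{\gd-\kappa})$ for some $\kappa>0$ coming from the non-atomicity/regularity of Patterson–Sullivan measure (the shell has measure $\ll Y^{\gd}$ times a small power saving). (2) Spectrally expand $\langle \pi(f_Y^{\pm})\varphi_0^{(q)},\varphi_0^{(q)}\rangle$-type inner products on $L^2(\tilde\G(q)\bk G)$: the projection onto $V_{\gl_0}$ yields $\frac{1}{[\G:\tilde\G(q)]}$ times the corresponding main term for $\G$ — the normalization is forced because $\varphi_0$ on the cover is the pullback of $\varphi_0$ on $\G\bk G$ and $\|\varphi_0^{(q)}\|_2^2 = [\G:\tilde\G(q)]\,\|\varphi_0\|_2^2$ — while the complementary-series terms $V_{\gl_j}$, $j\ge1$, are controlled by the mixing-rate bound \eqref{eq:decayMtrx} with parameter $s<2-\vep$ uniformly in $q$ by Theorem \ref{thm:specGap}, giving each a contribution $\ll Y^{2(s-2)+\gd}\cdot$(Sobolev norms of the bumps), hence $\ll Y^{\gd-\vep'}$; and the tempered part $V_{temp}$ is even smaller. (3) Collect: the main term is $\frac{1}{[\G:\tilde\G(q)]}\#\{\g\in\G:\|\g\|<Y\} + O(Y^{\gT_0})$ with $\gT_0<\gd$ depending only on $\vep$ from the spectral gap, uniformly in $q$ and $\g_1$.

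The main obstacle is obtaining the error term \emph{uniformly in $q$}: the number of exceptional (complementary-series) eigenvalues $\gl_1(q),\dots,\gl_{max(q)}(q)$ grows with the index $[\G:\tilde\G(q)]$, so one cannot afford to bound them individually and sum. The fix is standard but needs care: rather than the crude matrix-coefficient bound per eigenfunction, one applies \eqref{eq:decayMtrx} to the full orthogonal complement of $V_{\gl_0}$ inside $L^2(\tilde\G(q)\bk G)$ as a single $G$-representation not weakly containing any complementary series with parameter $>2-\vep$ — this is exactly what Theorem \ref{thm:specGap} provides — so the bound is applied once, with Sobolev norms of fixed test functions (a bump near $e$ of mass $1$, whose $\cS$-norm is an absolute constant), and the index $[\G:\tilde\G(q)]$ enters only through the already-extracted main term. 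A secondary technical point is choosing the smoothing scale to balance the shell error $Y^{\gd}\cdot(\text{scale})^{?}$ against the inflation of Sobolev norms of $f_Y^\pm$ (which grow as a negative power of the scale), and then absorbing everything into a single exponent $\gT_0<\gd$; this is routine once the spectral-gap input is in place. I would also remark that for $q$ with $(q,6)>1$ one uses the description of $\G/\G(q)$ from Lemma \ref{lem:GGq} only to know $[\G:\tilde\G(q)]$ is finite — no further arithmetic is needed here.
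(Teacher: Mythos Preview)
Your approach is essentially the same as the paper's: smooth the automorphic kernel $F_q(g,h)=\sum_{\g\in\tilde\G(q)}f_Y(g^{-1}\g h)$ against bump functions at a scale $\eta$, express the result as an integral of matrix coefficients, project onto $V_{\gl_0}$ to extract $[\G:\tilde\G(q)]^{-1}$ times the level-one count, bound the orthogonal part via \eqref{eq:decayMtrx} and the uniform gap \eqref{eq:specGap}, and optimize $\eta$ against the shell error. One small point: in the paper it is the \emph{bump functions} $\Psi_q$ at scale $\eta$ whose Sobolev norms blow up as $\eta^{-5}$ (giving $\eta^{-10}$ in the error) and balance against the shell error $O(\eta Y^{\gd})$ --- you introduce a second smoothing of $f_Y$ itself, which is harmless but redundant, and your remark that the bump's $\cS$-norm is ``an absolute constant'' should be corrected to $\ll\eta^{-5}$, since otherwise the point evaluation $F_q(\g_1^{-1},e)$ is not recovered with a power-saving error.
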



This  simple lemma  follows from a more-or-less standard argument. We give a sketch below, since a slightly more complicated result will be needed later, cf. Lemma \ref{lem:spec1}, but with essentially no  new ideas. After proving the lemma below, we will use the argument as a template for the more complicated statement.

\pf[Sketch of Proof]\

Denote the left hand side \eqref{eq:lemLHS} by $\cN_{q}$, and let $\cN_{1}/[\G:\tilde\G(q)]$ be the first term of \eqref{eq:lemRHS}. For $g\in G$, let 
\be\label{eq:fgIs}
f(g)=f_{Y}(g):=\bo_{\{\|g\|<Y\}},
\ee
and define
\be\label{eq:FqIs}
F_{q}(g,h):=
\sum_{\g\in\tilde\G(q)}
f(g^{-1}\g h),
\ee
so that 
\be\label{eq:cNq}
\cN_{q}=F_{q}(\g_{1}^{-1},e).
\ee

By construction, $F_{q}$ is a function on $\tilde\G(q)\bk  G\times \tilde\G(q)\bk G$, and we smooth $F_{q}$ in
both
 copies of $\tilde\G(q)\bk G$, as follows. 
Let $\psi\ge0$ be a smooth bump function supported in a ball of radius $\eta>0$ (to be chosen later) about the origin in $G$ with $\int_{G}\psi=1$, and 
automorphize it to
$$
\Psi_{q}(g):=\sum_{\g\in\tilde\G(q)}\psi(\g g)
.
$$
Then clearly $\Psi_{q}$ is a bump function in $\tilde\G(q)\bk G$ with $\int_{\tilde\G(q)\bk G}\Psi_{q}=1$.
Let
$$
\Psi_{q,\g_{1}}(g):=\Psi_{q}(g\g_{1})
.
$$

Smooth the variables $g$ and $h$ in $F_{q}$ by 
considering
\beann
\cH_{q}
&:=&
\<F_{q},\Psi_{q,\g_{1}}\otimes\Psi_{q}\>
=
\int_{\tilde\G(q)\bk G}
\int_{\tilde\G(q)\bk G}
F_{q}(g,h)\Psi_{q,\g_{1}}(g)\Psi_{q}(h) dg\, dh
\\
&=&
\sum_{\g\in\tilde\G(q)}
\int_{\tilde\G(q)\bk G}
\int_{\tilde\G(q)\bk G}
f(\g_{1}g^{-1}\g h)\Psi_{q}(g)\Psi_{q}(h) dg\, dh
.
\eeann

First we estimate the error from smoothing:
\beann
\cE
&=&
|\cN_{q}-\cH_{q}|
\\
&\le&
\sum_{\g\in\G}
\int_{\tilde\G(q)\bk G}
\int_{\tilde\G(q)\bk G}
|f(\g_{1}g^{-1}\g h)
-f(\g_{1}\g)
|
\Psi_{q}(g)\Psi_{q}(h) dg\, dh
,
\eeann
where we have increased $\g$ to run over all of $\G$. The analysis splits into three ranges.
\begin{enumerate}
\item
If $\g$ is such that 
\be\label{eq:reg1Lem}
\|\g_{1}\g
\|>Y(1+10\eta),
\ee 
 then both $f(\g_{1}g^{-1}\g h)$ and $f(\g_{1}\g)$ vanish.
\item
In the range
\be\label{eq:reg2Lem}
\|\g_{1}\g\|<Y(1-10\eta),
\ee
both $f(\g_{1}g^{-1}\g h)$ and $f(\g_{1}\g)$ are $1$, so their difference vanishes.
\item In the intermediate range, we
apply \cite{LaxPhillips1982}, bounding the count by
\be\label{eq:reg3Lem}
\ll Y^{\gd}\eta + Y^{\gd-\vep},
\ee
where $\vep>0$ depends on the spectral gap for $\G$.
\end{enumerate}
Thus it remains to analyze $\cH_{q}$.

Use a simple change of variables (see \cite[Lemma 3.7]{BourgainKontorovichSarnak2010}) to express $\cH_{q}$ via matrix coefficients:
$$
\cH_{q}
=
\int_{G}
f(g)
\<
\pi(g)
\Psi_{q},
\Psi_{q,\g_{1}}
\>_{\tilde\G(q)\bk G}
dg.
$$
Decompose the matrix coefficient into its projection onto the base irreducible $V_{\gl_{0}}$ in
\eqref{eq:RepDecomp} and an orthogonal term, and bound the remainder by the mixing rate \eqref{eq:decayMtrx} using the uniform spectral gap $\vep>0$ in \eqref{eq:specGap}. The functions $\psi$ are bump functions in six real dimensions, so can be chosen 
to have second-order Sobolev norms bounded by $\ll \eta^{-5}$. 
Of course the projection onto the base representation is just $[\G:\tilde\G(q)]^{-1}$ times the same projection at level one, cf. \eqref{eq:gl0Qgl0}.
Running the above argument in reverse at level one (see \cite[Prop. 4.18]{BourgainKontorovichSarnak2010}) gives:
\be\label{eq:NqErr}
\cN_{q}=
{1\over[\G:\tilde\G(q)]}\cdot \cN_{1}
+
O(\eta  Y^{\gd}+Y^{\gd-\vep})
+
O(Y^{\gd-\vep} \eta^{-10})
.
\ee
Optimizing $\eta$ and renaming $\gT_{0}<\gd$ in terms of the spectral gap $\vep$ gives 
 the claim. 
 \epf

Next we exploit the previous lemma and the product structure of the family $\fF$
in \eqref{eq:fFdef}
 to save a small power of $q$ in the following modular restriction. Such a bound is needed at several places in \S\ref{sec:QX}.

\begin{lem}\label{lem:spec3}
Let $\gT_{0}$ be as in \eqref{eq:lemRHS}. Define $\cC$ in \eqref{eq:TT1T2} by
\be\label{eq:cCis}
\cC:={10^{30}\over  \gd-\gT_{0}},
\ee
hence determining $T_{1}$ and $T_{2}$.
  There exists some $\eta_{0}>0$ depending only on the spectral gap of $\G$ so that
for any $1\le q<N$ and any $r(\mod q)$,
\be\label{eq:spec3}
\sum_
{\g\in\fF}
\bo_{\{
\<e_{1},\g v_{0}\>
\equiv r(\mod q)\}}
\ll
{1\over q^{\eta_{0}}}
T^{\gd}
.
\ee
The implied constant is independent of $r$.
\end{lem}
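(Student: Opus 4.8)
## Proof Proposal

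The plan is to exploit the product structure $\g = \g_1\g_2$ in the definition \eqref{eq:fFdef} of $\fF$, together with the coset-counting Lemma \ref{lem:spec0}, to gain a power of $q$. The key observation is that the condition $\<e_1,\g v_0\> \equiv r \pmod q$ depends only on $\g \pmod q$, so I can first sum over residues. Writing $\g = \g_1\g_2$ with $\|\g_1\| \asymp T_1$, $\|\g_2\| \asymp T_2$, and the technical constraint $\<e_1,\g_1\g_2 v_0\> > T/100$, I want to fix the class of $\g_2$ modulo $q$ first and then count the $\g_1$ in a fixed coset of $\G(q)$. More precisely, for each fixed $\gamma_2$, the condition on $\gamma_1\gamma_2$ modulo $q$ confines $\gamma_1$ to (at most) a bounded number of cosets of $\G(q)$ in $\G$ — actually one needs to be a little careful, as the condition is that the first coordinate of $\gamma_1\gamma_2 v_0$ lies in a fixed residue class, which is a union of cosets determined by $\gamma_2 \bmod q$.

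Here is the order of steps I would carry out. First, split the sum over $\g \in \fF$ according to the $\G(q)$-coset (equivalently, the reduction mod $q$) of $\g_2$; there are at most $|\G/\G(q)|$ such classes, and by Lemma \ref{lem:GGq} and strong approximation $|\G/\G(q)| \ll q^{O(1)}$. For each fixed class of $\gamma_2$, the inner sum over $\gamma_1$ with $\|\gamma_1\| \asymp T_1$ and $\gamma_1\gamma_2 v_0$ having first coordinate $\equiv r \pmod q$ is a sum over $\gamma_1$ ranging in a union of cosets of $\G(q)$. Applying Lemma \ref{lem:spec0} (in the form where $\gamma_1$ runs over a coset and we multiply on the right by the fixed element $\gamma_2$), this inner count is $\frac{1}{[\G:\G(q)]}\cdot\#\{\gamma_1\in\G:\|\gamma_1\|<2T_1\}\cdot(\text{number of good residue classes}) + O(T_1^{\gT_0})$. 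The main term, summed against the number of valid $\gamma_2$ classes times the count of $\gamma_2$'s of norm $\asymp T_2$ per class, should produce roughly $q^{-1}\cdot T_1^\gd\cdot T_2^\gd = q^{-1}T^\gd$ — here is where I gain the honest power $q^{-1}$: the residue condition mod $q$ on the first coordinate cuts down the relevant $\gamma_2$-classes by a factor $\asymp q$ (this is a local count over $\G/\G(q)$ that must be done using Lemma \ref{lem:GGq}).

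The error term requires the two-scale splitting, which is exactly why $T = T_1 T_2$ with $T_2 = T_1^{\cC}$ was introduced. Summing the $O(T_1^{\gT_0})$ error over all the $\gamma_2$ (there are $\ll T_2^\gd$ of them, crudely by \eqref{eq:fFTbnd}) gives $\ll T_2^\gd T_1^{\gT_0}$, and I need this to be $\ll q^{-\eta_0} T^\gd = q^{-\eta_0}T_1^\gd T_2^\gd$. Since $q < N$ and $T_1 = T^{1/(1+\cC)} = N^{1/(100(1+\cC))}$ is itself a fixed power of $N$, it suffices that $T_1^{\gT_0} \ll q^{-\eta_0} T_1^\gd$, i.e. $T_1^{\gd-\gT_0} \gg q^{\eta_0}$, i.e. $q^{\eta_0} \ll N^{(\gd-\gT_0)/(100(1+\cC))}$; with $\cC = 10^{30}/(\gd-\gT_0)$ this gives room to take $\eta_0$ a small absolute multiple of $(\gd-\gT_0)^2$, say. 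One must also absorb the $q^{O(1)}$ loss from $|\G/\G(q)|$ in the main-term bookkeeping — but that loss multiplies only the main term's $q^{-1}$-type saving, not the error, and in any case the gain $q^{-1}$ should be arranged to beat any polynomial loss, so reducing $\eta_0$ further handles it.

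The main obstacle I anticipate is the local computation: showing that the residue condition $\<e_1,\g v_0\>\equiv r\pmod q$ genuinely restricts $\gamma_2 \bmod q$ (equivalently $\gamma \bmod q$, via the orbit of $v_0$ under $\G/\G(q)$) to a set of density $\ll q^{-1}$ in $\G/\G(q)$, uniformly in $r$. This is where Lemma \ref{lem:GGq}(1)-(3) and Remark \ref{rmk:locA} are essential — multiplicativity reduces it to prime powers, the primes $p\nmid 6$ are handled by \eqref{eq:Fuc} via a point-count on $\SO_F(\Z/p^k\Z)$ acting on $v_0$, and the primes $2,3$ stabilize so contribute only a bounded factor. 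The uniformity in $r$ (including $r$ with $(r,q)>1$) needs the count of $\{w \in \text{orbit} : \<e_1,w\>\equiv r\}$ to be $\ll q^{-1}|\text{orbit}|$ for every $r$, which follows from the orbit being a union of full cosets of a fixed-index subgroup together with equidistribution of the first coordinate; I would prove this prime-by-prime. Everything else is an application of Lemma \ref{lem:spec0} exactly as in its own proof.
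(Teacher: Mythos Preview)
Your sketch contains a real gap in the error accounting, and because of it the approach as written cannot close for large $q$.

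When you write that the inner count over $\g_{1}$ equals
\[
\frac{1}{[\G:\G(q)]}\cdot\#\{\g_{1}\in\G:\|\g_{1}\|<2T_{1}\}\cdot(\text{number of good residue classes})+O(T_{1}^{\gT_{0}}),
\]
the $O(T_{1}^{\gT_{0}})$ is not correct: Lemma~\ref{lem:spec0} gives an error of $O(T_{1}^{\gT_{0}})$ \emph{per coset}, so summing over the good cosets multiplies the error by their number, which is a genuine power of $q$. Your later remark that the $q^{O(1)}$ loss ``multiplies only the main term's $q^{-1}$-type saving, not the error'' is exactly backwards. Once this factor is restored, the error from the spectral argument is of the shape $(\text{poly}(q))\cdot T_{i}^{\gT_{0}}$ for whichever variable you apply the lemma to. Since $T$ (hence $T_{1},T_{2}$) is only $N^{1/100}$ while $q$ can be as large as $N$, no choice of $\eta_{0}>0$ makes this $\ll q^{-\eta_{0}}T^{\gd}$ for all $q<N$.

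The paper's proof therefore splits into two genuinely different regimes. For $q$ small (precisely $q<T_{2}^{(\gd-\gT_{0})/2}$), it fixes $\g_{1}$ and applies Lemma~\ref{lem:spec0} to $\g_{2}$ --- the \emph{large} variable --- using the stabilizer $\tilde\G(q)$ of $v_{0}\pmod q$, which has index $\asymp q^{2}$ rather than $q^{6}$; this yields the manageable error $T_{1}^{\gd}\cdot q\cdot T_{2}^{\gT_{0}}$ and saves a full power of $q$. For $q\ge T_{2}^{(\gd-\gT_{0})/2}$ (so $q\ge T_{1}^{10^{29}}$ by \eqref{eq:cCis}), an entirely different idea is needed: one fixes $\g_{2}$, and shows via effective Hilbert Nullstellensatz / B\'ezout bounds (Hermann's method) that the congruence $\<e_{1},\g_{1}\g_{2}v_{0}\>\equiv r\pmod q$, being to a modulus enormously larger than the height $T_{1}$ of the $\g_{1}$'s, can be lifted to an \emph{exact} linear relation $\<e_{1},\g_{1}v_{*}\>=z_{*}$ valid for every $\g_{1}$ in the set. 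One then reintroduces a small auxiliary prime modulus $q_{0}\asymp T_{1}^{(\gd-\gT_{0})/2}$ and applies the Case~1 argument at level $q_{0}$. This Nullstellensatz step is the missing idea in your proposal.
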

\pf
Dropping the condition $\<e_{1},\g_{1}\,\g_{2}v_{0}\>>T/100$ in \eqref{eq:fFdef}, bound the left hand side of \eqref{eq:spec3} by
\be\label{eq:1}
\sum_
{\g_{1}\in\G\atop\|\g_{1}\|\asymp T_{1}}
\sum_
{\g_{2}\in\G\atop\|\g_{2}\|\asymp T_{2}}
\bo_{\{
\<e_{1},\g_{1}\g_{2} v_{0}\>
\equiv r(\mod q)\}}
\ee
We decompose the argument into two ranges of $q$. 

{\bf Case 1: $q$ small.}
In this range, we fix $\g_{1}$, and follow a standard argument for $\g_{2}$. 
Let $\tilde\G(q)<\G$ denote the stabilizer of $v_{0} (\mod q)$, that is 
\be\label{eq:G0q}
\tilde\G(q):=\{\g\in\G:\g v_{0}\equiv v_{0}(\mod q)\}.
\ee
Clearly \eqref{eq:G1G} is satisfied, and it is elementary  that 
\be\label{eq:G0qBnd}
[\G:\tilde\G(q)]\asymp q^{2},
\ee
cf.  \eqref{eq:Fuc}.
Decompose $\g_{2}=\g_{2}'\g_{2}''$ with $\g_{2}''\in\tilde\G(q)$ and $\g_{2}'\in \G/\tilde\G(q)$.
Then by \eqref{eq:lemRHS} and \cite{LaxPhillips1982}, we have
\beann
\eqref{eq:1}
&=&
\sum_
{\g_{1}\in\G\atop\|\g_{1}\|\asymp T_{1}}
\sum_{\g_{2}'\in\G/\tilde\G(q)}
\bo_{\{
\<e_{1},\g_{1}\g_{2}' v_{0}\>
\equiv r(\mod q)\}}
\sum_
{\g_{2}''\in\tilde\G(q)\atop\|\g_{2}'\g_{2}''\|\asymp T_{2}}
1
\\
&\ll&
T_{1}^{\gd}\
q\
\left(
\frac1{q^{2}}\
T_{2}^{\gd}
+
T_{2}^{\gT_{0}}
\right)
.
\eeann
Hence we have saved a whole power of $q$, as long as
\be\label{eq:q1range}
q<T_{2}^{(\gd-\gT_{0})/2}
.
\ee

{\bf Case 2: $q\ge T_{2}^{{\gd-\gT_{0}\over 2}}$.}
Then by  \eqref{eq:cCis} and \eqref{eq:TT1T2}, $q$ is actually a 
very large
power of $T_{1}$,  
\be\label{eq:qToT1}
q\ge T_{1}^{10^{29}}
.
\ee
In this range, 
we exploit Hilbert's Nullstellensatz and effective versions of Bezout's theorem; 
see a related argument in \cite[Proof of Prop. 4.1]{BourgainGamburd2009}.

 Fixing $\g_{2}$ in \eqref{eq:1} (with $\ll T_{2}^{\gd}$ choices), we set
$$
v:=\g_{2}v_{0},
$$
and play now with $\g_{1}$. Let $S$ be the set of $\g_{1}$'s in question (and we now drop the subscript $1$):
$$
S=S_{v,q}(T_{1}):=\{\g\in\G:\|\g\|\asymp T_{1}, \<e_{1},\g v\>\equiv r(\mod q)\}.
$$
This congruence restriction is to a modulus much bigger than the parameter, so we

{\bf Claim:} There is an integer vector $
v_{*}\neq0$ and an integer $z_{*}$ such that 
\be\label{eq:wStar}
\<e_{1}
,
\g v_{*}\>=z_{*}
\ee
holds for all $\g\in S$. That is, the modular condition can be lifted to an exact equality.

First we assume the Claim and complete the proof of \eqref{eq:spec3}. Let $q_{0}$ be a 
prime
 of size $\asymp T_{1}^{(\gd-\gT_{0})/2}$, say, such that $v_{*}\not\equiv0(\mod q_{0})$; then
\beann
|S|
&\ll&
\#
\{
\|\g_{1}\|<T_{1}:\<e_{1}
,\g v_{*}\>\equiv z_{*}(\mod q_{0})
\}
\\
&\ll&
q_{0}\left(
\frac1{q_{0}^{2}}
T_{1}^{\gd}
+
T_{1}^{\gT_{0}}
\right)
\ll
\frac1{q_{0}}
T_{1}^{\gd}
,
\eeann
by the argument in Case 1. 
Recall we assumed that $q<N$. Since $q_{0}$ above is  a small power of $N$,
 the above saves a tiny power of $q$, as desired.
\\

It remains to establish the Claim. 
For each $\g\in S$, consider the condition
$$
\<e_{1},\g\, v\>
=
\sum_{1\le j\le 4}\g_{1,j}\, v_{j}\equiv r(\mod q).
$$
First massage the equation into one with no trivial solutions.
Since $v$ is a primitive vector, after a linear change of variables we may assume that $(v_{1},q)=1$. 
Then multiply through by $\bar v_{1}$, where $v_{1}\bar v_{1}\equiv 1(\mod q)$, getting
\be\label{eq:3}
\g_{1,1}
+
\sum_{2\le j\le 4}\g_{1,j}\, v_{j}\bar v_{1}\equiv r \bar v_{1}(\mod q).
\ee
Now, for variables $V=(V_{2},V_{3},V_{4})$ and $Z$, and each $\g\in S$,
consider the (linear) polynomials $P_{\g}\in\Z[V,Z]$:
$$
P_{\g}(V,Z):=
\g_{1,1}
+
\sum_{2\le j\le 4}\g_{1,j}\, V_{j}-Z,
$$
and the affine variety
$$
\cV:=\bigcap_{\g\in S}\{P_{\g}=0\}.
$$
If this variety
$\cV(\C)$
 is non-empty, 
 then there is clearly a rational solution, 
$(V^{*},Z^{*})\in\cV(\Q)$. 
Hence
 we have found a rational solution  to \eqref{eq:wStar}, namely  $v^{*}=(1,V_{2}^{*},V_{3}^{*},V_{4}^{*})\neq0$ and $z^{*}=Z^{*}$. Since  \eqref{eq:wStar}  is 
homogeneous, we may clear denominators, getting an integral solution, $v_{*},z_{*}$.

Thus we henceforth assume by contradiction that the variety $\cV(\C)$ is empty. 
Then by Hilbert's Nullstellensatz, there are polynomials $Q_{\g}\in\Z[V,Z]$ and an integer $\fd\ge1$ so that
\be\label{eq:2}
\sum_{\g\in S}P_{\g}(V,Z)\ Q_{\g}(V,Z)=\fd,
\ee
for all $(V,Z)\in\C^{4}$. Moreover, 
Hermann's method \cite{Hermann1926} (see  \cite[Theorem IV]{MasserWustholz1983})
gives effective bounds on the heights of $Q_{\g}$ and $\fd$ in the above Bezout equation. Recall the height 
of a polynomial is the 
logarithm of its largest coefficient (in absolute value); thus the polynomials $P_{\g}$ are linear in four variables with 
height
$
\le\log T_{1}$. 
Then $Q_{\g}$ and $\fd$ can be found so that
\be\label{eq:fdIs}
\fd\le e^{8^{4\cdot 2^{4-1}-1}(\log T_{1}+8\log 8)}
\ll
T_{1}^{10^{28}}
.
\ee
(Much better bounds are known, see e.g.
 \cite[Theorem 5.1]{BerensteinYger1991}, but these suffice for our purposes.)
 
On the other hand, reducing \eqref{eq:2} modulo $q$ and evaluating at 
$$
V_{0}=(
v_{2}\bar v_{1},
v_{3}\bar v_{1},
v_{4}\bar v_{1}),
\qquad
Z_{0}=r\bar v_{1},
$$
we have
$$
\sum_{\g\in S}P_{\g}(V_{0},Z_{0}) Q_{\g}(V_{0},Z_{0})\equiv0\equiv\fd(\mod q),
$$
by \eqref{eq:3}. But then since $\fd\ge1$, we in fact have $\fd\ge q$, which is incompatible with \eqref{eq:fdIs} and \eqref{eq:qToT1}.
This furnishes our desired contradiction, completing the proof.
\epf

Next we need a slight generalization of Lemma \ref{lem:spec0}, which will be used in the major arcs analysis, see \eqref{eq:fMdecomp}.
\begin{lem}\label{lem:spec1}
Let $1<K\le T_{2}^{1/10}$, fix $|\gb|<K/N$, and fix $x,y\asymp X$. Then for any $\g_{0}\in\G$,
any $q\ge1$, and any group $\tilde\G(q)$ satisfying \eqref{eq:G1G}, we have
\bea
\nonumber
\sum_
{\g\in\fF\cap\{ \g_{0}\tilde\G(q)\}}
e
\bigg(
\gb\,
\ff_{\g}(2x,y)
\bigg)
&=&
{1\over [\G:\tilde\G(q)]}
\sum_
{\g\in\fF}
e
\bigg(
\gb\,
\ff_{\g}(2x,y)
\bigg)
\\
\label{eq:spec1}
&&
\hskip1in
+
O(
T^{\gT}
K)
,
\eea
where $\gT<\gd$ depends only on the spectral gap for $\G$, and the implied constant 
does
not 
depend
on 
$q$, $\g_{0}$, $\gb$, $x$ or $y$.
\end{lem}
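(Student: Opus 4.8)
The plan is to follow the template of Lemma~\ref{lem:spec0} essentially verbatim, but carrying along the extra oscillatory weight $e(\gb\,\ff_\g(2x,y))$ and the product structure $\g=\g_1\g_2$ of $\fF$. First I would record that, since $x,y\asymp X$ and the coefficients $A_\g,B_\g,C_\g$ of $\ff_\g$ are $O(\|\g\|)\ll O(T)$ on the family $\fF$, the phase $\gb\,\ff_\g(2x,y)$ has size $\ll |\gb|\,T\,X^2 = |\gb| N \ll K$; more importantly its \emph{gradient} with respect to the matrix entries of $\g$ is controlled, so the function $\g\mapsto e(\gb\,\ff_\g(2x,y))$ is, after rescaling $\g$ to the ball of radius $1$, Lipschitz with constant $\ll K$. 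This is the one new point compared to Lemma~\ref{lem:spec0}: the test function now depends on $\g$ both through the sharp cutoff $\bo_{\{\|\g\|<Y\}}$ (with $Y\asymp T$) and through a smooth-but-oscillating factor of controlled derivative.

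Concretely, I would fix $\g_0$ and write the left side as a sum over $\g\in\tilde\G(q)$ of $h(\g_0\g)$ where $h(g):=\bo_{\{g\in\fF\text{-shell}\}}\,e(\gb\,\ff_{g}(2x,y))$, encoding the decomposition $\g=\g_1\g_2$ and the $a_\g$-lower-bound inside the ``$\fF$-shell'' indicator. As in Lemma~\ref{lem:spec0}, I automorphize a bump $\psi$ of radius $\eta$ on $\tilde\G(q)\bk G$ in both the left and right variables, forming $\cH_q=\<F_q,\Psi_{q,\g_0}\otimes\Psi_q\>$, and estimate the smoothing error $\cE=|\cN_q-\cH_q|$. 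The error now has two contributions: (i) the boundary effect of moving the sharp cutoff $\|g\|<Y$ by $O(\eta)$, which by Lax--Phillips \cite{LaxPhillips1982} costs $\ll \eta T^\gd + T^{\gd-\vep}$ exactly as before, and (ii) the variation of the oscillating factor across a ball of radius $\eta$, which is $\ll \eta K$ pointwise and hence contributes $\ll \eta K\, T^\gd$ to the total. Then I unfold $\cH_q$ into a matrix coefficient $\int_G h(g)\<\pi(g)\Psi_q,\Psi_{q,\g_0}\>\,dg$, split off the base representation $V_{\gl_0}$ — whose contribution is $[\G:\tilde\G(q)]^{-1}$ times the level-one quantity by \eqref{eq:gl0Qgl0} — and bound the orthogonal complement by the mixing estimate \eqref{eq:decayMtrx} together with the uniform spectral gap \eqref{eq:specGap}; the Sobolev norms of the $\psi$'s are $\ll\eta^{-5}$, and the integral of $|h|\,\|g\|^{-2(2-\gT')}$ over $\|g\|\asymp T$ is $\ll T^{\gd-\vep'}$ for a spectral-gap $\vep'>0$. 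Running the level-one computation in reverse, I obtain
\[
\text{LHS}
=
{1\over[\G:\tilde\G(q)]}\sum_{\g\in\fF}e(\gb\,\ff_\g(2x,y))
+
O\big(\eta K\, T^\gd + T^{\gd-\vep} + T^{\gd-\vep}\eta^{-10}\big).
\]
Choosing $\eta$ a small fixed negative power of $T$ balances $T^{\gd-\vep}\eta^{-10}$ against $T^{\gd-\vep'}$ while leaving $\eta K T^\gd \ll K\,T^{\gd-\vep''}=: K\,T^\gT$ with $\gT<\gd$ depending only on the spectral gap; since $K\le T_2^{1/10}\le T$ this is consistent and gives the claimed shape $O(T^\gT K)$. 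Uniformity in $q,\g_0,\gb,x,y$ is automatic because every implied constant above (Lax--Phillips, mixing, Sobolev norms of $\psi$) is independent of these.

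The main obstacle — and the only place where genuine care beyond Lemma~\ref{lem:spec0} is needed — is controlling the oscillatory factor: one must verify that on the support $\|\g\|\ll T$, $x,y\asymp X$, $|\gb|<K/N$, the phase $\gb\,\ff_\g(2x,y)$ really does have derivative $\ll K$ in the $G$-variable (so that perturbing $\g$ by $\psi$-mass of radius $\eta$ changes $h$ by $\ll\eta K$), and that this same factor, being smooth and bounded by $1$, does not inflate the Sobolev norms entering the mixing bound by more than a fixed power of $K$ (which is a fixed power of $T$ and thus harmless against the power saving $T^{\gd}\to T^\gT$). Granting the elementary estimate $\ff_\g(2x,y)\ll \|\g\|X^2$ and its first-order analogue — both immediate from \eqref{eq:ffVdef}, \eqref{eq:ABCdef} since the entries of $\g v_0$ are $\ll\|\g\|$ — the rest is the standard spectral-gap unfolding already executed in \cite{BourgainKontorovichSarnak2010} and recalled in the proof of Lemma~\ref{lem:spec0}.
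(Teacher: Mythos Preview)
Your approach is the paper's, and you have correctly isolated the one new ingredient beyond Lemma~\ref{lem:spec0}: the oscillatory weight $e(\gb\,\ff_\g(2x,y))$ is Lipschitz with constant $\ll K$ on the relevant ball, contributing a term $\ll \eta K\cdot(\text{ball count})$ to the smoothing error in range~(2). The rest (automorphize a bump, unfold to a matrix coefficient, project onto $V_{\gl_0}$, bound the complement via \eqref{eq:decayMtrx} and \eqref{eq:specGap}, optimize~$\eta$) is indeed the template of Lemma~\ref{lem:spec0}.

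The one genuine imprecision is your treatment of the ``$\fF$-shell indicator.'' The family $\fF$ in \eqref{eq:fFdef} is \emph{not} a norm-ball in $\G$ at scale~$T$: it is a (multi)set of products $\g_1\g_2$ with separate norm conditions $\|\g_1\|\asymp T_1$, $\|\g_2\|\asymp T_2$, together with the linear condition $\<e_1,\g v_0\> > T/100$. Writing $h(g)=\bo_{\{g\in\fF\}}\,e(\cdots)$ and smoothing $g$ by~$\eta$ does not obviously move each factor by $O(\eta)$, so your boundary estimate ``$\ll \eta T^\gd$ by Lax--Phillips'' is not justified as stated. The paper resolves this by first rewriting the coset condition $\g_1\g_2\in\g_0\tilde\G(q)$ as $\g_2':=\g_0^{-1}\g_1\g_2\in\tilde\G(q)$, then \emph{fixing}~$\g_1$ and running the smoothing/mixing argument in the single variable $\g_2'$ at scale~$T_2$, with test function
\[
f(g)=\bo_{\{T_2<\|\g_1^{-1}g\|<2T_2\}}\,\bo_{\{\<e_1,gv_0\> > T/100\}}\,e\big(\gb\,\<w_{x,y},gv_0\>\big).
\]
Now the sharp cutoffs are honest norm-shells and half-spaces, and your three-range analysis applies verbatim with $T_2$ in place of~$T$; the error for fixed $\g_1$ is $O(\eta K T_2^\gd + T_2^{\gd-\vep}\eta^{-10})$. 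One then sums trivially over the $\ll T_1^\gd$ choices of~$\g_1$ and absorbs $T_1^\gd$ into the renamed exponent via \eqref{eq:TT1T2} and \eqref{eq:cCis}. This is why the hypothesis reads $K\le T_2^{1/10}$ rather than $K\le T^{1/10}$.
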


\pf
The proof follows with minor changes that of Lemma \ref{lem:spec0}, so we give a sketch; see also \cite[\S4]{BourgainKontorovichSarnak2010}.

According to the construction \eqref{eq:fFdef} of $\fF$, the $\g$'s in question satisfy $\g=\g_{1}\g_{2}\in\g_{0}\tilde\G(q)$, and hence we can write
$$
\g_{2}=\g_{1}^{-1}\g_{0}\g_{2}',
$$
with $\g_{2}'\in\tilde\G(q)$. Then $\g_{2}'=\g_{0}^{-1}\g_{1}\g_{2}$, and using \eqref{eq:ffToInProd}, we can write the left hand side of \eqref{eq:spec1} as
$$
\sum_{\g_{1}\in\G\atop T_{1}<\|\g_{1}\|<2T_{1}}
\sum_{\g_{2}'\in\tilde\G(q)\atop T_{2}<\|\g_{1}^{-1}\g_{0}\g_{2}'\|<2T_{2}}
\bo_{\{\<e_{1},\g_{0}\g_{2}'\,v_{0}\>>T/100\}}\
e
\bigg(
\gb\,
\<w_{x,y},
\g_{0}\g_{2}'\,v_{0}\>
\bigg)
.
$$
Now we fix $\g_{1}$ and mimic the proof of Lemma \ref{lem:spec0} in $\g_{2}'$. 

Replace \eqref{eq:fgIs} by
$$
f(g):=
\bo_{\{T_{2}<\|\g_{1}^{-1}g\|<2T_{2}\}}
\bo_{\{\<e_{1},g\,v_{0}\>>T/100\}}\
e
\bigg(
\gb\,
\<w_{x,y},
g\,v_{0}\>
\bigg)
.
$$
Then \eqref{eq:FqIs}-\eqref{eq:reg1Lem} remains essentially unchanged, save cosmetic changes such as replacing \eqref{eq:cNq} by
$F_{q}(\g_{1}\g_{0}^{-1},e)$. Then  in the estimation of the difference $|\cN_{q}-\cH_{q}|$ by splitting the sum on $\g_{2}'$ into ranges, the argument now proceeds as follows.
\begin{enumerate}
\item
The range \eqref{eq:reg1Lem} should be replaced by 
$$
\|\g_{1}\g_{0}^{-1}\g_{2}'\|<T_{2}(1-10\eta),\text{ or }
\|\g_{1}\g_{0}^{-1}\g_{2}'\|>2T_{2}(1+10\eta),
$$
$$
\text{ or }\<e_{1},\g_{1}\g_{0}^{-1}\g_{2}'\,v_{0}\><\frac T{100}(1-10\eta).
$$

\item
The range  \eqref{eq:reg2Lem} should be replaced by the 
range 
$$
T_{2}(1+10\eta)<\|\g_{1}\g_{0}^{-1}\g_{2}'\|<2T_{2}(1-10\eta),
\text{ and }\<e_{1},\g_{1}\g_{0}^{-1}\g_{2}'\,v_{0}\>>\frac T{100}(1+10\eta),
$$
in which $f$ is differentiable.
Here instead of the difference $|f(\g_{1}\g_{0}^{-1}g\g_{2}'h)-f(\g_{1}\g_{0}^{-1}\g_{2}')|$ vanishing, it is now bounded by
$$
\ll \eta K,
$$
for a net contribution to the error of $\ll \eta K T^{\gd}$.

\item
In the remaining range, \eqref{eq:reg3Lem} remains unchanged, using $|f|\le1$.
\end{enumerate}

The error in \eqref{eq:NqErr} is then replaced by
$$
O(\eta\, K\, T_{2}^{\gd} + T_{2}^{\gd-\vep}\eta^{-10}).
$$
Optimizing $\eta$ and renaming $\gT$ gives the bound $O(T_{2}^{\gT}K^{10/11})$, which is better than claimed in the power of $K$. Rename $\gT$ once more using \eqref{eq:TT1T2} and \eqref{eq:cCis}, giving \eqref{eq:spec1}.
\epf

The following is our last counting lemma, showing a certain equidistribution among the values of $\ff_{\g}(2x,y)$ at the scale $N/K$. This bound is used in the major arcs, see
the proof of Theorem  \ref{thm:cMNis}.

\begin{lem}
\label{lem:spec2}
Fix $N/2<n<N$,  $1<K\le T_{2}^{1/10}$, and $x,y\asymp X$. Then
\be\label{eq:spec2}
\sum_
{\g\in
\fF
}
\bo_{
\big\{
|
\ff_{\g}(2x,y)-
n|
<
\frac N{K}
\big\}
}
\gg
{T^{\gd}\over K}
+
T^{\gT}
,
\ee
where $\gT<\gd$ only depends on the spectral gap for $\G$. The implied constant is independent of $x,y,$ and $n$. 
\end{lem}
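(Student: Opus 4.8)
The plan is to reduce this to an application of the effective bisector counting Theorem \ref{thm:Vin}, much as in the proof sketch of Lemma \ref{lem:spec0}, but now with the counting weight being the indicator of an interval constraint on the quadratic value $\ff_\g(2x,y) = \<w_{x,y},\g v_0\>$ rather than just on $\|\g\|$. Recall $w_{x,y}$ is the fixed integer vector \eqref{eq:wxyDef} and $v_0$ the root quadruple. The first step is to understand the geometry of the linear functional $\g \mapsto \<w_{x,y},\g v_0\>$ on the group: writing $\g = \g_1\g_2$ with $\|\g_1\|\asymp T_1$, $\|\g_2\|\asymp T_2$ as in the family $\fF$, and using the Cartan decomposition $\g = s_1(\g) a(\g) m(\g) s_2(\g)^{-1}$ from \eqref{eq:gDecomp}, the quantity $\<w_{x,y},\g v_0\>$ is, up to constants depending on the (bounded) directions of $w_{x,y}$ and $v_0$, comparable to $\|a(\g)\|^2 \asymp \|\g\|^2 \asymp T^2$ whenever $s_1(\g)$ lies in a fixed spherical cap $\Phi$ bounded away from the directions annihilated by $w_{x,y}$, and $s_2(\g)$ lies in a cap $\Psi$ bounded away from the directions killing $v_0$. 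The condition $\<e_1,\g v_0\> > T/100$ in the definition of $\fF$ already forces $s_1(\g),s_2(\g)$ into such caps of positive Patterson--Sullivan measure, so this is compatible.

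Next I would set up the smoothing exactly as in Lemma \ref{lem:spec0}: let $f(g) = \bo_{\{|\<w_{x,y},g v_0\>-n|<N/K\}}\cdot(\text{the other constraints from }\fF)$, automorphize bump functions $\psi$ of radius $\eta$ at level one (no congruence subgroup is needed here since there is no modular condition — so $\tilde\G(q)=\G$), form $\cH = \<F,\Psi_{\g_1}\otimes\Psi\>$ and express it via matrix coefficients against the decomposition \eqref{eq:RepDecomp}. The main term comes from the projection onto $V_{\gl_0}$, and evaluates to $c_\gd \cdot \mu(\Phi)\mu(\Psi)\cdot \ell(\cI)\cdot T^{2\gd}$-type quantity; here the crucial point is that the interval $\{|\<w_{x,y},g v_0\>-n|<N/K\}$, pulled back through the Cartan coordinates, cuts out a set whose Patterson--Sullivan $\times$ Haar measure is $\asymp 1/K$ (since $N/K$ is a $1/K$-fraction of the full range $\asymp N \asymp T^2$ of the functional, and the map from $\|a(\g)\|^2$ to the functional value is Lipschitz with Lipschitz inverse on the relevant caps). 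This gives the main term $\gg T^\gd/K$. The error terms are the smoothing error $O(\eta T^\gd + T^{\gd-\vep})$ and the mixing error $O(T^{\gd-\vep}\eta^{-C})$, which on optimizing $\eta$ produce $O(T^\gt)$ with $\gt<\gd$ depending only on the spectral gap — exactly the shape claimed. The restriction $K \le T_2^{1/10}$ (equivalently $K$ a tiny power of $T$) is what guarantees the main term $T^\gd/K$ dominates the error $T^\gt$ unless we are already in the regime where the trivial lower bound $T^\gt$ suffices.

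The main obstacle I anticipate is the lower-bound direction of the geometric estimate: I must show that the preimage, under the Cartan decomposition, of the slab $\{|\<w_{x,y},g v_0\>-n|<N/K\}$ genuinely contains a product set $\Phi'\times \cI' \times \Psi'$ of caps and intervals of measure $\gg 1/K$ that also respects the norm constraints $T_1<\|\g_1\|<2T_1$, $T_2<\|\g_2\|<2T_2$ and the positivity constraint $\<e_1,\g v_0\>>T/100$ — i.e. that all these constraints are mutually compatible on a set of the right size, uniformly in $x,y\asymp X$ and $n\in(N/2,N)$. This requires checking that $w_{x,y}$ and $e_1$ are not "aligned" in a degenerate way: since $w_{x,y}$ has entries $\asymp X^2$ and is, up to scaling, a fixed direction depending mildly on the ratio $2x/y$ which ranges over a compact set bounded away from the bad loci, and since the positivity constraint already pins $s_1(\g)$ near the $e_1$-direction which is a generic point of the limit set, one can choose the caps $\Phi,\Psi$ once and for all. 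Establishing this uniform non-degeneracy, and then converting it into the measure lower bound $\gg 1/K$ via \eqref{eq:Vin}, is the technical heart; the rest is bookkeeping identical to Lemma \ref{lem:spec0}. A secondary point is that the decomposition $\g=\g_1\g_2$ is not unique, so the left side of \eqref{eq:spec2} counts with multiplicity $\ge 1$, which only helps for a lower bound.
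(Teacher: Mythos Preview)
Your high-level strategy---Cartan decomposition plus effective bisector counting, with the interval constraint $|\ff_\g(2x,y)-n|<N/K$ cutting out a region of relative measure $\asymp 1/K$---is the same as the paper's, and your identification of the geometric lower-bound issue (mutual compatibility of the caps) as the technical heart is accurate. Two points where your execution diverges from the paper's and would need adjustment:

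\textbf{Product structure.} The family $\fF$ is a product set $\{\g_1\g_2\}$, not a norm-ball in $\G$, so Vinogradov's Theorem~\ref{thm:Vin} does not apply to the full $\g$-sum directly. The paper handles this by \emph{fixing} $\g_1$ and expressing the remaining conditions as $\g_2\in R_{\g_1,x,y,n}\subset G$; Theorem~\ref{thm:Vin} is then applied to the $\g_2$-sum alone (with parameter $T_2$), and the result is summed over $\g_1$. This is why the hypothesis reads $K\le T_2^{1/10}$ rather than $K\le T^{1/10}$: the error term from Vinogradov is a power of $T_2$, not $T$. Your writeup treats $\g$ as a single element with Cartan coordinates, which blurs this.

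\textbf{Matrix-coefficient detour.} The smoothing/matrix-coefficient machinery of Lemma~\ref{lem:spec0} is designed to compare a congruence-level count to the level-one count; with $\tilde\G(q)=\G$ it returns the original sum unchanged and buys nothing. The main term you write down, $c_\gd\,\mu(\Phi)\mu(\Psi)\ell(\cI)\,T^\gd$, is precisely the output of Theorem~\ref{thm:Vin}, so the paper invokes that theorem directly (after lifting the region $R$ to the spin cover via $\iota$ and chopping into caps) rather than re-deriving it through the spectral decomposition. Your route is not wrong, but it re-proves a packaged result.

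Minor normalization: in the spin cover one has $\|a(\tilde\g)\|^2\asymp\|\g\|_{\SO_F}\asymp T$ (cf.~\eqref{eq:gToIg}), not $T^2$; and $\<w_{x,y},\g v_0\>\asymp X^2 T=N$, not $T^2$, since $w_{x,y}$ has entries $\asymp X^2$.
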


\pf[Sketch]
The proof is an 
explicit calculation nearly identical to the one given in \cite[\S5]{BourgainKontorovichSarnak2010}; 
we give only  a sketch here. 
Write the left hand side of \eqref{eq:spec2} as
$$
\sum_{\g_{1}\in\G\atop T_{1}<\|\g_{1}\|<2T_{1}}
\sum_{\g_{2}\in\G\atop T_{2}<\|\g_{2}\|<2T_{2}}
\bo_{\{
\<e_{1},\g_{1}\g_{2}v_{0}\>
>
T/100
\}}
\bo_{\{
|\<w_{x,y},\g_{1}\g_{2}v_{0}\>-n|<N/K
\}}
.
$$
Fix $\g_{1}$ and express the condition on $\g_{2}$ as $\g_{2}\in R\subset G$, where $R$ is the region
$$
R=
R_{\g_{1},x,y,n}:=
\left\{
g\in G
:
\begin{array}{c}
T_{2}<\|g\|<2T_{2}
\\
\<\g_{1}^{t}e_{1},g\, v_{0}\>>T/100
\\
|\<\g_{1}^{t}w_{x,y},
g\,
v_{0}
\>
-n|
<
\frac N{K}
\end{array}
\right\}
.
$$
Lift $G=\SO_{F}(\R)$ to its spin cover $\tilde G=\SL_{2}(\C)$ via the map $\iota$ of \eqref{eq:iota}. Let $\tilde R\subset\tilde G$ be the corresponding pullback region, and decompose $\tilde G$ into Cartan $KAK$ coordinates according to \eqref{eq:gDecomp}. 
Note that $\iota$ is quadratic in the entries, so, e.g., the condition 
\be\label{eq:gToIg}
\text{$\|g\|^{2}\asymp T$ gives $\|\iota(g)\|\asymp T$, }
\ee
explaining the factor $\|a(g)\|^{2}$ appearing in \eqref{eq:Vin}. 

Then chop $\tilde R$ 
 into 
 spherical caps
 and apply Theorem \ref{thm:Vin}. 
 The same argument as in \cite[\S5]{BourgainKontorovichSarnak2010} then leads to \eqref{eq:spec2}, after renaming $\gT$; we suppress the details.
\epf

\newpage

\subsection{Local Analysis Statements}
\

In this subsection, we study a certain exponential sum which arises in a crucial way in our estimates. 
Fix $\ff\in\fF$, and write $\ff=f-a$ with
$$
f(x,y)=Ax^{2}+2Bxy+Cy^{2}
$$ 
according to \eqref{eq:ffVdef}. 
Let
$q_{0}\ge1$, 
fix $r$ with  $(r,q_{0})=1$, and fix $n,m\in\Z$. 
(The notation is meant to be consistent with its later use; there will be another parameter $q$, and $q_{0}$ will be a divisor of $q$.)
Define the exponential sum
\be\label{eq:cSfDef}
\cS_{f}(q_{0},r;n,m)
:=
{1\over q_{0}^{2}}
\sum_{k(q_{0})}
\sum_{\ell( q_{0})}
e_{q_{0}}
\bigg(
r
f(k,\ell)
+nk+m\ell
\bigg)
.
\ee
This sum appears naturally in many places in the minor arcs analysis, see e.g. \eqref{eq:cRfuIs} and \eqref{eq:RfuNew}.
Our first lemma is completely standard, see, e.g. \cite[\S12.3]{IwaniecKowalski}.
\begin{lem}
With the above conditions,
\be\label{eq:cSfbnd1}
|
\cS_{f}(q_{0},r;n,m)
|
\le q_{0}^{-1/2}
.
\ee
\end{lem}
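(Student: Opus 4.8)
The plan is to bound the two-variable exponential sum
$$
\cS_{f}(q_{0},r;n,m)=\frac1{q_{0}^{2}}\sum_{k(q_{0})}\sum_{\ell(q_{0})}e_{q_{0}}\bigl(rf(k,\ell)+nk+m\ell\bigr)
$$
by treating it as a Gauss sum in the quadratic form $f(x,y)=Ax^{2}+2Bxy+Cy^{2}$. By multiplicativity of Gauss sums (the Chinese Remainder Theorem applied to $q_{0}=\prod p^{e}$) it suffices to prove $|\cS_{f}(p^{e},r;n,m)|\le p^{-e/2}$ for each prime power $p^{e}\|q_{0}$; the product of these bounds gives $q_{0}^{-1/2}$. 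So the real content is a single prime-power estimate.

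For a fixed prime power $p^{e}$, the idea is to diagonalize or otherwise reduce the binary form. The clean case is when $p$ is odd and coprime to the discriminant $\gD=4(B^{2}-AC)=-4a^{2}$: then (completing the square in $k$, say after arranging $(A,p)=1$, which can be done if $p\nmid A$; otherwise swap the roles of $k$ and $\ell$, or use that $p$ cannot divide $A$, $C$, and $B$ simultaneously when $(p,\gD)=1$) the double sum factors as a product of two one-variable quadratic Gauss sums $\sum_{k(p^{e})}e_{p^{e}}(\alpha k^{2}+\beta k)$, each of modulus $p^{e/2}$, divided by $p^{2e}$, giving exactly $p^{-e/2}$. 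When $p\mid\gD$ (so $p=2$ or $p\mid a$) the form is degenerate mod $p$ and one must be more careful: after a unimodular change of variables bringing $f$ to the shape $p^{j}(\text{unit})x^{2}+\cdots$, one performs the $k$-sum first, which either vanishes (a non-degenerate linear character summed over $p^{e}$) or is a Gauss sum of size $p^{(e+j)/2}$ but then forces a congruence on $\ell$ cutting the $\ell$-sum down by $p^{j}$; in either case the total is $\le p^{-e/2}$. The powers of two require the standard treatment of quadratic Gauss sums to prime power moduli including $4$ and $8$, which is where Salié-type sums enter as mentioned in the paper's outline.

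The cleanest route, and the one I would actually follow since the lemma is billed as "completely standard," is simply to invoke the general bound for complete exponential sums with a quadratic argument, as in \cite[\S12.3]{IwaniecKowalski}: for a non-degenerate quadratic form in $\nu$ variables over $\Z/q_{0}\Z$ (here $\nu=2$, and $f$ is non-degenerate since $\det\ne 0$ over $\Q$, though it may be degenerate modulo individual primes), the associated complete sum, normalized by $q_{0}^{-\nu}$, has modulus $\le q_{0}^{-\nu/2}$ — in fact the stronger statement that $q_{0}^{-\nu/2}$ times a product of local root numbers of modulus $\le 1$. With $\nu=2$ this is precisely \eqref{eq:cSfbnd1}. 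One should remark that the bound is uniform in $r$ (since $(r,q_{0})=1$ means $r$ only rotates the Gauss sum by a unit) and in $n,m$ (the linear terms only shift, and in the degenerate directions can only help by killing the sum).

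The main obstacle is purely bookkeeping at the bad primes $p\mid 2a$: one has to verify that the loss from a degenerate quadratic direction modulo $p$ (a Gauss sum of inflated size $p^{(e+j)/2}$) is always exactly compensated by the congruence restriction it imposes on the complementary variable (a saving of $p^{j}$), with no leftover. This is standard but fiddly, especially for $p=2$ where the quadratic Gauss sum formulas have extra cases modulo $8$; since the paper explicitly cites \cite[\S12.3]{IwaniecKowalski} for exactly this, I would not reproduce the computation but cite it, noting only that non-degeneracy of $f$ over $\Q$ (equivalently $\det\begin{pmatrix}A&B\\B&C\end{pmatrix}=-a^{2}\ne 0$, valid since $a=a_{\g}\gg T>0$ for $\g\in\fF$) guarantees the form is non-degenerate after localizing, up to a bounded power loss that the $p^{-1/2}$-per-prime bound still absorbs.
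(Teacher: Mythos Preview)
Your approach---multiplicativity, reduction to prime powers, completing the square to peel off a Gauss sum in one variable, then handling the remaining variable where the leading coefficient may share factors with the modulus---is exactly what the paper does. Two remarks.

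First, a harmless arithmetic slip: in your ``clean'' case (two nondegenerate one-variable Gauss sums) you get $p^{e/2}\cdot p^{e/2}/p^{2e}=p^{-e}$, not $p^{-e/2}$. This is of course only stronger than needed, and matches the paper's Remark that in the coprime situation one actually saves a full $q_{0}^{-1}$.

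Second, your ``cleanest route'' of citing \cite[\S12.3]{IwaniecKowalski} for nondegenerate forms does not close the argument: nondegeneracy of $f$ over $\Q$ says nothing about nondegeneracy over $\Z/p^{e}\Z$ when $p\mid a$, and the ``bounded power loss'' you mention is not bounded---it is $p^{j/2}$ where $p^{j}=(a^{2},p^{e})$, which can be as large as $p^{e/2}$. Your paragraph-2 sketch (the degenerate Gauss sum forces a congruence on the other variable, with exact compensation) is the correct mechanism, and the paper carries this out explicitly: after the $\ell$-Gauss sum one has a $k$-sum with leading coefficient $a^{2}r\bar C$; writing $\tilde q_{0}=(a^{2},q_{0})$, $q_{1}=q_{0}/\tilde q_{0}$, and splitting $k=k_{1}+q_{1}\tilde k$, the $\tilde k$-sum vanishes unless a certain congruence mod $\tilde q_{0}$ holds, in which case it contributes $\tilde q_{0}$, and the $k_{1}$-sum is a Gauss sum of modulus $q_{1}^{1/2}$. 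The net is $|\cS_{f}|=\tilde q_{0}^{1/2}/q_{0}\le q_{0}^{-1/2}$. The paper records this as an exact evaluation \eqref{eq:SfEval}, which is not optional: it is the input to the next two lemmata bounding the averaged sum $\cS$ in \eqref{eq:cSdef}. So you should not bypass the computation by citation.
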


\begin{rmk}
Being a sum in two variables, one might expect square-root cancellation in each, giving a savings of $q_{0}^{-1}$; indeed this is what we obtain, modulo some coprimality conditions, see \eqref{eq:SfEval}. For some of our applications, saving just one square-root is plenty, and we can ignore the coprimality; hence the cleaner statement in \eqref{eq:cSfbnd1}.
\end{rmk}
\pf
Write $\cS_{f}$ for $\cS_{f}(q_{0},r;n,m)$.
Note first that 
$\cS_{f}$
is multiplicative in $q_{0}$, so we study the case $q_{0}=p^{j}$ is a prime power. 
Assume for simplicity $(q_{0},2)=1$; similar calculations are needed to handle the $2$-adic case.

First we re-express $\cS_{f}$ in a 
more convenient form.
By
Descartes theorem \eqref{eq:Fv}, primitivity of the 
gasket
 $\sG$, and \eqref{eq:ABCdef}, we have that $(A,B,C)=1$; 
 assume henceforth that $(C,q_{0})=1$, say. 
Write $\bar x$ for the multiplicative inverse of $x$ (the modulus will be clear from context). Recall throughout that $(r,q_{0})=1$. 

Looking at the terms in the summand of $\cS_{f}$,  we have
\beann
&&
\hskip-.5in
r
f(k,\ell)
+
nk+m\ell
\quad(\mod q_{0})
\\
&\equiv&
r
(Ak^{2}+2Bk\ell+C\ell^{2})
+
nk+m\ell
\\
&\equiv&
rC(\ell
+B\bar Ck
)^{2}
+
r\bar Ck^{2}
(AC
-
B^{2}
)
+
nk+m\ell
\\
&\equiv&
rC(\ell
+B\bar Ck
)^{2}
+
a^{2}
r\bar C
k^{2}
+
nk+m\ell
\\
&\equiv&
rC(\ell
+B\bar Ck
+
\overline{2rC}
m
)^{2}
-\overline{4rC} m^{2}
+
a^{2}
r\bar C
k^{2}
+
k(n
-
B\bar C m)
,
\eeann 
where we used \eqref{eq:disc}. Hence  we have
\beann
\cS_{f}
&=&
{1\over q_{0}^{2}}
e_{q_{0}}
(
-\overline{4rC} m^{2}
)
\sum_{k( q_{0})}
e_{q_{0}}
\bigg(
a^{2}
r\bar C
k^{2}
+
k(n
-
B\bar C m)
\bigg)
\\
&&
\qquad
\times
\sum_{\ell( q_{0})}
e_{q_{0}}
\bigg(
rC(\ell
+B\bar Ck
+
\overline{2rC}
m
)^{2}
\bigg)
,
\eeann
and the $\ell$ sum is just a classical Gauss sum. It can be evaluated explicitly, see e.g. \cite[eq. (3.38)]{IwaniecKowalski}. Let 
$$
\vep_{q_{0}}:=\twocase{}1{if $q_{0}\equiv 1(\mod 4)$}i{if $q_{0}\equiv3(\mod 4)$.}
$$
Then the Gauss sum on $\ell$ is $\vep_{q_{0}}\sqrt q_{0}\left({rC\over q_{0}}\right)$, where $({\cdot\over q_{0}})$ is the Legendre symbol. Thus we have
\beann
\cS_{f}
&=&
{\vep_{q_{0}}\over q_{0}^{3/2}}
\left({rC\over q_{0}}\right)
e_{q_{0}}
(
-\overline{4rC} m^{2}
)
\sum_{k( q_{0})}
e_{q_{0}}
\bigg(
a^{2}
r\bar C
k^{2}
+
k(n
-
B\bar C m)
\bigg)
.
\eeann

Let
\be\label{eq:tildQdef}
\tilde q_{0}:=(a^{2},q_{0}), \qquad q_{1}:=q_{0}/\tilde q_{0},\qquad\text{and }\qquad a_{1}:=a^{2}/\tilde q_{0},
\ee 
so that $a^{2}/q_{0}=a_{1}/q_{1}$ in lowest terms. 
Break the sum on $0\le k< q_{0}$ according to $k= k_{1}+q_{1}\tilde k$,
with $0\le k_{1}< q_{1}$ and $0\le\tilde k< \tilde q_{0}$. Then
\beann
\cS_{f}
&=&
{\vep_{q_{0}}\over q_{0}^{3/2}}
\left({rC\over q_{0}}\right)
e_{q_{0}}
(
-\overline{4rC} m^{2}
)
\\
&&
\qquad
\times
\sum_{k_{1}( q_{1})}
e_{q_{1}}
\bigg(
a_{1}
r\bar C
(k_{1})^{2}
\bigg)
e_{q_{0}}
\bigg(
{k_{1}
}
(n
-
B\bar C m)
\bigg)
\\
&&
\qquad
\times
\sum_{\tilde k( \tilde q_{0})}
e_{\tilde q_{0}}
\bigg(
{\tilde k}
(n
-
B\bar C m)
\bigg)
.
\eeann
The last sum vanishes unless $n-B\bar Cm\equiv0(\mod \tilde q_{0})$, in which case it is $\tilde q_{0}$. In the latter case, define $L$ by 
\be\label{eq:Ldef}
L:=(Cn-Bm )/ \tilde q_{0} .
\ee
Then we have
\beann
\cS_{f}
&=&
\bo_{
nC\equiv mB( \tilde q_{0})
}
{\vep_{q_{0}}\over q_{0}^{3/2}}
\left({rC\over q_{0}}\right)
e_{q_{0}}
(
-\overline{4rC} m^{2}
)
\\
&&
\qquad
\times
e_{q_{1}}
\bigg(
-
\overline{4a_{1}rC}
L^{2}
\bigg)
\left[
\sum_{k_{1}( q_{1})}
e_{q_{1}}
\bigg(
a_{1}
r\bar C
(
k_{1}
+
\overline{2a_{1}r}
L
)^{2}
\bigg)
\right]
\tilde q_{0}
.
\eeann
The Gauss sum in brackets is again evaluated as 
$
\vep_{q_{1}}
q_{1}^{1/2}
\left(
{
a_{1}
r\bar C
\over q_{1}
}
\right)
,
$
so we have
\bea
\label{eq:SfEval}
\cS_{f}(q_{0},r;n,m)
&=&
\bo_{
nC\equiv mB(\tilde q_{0})
}
{
\vep_{q_{0}}\vep_{q_{1}}
\tilde q_{0}
^{1/2}
\over q_{0}
}
e_{q_{0}}
(
-\overline{4rC} m^{2}
)
\\
\nonumber
&&
\qquad
\times
e_{q_{1}}
\bigg(
-
\overline{4a_{1}rC}
L^{2}
\bigg)
\left({rC\over q_{0}}\right)
\left(
{
a_{1}
r\bar C
\over q_{1}
}
\right)
.
\eea
The claim then follows trivially.
\epf
\

Next we introduce a certain average of a pair of such sums. Let $f, q_{0}, r, n,$ and $m$ be as before, and fix $q\equiv0(\mod q_{0})$ 
 and 
 $(u_{0},q_{0})=1$. Let $\ff'\in\fF$ be another 
shifted form $\ff'=f'-a'$, with 
$$
f'(x,y)=A'x^{2}+2B'xy+C'y^{2}.
$$ 
Also let $n',m'\in\Z$. Then define
\bea\label{eq:cSdef}
\hskip-.5in
\cS&=&
\cS(q,q_{0},f,f',n,m,n',m';u_{0})
\\
\nonumber
&:=&
\sideset{}{'}
\sum_{r(q)}
\cS_{f}(q_{0},r\d_{0};n,m)
\overline{\cS_{f'}(q_{0},r\d_{0};n',m')}
e_{q}(r (a'- a))
.
\eea
This sum also
appears naturally
in the minor arcs analysis, see \eqref{eq:IQbnd1} and \eqref{eq:IQbnd2}.

\begin{lem}
With the above  notation, 
we have the estimate
\be
\label{eq:modBndaNeqAp}
|\cS|
\ll
\left({q/ q_{0}}\right)^{2}
{
\{(a^{2},q_{0})
\cdot
((a')^{2},q_{0})\}
^{1/2}
\over q^{5/4}
}
(a-a',q)^{1/4}
.
\ee
\end{lem}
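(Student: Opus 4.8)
The plan is to prove the bound on $\cS$ by inserting the explicit evaluation \eqref{eq:SfEval} for each of the two factors $\cS_{f}(q_{0},r\d_{0};n,m)$ and $\overline{\cS_{f'}(q_{0},r\d_{0};n',m')}$, and then carefully tracking the $r$-dependence of the resulting expression. After substitution, the indicator constraints $\bo_{nC\equiv mB(\tilde q_{0})}$ and $\bo_{n'C'\equiv m'B'(\tilde q_{0}')}$ are independent of $r$ and can be pulled out (bounded trivially by $1$); the prefactors $\vep_{q_{0}}\vep_{q_{1}}\tilde q_{0}^{1/2}/q_{0}$ and its primed analogue contribute exactly the factor $\{(a^{2},q_{0})\cdot((a')^{2},q_{0})\}^{1/2}/q_{0}^{2}$ after taking absolute values (using $|\vep|=1$). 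What remains inside the sum over $r(\mod q)$, $(r,q)=1$, is: the product of two Legendre-symbol factors $\big(\tfrac{r\d_{0}C}{q_{0}}\big)\big(\tfrac{a_{1}r\d_{0}\bar C}{q_{1}}\big)$ and the primed versions, which combine to give a fixed Jacobi symbol in $r$ of modulus dividing $q_{0}$; together with the exponential phases $e_{q_{0}}(-\overline{4r\d_{0}C}m^{2})$, $e_{q_{1}}(-\overline{4a_{1}r\d_{0}C}L^{2})$ and their primed conjugates, plus the outer phase $e_{q}(r(a'-a))$. Collecting the phases, the $r$-sum becomes a \emph{Kloosterman-type} (or more precisely Salié-type, because of the quadratic character twist) sum modulo $q$: something of the shape $\sum_{r(q)}' \big(\tfrac{\star}{q_{0}}\big) e_{q}\big(r\,\kappa + \bar r\,\lambda\big)$, where $\bar r$ is the inverse of $r$ modulo $q_{0}$ (lifted appropriately), $\kappa \equiv a'-a$ modulo $q$ captures the linear term, and $\lambda$ is a combination of $m^{2},m'^{2},L^{2},L'^{2}$ coming from the inverse-$r$ phases.

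The key steps, in order, are: (i) substitute \eqref{eq:SfEval} and factor out everything independent of $r$; (ii) reorganize the $r$-dependent phases into a single argument of the form $e_{q}(r\kappa)\cdot e_{q_{0}}(\bar r \mu)$ for explicit $\kappa,\mu$ — here one must be careful that the inverses $\overline{4r\d_{0}C}$ are modulo $q_{0}$ (resp.\ $q_{1}$), not modulo $q$, so one writes $\bar r$ modulo $q_{0}$ and uses $q_{0}\mid q$ to split the residue $r(\mod q)$ as $r = r_{0} + q_{0}t$; (iii) the sum over the ``free'' part $t$ (running over $q/q_{0}$ values, coming from the part of $r$ that only appears in $e_{q}(r\kappa)$) is a complete additive character sum contributing either $q/q_{0}$ (when $q/q_{0}\mid \kappa$) or $0$ — this is the source of the $(q/q_{0})$ factors and, combined with the divisibility constraint, of the $(a-a',q)^{1/4}$ gain; (iv) the remaining sum over $r_{0}(\mod q_{0})$, coprime to $q_{0}$, is (after multiplicativity in $q_{0}$, reducing to prime powers $p^{j}$) a Salié/Kloosterman sum whose classical Weil-type bound gives square-root cancellation $\ll q_{0}^{1/2+\epsilon}\,(\text{gcd factor})^{1/2}$; (v) reassemble, balancing the powers of $q_{0}$ and $q$ against the $(q/q_{0})^{2}$ from the outer $t$-sum, and check that the exponents match $q^{-5/4}(a-a',q)^{1/4}$ in the claim. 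A technical point is that the problem statement has $(u_{0},q_{0})=1$ but $u_{0}$ appears only shifting $r\mapsto r\d_{0}$; since $(r,q)=1$ and $\d_0$ is a unit mod $q_{0}$, one should first substitute $r\d_{0}\to r'$ where it is clean and track how the outer phase $e_{q}(r(a'-a))$ transforms (it becomes $e_{q}(r'\bar\d_{0}(a'-a))$ modulo the part of $r$ outside $q_{0}$ — this needs a little care but does not affect the final bound).

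The main obstacle I expect is step (iii)–(iv): correctly handling the interaction between the two different moduli $q$ and $q_{0}$ (with $q_{0}\mid q$ but $q/q_{0}$ possibly sharing factors with $q_{0}$), and in particular isolating \emph{exactly} where the $(a-a',q)^{1/4}$ saving comes from. The natural mechanism is that the complete sum over the ``high'' part of $r$ forces $q/q_{0} \mid (a'-a)$ up to the contribution of $\mu$; but since $\mu$ involves $m,m',L,L'$ which are not controlled, one instead argues that the number of $r$ for which the phase is stationary (i.e.\ the relevant linear form in the exponent vanishes mod the appropriate modulus) is at most $(a-a',q)^{1/2}$ times a bounded factor, and one only keeps \emph{half} of this saving, i.e.\ $(a-a',q)^{1/4}$, to have room. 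Concretely I would split $q = q' q''$ with $q'' \mid q_{0}^{\infty}$ and $(q',q_{0})=1$; on the $q'$-part the $r$-sum is purely $e_{q'}(r\kappa)$, a Ramanujan-type sum bounded by $(\kappa,q')\le (a-a',q)$, and on the $q''$-part it is a Salié sum of modulus $q''$ with the stated square-root cancellation; multiplying the two contributions and optimizing (the $q'$-part costs $(a-a',q)$ but also gives $q'$ in the denominator, so one balances to get $(a-a',q)^{1/4}$ against $q^{-5/4}$) yields \eqref{eq:modBndaNeqAp}. If the clean split is unavailable because $q/q_{0}$ and $q_{0}$ are not coprime, one falls back on the trivial bound $\ll q/q_{0}$ for the high-part sum and absorbs the loss, which still suffices because the exponent $5/4$ in the claim is not tight. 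The remaining verification — evaluating the Salié sum at prime powers via \cite[\S12.3]{IwaniecKowalski} and assembling the local factors by multiplicativity — is routine and I would only sketch it.
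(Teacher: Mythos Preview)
Your overall strategy is correct and matches the paper's: substitute the evaluation \eqref{eq:SfEval} for both $\cS_{f}$ and $\cS_{f'}$, pull out the $r$-independent prefactors, and recognize the remaining $r$-sum as a Kloosterman/Sali\'e-type sum. However, your execution is considerably more complicated than necessary, and a couple of the intermediate claims are off.

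The paper's proof is much shorter. The key observation you miss is that $\cS$ is \emph{multiplicative in $q$}, so one immediately reduces to $q=p^{j}$ a prime power; then automatically $q_{0}=p^{j'}$ for some $j'\le j$, and the two-modulus difficulty you spend most of your effort on (your steps (iii)--(iv), the $r=r_{0}+q_{0}t$ splitting, the $q=q'q''$ decomposition) simply evaporates. After substitution one has exactly
\[
\cS = (\text{unimodular factors})\cdot\frac{(\tilde q_{0}\tilde q_{0}')^{1/2}}{q_{0}^{2}}\cdot\Bigg[\sideset{}{'}\sum_{r(q)}\chi(r)\,e_{q}\big(r(a'-a)\big)\,e_{q_{0}}\big(\bar r\,\mu\big)\Bigg],
\]
with $\chi$ a quadratic character and $\mu$ some integer. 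At a prime power this bracketed sum is a standard twisted Kloosterman/Sali\'e sum, and the paper applies \emph{Kloosterman's elementary $3/4$ bound} \cite{Kloosterman1927} directly:
\[
\Big|[\cdot]\Big|\ll q^{3/4}(a-a',q)^{1/4}.
\]
Combining with $1/q_{0}^{2}=(q/q_{0})^{2}/q^{2}$ gives the claim immediately. The paper explicitly remarks that Weil's bound would save $q^{1/2}$ instead of $q^{1/4}$, but this is not needed.

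Two specific confusions in your write-up: first, the factor $(q/q_{0})^{2}$ does \emph{not} come from any $t$-sum over the ``high'' part of $r$; it comes purely from the algebraic prefactor $1/q_{0}^{2}$ in \eqref{eq:SfEval} (once for each of $\cS_{f},\cS_{f'}$). Second, your splitting $r=r_{0}+q_{0}t$ does not interact cleanly with the constraint $(r,q)=1$ when $q/q_{0}$ and $q_{0}$ share prime factors, and your fallback ``trivial bound $\ll q/q_{0}$'' does not obviously recover the stated exponent in general. Using multiplicativity in $q$ from the outset avoids all of this.
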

\begin{rmk}
Treating all $\gcd$'s above as $1$ and pretending $q=q_{0}$, the trivial bound here (after having saved essentially a whole $q$ from each of the two $\cS_{f}$ sums) is $1/q$, since the $r$ sum is unnormalized. So \eqref{eq:modBndaNeqAp} saves an extra $q^{1/4}$ in the $r$ sum. (In fact we could have saved the expected $q^{1/2}$, but this does not improve our final estimates.)
\end{rmk}
\pf
Observe  that $\cS$ is multiplicative in $q$, so we again consider the prime power case $q=p^{j}$, $p\neq2$; then $q_{0}$ is also a prime power, since $q_{0}\mid q$. As before, we may assume $(C,q_{0})=(C',q_{0})=1$.

Recall $a_{1}$, $\tilde q_{0}$, 
and $L$ 
given in \eqref{eq:tildQdef}
and \eqref{eq:Ldef}, 
and let $a_{1}'$, $\tilde q_{0}'$ and $L'$ 
be defined similarly.
Inputting the analysis from
 \eqref{eq:SfEval} into both $\cS_{f}$ and $\cS_{f'}$, 
we have
\bea
\nonumber
\cS
&=&
\bo_{
nC\equiv 2mB(\tilde q_{0})
\atop
n'C'\equiv 2m'B'(\tilde q'_{0})
}
{
\vep_{q_{1}}
\bar\vep_{q_{1}'}
(\tilde q_{0}\tilde q'_{0})^{1/2}
\over q_{0}^{2}
}
\left({
CC'\over q_{0}}\right)
\left(
{
a_{1}
u_{0}\bar C
\over q_{1}
}
\right)
\left(
{
a_{1}'
u_{0}\bar C'
\over q_{1}'
}
\right)
\\
\label{eq:cS1}
&&
\times
\Bigg[
\sideset{}{'}
\sum_{r(q)}
\left(
{
r\over q_{1}
}
\right)
\left(
{
r\over q_{1}'
}
\right)
e_{q}(r \{a'- a\})
\\
\nonumber
&&
\qquad
\times
e_{q_{0}}
\Bigg(
\overline{ 4
 r
 u_{0}
 }
\bigg\{
\overline{C'} (m')^{2}
-\overline{C} m^{2}
+
\overline{a_{1}'C'}
(L')^{2}
\tilde q'
-
\overline{a_{1}C}
L^{2}
\tilde q
\bigg\}
\Bigg)
\Bigg]
.
\eea

The term in brackets $\big[\cdot\big]$  is a Kloosterman- or Sali\'e-type sum, for which we have an elementary bound \cite{Kloosterman1927} to the power $3/4$:
%
%
\bea
\nonumber
|\cS|
&\ll&
{
(\tilde q_{0}
\tilde q'_{0})
^{1/2}
\over q_{0}^{2}
}
q^{3/4}
(a-a',q)^{1/4}
,
\eea
giving 
the claim. (There is no improvement in our use of this estimate from appealing to Weil's bound instead of Kloosterman's; any
 power gain suffices).
\epf

In the case $a=a'$, \eqref{eq:modBndaNeqAp} only saves one power of $q$, and in \S\ref{sec:QXT} we will need slightly more; see the proof of \eqref{eq:cI42bnd}.
We get a bit more cancellation in the special case $f(m,-n)\neq f'(m',-n')$ below.

\begin{lem}
Assuming
$a=a'$ and $f(m,-n)\neq f'(m',-n')$,
we have the estimate
\be
\label{eq:modBndaEqAp}
|\cS|
\ll
(q/q_{0})^{5}\
{
(a^{2},q_{0})
\over q^{9/8}
}
\cdot
|f(m,-n)-f'(m',-n')|^{1/2}
.
\ee
\end{lem}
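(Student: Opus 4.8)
The plan is to reduce to the same prime-power analysis used in the previous lemma, keeping all the explicit Gauss-sum evaluations from \eqref{eq:SfEval}, but now extracting extra cancellation from the exponential in the bracketed sum of \eqref{eq:cS1} rather than treating it as a generic Sali\'e sum. Since $\cS$ is multiplicative in $q$, I fix a prime power $q=p^{j}$ with $p\neq 2$ (the $2$-adic case being analogous), and $q_{0}\mid q$ a prime power; as before we may assume $(C,q_{0})=(C',q_{0})=1$. Setting $a=a'$ collapses the factor $e_{q}(r(a'-a))$ to $1$, so the bracket becomes
$$
\sideset{}{'}\sum_{r(q)}
\left({r\over q_{1}}\right)\left({r\over q_{1}'}\right)
e_{q_{0}}\!\left(\overline{4ru_{0}}\,\Xi\right),
$$
where $\Xi$ is the integer combination of $m,m',L,L',C,C',a_{1},a_{1}'$ appearing in \eqref{eq:cS1}. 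The first step is to identify $\Xi$, modulo units, with $f(m,-n)-f'(m',-n')$: using \eqref{eq:disc} (the discriminant is $-4a^{2}$) together with the definition \eqref{eq:Ldef} of $L$ and the identities for completing the square in the previous proof, one checks that $\overline{C}m^{2}-\overline{a_{1}C}L^{2}\tilde q_{0}$ is, up to a fixed unit and the common factor coming from $a=a'$, congruent to a unit multiple of $f(m,-n)$ modulo $q_{0}$ (this is exactly the algebra that produced the shifted-form structure in the first place). Hence $\Xi\equiv (\text{unit})\cdot\big(f(m,-n)-f'(m',-n')\big)\pmod{q_{0}}$, and in particular the $p$-adic valuation of $\Xi$ is controlled by that of $D:=f(m,-n)-f'(m',-n')$, which is nonzero by hypothesis.

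The second step is the character-sum estimate itself. Write $r$ for the summation variable and substitute $r\mapsto \bar r$ (a bijection on units mod $q$) to rewrite the bracket as $\sum'_{r(q)}\left({r\over q_{1}q_{1}'}\right)e_{q_{0}}(\alpha r)$ with $\alpha\equiv \overline{4u_{0}}\,\Xi \pmod{q_{0}}$ — a twisted Kloosterman/Gauss hybrid of the standard type. When $q_{1}q_{1}'$ is a square mod the relevant modulus the Legendre symbol is trivial and one gets a Ramanujan-type sum bounded by $(\alpha, q)\cdot$(something), i.e.\ a gain proportional to $\gcd(D,q)$; when it is nontrivial one gets a Gauss-sum/Sali\'e-sum of size $\ll q^{1/2}(\alpha,q)^{1/2}\ll q^{1/2}(D,q)^{1/2}$ — plus the subtlety that the modulus of the character ($q_{1}$ or $q_{1}'$) may differ from $q_{0}$ or from $q$, which is why the harmless factor $(q/q_{0})^{5}$ and the powers of $\tilde q_{0}=(a^{2},q_{0})$ are inserted to absorb these mismatches crudely. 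In all cases the bracket is $\ll q^{1/2}\,(D,q)^{1/2}\ll q^{1/2}|D|^{1/2}$, and this $|D|^{1/2}$ is the improvement over the $3/4$-power Kloosterman bound used in the $a\neq a'$ case. Feeding this back through the prefactor $(\tilde q_{0}\tilde q_{0}')^{1/2}/q_{0}^{2}$ and multiplying the prime-power contributions, one obtains
$$
|\cS|\ll (q/q_{0})^{5}\,\frac{(a^{2},q_{0})}{q^{2}}\cdot q^{7/8}\cdot|D|^{1/2}
=(q/q_{0})^{5}\,\frac{(a^{2},q_{0})}{q^{9/8}}\,|f(m,-n)-f'(m',-n')|^{1/2},
$$
as claimed, where the exponent $9/8$ (rather than the $7/8$ one naively expects from a square-root-cancelling $r$-sum that is unnormalized of length $q$) is a deliberate weakening: only the extra $q^{1/8}$ beyond the trivial $1/q$ is needed in \S\ref{sec:QXT}, and stopping at $9/8$ keeps the bookkeeping of the character moduli painless.

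The main obstacle I anticipate is the first step: cleanly pinning down that the argument $\Xi$ of the inner additive character is, up to units and controlled $\gcd$ factors, exactly $f(m,-n)-f'(m',-n')$, and in particular that its $p$-valuation does not exceed $v_{p}(D)$. This requires carefully tracking the "completing the square" identities from \eqref{eq:SfEval} — specifically how $-\overline{4rC}m^{2}$ combines with $-\overline{4a_{1}rC}L^{2}$ after multiplying by $\tilde q_{0}$ — and using $B^{2}-AC=-a^{2}$ to recognize the resulting quadratic expression in $(m,n)$ as a unit times $f(m,-n)=A m^{2}-2Bmn+Cn^{2}$ (note the sign pattern from evaluating at $(m,-n)$). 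Once that algebraic identity is in hand, the remaining steps — the elementary bound for the twisted Kloosterman/Sali\'e sum and the crude absorption of modulus mismatches into $(q/q_{0})^{5}$ and $(a^{2},q_{0})$ — are routine, exactly parallel to the previous lemma. The $2$-adic case needs the usual separate but structurally identical treatment and I would relegate it to a remark.
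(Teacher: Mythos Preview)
Your route is correct but genuinely different from the paper's, and in fact sharper. The paper keeps the bracketed $r$-sum in Kloosterman form and applies the elementary $3/4$-bound, picking up a factor $(p^{j},\Xi)^{1/4}$ at each prime $p^{j}\|q_{0}$; it then splits primes according to whether this gcd is at most $p^{j/2}$ (the small-gcd primes contribute the $q_{0}^{1/8}$ that produces the exponent $9/8$) or larger, and on the large-gcd set uses the algebra to deduce $p^{\lceil j/2\rceil}\mid D$, giving the factor $|D|^{1/2}$. You instead observe that, since $a=a'$ forces $q_{1}=q_{1}'$, the two quadratic characters $\bigl(\tfrac{r}{q_{1}}\bigr)\bigl(\tfrac{r}{q_{1}'}\bigr)$ cancel \emph{identically} --- you phrase this as a case, but it is automatic here --- so after $r\mapsto\bar r$ and the reduction $\sum'_{r(q)}=(q/q_{0})\sum'_{r(q_{0})}$ the bracket is a pure Ramanujan sum, bounded by $(q/q_{0})(\Xi,q_{0})$. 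The same completing-the-square algebra then gives $(\Xi,q_{0})\le(D,q_{0})\le(q_{0}|D|)^{1/2}$, yielding $|\cS|\ll(a^{2},q_{0})(q/q_{0})^{5/2}q^{-3/2}|D|^{1/2}$, which is uniformly stronger than the stated bound. Your method is more elementary (no Kloosterman estimate, no prime dichotomy); the paper's is more robust in that it would survive even if the characters did not cancel.

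One correction to your write-up: the identity is not $\Xi\equiv(\text{unit})\cdot D$ but rather $\tilde q_{0}\cdot(\text{unit})\cdot\Xi\equiv -D\pmod{q_{0}}$, with the extra factor $\tilde q_{0}=(a^{2},q_{0})$ coming from clearing the $\overline{a_{1}}$ in the $L^{2}$-terms (this is exactly the multiplication by $a^{2}=a_{1}\tilde q_{0}$ that the paper performs). The conclusion you actually need, $v_{p}(\Xi)\le v_{p}(D)$, still follows, so the argument survives --- but the intermediate claim as stated is false when $p\mid a$, and the write-up should reflect the correct identity.
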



\pf
Assume first that $q$ (and hence $q_{0}$) 
is
a prime power, continuing to omit the prime $2$.
Returning to the
definition of $\cS$ in \eqref{eq:cSdef},
it is clear in the case $a=a'$  that
$$
\sum_{r(q)}'=(q/q_{0})\sum_{r(q_{0})}'.
$$
Hence we again apply Kloosterman's $3/4$th bound to \eqref{eq:cS1}, getting
\bea
\label{eq:modBndaEqAp1}
|\cS
|
&\ll&
\bo_{
nC\equiv 2mB(\tilde q_{0})
\atop
n'C'\equiv 2m'B'(\tilde q'_{0})
}
(q/q_{0})^{9/2}
{
(a^{2},q_{0})
\over q^{5/4}
}
\\
\nonumber
&&
\times
\prod_{p^{j}\| q_{0}}
\Bigg(
p^{j}
,
\bar 4
\left\{
\overline{C'} (m')^{2}
-\overline{C} m^{2}
+
\overline{
a_{1}}
(a^{2},p^{j})
(
\overline{C'}
(L')^{2}
-
\overline{C}
L^{2}
)
\right\}
\Bigg)^{1/4}
,
\eea
which is valid now without the assumption
that $q_{0}$ is a prime power. (Here $a_{1}$ satisfies $a^{2}=a_{1}(a^{2},p^{j})$ as in \eqref{eq:tildQdef}, and $L$ is given in \eqref{eq:Ldef}, so both depend on $p^{j}$.)

Break the primes  diving $q_{0}$ into two sets, $\cP_{1}$ and $\cP_{2}$, defining  $\cP_{1}$ to be the set of those primes $p$ for which 
\be\label{eq:pr2}
\overline{
C} m^{2}
+
\overline{
C}
L^{2}
\overline{
a_{1}}
(a^{2},p^{j})
\equiv 
\overline{
C'} (m')^{2}
+
\overline{
C'}
(L')^{2}
\overline{
a_{1}}
(a^{2},p^{j})
\ \
(\mod p^{\lceil j/2\rceil})
,
\ee
and $\cP_{2}$ the rest. 
For the latter, the $\gcd$ in $(p^{j},\cdots)$ of \eqref{eq:modBndaEqAp1} is at most $p^{j/2}$, so we clearly have
\be\label{eq:cP2}
\prod_{p^{j}\| q_{0}\atop p\in\cP_{2}}(p^{j},\cdots)^{1/4}
\le
\prod_{p^{j}\| q_{0}
}p^{j/8}
=
q_{0}^{1/8}
.
\ee

For $p\in\cP_{1}$, we multiply both sides of \eqref{eq:pr2} by 
$$
a^{2}=
AC-B^{2}
=
A'C'-(B')^{2}
=
a_{1}(a^{2},p^{j})
,
$$ 
giving
\bea
\label{eq:15}
&&
\hskip-.5in
(AC-B^{2})\overline{C} m^{2}
+
\overline{C}
L^{2}
(a^{2},p^{j})^{2}
\\
\nonumber
&
\equiv
& 
(A'C'-(B')^{2})\overline{C'} (m')^{2}
+
\overline{C'}
(L')^{2}
(a^{2},p^{j})^{2}
\qquad(\mod p^{\lceil j/2\rceil})
.
\eea
Using \eqref{eq:Ldef} that
$$
nC- 
mB = (a^{2},p^{j}) L
,
\qquad
n'C'- 
m'B' = (a^{2},p^{j}) L'
$$
and subtracting $a$ from both sides of \eqref{eq:15}, 
we have shown that
\be
\label{eq:fmnfp}
f'(m',-n')
\equiv
f(m,-n)
\qquad(\mod p^{\lceil j/2\rceil})
.
\ee
Let 
$$
Z=|f(m,-n)-f'(m',-n')|
.
$$
By assumption $Z\neq0$. Moreover \eqref{eq:fmnfp} implies that
$$
\left(
 \prod_{p\in\cP_{1}}p^{\lceil j/2\rceil}
 \right)
 \mid
 Z
,
$$ 
and hence
\be\label{eq:cP1}
\prod_{p^{j}\|q_{0}\atop p\in\cP_{1}}p^{j/4}
\le
Z^{1/2}
.
\ee
Combining \eqref{eq:cP1} and \eqref{eq:cP2} in \eqref{eq:modBndaEqAp1} gives the claim.
%
\epf

Finally we need some  savings in the case $a=a'$ and $f(m,-n)=f'(m',-n')$. This will no longer come from $\cS$ itself, but from the following supplementary lemmata. 

\begin{lem}
Fix an equivalence class $\cK$ of primitive binary quadratic forms of discriminant $-4a^{2}$. 
We claim that the number of equivalent forms $f\in\cK$ with $\ff=f-a\in\fF$ is bounded, that is,
\be\label{eq:fincK}
\#\{\ff\in\fF:f\in\cK\}=O(1).
\ee
\end{lem}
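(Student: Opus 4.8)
The plan is to exploit the rigidity built into the family $\fF$: each $\ff_\g = f_\g - a_\g$ comes from a word $\g=\g_1\g_2$ of bounded norm ($\|\g\|\ll T$) acting on the fixed root quadruple $v_0$, and the coefficients $A_\g,B_\g,C_\g$ of $f_\g$ are, by \eqref{eq:ABCdef}, explicit linear combinations of the entries of $\g\, v_0$. Since $\|\g\|\ll T$ and $v_0$ is fixed, all of $a_\g, A_\g, B_\g, C_\g$ are integers of size $\ll T$. Thus $\{\ff\in\fF: f\in\cK\}$ injects into the set of \emph{integral} forms $f = Ax^2 + 2Bxy + Cy^2$ of discriminant $-4a^2$ lying in the fixed class $\cK$, with the extra normalization $A,B,C\ll T$ (and with $a$ determined, up to sign, by $a^2 = B^2 - AC = |\mathrm{disc}|/4$, which is fixed once $\cK$ is fixed). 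So it suffices to bound the number of such bounded-height representatives of a single $\SL_2(\Z)$-equivalence class.

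The key point is that a fixed class $\cK$ of primitive positive-definite binary quadratic forms contains \emph{exactly one} reduced form $f_0$, and every form in $\cK$ is $f_0\circ\gamma$ for a unique $\gamma\in\SL_2(\Z)$ (the stabilizer being $\{\pm I\}$ for $|\mathrm{disc}|>4$, which holds here since $a = |a_\g|$ is large — of order $T$ — on $\fF$; the finitely many small-$a$ cases contribute only $O(1)$ and may be discarded). Now if $f = f_0\circ\gamma$ has all coefficients $\ll T$, then the associated symmetric matrix $M_f = \gamma^t M_{f_0}\gamma$ has operator norm $\ll T$; since $\det M_f = \det M_{f_0} \asymp a^2 \asymp T^2$ is fixed, $M_f$ has both eigenvalues $\asymp T^{\pm 1}\cdot(\text{const})$... more precisely, writing $M_{f_0}$ for the (fixed) positive-definite matrix of the reduced form, the condition $\|\gamma^t M_{f_0}\gamma\|\ll T$ forces $\|\gamma\|^2 \ll T/\lambda_{\min}(M_{f_0}) \ll T$, i.e. $\|\gamma\|\ll T^{1/2}$. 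This is not yet $O(1)$, so a naive counting of $\gamma$ is insufficient; the real mechanism must be finer.

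The correct finishing argument is that the orbit map is essentially injective on $\fF$ in a stronger sense: the vector $\g\, v_0$ (hence the triple $(A_\g, B_\g, C_\g)$, hence $f_\g$ itself) determines $\g$ uniquely, because $\G$ acts on its orbit $\sO = \G\cdot v_0$ with trivial stabilizer of $v_0$ (as $v_0$ has four distinct entries and $\G$ is a thin group acting freely — indeed $\G$ is freely generated by $S_1S_2, S_2S_3, S_3S_4$ and no nontrivial element fixes the generic point $v_0$). So distinct $\ff\in\fF$ give distinct \emph{forms} $f_\g$ (up to the bounded multiplicity already noted in \S\ref{sec:outline} coming from non-unique factorization $\g=\g_1\g_2$, which is itself $O(1)$ per $\g$ by \eqref{eq:fFdef} and a counting argument). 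Hence it is enough to show: only $O(1)$ forms in the class $\cK$ arise as $f_\g$ for $\g\in\G$ with $\g\, v_0$ having first coordinate $a_\g \asymp T$ and $\|\g\|\ll T$. Fixing $\cK$ fixes the reduced representative $f_0$ and its matrix $M_{f_0}$; any $f_\g\in\cK$ satisfies $M_{f_\g} = \gamma^t M_{f_0}\gamma$ for a unique $\gamma\in\SL_2(\Z)$, and one reads off $a_\g$ from $\det$, which is forced. The map $\g \mapsto f_\g$ from $\G$ to integral forms, restricted to the slice $\{a_\g = a\}$ for this one fixed value $a$, has image in a \emph{single} fiber that is acted on by the (finite, since $\G\cap\mathrm{Stab}(v_0)$ trivial but the congruence / parabolic structure \eqref{eq:G1} is infinite)...

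The main obstacle — and where I'd focus the actual write-up — is precisely this last point: $\Xi$ acts on $\fF$ fixing $a_\g$ and sliding $(A,B,C)$ along an \emph{infinite} family (the parabolic orbits of \eqref{eq:oneParab}), so one cannot simply say "finitely many forms with this $a$." The resolution is that the parabolic orbit of a fixed $\g$ under $\Xi$ produces forms $f$ whose coefficients grow quadratically in the word length, so only $O(1)$ of them have height $\ll T$ consistent with coming from a word of norm $\ll T$ in the \emph{full} ball $\fF_T$; combined with Lax–Phillips \eqref{eq:fFTbnd} this is a counting argument, but the clean way is: an integral form of discriminant $-4a^2$ with coefficients $\ll T$ and fixed $a\asymp T$ has $A, C = O(T)$ and $AC = B^2 + a^2 = O(T^2)$, so $(A,C)$ ranges over $O(T^{1+\epsilon})$ divisor-type pairs — still not $O(1)$. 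Therefore the genuinely needed input is reduction theory: within one class $\cK$, a form $f$ with $0 < A \le C$ and $|2B|\le A$ (reduced) is \emph{unique}, and a general bounded-height member $f_0\circ\gamma\in\cK$ with $\|f_0\circ\gamma\|\ll T$ and $\mathrm{disc}$ fixed $\asymp -T^2$ forces, via the eigenvalue estimate above, $\|\gamma\| \ll 1$ — here is the crux: because $M_{f_0}$ itself has eigenvalues $\asymp T^{\pm\epsilon}\cdot\sqrt{|\mathrm{disc}|}\asymp T$ up to the bounded aspect ratio of a \emph{reduced} form, the ratio $\lambda_{\max}(M_{f_\g})/\lambda_{\max}(M_{f_0})$ is bounded above and below by absolute constants, forcing $\|\gamma\|\asymp 1$, hence only $O(1)$ choices of $\gamma$, hence \eqref{eq:fincK}. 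I would write the proof as: (i) reduce to counting bounded-height integral forms in the fixed class $\cK$; (ii) pass to the unique reduced representative $f_0$ and write a competitor as $f_0\circ\gamma$; (iii) use that both $f_0$ and $f_\g$ are "balanced" (reduced resp. having all coefficients $\asymp T$ with fixed discriminant $\asymp T^2$) to conclude $\gamma$ lies in a fixed-size set via the eigenvalue/operator-norm comparison; (iv) note the trivial stabilizer to get the final $O(1)$.
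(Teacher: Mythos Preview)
Your proposal has a genuine gap at the crucial step. The argument you settle on in step (iii) compares an $f_\g\in\fF$ to the \emph{reduced} representative $f_0$ of $\cK$, and asserts that $M_{f_0}$ has ``bounded aspect ratio'' with eigenvalues $\asymp T$. This is false in general: a reduced positive-definite form $A_0x^2+2B_0xy+C_0y^2$ of discriminant $-4a^2$ satisfies $|2B_0|\le A_0\le C_0$ and $A_0\le (2/\sqrt3)\,a$, but there is no lower bound on $A_0$. For instance, if $\cK$ happens to be the principal class, the reduced form is $x^2+a^2y^2$, whose matrix has eigenvalues $1$ and $a^2\asymp T^2$, not both $\asymp T$. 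With $\lambda_{\min}(M_{f_0})\asymp 1$ your operator-norm comparison yields only $\|\gamma\|\ll T^{1/2}$, exactly the bound you already rejected as insufficient; the same bound holds for $\gamma^{-1}$, and for $\SL_2$ this gives nothing new.

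The fix is to drop the reduced form entirely and compare two members of $\fF$ to each other, which is what the paper does. The point you underused is that the condition $a_\g>T/100$ built into $\fF$ forces not only $A_\g,B_\g,C_\g\ll T$ but also $A_\g C_\g=B_\g^2+a_\g^2\asymp T^2$, hence $A_\g,C_\g\asymp T$ (both sides). Now if $f,f'\in\cK$ both arise from $\fF$ and $f'=f\circ\gamma$ with $\gamma=\begin{pmatrix}g&h\\ i&j\end{pmatrix}$, completing the square in $A'=g^2A+2giB+i^2C$ gives
\[
A'=C\Big(i+\tfrac{gB}{C}\Big)^2+\frac{g^2a^2}{C},
\]
so $g^2\le A'C/a^2\ll 1$ and then $|i|\ll 1$; the same argument applied to $C'$ bounds $h,j$. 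Thus $\gamma$ lies in a fixed finite set and \eqref{eq:fincK} follows. Your outline (i)--(iv) becomes correct once you replace $f_0$ by a second form $f'\in\fF$; the eigenvalue comparison then reads $\|\gamma\|^2\le \lambda_{\max}(M_{f'})/\lambda_{\min}(M_f)\ll T/T=1$, which is the bound you wanted.
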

\pf
From \eqref{eq:ABCdef}, \eqref{eq:fFdef}, and \eqref{eq:disc}, we have that
 $f(m,n)=Am^{2}+2Bmn+Cn^{2}$ has coefficients of size
$$
A,B,C\ll T,
$$
and  $AC-B^{2}=a^{2}$, with
$
a\asymp T.
$ 
It follows that $AC\asymp T^{2}$, and hence 
\be\label{eq:ACisT}
A,C\asymp T.
\ee

Now suppose we have $\ff=f-a$ and $\ff'=f'-a$ with $f$ as above and $f'$ having coefficients $A',B',C'$. If $f$ and $f'$ are equivalent then there is an element $\mattwo ghij\in\GL(2,\Z)$ so that
\bea\label{eq:ApBpCpABC}
A' &=& g^{2} A + 2 gi B + i^{2} C,\\
\nonumber
B' &=&  gh A + (gj+hi) B +  ij C ,\\
\nonumber
C' &=&  h^{2} A + 2 hj B + j^{2} C.
\eea
The first line can be rewritten as 
$$
A'
=
C
( i
+ g B /C
)^{2}
 +
 g^{2} {4a^{2}\over C}
,
$$
so that
$$
g^{2}
\le
A' 
 {C\over4a^{2}}
\ll
1
 .
$$
Similarly,
$$
( i
+ g B /C
)^{2}
\le
{A' \over C}
\ll
1
,
$$
and 
hence
$|i|\ll 1$. In a similar fashion, we see that $|h|$ and $|j|$ are also bounded, thus the number of equivalent forms in $\cK$ is bounded, as claimed.
\epf

\begin{lem}
For a fixed large integer $z$, the number of inequivalent classes $\cK$ of primitive quadratic forms 
of determinant $-4a^{2}
$ which represent $z$ is 
\be\label{eq:cKzBnd}
\ll_{\vep}\
z^{\vep}\cdot 
(z,4a^{2})^{1/2}
,
\qquad
\text{for any $\vep>0$.} 
\ee
\end{lem}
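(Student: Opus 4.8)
The statement to prove is a bound on the number of inequivalent classes of primitive binary quadratic forms of discriminant $-4a^2$ which represent a fixed large integer $z$: namely $\ll_\varepsilon z^\varepsilon (z, 4a^2)^{1/2}$.

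\medskip

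The plan is to reduce this to the classical theory of representations of integers by binary quadratic forms in terms of ideal classes, following Gauss. First I would recall that for a form $f$ of discriminant $D = -4a^2$ (not necessarily fundamental), proper primitive representations $f(x,y) = z$ with $(x,y)=1$ correspond, up to the action of the automorphism group of $f$, to square roots $b \pmod{2z}$ of $D$ modulo $4z$, i.e.\ solutions of $b^2 \equiv D \pmod{4z}$; each such $b$ determines the class of $f$ as the class containing $(z, b, \cdot)$. Thus the number of classes $\cK$ representing $z$ is at most the number of residues $b \pmod{2z}$ with $b^2 \equiv -4a^2 \pmod{4z}$, plus a bounded-multiplicity correction for improper representations (where one factors out the largest square dividing $\gcd$ of $x,y$; since $z$ is represented, $z = \ell^2 z'$ with $z'$ represented primitively by a form in the same genus-theoretic setup, and the number of such $\ell$ is $\ll z^\varepsilon$). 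So it suffices to count solutions of $b^2 + 4a^2 \equiv 0 \pmod{4z}$.

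\medskip

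The heart of the matter is therefore the following elementary counting statement: the number of $b \pmod{2z}$ with $b^2 \equiv -4a^2 \pmod{4z}$ is $\ll_\varepsilon z^\varepsilon (z, 4a^2)^{1/2}$. This I would prove by multiplicativity: writing $N(m)$ for the number of solutions of $b^2 \equiv -4a^2$ in the appropriate modulus, one reduces to prime powers $p^j \| 4z$. For $p \nmid 2a$, the congruence $b^2 \equiv -4a^2 \pmod{p^j}$ has at most $2$ solutions. For $p \mid a$, writing $p^k \| 2a$ (so $p^{2k} \| 4a^2$), a standard lifting argument shows the number of solutions mod $p^j$ is $\ll p^{\min(k, \lceil j/2\rceil)} \ll p^{k}$, i.e.\ bounded by $p^{j/2}$ once $j \ge 2k$ and by the full $p^{\lceil j/2 \rceil}$ otherwise, in all cases $\ll (p^j, p^{2k})^{1/2} \cdot p^{O(1)}$ where the $p^{O(1)}$ is absorbed into the divisor-like factor; handling $p = 2$ separately (at most a bounded extra factor, since $2$-adic square roots come in bounded supply). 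Multiplying over $p \mid 4z$ gives $\ll 2^{\omega(z)} (4z, 4a^2)^{1/2} \ll_\varepsilon z^\varepsilon (z, 4a^2)^{1/2}$, after noting $(4z,4a^2)^{1/2} \asymp (z, a^2)^{1/2} \ll (z, 4a^2)^{1/2}$ up to bounded factors and that $2^{\omega(z)} \ll_\varepsilon z^\varepsilon$.

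\medskip

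I expect the main obstacle to be bookkeeping rather than a genuine difficulty: one must be careful that (i) the discriminant $-4a^2$ is not fundamental, so the passage from ``$z$ is represented by \emph{some} form in a class'' to ``$b^2 \equiv D \pmod{4z}$ has a solution'' must account for non-primitive representations and the extra divisor-type multiplicity $z^\varepsilon$ this introduces; and (ii) the $2$-adic place, where square roots modulo $2^j$ require the usual case analysis. Neither is serious, and the factor $(z,4a^2)^{1/2}$ emerging from the prime-power count at primes dividing $a$ is exactly what makes the bound work. The whole argument is classical and parallels, e.g., the divisor-bound estimates for representation numbers of binary quadratic forms in \cite{IwaniecKowalski}.
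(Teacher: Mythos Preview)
Your proposal is correct and follows essentially the same route as the paper's proof. Both arguments reduce to counting, for each square divisor $z_1$ of $z$ (coming from factoring out the gcd of the representation), the number of residues $B$ modulo $z_1$ (or $2z_1$) with $B^2 \equiv -4a^2$, and then handle this congruence multiplicatively by bounding the local count at each prime power $p^e \| z_1$ by $2\min(p^{e/2}, p^{f})$ where $p^f \| 2a$; the paper's write-up is only slightly more explicit about this last local bound.
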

\pf
If $f\in\cK$ represents $z$, say $f(m,n)=z$, then, setting $w=(m,n)$, $f$ represents $z_{1}:=z/w^{2}$ primitively. 
We see from \eqref{eq:ApBpCpABC} that $f$ is then in the same class as $f_{1}(m,n)=z_{1}m^{2}+2Bmn+Cn^{2}$, with
$$
-4a^{2}=z_{1}C-B^{2}
.
$$ 
Moreover, by a unipotent change of variables preserving $z_{1}$, we can force $B$ into the range $[0,z_{1})$, that is, $B$ is determined mod $z_{1}$. So the number of inequivalent  such $f_{1}$ is equal to
\be\label{eq:BtoZ1}
\#\{B(\mod z_{1}):B^{2}\equiv-4a^{2}(z_{1})\}
=
\prod_{p^{e}\mid\mid z_{1}}
\#\{B^{2}\equiv-p^{2f}(p^{e})\}
,
\ee
where $p^{f}\mid\mid 2a$. If $2f\ge e$, then the number of local solutions is at most $p^{e/2}$. Otherwise, write $B=B_{1}p^{f}$; then there are at most $2$ solutions to $B_{1}^{2}\equiv-1(\mod p^{e-2f}),$
and there are $p^{f}$ values for $B$ once $B_{1}$ is determined. Hence the number of local solutions is at most $2\cdot\min(p^{e/2},p^{f})$, so the number of solutions to \eqref{eq:BtoZ1} is at most
$$
2^{\gw(z)}(z_{1},4a^{2})^{1/2}\ll_{\vep}z^{\vep}(z,4a^{2})^{1/2}.
$$
The number of divisors $z_{1}$ of $z$ is $\ll_{\vep} z^{\vep}$, completing the proof.
\epf

\begin{lem} \label{lem:kEllD}
Fix $(A,B,C)=1$ and $d\mid AC-B^{2}$. Then there are integers  $k,\ell
$ with $(k,\ell,d)=1$ so that, whenever $Am^{2}+2Bmn+Cn^{2}\equiv0(d)$, we have
\be\label{eq:nkmell}
(mk+n\ell)^{2}\equiv0(d).
\ee
\end{lem}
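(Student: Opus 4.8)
The plan is to reduce to a local (prime-power) statement and use the structure of the form $f$ coming from Descartes' theorem, exactly the completing-the-square manipulation already performed in the proof of \eqref{eq:SfEval}. Since $(mk+n\ell)^2\equiv 0(d)$ can be checked one prime power at a time (if $p^e\|d$ we need $(mk+n\ell)^2\equiv 0(p^e)$, i.e. $p^{\lceil e/2\rceil}\mid mk+n\ell$), and since the $k,\ell$ we produce will be allowed to depend on $d$, it suffices to treat $d=p^e$ a prime power and then glue via the Chinese Remainder Theorem, choosing the global $(k,\ell)$ to match each local pair and dividing out by any common factor with $d$ to enforce $(k,\ell,d)=1$.

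So fix $d=p^e$. First I would dispose of the case $p\mid(A,B,C)$: this cannot happen since $(A,B,C)=1$, but more relevantly I want to split on whether $p$ divides the "leading" coefficients. Since $(A,B,C)=1$, at least one of $A,C$ is a unit mod $p$ up to a $\GL_2(\Z)$ change of variables — and a $\GL_2(\Z)$ change of variables replaces $(k,\ell)$ by its image under the inverse transpose, preserving $(k,\ell,d)=1$ and the vanishing of $mk+n\ell$ — so I may assume $(C,p)=1$. Then, writing $a^2=AC-B^2$ (so $\gD=-4a^2$ and $p^f\|a$ for some $f\ge 0$), complete the square as in the display preceding \eqref{eq:SfEval}:
$$
C\cdot f(m,n)\equiv (Cn+Bm)^2 + a^2 m^2 \pmod{p^e}.
$$
Hence $f(m,n)\equiv 0(p^e)$ forces $(Cn+Bm)^2\equiv -a^2m^2(p^e)$.

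The key step is extracting from $(Cn+Bm)^2+a^2m^2\equiv 0(p^e)$ a single linear form in $m,n$ whose square vanishes mod $p^e$. If $2f\ge e$, then $a^2\equiv 0(p^e)$, so $(Cn+Bm)^2\equiv 0(p^e)$ and we may simply take $\ell=C$, $k=B$ (note $(k,\ell,p)=(B,C,p)\mid$ unit since $(C,p)=1$). If $2f<e$: set $u=Cn+Bm$ and note $p^f\mid u$ (look mod $p^{2f+1}$: $u^2\equiv 0(p^{2f})$ gives $p^f\mid u$), write $u=p^f u_1$, $a=p^f a_1$ with $(a_1,p)=1$; then $u_1^2\equiv -a_1^2 m^2(p^{e-2f})$, so $-1$ is a square mod $p^{e-2f}$, say $-1\equiv t^2$, giving $u_1\equiv \pm t a_1 m(p^{\lceil(e-2f)/2\rceil})$ — here one has to be a little careful with $p=2$, where squares behave worse, but the paper has already flagged that the $2$-adic case requires "similar calculations"; I would handle $p=2$ by the same scheme with the usual loss of a bounded power of $2$, which is harmless since $k,\ell$ need not be optimal. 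Thus $p^{f+\lceil(e-2f)/2\rceil}=p^{\lceil e/2\rceil}$ divides $u\mp t a m = (Cn+Bm)\mp t a m = Cn + (B\mp ta)m$, and we take $\ell=C$, $k=B\mp t a$. Again $(k,\ell,p)\mid (C,p)=1$, and $(mk+n\ell)^2=(Cn+km)^2\equiv 0(p^e)$ as required.

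The main obstacle I expect is the bookkeeping at $p=2$ (lifting square roots of $-1$ and of $-a_1^2/C$ through powers of $2$, where the completion of the square has a $\overline{4}$ in it), together with making the gluing clean: after CRT one obtains $(k,\ell)$ with $mk+n\ell$ divisible by $p^{\lceil e_p/2\rceil}$ for each $p^{e_p}\|d$, hence $(mk+n\ell)^2\equiv 0(d)$; then replacing $(k,\ell)$ by $(k/g,\ell/g)$ where $g=(k,\ell,d)$ — which only improves divisibility — secures $(k,\ell,d)=1$. None of this is deep; it is the same circle of ideas as the Gauss-sum evaluation already carried out, just run to produce the linear form rather than to evaluate the sum.
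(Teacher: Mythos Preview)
Your local-then-CRT strategy is the same as the paper's and is ultimately correct, but you have missed the one-line observation that makes the lemma trivial: the hypothesis $d\mid AC-B^{2}$ means that for every $p^{e}\| d$ the term $(AC-B^{2})m^{2}$ already vanishes $\pmod{p^{e}}$, so (with $(C,p)=1$, say) one has $C\cdot f(m,n)\equiv (Cn+Bm)^{2}\pmod{p^{e}}$ directly, and $(k_{p},\ell_{p})=(B,C)$ works. In your notation this is precisely the statement that $2f\ge e$ always holds, so your case $2f<e$ is vacuous --- which is fortunate, since your argument there requires $-1$ to be a square $\pmod{p^{e-2f}}$, false for $p\equiv 3\pmod 4$. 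All the worry about $\sqrt{-1}$ and about $p=2$ evaporates (no division by $2$ is ever needed, and if $p=2$ divides both $A$ and $C$ then $p\nmid AC-B^{2}$ so $p\nmid d$); the paper's proof is five lines. One minor caution: your closing ``divide out by $g=(k,\ell,d)$'' step is both unnecessary (each local $\ell_{p}$ is a unit mod $p$, so CRT already yields $(k,\ell,d)=1$) and unsound in general --- take $k=\ell=4$, $d=2$.
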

\pf
We will work locally, then lift to a global solution.  Let $p^{e}\mid\mid d$. 

Case 1: If $(p,A)=1$, then $Am^{2}+2Bmn+Cn^{2}\equiv0(p^{e})$ implies
$$
(m+\bar A Bn)^{2}-\bar A ^{2}B^{2}n^{2}+\bar ACn^{2}\equiv
(m+\bar A Bn)^{2}\equiv
0(p^{e}).
$$
In this case, we set $k_{p}:=1$, and $\ell_{p}:={\bar A B}$. 

Case 2:
If $(p,A)>1$, then by primitivity, $(p,C)=1$. As before, we have
$
(n+\bar C Bm)^{2}\equiv
0(p^{e}),
$
and we choose $k_{p}={\bar C B}$, $\ell_{p}:=1$.

By the Chinese Remainder Theorem, there are integers $k$ and $\ell$ so that $k\equiv k_{p}(\mod p^{e})$, and similarly with $\ell$. By construction, we have $(k,\ell,d)=1$,
as claimed.
\epf

\begin{lem}
Given large $M$, $(A,B,C)=1$ and $d\mid AC-B^{2}$,
\be\label{eq:nmUpToM}
\#\{m,n<M:Am^{2}+2Bmn+Cn^{2}\equiv0(d)\}
\ee
$$
\ll_{\vep} 
d^{\vep}
\left({M^{2}\over d^{1/2}}+M\right)
.
$$
\end{lem}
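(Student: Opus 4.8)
The plan is to convert the quadratic congruence into a linear one modulo a divisor of size at least $d^{1/2}$, using Lemma~\ref{lem:kEllD}, and then to count lattice points on the resulting line inside the box $[0,M)^{2}$. Since $(A,B,C)=1$ and $d\mid AC-B^{2}$, Lemma~\ref{lem:kEllD} supplies integers $k,\ell$ with $(k,\ell,d)=1$ such that $Am^{2}+2Bmn+Cn^{2}\equiv0(d)$ implies $(mk+n\ell)^{2}\equiv0(d)$. Factor $d=\prod_{p}p^{e_{p}}$ and set $d_{1}:=\prod_{p}p^{\lceil e_{p}/2\rceil}$; then $d_{1}\mid d$, $d_{1}^{2}\ge d$, and $(k,\ell,d_{1})=1$ (the last because $d_{1}\mid d$). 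For each $p\mid d$, from $p^{e_{p}}\mid(mk+n\ell)^{2}$ we get $p^{\lceil e_{p}/2\rceil}\mid mk+n\ell$, and multiplying over $p$ shows that $(mk+n\ell)^{2}\equiv0(d)$ forces $mk+n\ell\equiv0(d_{1})$. Hence
$$
\#\{m,n<M:Am^{2}+2Bmn+Cn^{2}\equiv0(d)\}\le\#\{m,n<M:mk+n\ell\equiv0(d_{1})\}.
$$

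To bound the right-hand side I would fix $n$ and count the admissible $m$. Put $g:=(k,d_{1})$. The congruence $km\equiv-n\ell(d_{1})$ is solvable in $m$ exactly when $g\mid n\ell$; since $(g,\ell)=(k,\ell,d_{1})=1$ this is the same as $g\mid n$, which holds for at most $M/g+1$ values of $n<M$. For each such $n$ the solutions $m$ form a single residue class modulo $d_{1}/g$, hence at most $Mg/d_{1}+1$ of them are $<M$. Multiplying the two counts and using $1\le g\le d_{1}$ gives $(M/g+1)(Mg/d_{1}+1)\ll M^{2}/d_{1}+M$ (largeness of $M$ is used only to absorb the additive constants). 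Since $d_{1}\ge d^{1/2}$, this is $\ll M^{2}/d^{1/2}+M$, which already implies the claimed estimate with room to spare, so the factor $d^{\vep}$ is not really needed.

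The argument has no genuine difficulty; the one place to be careful is the gcd bookkeeping in the linear count, where the coprimality $(k,\ell,d_{1})=1$ coming from Lemma~\ref{lem:kEllD} is exactly what keeps the main term at $M^{2}/d_{1}$ rather than letting a spurious factor of $d_{1}$ creep in. Alternatively, one may phrase the linear count through the two successive minima $\lambda_{1}\le\lambda_{2}$ of the index-$d_{1}$ lattice $\{(m,n):mk+n\ell\equiv0(d_{1})\}$, using $\lambda_{1}\lambda_{2}\asymp d_{1}$ and $\lambda_{1}\ge1$ to obtain the same $\ll M^{2}/d_{1}+M$ bound.
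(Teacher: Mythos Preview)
Your proof is correct and follows essentially the same route as the paper: invoke Lemma~\ref{lem:kEllD} to replace the quadratic congruence by the linear one $mk+n\ell\equiv0(d_{1})$, then count by fixing one variable and using the coprimality $(k,\ell,d_{1})=1$. The one difference is that the paper leaves $d_{1}$ unspecified (any $d_{1}\mid d$ with $d\mid d_{1}^{2}$) and then sums over all such $d_{1}$ and over divisors $w\mid d_{1}$, which is where the factor $d^{\vep}$ enters; your explicit choice $d_{1}=\prod_{p}p^{\lceil e_{p}/2\rceil}$ and the fixed $g=(k,d_{1})$ avoid that summation and yield the sharper bound $\ll M^{2}/d^{1/2}+M$ without the $d^{\vep}$, as you note.
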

\pf
As in Lemma \ref{lem:kEllD}, $A,B,C$ and $d$ determine   $k,\ell$ so that
$$
\sum_{m,n<M}\bo_{\{Am^{2}+2Bmn+Cn^{2}\equiv0(d)\}}
\le
\sum_{m,n<M}\bo_{\{(mk+n\ell)^{2}\equiv0(d)\}}
.
$$
But then there is a $d_{1}\mid d$, with $d\mid d_{1}^{2}$ so that $mk+n\ell\equiv0(d_{1})$.
Let $w=(\ell, d_{1})$; then $mk\equiv0(w)$ implies $m\equiv0(w)$ since $(k,\ell,d)=1$. There are at most $1+M/w$ such $m$ up to $M$. With $m$ fixed, $n$ is uniquely determined mod $d_{1}/w$.
Hence we get
the bound 
\beann
\eqref{eq:nmUpToM}
&\le&
\sum_{d_{1}\mid d\atop d\mid d_{1}^{2}}
\sum_{w\mid d_{1}}
\sum_{m,n<M}\bo_{\{m\equiv0(\mod w)\}}\bo_{\{ n\equiv-\overline{\frac \ell w}\frac mw k(\mod \frac{d_{1}}w)\}}
\\
&\ll&
\sum_{d_{1}\mid d\atop d\mid d_{1}^{2}}
\sum_{w\mid d_{1}}
\left(
{M\over w}
+1
\right)
\left(
{wM\over d_{1}}
+1
\right)
\ll_{\vep}
d^{\vep}
\left(
{M^{2}\over d^{1/2}}
+M
\right)
,
\eeann
as claimed.
\epf

Finally we collect the above lemmata into  our desired estimate, essential in the proof of \eqref{eq:cI41bnd}.
\begin{prop}
For large $M$ and  $\ff=f-a\in\fF$ fixed,
\be\label{eq:eqeq}
\#
\left\{
\begin{array}{c}
\ff'\in\fF
\\
m,n,m',n'<M
\end{array}
\Bigg| 
\begin{array}{c}
a'=a\\
\ff(m,-n)=\ff'(m',-n')
\end{array}
\right\}
\ll_{\vep}
(TM)^{\vep}
\left(
M^{2}
+
TM
\right)
,
\ee
for any $\vep>0$.
\end{prop}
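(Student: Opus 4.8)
The plan is to reduce the claimed count to the two inputs we already have: the bound \eqref{eq:fincK} on how many $\ff'\in\fF$ lie in a fixed $\GL(2,\Z)$-equivalence class of forms, and the divisor-type bounds \eqref{eq:cKzBnd}, \eqref{eq:nmUpToM}. Write $z=\ff(m,-n)=f(m,-n)-a$, so that the value $z$ ranges over $\ll TM^{2}$ possibilities (here we use $A,B,C\ll T$ from \eqref{eq:ACisT} and $m,n<M$); actually it is cleaner to record the constraint as $f'(m',-n')=z+a=:z'$, where $z'\asymp TM^{2}$ as well. For each admissible pair $(z,z')$ we must count the number of $(\ff',m',n')$ with $f'(m',-n')=z'$; summing over $(m,n)$ on the left and over the value $z'$ will then produce the right-hand side.

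First I would fix $\ff=f-a\in\fF$ and observe that the left-hand count is
\[
\sum_{m,n<M}\ \#\{\ff'\in\fF,\ m',n'<M:\ a'=a,\ f'(m',-n')=f(m,-n)\}.
\]
For the inner count, fix also $(m,n)$ and set $z'=f(m,-n)$; since $f'$ has discriminant $-4(a')^{2}=-4a^{2}$ and represents $z'$, Lemma with \eqref{eq:cKzBnd} bounds the number of $\GL(2,\Z)$-classes $\cK$ of such forms by $\ll_{\vep}(z')^{\vep}(z',4a^{2})^{1/2}$. Within each such class, \eqref{eq:fincK} says there are $O(1)$ forms $\ff'\in\fF$, so the number of eligible $\ff'$ (ignoring the $m',n'$ count) is $\ll_{\vep}(z')^{\vep}(z',4a^{2})^{1/2}$. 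Then, for each such fixed $\ff'$, the number of $(m',n')<M$ with $f'(m',-n')=z'$ is at most the number of $(m',n')<M$ lying on the level set $f'(x,y)=z'$; since $f'$ is positive definite with discriminant $-4a^{2}$ and $z'\asymp TM^{2}\ge 1$, this level set carries $\ll_{\vep}(z')^{\vep}$ lattice points by the standard divisor bound for representations by a binary quadratic form (equivalently, factor in the order of discriminant $-4a^{2}$). Combining, the inner count is $\ll_{\vep}(z')^{\vep}(z',4a^{2})^{1/2}$.

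It remains to sum over $(m,n)<M$. Here the main point, and the only genuinely delicate step, is handling the factor $(z',4a^{2})^{1/2}=(f(m,-n),4a^{2})^{1/2}$: summing this naively over all $(m,n)$ would lose too much. Instead I would sort $(m,n)$ by the divisor $d:=(f(m,-n),4a^{2})$ of $4a^{2}$ (there are $\ll_{\vep}T^{\vep}$ such $d$ since $a\asymp T$), and for each $d$ apply \eqref{eq:nmUpToM} with $A,B,C$ the coefficients of $f$: the number of $(m,n)<M$ with $f(m,-n)\equiv 0\pmod d$ is $\ll_{\vep}d^{\vep}(M^{2}/d^{1/2}+M)$. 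Multiplying by the per-pair bound $\ll_{\vep}(TM^{2})^{\vep}\,d^{1/2}$ and summing over the $\ll_{\vep}T^{\vep}$ divisors $d\mid 4a^{2}$ gives
\[
\#\{\cdots\}\ \ll_{\vep}\ (TM)^{\vep}\sum_{d\mid 4a^{2}} d^{1/2}\left(\frac{M^{2}}{d^{1/2}}+M\right)\ \ll_{\vep}\ (TM)^{\vep}\left(M^{2}+TM\right),
\]
where in the last step we used $d\le 4a^{2}\ll T^{2}$, so $d^{1/2}\ll T$, and there are $\ll_{\vep}T^{\vep}$ divisors. This is exactly \eqref{eq:eqeq}. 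The main obstacle is precisely this last bookkeeping: making sure the $(z',4a^{2})^{1/2}$ factor from \eqref{eq:cKzBnd} is exactly cancelled by the $d^{-1/2}$ saving in \eqref{eq:nmUpToM} and that the leftover divisor sum only costs $T^{\vep}$ rather than a power of $T$; everything else is an assembly of the quoted lemmas.
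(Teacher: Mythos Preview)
Your proof is correct and follows essentially the same route as the paper: bound the number of $(m',n')$ representing $z'=f(m,-n)$ by a divisor bound, bound the number of eligible $\ff'$ via \eqref{eq:fincK} and \eqref{eq:cKzBnd} to get the factor $(f(m,-n),4a^{2})^{1/2}$, then sort $(m,n)$ by this gcd and apply \eqref{eq:nmUpToM} so that the $d^{1/2}$ cancels. The paper's presentation is the same chain of inequalities, only with the divisor sum written as $\sum_{d\mid 4a^{2}}d^{1/2}\bo_{\{f(m,-n)\equiv0(d)\}}$ rather than partitioning by the exact gcd, which is an immaterial difference.
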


\pf
Once $f,m,n$, and $\ff'=f'-a\in\fF$ are determined, it is elementary that there are  $\ll_{\vep} M^{\vep}$ values of $m',n'$ with $f(m,-n)=f'(m',-n')$. 
Decomposing $f'$ into classes and applying 
 \eqref{eq:fincK},
 \eqref{eq:cKzBnd}, 
and 
 \eqref{eq:nmUpToM},
in succession, we have
\beann
&&
\hskip-1in
\sum_{m,n<M}
\sum_{\ff'\in\fF\atop a'=a}
\sum_{m',n'<M}
\bo_{\{f(m,-n)=f'(m',-n')\}}
\\
&\ll_{\vep}&
\sum_{m,n<M}
\sum_{\ff'\in\fF\atop a'=a}
\bo_{\{f'\text{ represents }f(m,-n)\}}
M^{\vep}
\\
&\ll&
M^{\vep}
\sum_{m,n<M}
\sum_{\text{classes }\cK\atop\text{representing }f(m,-n)}
\sum_{\ff'\in\fF\atop a'=a,f'\in\cK}
1
\\
&\ll_{\vep}&
(TM)^{\vep}
\sum_{m,n<M}
(f(m,-n),4a^{2})^{1/2}
\\
&\ll&
(TM)^{\vep}
\sum_{d\mid 4a^{2}}
d^{1/2}
\sum_{m,n<M}
\bo_{\{f(m,-n)\equiv0(d)\}}
\\
&\ll&
(TM)^{\vep}
\sum_{d\mid 4a^{2}}
d^{1/2}
\left(
{M^{2}\over d^{1/2}}
+
M
\right)
\\
&\ll&
(TM)^{\vep}
\left(
M^{2}
+
M
a
\right)
,
\eeann
from which the claim follows since $a\ll T$.
\epf

\newpage


\section{Major Arcs}\label{sec:Maj}


We return to the setting and notation of \S\ref{sec:outline} with the goal of establishing \eqref{eq:cMNUbnd}. Thanks to the counting lemmata in \S\ref{sec:count}, we can now define the major ars parameters $Q_{0}$ and $K_{0}$ from \eqref{eq:Q0K0intro}. First recall the two numbers $\gT<\gd$ appearing  in \eqref{eq:spec1}, \eqref{eq:spec2}, and define
\be\label{eq:gT1def}
1<\gT_{1}<\gd
\ee
to be the larger of the two. Then set
\be\label{eq:Q0is}
Q_{0}= T ^{(\gd-\gT_{1})/20},
\qquad
K_{0}=Q_{0}^{2}.
\ee 
We may now also set the parameter $U$ from \eqref{eq:Uis} to be
\be\label{eq:fuIs}
U=Q_{0}{}^{(\eta_{0})^{2}/100},
\ee
where $0<\eta_{0}<1$ is the number which appears in Lemma \ref{lem:spec3}.

Let $\cM_{N}^{(U)}(n)$ denote either $\cM_{N}(n)$ or $\cM_{N}^{U}(n)$ from \eqref{eq:cMNdef}, \eqref{eq:cMNUdef}, respectively.
Putting \eqref{eq:hatFuncN} and \eqref{eq:cRNhatIs} (resp. \eqref{eq:cRNhatUis}) into \eqref{eq:cMNdef} (resp. \eqref{eq:cMNUdef}),
making a change of variables $\gt=r/q+\gb$, and unfolding the integral from $\sum_{m}\int_{0}^{1}$ to $\int_{\R}$ 
gives
\be\label{eq:cMNUn}
\cM_{N}^{(U)}(n)
=
\sum_{x,y\in\Z}
\gU\left(\frac {2x}X\right)
\gU\left(\frac yX\right)
\cdot
\fM(n)
\cdot
\sum_{u}
\mu(u)
,
\ee
where in the last sum, $u$ ranges over $u\mid (2x,y)$ (resp. and $u<U$). Here we have defined
\bea
\label{eq:fMis}
&&
\hskip-.5in
\fM(n)
=
\fM_{x,y}(n)
\\
\nonumber
&:=&
\sum_{q<Q_{0}}
\sideset{}{'}\sum_{r(q)}
\sum_{\g\in\fF}
e_{q}(r(\<w_{x,y},\g v_{0}\>-n))
\int_{\R}
\ft
\left(
\frac N{K_{0}}
\gb
\right)
e(\gb (\ff_{\g}(2x,y)-n))
d\gb
,
\eea
using \eqref{eq:ffToInProd}.

As in \eqref{eq:G0q}, let $\tilde\G(q)$ be the stabilizer of $v_{0}(\mod q)$. Decompose the sum on  $\g\in\fF$ in \eqref{eq:fMis} as 
a sum on ${\g_{0}\in\G/\tilde\G(q)}$ and ${\g\in\fF\cap\g_{0}\tilde\G(q)}$.
Applying Lemma \ref{lem:spec1} to the latter sum, using the definition of $\gT_{1}$ in \eqref{eq:gT1def}, and recalling the estimate \eqref{eq:G0qBnd}  gives
\be\label{eq:fMdecomp}
\fM(n)
=
\fS_{Q_{0}}(n)\cdot
\fW(n)
+
O\left({T^{\gT_{1}}\over N}K_{0}^{2}Q_{0}^{4}\right)
,
\ee
where
\beann
\fS_{Q_{0}}(n)
&:=&
\sum_{q<Q_{0}}
\sideset{}{'}\sum_{r(q)}
\sum_{\g_{0}\in\G/\tilde\G(q)}
{e_{q}(r(\<w_{x,y},\g_{0} v_{0}\>-n))
\over
[\G:\tilde\G(q)]
}
,
\\
\fW(n)
&:=&
{K_{0}\over N}\,
\sum_{\ff\in\fF}
\hat\ft
\left(
(\ff(2x,y)-n)
\frac {K_{0}}N
\right)
.
\eeann

Clearly we have thus split $\fM$ into  ``modular'' and ``archimedean'' components. It is now a simple matter to prove the following
\begin{thm}\label{thm:cMNis}
For $\foh N<n<N$, there exists a function $\fS(n)$ as in Theorem \ref{thm:RNU} so that
\be\label{eq:cMNbnd}
\cM_{N}(n)\gg\fS(n)T^{\gd-1}
.
\ee
\end{thm}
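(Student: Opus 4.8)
The plan is to assemble the theorem from the decomposition \eqref{eq:cMNUn}--\eqref{eq:fMdecomp} by (i) producing a good lower bound on the archimedean factor $\fW(n)$, (ii) producing a lower bound (with a positive, essentially $N^{-\vep}$-sized singular series) on the modular factor $\fS_{Q_0}(n)$, and a matching definition of the limit $\fS(n)$, (iii) checking that the error term in \eqref{eq:fMdecomp} is negligible after summing over $x,y\asymp X$, and finally (iv) summing over $x,y$ with the $\gU$-weights and the M\"obius factor to recover the claimed size $T^{\gd-1}$. Since the theorem is about $\cM_N$ (not $\cM_N^U$), the M\"obius sum in \eqref{eq:cMNUn} is the full sum over $u\mid(2x,y)$, which just reintroduces the coprimality condition $(2x,y)=1$; so effectively we sum $\fM_{x,y}(n)$ over $x,y\asymp X$ with $(2x,y)=1$ against the smooth weights.

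First I would handle the archimedean factor. Recall $\fW(n)=\frac{K_0}{N}\sum_{\ff\in\fF}\hat\ft\big((\ff(2x,y)-n)\frac{K_0}{N}\big)$, and $\hat\ft$ is the Fej\'er kernel \eqref{eq:hatFunc}, which is nonnegative and bounded below by a positive constant on, say, $|y|\le 1/2$. Hence $\fW(n)\gg \frac{K_0}{N}\,\#\{\ff\in\fF: |\ff(2x,y)-n|<\frac{N}{2K_0}\}$. By Lemma \ref{lem:spec2} with $K=2K_0$ (note $K_0=Q_0^2$ is a small enough power of $N$ that $K\le T_2^{1/10}$, given the choice \eqref{eq:Q0is} and $T_2$ a large power of $T_1$), this count is $\gg T^\gd/K_0 + T^\gT$, so $\fW(n)\gg T^\gd/N$. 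Crucially this lower bound is uniform in $x,y\asymp X$ and in $n\in(\foh N,N)$.

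Next, the modular factor. I would first show $\fS_{Q_0}(n)$ is a truncation of a multiplicative singular series. Writing $c_q(n):=\sideset{}{'}\sum_{r(q)}\sum_{\g_0\in\G/\tilde\G(q)}\frac{e_q(r(\<w_{x,y},\g_0 v_0\>-n))}{[\G:\tilde\G(q)]}$, one sees $c_q$ is the $q$-th term of a local density: summing the inner $r$-sum detects $\<w_{x,y},\g_0 v_0\>\equiv n \pmod q$, so $\fS_{Q_0}(n)=\sum_{q<Q_0}c_q(n)$ with $\fS(n):=\sum_{q=1}^\infty c_q(n)$ the full singular series. Multiplicativity of $\G/\G(q)$ (Lemma \ref{lem:GGq}) and the stabilization of the $2$- and $3$-parts give an Euler product; the local factors are $\ge 0$ and the product vanishes precisely when some local obstruction fails, i.e. $n\notin\sA$ (this is the content of Remark \ref{rmk:locA}). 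A standard bound $|c_q(n)|\ll q^{-1+\vep}(n,q)^{?}$ on the $p$-power terms (using $[\G:\tilde\G(q)]\asymp q^2$ and square-root cancellation in the Gauss-type $r$-sum) shows the tail $\sum_{q\ge Q_0}|c_q(n)|$ is a negative power of $Q_0$, hence of $N$, so $\fS_{Q_0}(n)=\fS(n)+O(N^{-c})$; and a standard Siegel-type lower bound for such products gives $\fS(n)\gg_\vep N^{-\vep}$ when $n\in\sA$. The one subtlety is that $c_q$ a priori depends on $x,y$ through $w_{x,y}$; I would note that $w_{x,y}$ only enters via its residue, that admissibility is a statement about the orbit $\G/\G(24)$ of $v_0$, and that for the lower bound it suffices to exhibit one allowed $x,y$ residue class—deferring a cleaner treatment to Theorem \ref{thm:cMNtoMNU}.

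Finally I would assemble: from \eqref{eq:fMdecomp}, $\fM_{x,y}(n)=\fS_{Q_0}(n)\fW(n)+O(T^{\gT_1}K_0^2 Q_0^4/N)$; by \eqref{eq:Q0is} the parameters $K_0,Q_0$ are chosen exactly so that $T^{\gT_1}K_0^2Q_0^4$ is a power of $T$ strictly below $T^\gd$ (indeed $T^{\gT_1}\cdot T^{6(\gd-\gT_1)/20}\ll T^{\gd - c}$), so this error, times the $\ll X^2$ values of $(x,y)$, is $\ll X^2 T^{\gd-c}/N = T^{\gd-1-c}$, negligible against the main term. For the main term, summing $\fS_{Q_0}(n)\fW(n)\gg N^{-\vep}\cdot (T^\gd/N)$ over the $\asymp X^2$ coprime pairs $x,y\asymp X$ against the smooth $\gU$-weights (which contribute a positive constant mass, and the coprimality condition $(2x,y)=1$ contributes a positive density $6/\pi^2\cdot$const factor that can be absorbed) gives $\cM_N(n)\gg N^{-\vep}\,X^2 T^\gd/N = N^{-\vep}T^{\gd-1}$, using $X^2 T = N$ from \eqref{eq:TXNis}. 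Renaming $\vep$ into $\fS$ as in Theorem \ref{thm:RNU} (the $N^{-\vep}$ is exactly the allowed size of the singular series) yields \eqref{eq:cMNbnd}. The main obstacle is the clean bookkeeping of the singular series: verifying nonnegativity and the sharp lower bound $\gg_\vep N^{-\vep}$ of the Euler product, controlling the dependence on $x,y$, and matching the truncated $\fS_{Q_0}$ to the completed $\fS$ with an error smaller than the main term—everything else is insertion of the counting Lemmas \ref{lem:spec1} and \ref{lem:spec2} and the parameter choices \eqref{eq:Q0is}, \eqref{eq:TXNis}.
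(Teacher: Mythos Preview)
Your approach is essentially the paper's: bound $\fW(n)$ below via positivity of $\hat\ft$ and Lemma~\ref{lem:spec2}, control $\fS_{Q_0}(n)$ as a truncated Euler product with $\fS(n)\gg_\vep N^{-\vep}$ on admissibles, check the error in \eqref{eq:fMdecomp} is negligible by the choice \eqref{eq:Q0is}, and sum over $x,y\asymp X$. The assembly and the parameter bookkeeping are correct.

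There is one genuine gap, at exactly the point you flag as a ``subtlety''. Your $c_q(n)$ depends a priori on $x,y$ through $w_{x,y}$, and your workaround---``it suffices to exhibit one allowed $x,y$ residue class''---is not enough: in step (iv) you sum $\fM_{x,y}(n)$ over \emph{all} coprime $x,y\asymp X$, so to extract $\gg X^2$ terms you need the lower bound on $\fS_{Q_0}(n)$ uniformly (or at least for a positive proportion) in $x,y$, not for a single residue class. Deferring to Theorem~\ref{thm:cMNtoMNU} does not help either; that theorem transfers $\cM_N$ to $\cM_N^U$ and has nothing to do with the $x,y$ dependence of the singular series.

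The clean resolution, which the paper uses, is a one-line transitivity observation: recall from \eqref{eq:wxyDef} that $w_{x,y}=\xi_{x,y}^{t}e_4$ with $\xi_{x,y}\in\Xi<\G$, so $\<w_{x,y},\g_0 v_0\>=\<e_4,\xi_{x,y}\g_0 v_0\>$. Since $\g_0$ runs over $\G/\tilde\G(q)$ and left multiplication by $\xi_{x,y}\in\G$ permutes these cosets, the inner sum is unchanged if $w_{x,y}$ is replaced by $e_4$. Thus $\fS_{Q_0}(n)$ is literally independent of $x,y$, the completed $\fS(n)$ is the well-defined Euler product the paper writes down, and your step (iv) goes through with no further fuss.
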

\pf
First we discuss the modular component. Write $\fS_{Q_{0}}$ as
$$
\fS_{Q_{0}}(n)
=
\sum_{q<Q_{0}}
{1\over
[\G:\tilde\G(q)]
}
\sum_{\g_{0}\in\G/\tilde\G(q)}
c_{q}(\<w_{x,y},\g_{0} v_{0}\>-n)
,
$$
where $c_{q}$ is the Ramanujan sum, $c_{q}(m)=\sum_{r(q)}'e_{q}(rm)$. 
By \eqref{eq:Fuc}, 
the analysis now
reduces to a classical estimate for the singular series. We may 
use the transitivity of the $\g_{0}$ sum to replace $\<w_{x,y},\g_{0} v_{0}\>$ by $\<e_{4},\g_{0} v_{0}\>$, 
extend the sum on $q$ to all natural numbers,  and use multiplicativity to write the sum
as an Euler product. Then  the resulting singular series 
$$
\fS(n)
:=
\prod_{p}
\left[
1+
\sum_{k\ge1}
{1\over[\G:\G_{0}(p^{k})]}
\sum_{\g_{0}\in\G/\G_{0}(p^{k})}
c_{p^{k}}
\bigg(
\<
e_{4},
\g_{0}\,
v_{0}
\>
-
n
\bigg)
\right]
$$ 
vanishes only on non-admissible numbers, and can easily be seen to satisfy
\be\label{eq:fSbnd}
N^{-\vep}\ll_{\vep} \fS(n)\ll_{\vep} N^{\vep},
\ee
for any $\vep>0$. See, e.g. \cite[\S4.3]{BourgainKontorovich2010}.

Next we handle the archimedean component. By our choice of $\ft$ in \eqref{eq:hatFunc}, specifically that $\hat\ft>0$ and $\hat\ft(y)>2/5$ for $|y|<1/2$, we have
$$
\fW(n)\gg {K_{0}\over N} \sum_{\ff\in\fF}
\bo_{\{
|\ff(2x,y)-n|
<
\frac N {2K_{0}}
\}}
\gg
{T^{\gd}\over N}+{T^{\gT_{1}}K_{0}\over N}
,
$$
using Lemma \ref{lem:spec2}.

Putting everything into  \eqref{eq:fMdecomp} and then into \eqref{eq:cMNUn} gives \eqref{eq:cMNbnd}, using \eqref{eq:Q0is} and \eqref{eq:TXNis}. 
\epf

Next we derive from the above that the same bound holds for $\cM_{N}^{U}$ (most of the time).

\begin{thm}\label{thm:cMNtoMNU}
There is an $\eta>0$ such that the bound \eqref{eq:cMNbnd} holds with $\cM_{N}$ replaced by $\cM_{N}^{U}$, 
except on a set 
of cardinality $\ll N^{1-\eta}$.
\end{thm}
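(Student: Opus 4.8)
The plan is to compare $\cM_N^U$ with $\cM_N$ and show their difference is small in a suitable averaged sense, exactly mirroring the passage from $\cR_N$ to $\cR_N^U$ in \eqref{eq:nZbnd}, but now applied to the major-arc integrals. First I would write
$$
\cM_N(n)-\cM_N^U(n)
=
\int_0^1 \fT(\gt)\Big(\widehat{\cR_N}(\gt)-\widehat{\cR_N^U}(\gt)\Big)e(-n\gt)\,d\gt,
$$
and use that $\widehat{\cR_N}-\widehat{\cR_N^U}$ is the Fourier transform of $\cR_N-\cR_N^U$, which is supported on $n<N$ and was already shown (in the proof of Theorem \ref{thm:Main} assuming Theorem \ref{thm:RNU}) to satisfy $\sum_{n<N}|\cR_N(n)-\cR_N^U(n)|\ll T^\gd X^2/U$. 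Since $\fT$ is a bounded function on the circle with $\|\fT\|_\infty\ll 1$ (indeed $\fT$ is a sum of non-overlapping spikes of height $\le 1$ once $K_0/N$ is small enough that the spikes at distinct fractions $r/q$, $q<Q_0$, do not collide — which holds because $K_0/N\ll Q_0^2/N$ is far smaller than $1/Q_0^2$), convolution with $\widehat{\fT}$ does not increase the $\ell^1$ norm by more than a constant. Hence
$$
\sum_{n<N}\big|\cM_N(n)-\cM_N^U(n)\big|
\le
\|\fT\|_\infty \sum_{n<N}\big|\cR_N(n)-\cR_N^U(n)\big|
\ll
\frac{T^\gd X^2}{U}.
$$

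Next I would convert this $\ell^1$ bound into a bound on the exceptional set. Let $Z'$ be the set of admissible $n\in(\foh N,N)$ for which $\cM_N^U(n)<\foh\cM_N(n)$. For such $n$, Theorem \ref{thm:cMNis} gives $\cM_N(n)\gg \fS(n)T^{\gd-1}\gg_\vep N^{-\vep}T^{\gd-1}$, and therefore $|\cM_N(n)-\cM_N^U(n)|\gg_\vep N^{-\vep}T^{\gd-1}$. Summing,
$$
|Z'|\cdot N^{-\vep}T^{\gd-1}
\ll_\vep
\sum_{n<N}\big|\cM_N(n)-\cM_N^U(n)\big|
\ll
\frac{T^\gd X^2}{U},
$$
so, using $T X^2 = N$ from \eqref{eq:TXNis} and $U = N^{\fu}$ a fixed positive power of $N$ from \eqref{eq:Uis},
$$
|Z'|\ll_\vep \frac{N^{1+\vep}}{U}=N^{1-\fu+\vep}.
$$
Choosing $\vep<\fu/2$ and setting $\eta$ to be, say, $\fu/2$ (shrinking it if necessary to be compatible with the $\eta$ already in play in Theorem \ref{thm:cMNtoMNU}), this is $\ll N^{1-\eta}$. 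Off the exceptional set $Z'$ we have $\cM_N^U(n)\ge\foh\cM_N(n)\gg\fS(n)T^{\gd-1}$, which is precisely the assertion that \eqref{eq:cMNbnd} holds for $\cM_N^U$ outside a set of size $\ll N^{1-\eta}$.

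The only genuinely non-routine point is justifying $\|\fT\|_\infty\ll 1$ and, more precisely, that passing from $\cR_N-\cR_N^U$ to $\fT\cdot(\widehat{\cR_N}-\widehat{\cR_N^U})$ on the Fourier side costs only a constant in $\ell^1$: this is because $\fT$, being a finite sum of translates of the fixed nonneg\-ative bump $\ft(\tfrac N{K_0}\cdot)$ localized to width $K_0/N$ at the Farey fractions of level $<Q_0$, has $\widehat{\fT}$ of bounded $\ell^1$ norm (its mass is $\ll Q_0^2\cdot(K_0/N)\ll 1$ by \eqref{eq:Q0is} and $TX^2=N$), so multiplication by $\fT$ in $\gt$ is convolution by an $\ell^1$-bounded kernel in $n$ and Young's inequality applies. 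I expect this bookkeeping with $\fT$ to be the main (mild) obstacle; everything else is a direct transcription of the $\cR_N\to\cR_N^U$ argument already carried out in the excerpt.
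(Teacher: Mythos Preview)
Your argument has a genuine gap at the Fourier-analytic step. The inequality
\[
\sum_{n<N}\big|\cM_N(n)-\cM_N^U(n)\big|
\le
\|\fT\|_\infty \sum_{n<N}\big|\cR_N(n)-\cR_N^U(n)\big|
\]
is not valid: multiplication by $\fT$ on the circle corresponds to convolution with the Fourier coefficient sequence $\check\fT(k)=\int_0^1\fT(\gt)e(-k\gt)d\gt$, and the $\ell^1\to\ell^1$ operator norm of this convolution is $\|\check\fT\|_{\ell^1}$, not $\|\fT\|_{L^\infty}$. Your later justification computes the quantity $Q_0^2\cdot(K_0/N)$, but that is $\|\fT\|_{L^1}=\check\fT(0)$, not $\sum_k|\check\fT(k)|$. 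In fact one can check that $\check\fT(k)=\check{\ft_N}(k)\sum_{q<Q_0}c_q(k)$, and a direct estimate via $|c_q(k)|\le(k,q)$ gives only $\|\check\fT\|_{\ell^1}\ll Q_0\log Q_0$; heuristics (e.g. Plancherel) suggest this is essentially sharp. Since $U=Q_0^{\eta_0^2/100}$ is a much smaller power of $Q_0$ (see \eqref{eq:fuIs}), the loss of a factor $Q_0$ completely swamps the saving of $U$, and the exceptional-set bound you derive would be $|Z'|\ll N^{1+\vep}Q_0/U\gg N$, which is useless.

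The paper avoids this obstruction entirely by not passing through Fourier analysis at all. Instead it uses the explicit formula \eqref{eq:cMNUn}, which writes \emph{both} $\cM_N(n)$ and $\cM_N^U(n)$ as $\sum_{x,y}\fM_{x,y}(n)$ times a M\"obius sum in $u$, with the same $\fM_{x,y}(n)$ in each case; the difference then restricts the M\"obius sum to $u\ge U$. One bounds $\sum_{n<N}|\fM_{x,y}(n)|$ directly from the major-arc decomposition \eqref{eq:fMdecomp}, using $|\fS_{Q_0}|\ll N^\vep$ and the trivial estimate $\sum_{n<N}\hat\ft((\ff(2x,y)-n)K_0/N)\ll N/K_0$ for each $\ff\in\fF$. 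This gives $\sum_n|\cM_N-\cM_N^U|\ll N^\vep T^\gd X^2/U$ cleanly, after which your exceptional-set argument goes through verbatim. The point is that the factorization in \eqref{eq:cMNUn} already isolates the $u$-dependence multiplicatively, so there is no need to control the operator ``multiply by $\fT$.''
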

\pf
Putting \eqref{eq:fMdecomp} into \eqref{eq:cMNUn} gives
\beann
&&
\hskip-.5in
\sum_{n<N}
|\cM_{N}(n)-\cM_{N}^{U}(n)|
\ll
\sum_{x,y\asymp X}
\sum_{n<N}|\fM(n)|
\sum_{u\mid(2x,y)\atop u\ge U}
1
\\
&\ll_{\vep}&
\sum_{y<X}
\sum_{u\mid y\atop u\ge U}
\sum_{x<X\atop 2x\equiv0(\mod u)}
\left\{
N^{\vep}
\sum_{\ff\in\fF}
{K_{0}\over N}
\left[
\sum_{n<N}
\hat\ft
\left(
(\ff(2x,y)-n)
\frac {K_{0}}N
\right)
\right]
+
K_{0}^{2}Q_{0}^{4}T^{\gT_{1}}
\right\}
\\
&\ll&
N^{\vep}
X\frac XU
T^{\gd}
,
\eeann
using \eqref{eq:fSbnd} and \eqref{eq:Q0is}. 
The rest of the argument is identical to that leading to \eqref{eq:nZbnd}.
\epf

This establishes \eqref{eq:cMNUbnd}, and hence completes our Major Arcs analysis; the rest of the paper is devoted to proving \eqref{eq:cEN2bnd}.


\newpage


\section{Minor Arcs I: Case $q<Q_{0}$}\label{sec:qQ0}

We keep all the notation of \S\ref{sec:outline}, our goal in this section being to bound \eqref{eq:IQ0K0} and \eqref{eq:IQ0}. First we return to
\eqref{eq:cRNhatUis} and reverse orders of summation, writing
\be\label{eq:RNRf}
\widehat{\cR_{N}^{U}}
(\gt)
=
\sum_{\d< U}
\mu(\d)
\sum_{\ff\in\fF}
e(-a\gt)
\hat\cR_{f,\d}
(\gt)
,
\ee
where $\ff=f-a$ according to \eqref{eq:ffVdef}, and we have set
$$
\hat\cR_{f,\d}(\gt)
:=
\sum_{ 2x\equiv0( u)}
\sum_{ y\equiv0( u)}
\gU\left(\frac {2x}X\right)
\gU\left(\frac{y}X\right)
e
\bigg(
\gt
f(2 x, y)
\bigg)
.
$$
If $u$ is even, then we have
\be\label{eq:cRffuIs}
\hat\cR_{f,\d}(\gt)
=
\sum_{ x,y\in\Z}
\gU\left(\frac {x u}X\right)
\gU\left(\frac{y u}X\right)
e
\bigg(
\gt
f( x u , y u)
\bigg)
.
\ee
If $u$ is odd, we have
$$
\hat\cR_{f,\d}(\gt)
=
\sum_{ x,y\in\Z}
\gU\left(\frac {2x u}X\right)
\gU\left(\frac{y u}X\right)
e
\bigg(
\gt
f(2 x u , y u)
\bigg)
.
$$
From now on, we focus exclusively on the case $u$ is even, the other case being handled similarly.
We first massage $\hat \cR_{f,u}$ further.

Since $f$ 
is homogeneous quadratic,
we have
$$
f( x u , y u)
=
u^{2}
f(x,y)
.
$$
Hence expressing $\gt=\frac rq+\gb$, we will 
need to write $u^{2}/q$ as a reduced fraction; to this end, introduce the notation
\be\label{eq:tilQis}
\tilde q:=(u^{2},q)\qquad
u_{0}:=u^{2}/\tilde q
,\qquad
q_{0}:=q/\tilde q
,
\ee
so that $u^{2}/q=u_{0}/q_{0}$ in lowest terms, $(u_{0},q_{0})=1$.

\begin{lem}
Recalling the notation \eqref{eq:cSfDef},
we have
\be\label{eq:cRfuIs}
\hat\cR_{f,\d }\left(\frac rq+\gb\right)
=
{1\over \d ^{2}}
\sum_{n,m\in\Z}
\cJ_{f}\left(X,\gb;{n\over \d q_{0}},{m\over \d q_{0}}\right)
\cS_{f}(q_{0},r\d _{0};n,m)
,
\ee
where we have set
\be\label{eq:cJfIs}
\cJ_{f}\left(X,\gb;{n\over uq_{0}},{m\over uq_{0}}\right)
:=
\iint\limits_{x,y\in\R}
\gU\left({x\over X}\right)
\gU\left({y\over X}\right)
e
\bigg(
\gb
f(x,y)
-{n\over uq_{0}}x-{m\over uq_{0}}y
\bigg)
dx dy
.
\ee
\end{lem}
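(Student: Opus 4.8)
The plan is to derive \eqref{eq:cRfuIs} from \eqref{eq:cRffuIs} by two standard manipulations: a reduction of the rational part of the phase modulo $q_{0}$ using homogeneity of $f$, followed by Poisson summation in the two integer variables. First, writing $\gt=\tfrac rq+\gb$ and invoking $f(xu,yu)=u^{2}f(x,y)$ together with $u^{2}/q=u_{0}/q_{0}$ from \eqref{eq:tilQis}, the rational phase becomes $e\big(\tfrac rq u^{2}f(x,y)\big)=e_{q_{0}}\big(ru_{0}f(x,y)\big)$. Next I would split each variable as $x=k+q_{0}x'$, $y=\ell+q_{0}y'$ with $k,\ell$ running over $\Z/q_{0}\Z$ and $x',y'\in\Z$; since $f$ has integer coefficients, $f(k+q_{0}x',\ell+q_{0}y')\equiv f(k,\ell)\pmod{q_{0}}$, so the rational phase depends on $(k,\ell)$ only. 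This rewrites $\hat\cR_{f,u}\big(\tfrac rq+\gb\big)$ as $\sum_{k,\ell\,(q_{0})}e_{q_{0}}\big(ru_{0}f(k,\ell)\big)$ times the inner sum $\sum_{x',y'\in\Z}\gU\big(\tfrac{(k+q_{0}x')u}{X}\big)\gU\big(\tfrac{(\ell+q_{0}y')u}{X}\big)e\big(\gb u^{2}f(k+q_{0}x',\ell+q_{0}y')\big)$.

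For each fixed $(k,\ell)$ this inner sum is $\sum_{x',y'\in\Z}g(x',y')$ with $g\in C_{c}^{\infty}(\R^{2})$ --- a product of the bump functions $\gU$ with a bounded oscillatory factor --- so Poisson summation gives $\sum g(x',y')=\sum_{n,m\in\Z}\hat g(n,m)$, where $\hat g$ decays faster than any power of $(n,m)$ by repeated integration by parts in the smooth, compactly supported integrand, which justifies every rearrangement below. In $\hat g(n,m)=\iint g(s,t)e(-ns-mt)\,ds\,dt$ I would change variables $x=k+q_{0}s$, $y=\ell+q_{0}t$, contributing a Jacobian $q_{0}^{-2}$ and a factor $e_{q_{0}}(nk+m\ell)$, with the integral acquiring the phase $e\big(\gb u^{2}f(x,y)-\tfrac n{q_{0}}x-\tfrac m{q_{0}}y\big)$; then I would rescale $x\mapsto x/u$, $y\mapsto y/u$, contributing a Jacobian $u^{-2}$ and, via $f(x/u,y/u)=u^{-2}f(x,y)$, restoring $\gb f$ in place of $\gb u^{2}f$. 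The integral that remains is exactly $\cJ_{f}\big(X,\gb;\tfrac n{uq_{0}},\tfrac m{uq_{0}}\big)$ of \eqref{eq:cJfIs}, so $\hat g(n,m)=\tfrac1{u^{2}q_{0}^{2}}\,e_{q_{0}}(nk+m\ell)\,\cJ_{f}\big(X,\gb;\tfrac n{uq_{0}},\tfrac m{uq_{0}}\big)$.

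Substituting back, moving the $(n,m)$-sum outside, and collecting the $(k,\ell)$-sum then yields $\tfrac1{u^{2}}\sum_{n,m}\cJ_{f}\big(X,\gb;\tfrac n{uq_{0}},\tfrac m{uq_{0}}\big)\cdot\tfrac1{q_{0}^{2}}\sum_{k,\ell\,(q_{0})}e_{q_{0}}\big(ru_{0}f(k,\ell)+nk+m\ell\big)$, and the inner double sum is precisely $\cS_{f}(q_{0},ru_{0};n,m)$ by \eqref{eq:cSfDef}. This is exactly \eqref{eq:cRfuIs}. The odd-$u$ case runs identically, the harmless factor $2$ in the first slot of $\gU$ and of $f$ being carried unchanged through the same steps.

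I do not expect a genuine obstacle here; the only point deserving care is the bookkeeping of the two rescalings --- the residue-class splitting supplies the factor $q_{0}^{-2}$ that is exactly the normalization already built into the definition of $\cS_{f}$, while the dilation by $u$ supplies the $u^{-2}$ that survives in front --- together with the routine verification that $\cJ_{f}$ decays fast enough in $(n,m)$ to license Poisson summation and the interchange of the $(k,\ell)$- and $(n,m)$-summations.
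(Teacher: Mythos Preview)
Your proof is correct and follows essentially the same route as the paper: reduce the rational phase using homogeneity of $f$ and \eqref{eq:tilQis}, split into residue classes modulo $q_{0}$, apply Poisson summation, and change variables to recover $\cJ_{f}$ and $\cS_{f}$. The paper collapses the $q_{0}$-shift and the $u$-rescaling into a single step, whereas you separate them, but the argument is otherwise identical.
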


\pf
Returning to \eqref{eq:cRffuIs}, we have
\beann
\hat\cR_{f,\d}\left(\frac rq+\gb\right)
&=&
\sum_{x,y\in\Z}
\gU\left({\d x\over X}\right)
\gU\left({\d y\over X}\right)
e_{q_{0}}
\bigg(
r
\d_{0}
f( x,y)
\bigg)
e
\bigg(
\gb
\d^{2}
f(x,y)
\bigg)
\\
&=&
\sum_{k(q_{0})}
\sum_{\ell( q_{0})}
e_{q_{0}}
\bigg(
r
\d_{0}
f(k,\ell)
\bigg)
\\
&&
\times
\left[
\sum_{x\in\Z\atop x\equiv k(q_{0})}
\sum_{y\in\Z\atop y\equiv \ell(q_{0})}
\gU\left({\d x\over X}\right)
\gU\left({\d y\over X}\right)
e
\bigg(
\gb
\d^{2}
f(x,y)
\bigg)
\right]
.
\eeann

Apply Poisson summation to the bracketed term above:
\beann
\Bigg[
\cdot
\Bigg]
&=&
\sum_{x,y\in\Z}
\gU\left({\d (q_{0}x+k)\over X}\right)
\gU\left({\d (q_{0}y+\ell)\over X}\right)
e
\bigg(
\gb
\d^{2}
f(q_{0}x+k,q_{0}y+\ell)
\bigg)
\\
&=&
\sum_{n,m\in\Z}\
\iint\limits_{x,y\in\R}
\gU\left({\d (q_{0}x+k)\over X}\right)
\gU\left({\d (q_{0}y+\ell)\over X}\right)
e
\bigg(
\gb
\d^{2}
f(q_{0}x+k,q_{0}y+\ell)
\bigg)
\\
&&
\hskip3.5in
\times
e(-nx-my)
dx dy
\\
&=&
{1\over \d^{2}q_{0}^{2}}
\sum_{n,m\in\Z}
e_{q_{0}}(nk+m\ell)
\cJ_{f}\left(X,\gb;{n\over \d q_{0}},{m\over \d q_{0}}\right),
\eeann
%
%
%
%
%
Inserting this in the above, the claim follows immediately.
\epf

We are now in position to prove the following
\begin{prop}
With the above notation,
\be\label{eq:hatRfuBnd}
\left|
\hat\cR_{f,\d }\left(\frac rq+\gb\right)
\right|
\ll
u
(\sqrt{q}|\gb|T)^{-1}
.
\ee
\end{prop}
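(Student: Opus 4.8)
\pf[Sketch]
The plan is to insert the Poisson--summed identity \eqref{eq:cRfuIs} and estimate its two ingredients separately. By \eqref{eq:cSfbnd1} one has $|\cS_f(q_0,ru_0;n,m)|\le q_0^{-1/2}$ for all $n,m$, so
\[
\left|\hat\cR_{f,u}\left(\tfrac rq+\gb\right)\right|
\ \le\
\frac1{u^{2}\,q_0^{1/2}}\sum_{n,m\in\Z}\left|\cJ_f\left(X,\gb;\tfrac n{uq_0},\tfrac m{uq_0}\right)\right| .
\]
Thus everything reduces to proving $\sum_{n,m\in\Z}\bigl|\cJ_f(X,\gb;\tfrac n{uq_0},\tfrac m{uq_0})\bigr|\ll u^{2}(|\gb|T)^{-1}$; granting this and recalling $q_0=q/\tilde q$ with $\tilde q=(u^{2},q)$ from \eqref{eq:tilQis}, one obtains $|\hat\cR_{f,u}(\tfrac rq+\gb)|\ll \tilde q^{1/2}(q^{1/2}|\gb|T)^{-1}\ll u\,(q^{1/2}|\gb|T)^{-1}$, since $\tilde q\le u^{2}$.

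For the integral $\cJ_f(X,\gb;\alpha,\beta)$ of \eqref{eq:cJfIs} I would establish two facts. \emph{(a) A uniform bound} $|\cJ_f(X,\gb;\alpha,\beta)|\ll(|\gb|T)^{-1}$, valid for \emph{all} $(\alpha,\beta)$. After rescaling $(x,y)=(Xs,Xt)$ one has $\cJ_f=X^{2}\iint\gU(s)\gU(t)\,e(\gb X^{2}f(s,t)-\alpha Xs-\beta Xt)\,ds\,dt$. By \eqref{eq:ACisT} and \eqref{eq:disc}, $f$ is positive definite with $A,C\asymp T$ and discriminant $-4a^{2}$, $a\asymp T$, so --- $f$ being a homogeneous quadratic --- the Hessian of the phase is the \emph{constant} matrix with entries $2\gb X^{2}A,\,2\gb X^{2}B,\,2\gb X^{2}B,\,2\gb X^{2}C$, whose eigenvalues are both $\asymp\gb X^{2}T=\gb N$. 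Completing the square and diagonalizing reduces the integral to $X^{2}\iint b(s,t)\,e(\mu_1 s^{2}+\mu_2 t^{2})\,ds\,dt$ with $b$ a fixed smooth compactly supported bump and $\mu_1,\mu_2\asymp\gb N$; two iterated one--dimensional stationary--phase estimates, each gaining a factor $|\mu_i|^{-1/2}$, give $|\cJ_f|\ll X^{2}(|\gb|N)^{-1}=(|\gb|T)^{-1}$ when $|\gb|N\gg1$, while the trivial bound $|\cJ_f|\le X^{2}\ll(|\gb|T)^{-1}$ handles $|\gb|N\ll1$. \emph{(b) Localization:} $\cJ_f(X,\gb;\alpha,\beta)$ is negligible unless $(\alpha,\beta)$ lies within $O(1/X)$ of the resonant set $\mathcal D:=\gb X\,\nabla f([1,2]^{2})$, which has diameter $\asymp|\gb|XT$; indeed on the support of $\gU\otimes\gU$ the gradient of the phase equals $\gb X^{2}\nabla f(s,t)-(\alpha X,\beta X)$, of size $\asymp X\cdot\operatorname{dist}((\alpha,\beta),\mathcal D)$, so repeated integration by parts yields rapid decay away from $\mathcal D$.

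Granting (a) and (b), the pairs $(n,m)\in\Z^{2}$ for which $(\tfrac n{uq_0},\tfrac m{uq_0})$ lies in the $O(1/X)$--neighborhood of $\mathcal D$ number
\[
\ll\ \bigl(1+uq_0(|\gb|XT+1/X)\bigr)^{2}\ \ll\ u^{2},
\]
using $uq_0\le uq$ and $|\gb|<1/(qM)$ with $M=XT$ (see \eqref{eq:Mis}), which give $uq_0|\gb|XT\ll u$, together with $q<Q_0\ll X$ (see \eqref{eq:Q0is}), which gives $uq_0/X\ll u$. Since each such term is $\ll(|\gb|T)^{-1}$ by (a) and the remaining terms are negligible by (b), we get $\sum_{n,m}|\cJ_f(X,\gb;\tfrac n{uq_0},\tfrac m{uq_0})|\ll u^{2}(|\gb|T)^{-1}$, which is exactly what was needed, and \eqref{eq:hatRfuBnd} follows. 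The case $u$ odd is identical, starting from the odd--$u$ analogue of \eqref{eq:cRffuIs} and using the same $\cJ_f$ and $\cS_f$.

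The main obstacle is part (a): obtaining $(|\gb|T)^{-1}$ with no restriction on $(\alpha,\beta)$, in particular in the transition regime where the stationary point of the phase has escaped the support of $\gU\otimes\gU$. This is where one must use the one--dimensional estimate $|\int b(x)e(\mu x^{2})\,dx|\ll|\mu|^{-1/2}\|b\|_{C^{1}}$ in its robust form --- valid whether or not the critical point lies in $\operatorname{supp} b$ --- and where it is convenient that $f$ is homogeneous so that the Hessian is constant; a crude non--stationary treatment there would only give $X^{2}$ and cost a power of $T$ in the final estimate. A secondary point is keeping the lattice--point count at $O(u^{2})$, which forces one to carry the smoothing scale $1/X$ through the argument and to use that $q$ is a small power of $N$.
\epf
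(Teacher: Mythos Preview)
Your proof is correct and follows the paper's approach: apply \eqref{eq:cRfuIs}, bound $\cS_f$ via \eqref{eq:cSfbnd1}, localize the $(n,m)$-sum to $O(u^2)$ terms by non-stationary phase, and bound each surviving $\cJ_f$ by $(|\gb|T)^{-1}$ via stationary phase using $|\operatorname{discr}(f)|^{1/2}\asymp T$. Your parts (a)--(b) simply spell out what the paper records as \eqref{eq:nmRest}--\eqref{eq:cJfbnd} with a reference to \cite[\S8.3]{IwaniecKowalski}; the only minor difference is that your careful localization carries the $O(1/X)$ thickening and so invokes $q<Q_0$, whereas the paper's cruder restriction $|n|,|m|\ll u$ uses only $|\gb|<1/(qM)$.
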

\pf
By (non)stationary phase (see, e.g., \cite[\S8.3]{IwaniecKowalski}), the 
integral
in 
\eqref{eq:cJfIs}
 has negligible contribution unless
$$
{|n|\over \d q_{0}},
{|m|\over \d q_{0}}
\ll
|\gb|\cdot |\nabla f|\ll |\gb|\cdot TX,
$$
so the $n,m$ sum can be restricted to
\be\label{eq:nmRest}
|n|,|m|\ll  |\gb|\cdot TX\cdot \d q_{0}
\ll
 \d 
.
\ee
Here we used $|\gb|\ll (qM)^{-1}$ with $M$ given by \eqref{eq:Mis}.
In this range, stationary phase gives
\be\label{eq:cJfbnd}
\left|
\cJ_{f}\left(X,\gb;{n\over \d q_{0}},{m\over \d q_{0}}\right)\right|
\ll\min\left(X^{2},{1\over |\gb|\cdot|\operatorname{discr}(f)|^{1/2}}\right)
\ll\min\left(X^{2},{1\over |\gb| T}\right)
,
\ee
using
\eqref{eq:disc} and \eqref{eq:fFTbnd}
that
 $|\operatorname{discr}(f)|=4|B^{2}-AC|=4 a^{2}\gg T^{2}$.

Putting \eqref{eq:nmRest}, \eqref{eq:cJfbnd} and \eqref{eq:cSfbnd1} into \eqref{eq:cRfuIs}, we have
$$
\left|
\hat\cR_{f,\d }\left(\frac rq+\gb\right)
\right|
\ll
{1\over \d ^{2}}
\sum_{|n|,|m|\ll u} 
{1\over |\gb|T}
\cdot
{1\over \sqrt {q_{0}}}
,
$$
from which the claim follows, using \eqref{eq:tilQis}.
\epf

Finally, we prove the desired estimates of the strength \eqref{eq:cEN2bnd}.

\begin{thm}\label{thm:IQ0}
Recall the integrals $\cI_{Q_{0},K_{0}},\ \cI_{Q_{0}}$ from \eqref{eq:IQ0K0}, \eqref{eq:IQ0}.
There is an $\eta>0$ so that
$$
\cI_{Q_{0},K_{0}}, \ \cI_{Q_{0}}\ll N\, T^{2(\gd-1)}\, N^{-\eta}
,
$$
as $N\to\infty$.
\end{thm}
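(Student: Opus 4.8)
The plan is to bound $\cI_{Q_0,K_0}$ and $\cI_{Q_0}$ by inserting the pointwise estimate \eqref{eq:hatRfuBnd} for $\hat\cR_{f,\d}(r/q+\gb)$ into the definition \eqref{eq:RNRf} of $\widehat{\cR_N^U}$ and then integrating over the arcs. First I would note that, since $\gd>1$ and $q<Q_0$ with $K_0/N\le|\gb|<1/(qM)$ (respectively $|\gb|<K_0/N$), the relevant quantity to control is
\[
\widehat{\cR_N^U}\!\left(\tfrac rq+\gb\right)
\ll
\sum_{\d<U}\sum_{\ff\in\fF}
\left|\hat\cR_{f,\d}\!\left(\tfrac rq+\gb\right)\right|
\ll
\#\fF_T\cdot\sum_{\d<U}\d\cdot\frac{1}{\sqrt q\,|\gb|T}
\ll
\frac{T^\gd\,U^2}{\sqrt q\,|\gb|\,T}
=\frac{T^{\gd-1}U^2}{\sqrt q\,|\gb|},
\]
using \eqref{eq:fFTbnd} to count $\fF_T$ and summing $\d<U$ trivially. (On $\cI_{Q_0,K_0}$ one uses instead the $X^2$ alternative in \eqref{eq:cJfbnd} near $\gb=0$, giving $\widehat{\cR_N^U}\ll T^{\gd-1}U^2 X^2/\sqrt q$ there, tempered by the factor $|\gb N/K_0|^2$ in \eqref{eq:IQ0K0}.)

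The next step is the arc integration. For $\cI_{Q_0}$, I would write
\[
\cI_{Q_0}
\ll
\sum_{q<Q_0}\ \sideset{}{'}\sum_{r(q)}\ \int_{K_0/N<|\gb|<1/(qM)}
\left(\frac{T^{\gd-1}U^2}{\sqrt q\,|\gb|}\right)^{2}d\gb
\ll
T^{2(\gd-1)}U^4
\sum_{q<Q_0}\frac{q}{q}\int_{|\gb|>K_0/N}\frac{d\gb}{|\gb|^2}
\ll
T^{2(\gd-1)}U^4\,Q_0\,\frac{N}{K_0}.
\]
Since $K_0=Q_0^2$ by \eqref{eq:Q0is}, the $q$-sum and $\gb$-integral combine to $\ll N/Q_0$, so $\cI_{Q_0}\ll N\,T^{2(\gd-1)}\,U^4/Q_0$. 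Recalling from \eqref{eq:fuIs} that $U=Q_0^{(\eta_0)^2/100}$ is a very small power of $Q_0$ (hence of $N$), the factor $U^4/Q_0$ is $N^{-\eta}$ for some $\eta>0$, which is exactly \eqref{eq:cEN2bnd}'s strength. For $\cI_{Q_0,K_0}$ I would similarly get, using the weight $|\gb N/K_0|^2\le1$ and the bound $X^2\ll N/T$, that the $\gb$-integral over $|\gb|<K_0/N$ contributes $\ll (K_0/N)$ and the modular sum $\ll Q_0$, yielding again $\cI_{Q_0,K_0}\ll T^{2(\gd-1)}U^4 (K_0/N)(N/T)^2 Q_0\cdot(\cdots)$; after substituting $K_0=Q_0^2$, $X^2T=N$, and $Q_0$ a small power of $N$, this is $\ll N\,T^{2(\gd-1)}N^{-\eta}$ as well. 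I would double-check the exponent bookkeeping here carefully, since the $X^2$ versus $1/(|\gb|T)$ dichotomy in \eqref{eq:cJfbnd} makes the near-origin region subtly different.

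The main obstacle I anticipate is \emph{not} the analytic estimate itself — which is a fairly direct insertion of \eqref{eq:hatRfuBnd} and a geometric $\gb$-integral — but rather handling the sum over $\d<U$ and the Möbius truncation cleanly: one must make sure the divisor-type loss from $\sum_\d\d\ll U^2$, combined with the $u=\d$ dependence hidden in $\tilde q=(u^2,q)$, $q_0=q/\tilde q$ in \eqref{eq:tilQis}, does not erode the $\sqrt q$ savings. Concretely, when $\tilde q=(\d^2,q)$ is large the modulus $q_0$ can be much smaller than $q$, so $\sqrt{q_0}^{-1}$ is weaker than $\sqrt q^{-1}$; one must verify that the extra $\d^2$ gained in the numerator of \eqref{eq:cRfuIs} (the $1/\d^2$ prefactor against the $\d^2$ from $\tilde q\le\d^2$) compensates, so that effectively $u/\sqrt q$ in \eqref{eq:hatRfuBnd} is the honest bound. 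Once that is pinned down, choosing $Q_0$ a sufficiently small power of $N$ relative to $U$ (as already arranged in \eqref{eq:Q0is}, \eqref{eq:fuIs}) closes the argument, and $\eta$ can be taken to be, say, a small multiple of $(\gd-\gT_1)$ times $(\eta_0)^2$.
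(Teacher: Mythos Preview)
Your treatment of $\cI_{Q_0}$ is correct and matches the paper's proof line for line. The gap is in $\cI_{Q_0,K_0}$.

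You propose to switch to the $X^2$ alternative in \eqref{eq:cJfbnd} near $\gb=0$ and then discard the weight via $|\gb N/K_0|^2\le1$. But the $X^2$ branch of \eqref{eq:cJfbnd} carries no factor of $1/T$; redoing the proof of \eqref{eq:hatRfuBnd} with $|\cJ_f|\le X^2$ gives $|\hat\cR_{f,\d}|\ll \d X^2/\sqrt q$, hence $|\widehat{\cR_N^U}|\ll T^{\gd}U^2X^2/\sqrt q$, \emph{not} $T^{\gd-1}U^2X^2/\sqrt q$ as you wrote. With the correct exponent,
\[
\cI_{Q_0,K_0}
\ \ll\ \sum_{q<Q_0}\frac{\phi(q)}{q}\cdot\frac{K_0}{N}\cdot T^{2\gd}U^4X^4
\ \ll\ Q_0\,\frac{K_0}{N}\,T^{2\gd}U^4X^4
\ =\ N\,T^{2(\gd-1)}\,Q_0^{3}U^4,
\]
using $X^4=(N/T)^2$ and $K_0=Q_0^2$. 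Since $Q_0^3U^4$ is a \emph{positive} power of $N$, this is a loss, not a savings.

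The paper's fix is simpler than splitting the $\gb$-range: keep the bound $|\widehat{\cR_N^U}|\ll U^2T^{\gd-1}/(\sqrt q\,|\gb|)$ from \eqref{eq:hatRfuBnd} all the way down to $\gb=0$. The singularity is illusory, because the weight $|\gb N/K_0|^2$ in \eqref{eq:IQ0K0} exactly kills the $1/|\gb|^2$:
\[
\cI_{Q_0,K_0}
\ \ll\ \sum_{q<Q_0}\ \sideset{}{'}\sum_{r(q)}\ \int_{|\gb|<K_0/N}
\Big(\frac{N}{K_0}\Big)^{2}\frac{U^4T^{2(\gd-1)}}{q}\,d\gb
\ \ll\ Q_0\,\frac{N}{K_0}\,U^4T^{2(\gd-1)}
\ =\ N\,T^{2(\gd-1)}\,\frac{U^4}{Q_0},
\]
which is the claimed power savings by \eqref{eq:Q0is} and \eqref{eq:fuIs}.

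Your closing worry about $\tilde q=(\d^2,q)$ eroding the $\sqrt q$ gain is misplaced: the factor $u$ in the numerator of \eqref{eq:hatRfuBnd} is precisely the compensation for $\sqrt{\tilde q}\le \d$, so that bound is already expressed honestly in terms of $q$. The real bookkeeping issue was the one above.
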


\pf
We first handle $\cI_{Q_{0},K_{0}}$.
Returning to \eqref{eq:RNRf} and applying \eqref{eq:hatRfuBnd} gives
$$
\left|
\widehat{\cR_{N}^{U}}
\left(\frac rq+\gb\right)
\right|
\ll
\sum_{\d< U}
\sum_{\ff\in\fF}
u
(\sqrt{q}|\gb|T)^{-1}
\ll
U^{2}
T^{\gd-1}
(\sqrt{q}|\gb|)^{-1}
.
$$
Inserting this into \eqref{eq:IQ0K0} and using \eqref{eq:Q0is}, \eqref{eq:fuIs} gives
\beann
\cI_{Q_{0},K_{0}}
&\ll&
\sum_{q<Q_{0}}
\sideset{}{'}\sum_{r(q)}
\int_{|\gb|<K_{0}/N}
\left|
\gb
\frac N{K_{0}}
\right|^{2}
U^{4}
T^{2(\gd-1)}
{1\over q|\gb|^{2}}
d\gb
\\
&\ll&
Q_{0}
\frac N{K_{0}}
U^{4}
T^{2(\gd-1)}
\ll
N
T^{2(\gd-1)}
N^{-\eta}
.
\eeann

Next we handle
\beann
\cI_{Q_{0}}
&\ll&
\sum_{q<Q_{0}}
\sideset{}{'}
\sum_{r(q)}
\int\limits_{{K_{0}\over N}<|\gb|<\frac1{qM}}
U^{4}
T^{2(\gd-1)}
{1\over q|\gb|^{2}}
d\gb
\\
&\ll&
Q_{0}
U^{4}
T^{2(\gd-1)}
\left({
N\over K_{0}}+{Q_{0}M}\right)
\\
&\ll&
N
T^{2(\gd-1)}
{
Q_{0}
U^{4}
\over K_{0}}
,
\eeann
which is again a power savings.
\epf


\newpage

\section{Minor Arcs II: Case $Q_{0}\le Q<X$}\label{sec:QX}

Keeping all the notation from the last section,  we now turn our attention to the integrals $\cI_{Q}$ in \eqref{eq:IQdef}.
It is no longer sufficient just to get cancellation in $\hat \cR_{f,u}$ alone, as in \eqref{eq:hatRfuBnd}; we must use the fact that $\cI_{Q}$ is an $L^{2}$-norm. 

To this end, recall the notation \eqref{eq:tilQis}, and put \eqref{eq:cRfuIs} into \eqref{eq:RNRf}, applying Cauchy-Schwarz in the $u$-variable: 
\bea
\label{eq:cRsq}
\left|\widehat{\cR_{N}^{U}}
\left(\frac rq+\gb\right)\right|^{2}
&\ll&
U
\sum_{\d< U}
\Bigg|
\sum_{\ff\in\fF}
e_{q}(-r a)
e(-a\gb)
\\
\nonumber
&&
\times
{1\over \d ^{2}}
\sum_{n,m\in\Z}
\cJ_{f}\left(X,\gb;{n\over \d q_{0}},{m\over \d q_{0}}\right)
\cS_{f}(q_{0},r\d _{0};n,m)
\Bigg|^{2}
.
\eea
Recall from \eqref{eq:ffVdef} that
$\ff=f-a$.
Insert \eqref{eq:cRsq} into \eqref{eq:IQdef} and
open the square, setting $\ff'=f'-a'$. This gives
\bea
\nonumber
\cI_{Q}
&\ll&
U
\sum_{\d< U}
{1\over \d ^{4}}
\sum_{q\asymp Q}
\sideset{}{'}
\sum_{r(q)}
\int_{|\gb|<\frac1{qM}}
\Bigg|
\sum_{\ff\in\fF}
e_{q}(-r a)
e(-a\gb)
\\
\nonumber
&&
\times
\sum_{n,m\in\Z}
\cJ_{f}\left(X,\gb;{n\over \d q_{0}},{m\over \d q_{0}}\right)
\cS_{f}(q_{0},r\d _{0};n,m)
\Bigg|^{2}
d\gb
\\
\label{eq:IQbnd1}
&
=
&
U
\sum_{\d< U}
{1\over \d ^{4}}
\sum_{n,m,n',m'\in\Z}\
\sum_{\ff,\ff'\in\fF}\
\sum_{q\asymp Q}
\\
\nonumber
&&
\times
\left[
\sideset{}{'}
\sum_{r(q)}
\cS_{f}(q_{0},r\d _{0};n,m)
\overline{\cS_{f'}(q_{0},r\d _{0};n',m')}
e_{q}(r (a'-a))
\right]
\\
\nonumber
&&
\times
\left[
\int_{|\gb|<\frac1{qM}}
\cJ_{f}\left(X,\gb;{n\over \d q_{0}},{m\over \d q_{0}}\right)
\overline{\cJ_{f'}\left(X,\gb;{n'\over \d q_{0}},{m'\over \d q_{0}}\right)}
e(\gb(a'-a))
d\gb
\right]
.
\eea

Note that again the sum has split into ``modular'' and ``archimedean'' pieces (collected in brackets, respectively), with the former being exactly equal to $\cS$ in \eqref{eq:cSdef}.

Decompose   \eqref{eq:IQbnd1} as
\be\label{eq:cIQ12}
\cI_{Q}\ll \cI_{Q}^{(=)}+\cI_{Q}^{(\neq)},
\ee
where, once $\ff$ is fixed,  we collect $\ff'$ according to whether $a'=a$ (the ``diagonal'' case) and the off-diagonal $a'\neq a$.

\begin{lem}
Assume $Q<X$. For $\square\in\{=,\neq\}$, we have
\be\label{eq:cIQ1def}
\cI_{Q}^{(\square)}
\ll
U^{6}
{X^{2}\over T}
\sum_{\ff\in\fF}
\sum_{\ff'\in\fF\atop a'\square a}
\sum_{q\asymp Q}
{
\{(a^{2},q)\cdot((a')^{2},q)\}^{1/2}(a-a',q)^{1/4}
\over
q^{5/4}
}
.
\ee
\end{lem}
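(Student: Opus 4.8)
The plan is to bound the two bracketed factors in \eqref{eq:IQbnd1} separately and then count the surviving frequencies. First I would observe that the inner $r$-sum in \eqref{eq:IQbnd1} is exactly $\cS(q,q_{0},f,f',n,m,n',m';u_{0})$ of \eqref{eq:cSdef}, so the bound \eqref{eq:modBndaNeqAp} applies directly and gives
$$
|\cS|\ll(q/q_{0})^{2}\,\frac{\{(a^{2},q_{0})((a')^{2},q_{0})\}^{1/2}}{q^{5/4}}\,(a-a',q)^{1/4}.
$$
Using \eqref{eq:tilQis} one has $q/q_{0}=\tilde q=(u^{2},q)\le u^{2}<U^{2}$, while $(a^{2},q_{0})\le(a^{2},q)$ and $((a')^{2},q_{0})\le((a')^{2},q)$; hence $|\cS|\ll U^{4}\{(a^{2},q)((a')^{2},q)\}^{1/2}(a-a',q)^{1/4}q^{-5/4}$, uniformly in $\square\in\{=,\neq\}$.

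For the archimedean bracket I would use the stationary-phase estimate \eqref{eq:cJfbnd}, namely $|\cJ_{f}(X,\gb;\cdot)|\ll\min(X^{2},(|\gb|T)^{-1})$ (and likewise for $\cJ_{f'}$), to dominate the $\gb$-integral by $\int_{|\gb|<1/(qM)}\min(X^{2},(|\gb|T)^{-1})^{2}\,d\gb$. Splitting at the crossover $|\gb|=(X^{2}T)^{-1}$ — which lies inside $|\gb|<1/(qM)$ exactly because $q\asymp Q<X$ and $M=XT$ — the flat part contributes $\ll X^{4}\cdot(X^{2}T)^{-1}=X^{2}/T$ and the decaying part contributes $\ll T^{-2}\cdot X^{2}T=X^{2}/T$, so the archimedean bracket is $O(X^{2}/T)$. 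Independently, nonstationary phase (repeated integration by parts) shows, as already recorded in \eqref{eq:nmRest}, that $\cJ_{f}(X,\gb;n/(uq_{0}),m/(uq_{0}))$ is negligible unless $|n|,|m|\ll|\gb|\,TX\,uq_{0}\ll u$, using $|\gb|<1/(qM)$, $q_{0}\le q$ and $M=XT$; the same applies to $n',m'$. Thus, up to a negligible error, the quadruple sum over $(n,m,n',m')$ in \eqref{eq:IQbnd1} is supported on a set of size $O(u^{4})$.

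Assembling these: inserting $|\cS|\ll U^{4}(\cdots)$ and the $O(X^{2}/T)$ bound into \eqref{eq:IQbnd1}, the prefactor $U\sum_{u<U}u^{-4}$ against $O(u^{4})$ frequencies collapses to $O(U^{2})$, giving
$$
\cI_{Q}^{(\square)}\ll U^{2}\cdot U^{4}\cdot\frac{X^{2}}{T}\sum_{\ff\in\fF}\ \sum_{\ff'\in\fF,\ a'\square a}\ \sum_{q\asymp Q}\frac{\{(a^{2},q)((a')^{2},q)\}^{1/2}(a-a',q)^{1/4}}{q^{5/4}},
$$
which is \eqref{eq:cIQ1def}. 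The case $u$ odd, postponed just after \eqref{eq:cRffuIs}, is handled identically using the analogous identities.

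The genuine input here lives entirely in results already proved — the $3/4$-power Kloosterman/Sali\'e bound packaged in \eqref{eq:modBndaNeqAp} and the stationary-phase bound \eqref{eq:cJfbnd} — so I expect the main obstacle to be purely organizational: propagating the powers of $U$ correctly through the Cauchy-Schwarz step \eqref{eq:cRsq} and the truncated M\"obius inversion, carefully tracking the divisor $\tilde q=(u^{2},q)$ so that all $q_{0}$-gcd's are safely replaced by $q$-gcd's, and — the one place where a hypothesis is actually consumed — verifying that the phase crossover $|\gb|\asymp(X^{2}T)^{-1}$ falls inside $|\gb|<1/(qM)$, which is exactly the standing assumption $Q<X$.
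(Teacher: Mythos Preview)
Your proof is correct and follows essentially the same route as the paper: apply \eqref{eq:modBndaNeqAp} to the modular bracket, use \eqref{eq:nmRest} to truncate the frequencies to $|n|,|m|,|n'|,|m'|\ll u$, bound the archimedean integral via \eqref{eq:cJfbnd} and the split at $|\gb|\asymp (X^{2}T)^{-1}$ (valid precisely because $Q<X$), and collect the resulting powers of $u$. The only cosmetic difference is that you overbound $(q/q_{0})^{2}\le u^{4}$ by $U^{4}$ early and then get $U^{2}\cdot U^{4}=U^{6}$, whereas the paper keeps $u^{4}$ in place and sums $U\sum_{u<U}u^{4}\ll U^{6}$; the outcome is identical.
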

\pf
Apply \eqref{eq:modBndaNeqAp} and \eqref{eq:nmRest}, \eqref{eq:cJfbnd} to \eqref{eq:IQbnd1}, giving
\beann
\cI_{Q}^{(\square)}
&\ll&
U
\sum_{u<U}
{1\over u^{4}}
\sum_{|n|,|m|,|n'|,|m'|\ll u}\
\sum_{\ff,\ff'\in\fF\atop a'\square a}\
\sum_{q\asymp Q}
\\
&&
\qquad
\times
{u^{4} 
\{(a^{2},q)\cdot((a')^{2},q)\}^{1/2}(a-a',q)^{1/4}
\over
q^{5/4}
}
\\
&&
\qquad
\qquad
\times
\int_{|\gb|<1/(qM)}
\min\left(
X^{2},
{1\over |\gb|T}
\right)^{2}
d\gb
,
\eeann
where we used \eqref{eq:tilQis}. The claim then follows immediately from \eqref{eq:Mis} and $Q<X$.
\epf

We treat $\cI_{Q}^{(=)}, \ \cI_{Q}^{(\neq)}$ separately, starting with the former; we give bounds of the quality claimed in \eqref{eq:cEN2bnd}.
\begin{prop}
There is an $\eta>0$ such that
\be\label{eq:cIQ2bnd}
\cI_{Q}^{(=)}\ll N \, T^{2(\gd-1)}N^{-\eta},
\ee
as $N\to\infty$.
\end{prop}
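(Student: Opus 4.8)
The plan is to start from the estimate furnished by the preceding lemma. In the diagonal case $a'=a$ one has $(a-a',q)=q$, so the ratio $(a-a',q)^{1/4}q^{-5/4}$ collapses to $q^{-1}$ and $\{(a^{2},q)((a')^{2},q)\}^{1/2}$ collapses to $(a^{2},q)$; hence
\[
\cI_{Q}^{(=)}\ \ll\ U^{6}\,\frac{X^{2}}{T}\sum_{\ff\in\fF}\ \sum_{\ff'\in\fF\atop a'=a}\ \sum_{q\asymp Q}\frac{(a^{2},q)}{q}.
\]
The first step is the elementary divisor bound $\sum_{q\asymp Q}(a^{2},q)/q\ll_{\vep}T^{\vep}$, valid uniformly in $a\asymp T$: writing $(a^{2},q)=\sum_{d\mid(a^{2},q)}\phi(d)$ and interchanging sums, the contribution of a divisor $d\mid a^{2}$ is $\phi(d)\cdot\#\{q\asymp Q:d\mid q\}$, which vanishes unless $d\ll Q$ and is then $\ll Q\,\phi(d)/d$; summing over $d\mid a^{2}$ gives $\ll Q\,\tau(a^{2})\ll_{\vep}Q\,T^{\vep}$, and dividing by $q\asymp Q$ gives the claim. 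The point is that the large divisors of $a^{2}$ never enter, since $d\mid q$ with $q\asymp Q$ forces $d\ll Q$.

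Since the inner $q$-sum now depends only on $a=a_{\ff}$, the double sum over $\ff,\ff'$ collapses to $\sum_{\ff\in\fF}g(a_{\ff})=\sum_{a}g(a)^{2}$, where $g(a):=\#\{\ff\in\fF:a_{\ff}=a\}$, so that $\cI_{Q}^{(=)}\ll_{\vep}U^{6}\,(X^{2}/T)\,T^{\vep}\sum_{a}g(a)^{2}$. I would then bound $\sum_{a}g(a)^{2}\le\big(\max_{a}g(a)\big)\sum_{a}g(a)$, using $\sum_{a}g(a)=\#\fF\ll T^{\gd}$ from \eqref{eq:fFTbnd}. For the maximum, the crucial input is Lemma \ref{lem:spec3}: since $\{\g\in\fF:\<e_{1},\g v_{0}\>=a\}\subseteq\{\g\in\fF:\<e_{1},\g v_{0}\>\equiv a\ (\mod q)\}$ for every modulus $q$, applying Lemma \ref{lem:spec3} with $q$ a fixed integer of size $\asymp N^{1/2}$ gives $g(a)\ll q^{-\eta_{0}}T^{\gd}\ll N^{-\eta_{0}/2}T^{\gd}$, uniformly in $a$. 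Hence $\sum_{a}g(a)^{2}\ll N^{-\eta_{0}/2}T^{2\gd}$.

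Combining these and using the normalization $N=TX^{2}$, together with the fact that $U^{6}T^{\vep}$ is an arbitrarily small fixed power of $N$ (recall \eqref{eq:Uis}, \eqref{eq:fuIs}), yields
\[
\cI_{Q}^{(=)}\ \ll\ U^{6}\,T^{\vep}\,N^{-\eta_{0}/2}\,X^{2}\,T^{2\gd-1}\ =\ U^{6}\,T^{\vep}\,N^{-\eta_{0}/2}\,N\,T^{2(\gd-1)}\ \ll\ N\,T^{2(\gd-1)}\,N^{-\eta}
\]
for a suitable $\eta>0$; summing over the $\ll\log N$ dyadic values $Q\in[Q_{0},X)$ and shrinking $\eta$ slightly completes the proof. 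The only genuine ingredient is the equidistribution of the shifts $a_{\ff}$ supplied by Lemma \ref{lem:spec3} — this is exactly what keeps the ``diagonal'' $a'=a$ from being too large; the divisor estimate and the bookkeeping in $U,X,T,N$ are routine. (Should one prefer a more classical substitute for the bound on $\max_{a}g(a)$, the class-number estimate $h(-4a^{2})\ll_{\vep}a^{1+\vep}$ combined with \eqref{eq:fincK} already gives $g(a)\ll_{\vep}T^{1+\vep}$, hence $\sum_{a}g(a)^{2}\ll_{\vep}T^{\gd+1+\vep}$, which still suffices because $\gd>1$; but Lemma \ref{lem:spec3} is shorter and leaves more room.)
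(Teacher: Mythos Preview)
Your proof is correct and follows essentially the same route as the paper: reduce via the lemma to $U^{6}(X^{2}/T)\sum_{\ff,\ff':a'=a}\sum_{q\asymp Q}(a^{2},q)/q$, dispatch the $q$-sum by an elementary divisor estimate, and then control the diagonal count $\sum_{\ff'\in\fF:a'=a}1$ using Lemma~\ref{lem:spec3}. The only difference is cosmetic: the paper applies Lemma~\ref{lem:spec3} with modulus $\lfloor Q_{0}\rfloor$ (saving $Q_{0}^{-\eta_{0}}$), whereas you take modulus $\asymp N^{1/2}$ (saving $N^{-\eta_{0}/2}$); either works. Two small remarks: your final sentence about summing over dyadic $Q\in[Q_{0},X)$ is not part of this proposition (which concerns a single $Q$) and should be dropped; and your parenthetical class-number alternative via \eqref{eq:fincK} and $h(-4a^{2})\ll_{\vep}a^{1+\vep}$ is a valid backup, though unnecessary here.
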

\pf
From \eqref{eq:cIQ1def}, we have
\beann
\cI_{Q}^{(=)}
&\ll&
U^{6}
{X^{2}\over T}
\sum_{\ff\in\fF}
\sum_{\ff'\in\fF\atop a'=a}
\sum_{q\asymp Q}
{(a^{2},q)\over q}
\\
&\ll&
{U^{6}
X^{2}\over QT}
\sum_{\ff\in\fF}
\sum_{\tilde q_{1}\mid a^{2}\atop\tilde q_{1}\ll Q}
\tilde q_{1}
\sum_{q\asymp Q\atop q\equiv0(\tilde q_{1})}
\sum_{\ff'\in\fF\atop a'=a}
1
\\
&\ll_{\vep}&
{U^{6}
X^{2}\over T}
\sum_{\ff\in\fF}
T^{\vep}
\sum_{\ff'\in\fF\atop a'=a}
1
.
\eeann
Recalling that $a=a_{\g}=\<e_{1},\g v_{0}\>$, replace the condition $a'=a$ with $a'\equiv a(\mod \lfloor Q_{0}\rfloor)$, and apply \eqref{eq:spec3}:
$$
\cI_{Q}^{(=)}
\ll_{\vep}
{U^{6}
X^{2}\over T}
T^{\gd}
T^{\vep}
{T^{\gd}\over Q_{0}^{\eta_{0}}}
.
$$
Then \eqref{eq:fuIs} and \eqref{eq:TXNis}
imply the claimed power savings.
\epf

Next we turn our attention to  $\cI_{Q}^{(\neq)}$,  the off-diagonal
 contribution. 
We decompose 
this sum
further according to whether $\gcd(a,a')$ is large or not. To this end, introduce a parameter $H$, which we will eventually set to
\be\label{eq:His}
H=U^{10/\eta_{0}}=Q_{0}{}^{\eta_{0}/10},
\ee
where, as in \eqref{eq:fuIs}, the constant $\eta_{0}>0$ comes from Lemma \ref{lem:spec3}.
Write
\be\label{eq:cIQ1break}
\cI_{Q}^{(\neq)}=\cI_{Q}^{(\neq,>)}+\cI_{Q}^{(\neq,\le)},
\ee
corresponding to whether $(a,a')>H$ or $(a,a')\le H$, respectively. We deal first with the large $\gcd$.
\begin{prop}
There is an $\eta>0$ such that
\be\label{eq:cIQ11bnd}
\cI_{Q}^{(\neq,>)}\ll N \, T^{2(\gd-1)}N^{-\eta},
\ee
as $N\to\infty$.
\end{prop}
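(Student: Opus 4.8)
\pf
The plan is to mimic the treatment of the diagonal term $\cI_{Q}^{(=)}$ in \eqref{eq:cIQ2bnd}: dispose of the sum over $q$ trivially, reduce to a counting problem for pairs $(\ff,\ff')\in\fF\times\fF$, and extract the saving from Lemma~\ref{lem:spec3}. The key observation is that imposing $(a,a')>H$ is a strong constraint that Lemma~\ref{lem:spec3} converts into a saving of a power of $H$, and the exponents fixed in \eqref{eq:His} and \eqref{eq:fuIs} are designed precisely so that this outweighs the loss $U^{6}$ incurred from the Cauchy--Schwarz in the M\"obius variable $u$ and from the truncation of M\"obius inversion.

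First I would start from \eqref{eq:cIQ1def} with $\square=\neq$, further restricting the $\ff'$-sum to $(a,a')>H$; this is harmless since all summands are nonnegative. Then I would dispose of the innermost sum over $q$. Using $(a-a',q)^{1/4}\le q^{1/4}$ together with $a,a'\asymp T$ for every $\ff,\ff'\in\fF$ (cf.\ \eqref{eq:fFdef} and \eqref{eq:ACisT}), the elementary estimate $\sum_{q\asymp Q}(a^{2},q)/q\ll_{\vep}N^{\vep}$ (obtained by writing $(a^{2},q)=\sum_{d\mid(a^{2},q)}\varphi(d)$ and summing over $q$), and $2uv\le u^{2}+v^{2}$, one finds
$$
\sum_{q\asymp Q}
{\{(a^{2},q)\,((a')^{2},q)\}^{1/2}\,(a-a',q)^{1/4}\over q^{5/4}}
\ \ll_{\vep}\ N^{\vep}
$$
uniformly for $a,a'\asymp T$. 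Hence \eqref{eq:cIQ1def} reduces to
$$
\cI_{Q}^{(\neq,>)}\ \ll_{\vep}\ U^{6}\,\frac{X^{2}}{T}\,N^{\vep}\,
\#\big\{(\ff,\ff')\in\fF\times\fF:\ a'\neq a,\ (a,a')>H\big\}.
$$

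The substantive step is to bound this pair count without wasting the saving. Fix $\ff\in\fF$; then $a=\<e_{1},\g v_{0}\>\asymp T$. For any $\ff'$ with $(a,a')>H$, the integer $g:=(a,a')$ is a divisor of $a$ with $H<g\le a\ll T<N$, and $g\mid a'=\<e_{1},\g' v_{0}\>$. Applying \eqref{eq:spec3} with modulus $g$ and residue $0$ bounds $\#\{\g'\in\fF:\ g\mid a_{\g'}\}$ by $\ll g^{-\eta_{0}}T^{\gd}\le H^{-\eta_{0}}T^{\gd}$; summing over the $\ll_{\vep}T^{\vep}$ divisors $g\mid a$ with $g>H$ shows that for each fixed $\ff$ there are $\ll_{\vep}H^{-\eta_{0}}T^{\gd+\vep}$ admissible $\ff'$. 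Multiplying by $\#\fF\ll T^{\gd}$ from \eqref{eq:fFTbnd} gives the pair count $\ll_{\vep}H^{-\eta_{0}}T^{2\gd+\vep}$. Inserting this and $X^{2}=N/T$ from \eqref{eq:TXNis} into the previous display yields
$$
\cI_{Q}^{(\neq,>)}\ \ll_{\vep}\ N\,T^{2(\gd-1)}\cdot U^{6}H^{-\eta_{0}}\,N^{2\vep}.
$$
Finally I would assemble the exponents: by \eqref{eq:His}, \eqref{eq:fuIs}, and \eqref{eq:Q0is} one has $U^{6}H^{-\eta_{0}}=Q_{0}^{\,3\eta_{0}^{2}/50-\eta_{0}^{2}/10}=Q_{0}^{-\eta_{0}^{2}/25}$, which is a fixed negative power of $N$ since $Q_{0}=T^{(\gd-\gT_{1})/20}$ with $\gT_{1}<\gd$ and $T=N^{1/100}$; choosing $\vep$ sufficiently small then gives $\cI_{Q}^{(\neq,>)}\ll N\,T^{2(\gd-1)}N^{-\eta}$ for a suitable effective $\eta>0$. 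The only real obstacle is the pair count: one must resist pinning down the exact value of $a'$ (which would cost a full power of $T$ and fail) and instead feed the divisor $g=(a,a')$ itself into Lemma~\ref{lem:spec3} as the modulus, so that the factor $H^{-\eta_{0}}$ it produces dominates the accumulated $U^{6}$.
\epf
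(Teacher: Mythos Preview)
Your proof is correct and follows essentially the same route as the paper. Both arguments bound $(a-a',q)^{1/4}\le q^{1/4}$, reduce the $q$-sum to $\ll_{\vep}N^{\vep}$ (the paper via $[\tilde q_{1},\tilde q_{1}']\ge(\tilde q_{1}\tilde q_{1}')^{1/2}$, you via AM--GM and the divisor identity), and then extract the saving $H^{-\eta_{0}}$ by summing over divisors $h\mid a$ with $h>H$ and applying Lemma~\ref{lem:spec3} to the inner $\ff'$-sum with the congruence $a'\equiv 0\,(h)$; the final assembly of exponents is identical.
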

\pf
Writing $(a,a')=h>H$, 
$\tilde q_{1}=(a^{2},q)$, 
$\tilde q_{1}'=((a')^{2},q)$, 
and
using
$(a-a',q)\le q$ in \eqref{eq:cIQ1def},
we have
\beann
\cI_{Q}^{(\neq,>)}
&\ll&
U^{6}
{X^{2}\over T}
\sum_{\ff\in\fF}
\sum_{\ff'\in\fF\atop a'\neq a,(a,a')>H}
\sum_{q\asymp Q}
{
\{(a^{2},q)\cdot((a')^{2},q)\}^{1/2}(a-a',q)^{1/4}
\over
q^{5/4}
}
\\
&\ll&
U^{6}
{X^{2}\over T}
\sum_{\ff\in\fF}
\sum_{h\mid a\atop h>H}
\sum_{\ff'\in\fF\atop a'\equiv 0(\mod h)}
\sum_{\tilde q_{1}\mid a^{2}\atop \tilde q_{1}\ll Q}
\sum_{\tilde q_{1}'\mid (a')^{2}\atop [\tilde q_{1},\tilde q_{1}']\ll Q}
(\tilde q_{1}
\tilde q_{1}')^{1/2}
\sum_{q\asymp Q\atop q\equiv 0([\tilde q_{1},\tilde q_{1}'])}
{1\over Q}
\\
&\ll_{\vep}&
U^{6}
{X^{2}\over T}
T^{\vep}
\sum_{\ff\in\fF}
\sum_{h\mid a\atop h>H}
\sum_{\ff'\in\fF\atop a'\equiv 0(\mod h)}
1
,
\eeann
where we used $[n,m]>(nm)^{1/2}$. Apply \eqref{eq:spec3} to the innermost sum, getting
\beann
\cI_{Q}^{(\neq,>)}
&\ll_{\vep}&
U^{6}
{X^{2}\over T}
T^{\vep}
T^{\gd}
{1\over H^{\eta_{0}}}
T^{\gd}
.
\eeann
By \eqref{eq:His} and \eqref{eq:fuIs}, this is a power savings, as claimed.
\epf
Finally, we handle small $\gcd$.
\begin{prop}
There is an $\eta>0$ such that
\be\label{eq:cIQ12bnd}
\cI_{Q}^{(\neq,\le)}\ll N \, T^{2(\gd-1)}N^{-\eta},
\ee
as $N\to\infty$.
\end{prop}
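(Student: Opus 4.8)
The plan is to bound $\cI_{Q}^{(\neq,\le)}$ by extracting, in the off-diagonal with small gcd, the extra cancellation coming from the archimedean integral rather than the modular sum. Starting from \eqref{eq:IQbnd1}, I would again insert the modular bound \eqref{eq:modBndaNeqAp} for $\cS$ and the stationary phase bound \eqref{eq:cJfbnd} for $\cJ_f$, exactly as in the lemma leading to \eqref{eq:cIQ1def}, but this time I would \emph{not} throw away the $\gb$-integral's oscillation: the phase $e(\gb(a'-a))$ oscillates at rate $|a-a'|\asymp T$, and since $|\gb|$ can range up to $1/(qM)$ with $M=XT$, integrating the product of two $\min(X^2,1/(|\gb|T))$ factors against $e(\gb(a'-a))$ should gain an additional factor of roughly $1/(|a-a'|/(qM)) = qM/|a-a'|$ compared to the trivial bound used before, i.e. an extra $1/|a-a'|$ after accounting for the $q$'s. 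Concretely, I expect
$$
\int_{|\gb|<1/(qM)}\cJ_f\cdots\overline{\cJ_{f'}\cdots}\, e(\gb(a'-a))\,d\gb
\ll
\frac{1}{T}\cdot\frac{1}{|a-a'|}
\cdot(\text{mild factors}),
$$
so that, combined with $(a-a',q)\le q$ and summing over $q\asymp Q$, one obtains a bound of the shape
$$
\cI_{Q}^{(\neq,\le)}
\ll_{\vep}
U^{6}\,(NT)^{\vep}\,\frac{X^{2}}{T}
\sum_{\ff\in\fF}\sum_{\ff'\in\fF\atop 0<|a-a'|,\ (a,a')\le H}
\frac{1}{|a-a'|}.
$$

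Next I would estimate the double sum $\sum_{\ff,\ff'}1/|a-a'|$. Here the point is that $a=a_\g=\<e_1,\g v_0\>$ takes values of size $\asymp T$, and $\#\fF\ll T^\gd$ by \eqref{eq:fFTbnd}, so on average $1/|a-a'|$ is of size $\asymp 1/T$; the contribution from $|a-a'|$ small must be controlled using Lemma \ref{lem:spec3}. Fixing $\ff$, I would split the $\ff'$-sum dyadically over ranges $|a-a'|\asymp D$ for $1\le D\ll T$, bound the number of $\ff'\in\fF$ with $a'$ in a fixed window of length $D$ by decomposing that window into $O(D/q_0')$ residue classes or, more simply, by using \eqref{eq:spec3} with $q\asymp$ (a prime near $D$) together with the constraint $(a,a')\le H$ — the small-gcd hypothesis is exactly what lets the congruence/divisibility bookkeeping stay under control, since it prevents $a'$ from being forced into a thin sub-progression coming from a large common factor. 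This should give $\sum_{\ff'\colon|a-a'|\asymp D}1/D\ll T^{\gd-\eta'}/D \cdot$ (something), and summing the geometric-type series over dyadic $D$ yields $\sum_{\ff,\ff'}1/|a-a'|\ll T^{2\gd}/T^{1+\eta'}$ up to $N^\vep$, after using $H=Q_0^{\eta_0/10}$ and \eqref{eq:His}, \eqref{eq:fuIs} to absorb the small powers of $U$ and $H$.

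Putting these together, $\cI_{Q}^{(\neq,\le)}\ll_\vep U^6 (NT)^\vep (X^2/T)\cdot T^{2\gd-1-\eta'}$, and then invoking \eqref{eq:TXNis} (so $X^2 T = N$, $T=N^{1/100}$) and \eqref{eq:fuIs} to see that $U^6$ and the $N^\vep$ are swamped by the saved power $T^{-\eta'}=N^{-\eta'/100}$, one gets $\cI_{Q}^{(\neq,\le)}\ll N\,T^{2(\gd-1)}N^{-\eta}$ for a suitable $\eta>0$, as claimed. Finally, summing over the $O(\log N)$ dyadic values of $Q$ in the range $Q_0\le Q<X$ costs only a further $N^\vep$, which is harmless.

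The main obstacle, as I see it, is making the archimedean gain of $1/|a-a'|$ rigorous and uniform: one must check via (non)stationary phase that the $\gb$-integral of the product $\cJ_f\,\overline{\cJ_{f'}}\,e(\gb(a'-a))$ genuinely oscillates — the two $\cJ$ factors themselves carry $\gb$-dependent phases $\gb f(x,y)$ of size up to $|\gb|TX^2 = |\gb|N$, which near $\gb\sim 1/(qM)$ is of size $N/(qXT)$, not necessarily small, so one cannot naively treat $\cJ_f\overline{\cJ_{f'}}$ as slowly varying. The honest route is to go back inside the definition \eqref{eq:cJfIs}, combine the four real integrations over $x,y,x',y'$ with the $\gb$-integration into a single oscillatory integral, and perform stationary phase in all five variables at once; the stationary point in $\gb$ forces $f(x,y)-f'(x',y')\approx a-a'$, which is what converts the $1/|a-a'|$ into an honest saving. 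This is the same phenomenon exploited in \cite[\S6]{BourgainKontorovichSarnak2010}, and I would model the computation on that; the bookkeeping with the parameters $u,\d,q_0$ is the tedious-but-routine part, while the combined stationary-phase estimate and the Lemma~\ref{lem:spec3} input for the near-diagonal $|a-a'|\asymp D$ count are the two places where genuine care is required.
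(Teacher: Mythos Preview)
Your approach has a genuine gap at its very first step: the archimedean oscillation you want to exploit is not there. The phase $e(\gb(a'-a))$ has total variation over the range $|\gb|<1/(qM)$ of size
\[
\frac{|a'-a|}{qM}\ \ll\ \frac{T}{qXT}\ =\ \frac{1}{qX}\ \ll\ 1,
\]
since $M=XT$ and $Q_{0}\le q$. So this factor is essentially constant on the whole $\gb$-interval, and no amount of integration by parts will extract a factor $1/|a-a'|$. The dominant contribution to the $\gb$-integral already sits near $|\gb|\asymp 1/(X^{2}T)=1/N$, where $|\gb(a'-a)|\ll T/N$ is even smaller. Your fallback of doing a five-variable stationary phase does not rescue this: integrating out $\gb$ (the phase is linear in $\gb$) just imposes $|f(x,y)-f'(x',y')-(a-a')|\ll qM$, a constraint of width $qXT$ on quantities of size $N=X^{2}T$; for $q$ near $X$ this is no constraint at all, and in any case it does not translate into a clean $1/|a-a'|$.

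The paper's proof is entirely arithmetic and starts directly from \eqref{eq:cIQ1def}. The savings comes from the extra $q^{1/4}$ in the Kloosterman bound \eqref{eq:modBndaNeqAp}, combined with a careful gcd decomposition that is precisely where the hypothesis $(a,a')\le H$ enters. Writing $g=(a,q)$, $g'=(a',q)$, $h=(g,g')\mid(a,a')\le H$, and $(a-a',q)=h\tilde g$, one has that $g/h$, $g'/h$, $\tilde g$ are pairwise coprime, so $[g,g',h\tilde g]\ge (g/h)(g'/h)\tilde g$. Summing over $q\equiv0\pmod{[g,g',h\tilde g]}$ kills the factors $(g/h)(g'/h)$, leaving $H^{9/4}Q^{-1/4}\sum_{\tilde g\mid(a-a')}\tilde g^{-3/4}$. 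Now fix $\tilde g$ and sum over $\ff'$ with $a'\equiv a\pmod{\tilde g}$ using Lemma~\ref{lem:spec3}; this gives an extra $\tilde g^{-\eta_{0}}$, so the $\tilde g$-sum is $\ll Q^{1/4-\eta_{0}}$, and combined with $Q^{-1/4}$ one nets $Q^{-\eta_{0}}\le Q_{0}^{-\eta_{0}}$. The choices \eqref{eq:His}, \eqref{eq:fuIs} then make $U^{6}H^{9/4}Q_{0}^{-\eta_{0}}$ a genuine negative power of $N$. The small-gcd hypothesis is used solely to bound $h\le H$; without it the $h^{9/4}$ factor would be uncontrolled.
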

\pf
First note that
\beann
\cI_{Q}^{(\neq,\le)}
&=&
U^{6}
{X^{2}\over T}
\sum_{\ff\in\fF}
\sum_{\ff'\in\fF\atop a'\neq a,(a,a')\le H}
\sum_{q\asymp Q}
{
\{(a^{2},q)\cdot((a')^{2},q)\}^{1/2}(a-a',q)^{1/4}
\over
q^{5/4}
}
\\
&\ll&
U^{6}
{X^{2}\over T}
{1\over Q^{5/4}}
\sum_{\ff\in\fF}
\sum_{\ff'\in\fF\atop a'\neq a, (a,a')\le H}
\sum_{q\asymp Q}
(a,q)
(a',q)
(a-a',q)^{1/4}
.
\eeann
Write $g=(a,q)$ and $g'=(a',q)$, and let $h=(g,g')$; observe then that $h\mid (a,a')$ and $h\ll Q$. Hence we can write $g=hg_{1}$ and $g'=hg_{1}'$ so that $(g_{1},g_{1}')=1$.
Note also that $h\mid (a-a',q)$, so we can write $(a-a',q)=h\tilde g$; thus $g_{1},g_{1}',$ and $\tilde g$ are pairwise coprime, implying 
$$
[hg_{1},hg_{1}',h\tilde g]
\ge 
g_{1}g_{1}'\tilde g
.
$$
Then we have
\beann
\cI_{Q}^{(\neq,\le)}
&\ll&
U^{6}
{X^{2}\over T}
{1\over Q^{5/4}}
\sum_{\ff\in\fF}
\sum_{\ff'\in\fF\atop a'\neq a, (a,a')\le H}
\sum_{h\mid (a,a')\atop h\le H}
\sum_{g_{1}\mid a\atop g_{1}\ll Q}
\sum_{g_{1}'\mid a'\atop g_{1}'\ll Q}
\\
&&
\qquad\times
\sum_{\tilde g\mid (a-a')\atop [hg_{1},hg_{1}',h\tilde g]\ll Q}
(hg_{1})
(hg_{1}')
(h\tilde g)^{1/4}
\sum_{q\asymp Q\atop q\equiv0([hg_{1},hg_{1}',h\tilde g])}
1
\\
&\ll_{\vep}&
U^{6}
{X^{2}\over T}
{H^{9/4}\over Q^{5/4}}
\sum_{\ff,\ff'\in\fF}
T^{\vep}
\sum_{g_{1}\mid a\atop g_{1}\ll Q}
\sum_{g_{1}'\mid a'\atop g_{1}'\ll Q}
\sum_{\tilde g\mid (a-a')\atop \tilde g\ll Q}
g_{1}\,
g_{1}'\,
\tilde g^{1/4}
{Q\over g_{1}g_{1}'\tilde g}
\\
&\ll&
U^{6}
{X^{2}\over T}
{H^{9/4}\over Q^{1/4}}
\sum_{\ff\in\fF}
\sum_{\tilde g\ll Q}
{1\over \tilde g^{3/4}}
T^{\vep}
\sum_{\ff'\in\fF\atop a'\equiv a(\mod \tilde g)}
1
.
\eeann
To the last sum, we again apply Lemma \ref{lem:spec3}, giving
\beann
\cI_{Q}^{(\neq,\le)}
&\ll_{\vep}&
U^{6}
{X^{2}\over T}
{H^{9/4}\over Q^{1/4}}
T^{\gd}
\sum_{\tilde g\ll Q}
{1\over \tilde g^{3/4}}
T^{\vep}
{1\over \tilde g^{\eta_{0}}}
T^{\gd}
\ll
U^{6}
{X^{2}\over T}
{H^{9/4}\over Q_{0}^{\eta_{0}}}
T^{\gd}
T^{\vep}
T^{\gd}
,
\eeann
since $Q\ge Q_{0}$.
By \eqref{eq:His} and \eqref{eq:fuIs}, this is again a power savings, as claimed.
\epf

Putting together \eqref{eq:cIQ12}, \eqref{eq:cIQ2bnd}, \eqref{eq:cIQ1break}, \eqref{eq:cIQ11bnd}, and \eqref{eq:cIQ12bnd}, we have proved the following
\begin{thm}\label{thm:IQX}
For $Q_{0}\le Q<X$, there is some $\eta>0$ such that 
$$
\cI_{Q}\ll N\, T^{2(\gd-1)}\, N^{-\eta},
$$
as $N\to\infty$.
\end{thm}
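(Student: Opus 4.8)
The plan is to assemble the three estimates proved above in this section through the decompositions already set up. First I would record that, by \eqref{eq:cIQ12}, fixing $\ff\in\fF$ and separating $\ff'\in\fF$ according to whether $a'=a$ or $a'\neq a$ gives $\cI_Q\ll\cI_Q^{(=)}+\cI_Q^{(\neq)}$, and that \eqref{eq:cIQ1break} further splits the off-diagonal piece as $\cI_Q^{(\neq)}=\cI_Q^{(\neq,>)}+\cI_Q^{(\neq,\le)}$ according to whether $(a,a')>H$ or $(a,a')\le H$, with $H$ chosen as in \eqref{eq:His}. The theorem is then immediate from the bounds $\cI_Q^{(=)},\cI_Q^{(\neq,>)},\cI_Q^{(\neq,\le)}\ll N\,T^{2(\gd-1)}N^{-\eta}$ established in \eqref{eq:cIQ2bnd}, \eqref{eq:cIQ11bnd}, and \eqref{eq:cIQ12bnd}, respectively.

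The common mechanism behind all three bounds, which I would emphasize, is the split of \eqref{eq:IQbnd1} into an archimedean factor and a modular factor. The archimedean factor is controlled by non-stationary phase, which restricts the dual variables to $|n|,|m|\ll U$ as in \eqref{eq:nmRest}, followed by the stationary-phase bound $|\cJ_f|\ll\min(X^{2},1/(|\gb|T))$ of \eqref{eq:cJfbnd}; the hypothesis $Q<X$ enters exactly here, to bound $\int_{|\gb|<1/(qM)}\min(X^{2},1/(|\gb|T))^{2}\,d\gb\ll X^{2}/(QT)$ using $M=XT$ from \eqref{eq:Mis}, which is what produces \eqref{eq:cIQ1def}. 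The modular factor is the sum $\cS$, for which the Kloosterman/Sali\'e estimate \eqref{eq:modBndaNeqAp} gives $|\cS|\ll(q/q_{0})^{2}\{(a^{2},q_{0})((a')^{2},q_{0})\}^{1/2}(a-a',q)^{1/4}q^{-5/4}$. With these in hand, each of the three sums reduces to a divisor sum over $q\asymp Q$, harmlessly absorbed into $T^{\vep}$, times a count of $\ff'\in\fF$ subject to a congruence on $a'$; Lemma \ref{lem:spec3} converts that congruence into a genuine power saving ($Q_{0}^{-\eta_{0}}$ in the diagonal case, $H^{-\eta_{0}}$ for large gcd, $Q_{0}^{-\eta_{0}}$ again for small gcd), which dominates the polynomial losses in $U$ and $H$ thanks to the choices \eqref{eq:fuIs}, \eqref{eq:His}, and \eqref{eq:TXNis}.

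The step I expect to be the main obstacle is $\cI_Q^{(\neq,\le)}$, the off-diagonal contribution with small gcd: here \eqref{eq:modBndaNeqAp} only saves essentially one power of $q$, so one must track simultaneously the three quantities $(a,q)$, $(a',q)$, and $(a-a',q)$. The bookkeeping is to set $g=(a,q)$, $g'=(a',q)$, $h=(g,g')\le H$, write $g=hg_{1}$, $g'=hg_{1}'$, $(a-a',q)=h\tilde g$, observe that $g_{1},g_{1}',\tilde g$ are pairwise coprime, and hence $[hg_{1},hg_{1}',h\tilde g]\ge g_{1}g_{1}'\tilde g$; this makes the interior divisor sums collapse to a convergent $\sum_{\tilde g}\tilde g^{-3/4-\eta_{0}}$ and leaves $H^{9/4}Q_{0}^{-\eta_{0}}$ times two copies of $\#\fF\ll T^{\gd}$, which is the desired power saving. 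The diagonal and large-gcd pieces follow the same template but with strictly easier divisor-sum bookkeeping, so the whole theorem follows by combining \eqref{eq:cIQ2bnd}, \eqref{eq:cIQ11bnd}, and \eqref{eq:cIQ12bnd} via \eqref{eq:cIQ12} and \eqref{eq:cIQ1break}.
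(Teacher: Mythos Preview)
Your proof is correct and matches the paper's exactly: the theorem is simply the combination of \eqref{eq:cIQ2bnd}, \eqref{eq:cIQ11bnd}, and \eqref{eq:cIQ12bnd} via the decompositions \eqref{eq:cIQ12} and \eqref{eq:cIQ1break}, and your first paragraph says precisely this. One small slip in your explanatory recap: the archimedean integral $\int_{|\gb|<1/(qM)}\min(X^{2},1/(|\gb|T))^{2}\,d\gb$ is $\ll X^{2}/T$, not $X^{2}/(QT)$ --- the hypothesis $Q<X$ is what puts the breakpoint $|\gb|=1/(X^{2}T)$ inside the range of integration, and the resulting $X^{2}/T$ is exactly the prefactor you see in \eqref{eq:cIQ1def}.
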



\newpage


\section{Minor Arcs III: Case  $X\le Q<M$}\label{sec:QXT}

In this section, we continue our analysis of $\cI_{Q}$ from \eqref{eq:IQdef}, but now we need different methods to handle the very large $Q$ situation. In particular, the range of $x,y$ in \eqref{eq:cRffuIs} is now such that we have incomplete sums, so our first step is to complete them. 

To this end, recall the notation \eqref{eq:tilQis} and introduce
\be\label{eq:glfDef}
\gl_{f}\left(X,\gb;\frac n{q_{0}},\frac m{q_{0}},u\right)
:=
\sum_{x,y\in\Z}
\gU\left(\frac {ux}X\right)
\gU\left(\frac {uy}X\right)
e
\left(
-{n\over q_{0}}x-{m\over q_{0}}y
\right)
e
\bigg(
\gb
u^{2}
f(x,y)
\bigg)
,
\ee
so that, using \eqref{eq:cSfDef}, an elementary calculation gives
\be\label{eq:RfuNew}
\hat\cR_{f,u}\left(\frac rq+\gb\right)
=
\sum_{n(q_{0})}
\sum_{m(q_{0})}
\gl_{f}\left(X,\gb;\frac n{q_{0}},\frac m{q_{0}},u\right)
\cS_{f}(q_{0},ru_{0};n,m)
.
\ee
Put \eqref{eq:RfuNew} into \eqref{eq:RNRf} and apply Cauchy-Schwarz in the $u$-variable:
\bea
\label{eq:cRsq2}
\left|\widehat{\cR_{N}^{U}}
\left(\frac rq+\gb\right)\right|^{2}
&\ll&
U
\sum_{\d< U}
\Bigg|
\sum_{\ff\in\fF}
e_{q}(-r a)
e(-a\gb)
\\
\nonumber
&&
\times
\sum_{0\le n,m<q_{0}}
\gl_{f}\left(X,\gb;\frac n{q_{0}},\frac m{q_{0}},u\right)
\cS_{f}(q_{0},ru_{0};n,m)
\Bigg|^{2}
.
\eea
As before, open the square, setting $\ff'=f'-a'$, and insert the result into \eqref{eq:IQdef}:
\bea
\label{eq:IQbnd2}
\cI_{Q}
&\ll&
U
\sum_{\d< U}
\sum_{q\asymp Q}
\sum_{n,m,n',m'<q_{0}}\
\sum_{\ff,\ff'\in\fF}\
\\
\nonumber
&&
\times
\left[
\sideset{}{'}
\sum_{r(q)}
\cS_{f}(q_{0},r\d _{0};n,m)
\overline{\cS_{f'}(q_{0},r\d _{0};n',m')}
e_{q}(r (a'-a))
\right]
\\
\nonumber
&&
\times
\left[
\int_{|\gb|<1/(qM)}
\gl_{f}\left(X,\gb;\frac n{q_{0}},\frac m{q_{0}},u\right)
\overline{\gl_{f'}\left(X,\gb;\frac {n'}{q_{0}},\frac {m'}{q_{0}},u\right)}
e(\gb (a-a'))
%
%
d\gb
\right]
.
\eea
Yet again the sum has split into modular and archimedean components with the former being exactly equal to $\cS$ in \eqref{eq:cSdef}. As before, decompose 
$\cI_{Q}$ according to the diagonal ($a=a'$) and off-diagonal terms:
\be\label{eq:cIQ34}
\cI_{Q}\ll \cI_{Q}^{(=)}+\cI_{Q}^{(\neq)}.
\ee

\begin{lem}
Assume $Q\ge X$. For $\square\in\{=,\neq\}$, we have
\be\label{eq:cIQ3def}
\cI_{Q}^{(\square)}
\ll
{U
X^{3}
\over QT}
\sum_{u<U}
{1\over u^{4}}
\sum_{q\asymp Q}\
\sum_{n,m,n',m'\ll {U Q\over X}}
\sum_{\ff\in\fF}
\sum_{\ff'\in\fF\atop a'\square a}
|\cS| 
.
\ee
\end{lem}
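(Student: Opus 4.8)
The plan is to work from the identity \eqref{eq:IQbnd2}, estimate its two bracketed factors separately — the modular one is already exactly $\cS$ from \eqref{eq:cSdef} — and concentrate the effort on the archimedean factor
$$
\cJ:=\int_{|\gb|<1/(qM)}\gl_{f}\!\left(X,\gb;\tfrac{n}{q_{0}},\tfrac{m}{q_{0}},u\right)\overline{\gl_{f'}\!\left(X,\gb;\tfrac{n'}{q_{0}},\tfrac{m'}{q_{0}},u\right)}\,e(\gb(a-a'))\,d\gb ,
$$
together with the bookkeeping task of cutting the ranges of $n,m,n',m'$ from $[0,q_{0})$ down to $\ll UQ/X$.

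First I would truncate the frequency sums by completion. In \eqref{eq:glfDef} the weight $\gU(ux/X)$ confines $x$, and likewise $y$, to an interval of length $\asymp X/u$, so applying Poisson summation in $x$ and then in $y$ (with $\gb$ fixed) expresses $\gl_{f}$ as a dual sum of Fourier transforms of a fixed smooth bump against the phase $-\tfrac{n}{q_{0}}x-\tfrac{m}{q_{0}}y+\gb u^{2}f(x,y)$. The key observation is that the extra oscillation $\gb u^{2}f$ is harmless in the present regime: over the box $x,y\ll X/u$ it varies by $O(|\gb|u^{2}T(X/u)^{2})=O(|\gb|TX^{2})$, which is $O(X/Q)=O(1)$ because $|\gb|<1/(qM)=1/(qXT)$ and $q\asymp Q\ge X$; in particular the shift it induces on the dual frequency is $O(|\gb|uTX)=O(u/Q)$, no larger than the bump's natural width $\asymp u/X$. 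Hence the standard non-stationary-phase bound shows the dual sum is negligible — smaller than any fixed power of $N$ — unless the signed residue of $n$ modulo $q_{0}$ is $\ll q_{0}u/X$, and similarly for $m$; the identical analysis of $\gl_{f'}$ handles $n',m'$. Since $q_{0}\mid q\asymp Q$ and $u<U$, only the frequencies with $|n|,|m|,|n'|,|m'|\ll UQ/X$ contribute, up to a negligible error.

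Within that resonant box I would then use only the crude bound $|\gl_{f}(X,\gb;\cdot,\cdot,u)|\ll(X/u)^{2}$ — a $\gU$-weighted sum of $\asymp(X/u)^{2}$ unimodular terms, using $\int\gU=1$ — since the same smallness of $|\gb|$ shows the quadratic phase is too flat for any stationary-phase improvement here. Thus $|\gl_{f}\overline{\gl_{f'}}|\ll(X/u)^{4}$, and since the $\gb$-interval has length $\ll 1/(qM)\asymp 1/(QXT)$ we obtain
$$
|\cJ|\ \ll\ \frac{(X/u)^{4}}{QXT}\ =\ \frac{X^{3}}{u^{4}QT}.
$$
Substituting this into \eqref{eq:IQbnd2}, identifying the modular bracket with $\cS=\cS(q,q_{0},f,f',n,m,n',m';u_{0})$, restricting the frequency ranges as above, and separating $\ff'$ according to whether $a'=a$ or $a'\neq a$ yields precisely \eqref{eq:cIQ3def}.

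The step I expect to be the main obstacle is the completion: one must verify that the oscillation coming from $\gb u^{2}f(x,y)$ never exceeds $O(1)$ over the whole range of summation, which is exactly where the hypothesis $Q\ge X$ (making $1/(qM)$ small enough with $M=XT$) is essential, and one must check that the Poisson tails are genuinely negligible so that they may be discarded before bounding $\cS$. Everything past that point is the trivial size bound for $\gl_{f}$ and routine bookkeeping.
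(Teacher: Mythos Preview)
Your proposal is correct and follows essentially the same line as the paper. The paper argues more tersely: since $|\gb u^{2}f(x,y)|\ll X/Q\le 1$ over the support of the weights, the sum $\gl_{f}$ behaves like a smooth linear exponential sum and has no contribution unless $nx/q_{0},my/q_{0}\ll 1$, i.e.\ $n,m\ll uq_{0}/X\le UQ/X$; then it applies the trivial bound $|\gl_{f}|\ll (X/u)^{2}$ and the length $\asymp 1/(QXT)$ of the $\gb$-interval. Your Poisson-plus-nonstationary-phase presentation is simply a more explicit way of saying the same thing, and your identification of $Q\ge X$ as the crucial hypothesis is exactly right.
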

\pf
Consider the sum $\gl_{f}$ in \eqref{eq:glfDef}. Since $x,y\asymp X/u$, 
 $|\gb|<1/(qM)$, $X\le Q$, and using \eqref{eq:Mis}, we have that
$$
|\gb u^{2} f(x,y)|\ll \frac1{QM} u^{2} T\left(\frac Xu\right)^{2}= \frac XQ\le 1.
$$
Hence 
there is contribution  only if $nx/q_{0}, my/q_{0} \ll 1$, that is,  we may restrict to the range
$$
n,m\ll u q_{0}/X
.
$$ 
In this range, we give $\gl_{f}$ the trivial bound of $X^{2}/u^{2}$. 
Putting this analysis into 
\eqref{eq:IQbnd2},
the claim follows. 
\epf
We handle the off-diagonal term first. 
\begin{prop}
Assuming $X\le Q<M$, there is some $\eta>0$ such that
\be\label{eq:cI3bnd}
\cI_{Q}^{(\neq)}
\ll
N\, T^{2(\gd-1)}\, N^{-\eta}
,
\ee
as $N\to\infty$.
\end{prop}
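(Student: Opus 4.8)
The plan is to begin from the bound \eqref{eq:cIQ3def} with $\square=\neq$ and feed in the modular estimate \eqref{eq:modBndaNeqAp} for $|\cS|$. There the factor $(q/q_{0})^{2}$ equals $(u^{2},q)^{2}\le u^{4}$ by \eqref{eq:tilQis}, which exactly cancels the $u^{-4}$ in \eqref{eq:cIQ3def}; bounding also $(a^{2},q_{0})\le(a^{2},q)$ and $((a')^{2},q_{0})\le((a')^{2},q)$ (legitimate since $q_{0}\mid q$), the remaining sum over $u<U$ then costs only a factor $U$, and the sum over $n,m,n',m'\ll UQ/X$ costs $(UQ/X)^{4}$. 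Collecting these factors gives
\[
\cI_{Q}^{(\neq)}\ll\frac{U^{6}Q^{3}}{XT}\sum_{q\asymp Q}\ \sum_{\ff\in\fF}\ \sum_{\ff'\in\fF\atop a'\neq a}\frac{\{(a^{2},q)\cdot((a')^{2},q)\}^{1/2}(a-a',q)^{1/4}}{q^{5/4}}.
\]

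The essential point, in contrast with the regime $Q<X$ of \S\ref{sec:QX}, is to keep the full decay $q^{-5/4}$ rather than absorbing $(a-a',q)^{1/4}$ into $q^{1/4}$. Fix $\ff,\ff'$ with $a'\neq a$, and classify the $q\asymp Q$ according to the values $d_{1}=(a^{2},q)$, $d_{2}=((a')^{2},q)$, $d_{3}=(a-a',q)$, so that $[d_{1},d_{2},d_{3}]\mid q$. Using the elementary estimate $\sum_{q\asymp Q,\,L\mid q}q^{-5/4}\ll L^{-1}Q^{-1/4}$ (any $L\ge1$) and then $[d_{1},d_{2},d_{3}]\ge[d_{1},d_{2}]\ge(d_{1}d_{2})^{1/2}$, the inner $q$-sum is
\[
\ll Q^{-1/4}\!\!\sum_{d_{1}\mid a^{2},\,d_{2}\mid(a')^{2},\,d_{3}\mid a-a'}\!\!\frac{d_{1}^{1/2}d_{2}^{1/2}d_{3}^{1/4}}{[d_{1},d_{2},d_{3}]}\ \ll\ Q^{-1/4}\!\!\sum_{d_{1}\mid a^{2},\,d_{2}\mid(a')^{2},\,d_{3}\mid a-a'}\!\!d_{3}^{1/4}.
\]
Since $a,a'\asymp T$ forces $0\neq|a-a'|\ll T$ (here the off-diagonality $a'\neq a$ is used), the divisor bound yields $\ll_{\vep}Q^{-1/4}T^{1/4}N^{\vep}$ for this inner sum.

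Bounding the remaining bilinear sum over $\fF$ trivially by $(\#\fF)^{2}\ll T^{2\gd}$ via \eqref{eq:fFTbnd}, one obtains
\[
\cI_{Q}^{(\neq)}\ll_{\vep}\frac{U^{6}Q^{3}}{XT}\cdot Q^{-1/4}T^{1/4}T^{2\gd}N^{\vep}=\frac{U^{6}Q^{11/4}T^{2\gd-3/4}}{X}\,N^{\vep}.
\]
This is increasing in $Q$, so the constraint $Q<M=XT$ from \eqref{eq:Mis} bounds the right side by $U^{6}X^{7/4}T^{2\gd+2}N^{\vep}$; comparing with the target $NT^{2(\gd-1)}=X^{2}T^{2\gd-1}$ (using $N=X^{2}T$ from \eqref{eq:TXNis}), the ratio is $\ll U^{6}X^{-1/4}T^{3}N^{\vep}$, which is a negative power of $N$ since $X=N^{99/200}$, $T=N^{1/100}$, and $U=N^{\fu}$ with $\fu$ small; this yields \eqref{eq:cI3bnd} for a suitable $\eta>0$. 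I expect the main friction to be in the first step: confirming that completing the $n,m,n',m'$-sums (range $\ll UQ/X$) together with the loss $(q/q_{0})^{2}\le u^{4}$ still leaves the clean prefactor $U^{6}Q^{3}/(XT)$, and then verifying that the single power $Q^{-1/4}$ salvaged from the $q$-sum — which here, unlike in \S\ref{sec:QX}, needs no appeal to Lemma \ref{lem:spec3} — genuinely suffices once one exploits $Q<XT$ and the sparsity of $T$ and $X$ as powers of $N$.
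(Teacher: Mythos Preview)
Your proof is correct and follows essentially the same route as the paper: insert the modular bound \eqref{eq:modBndaNeqAp} into \eqref{eq:cIQ3def}, absorb $(q/q_{0})^{2}\le u^{4}$ against the $u^{-4}$, and then exploit $Q<M=XT$ to get the power saving. The only cosmetic difference is that the paper bounds the gcd factors crudely by $(a^{2},q_{0})^{1/2}\le a\ll T$, $((a')^{2},q_{0})^{1/2}\le a'\ll T$, and $(a-a',q)^{1/4}\le|a-a'|^{1/4}\ll T^{1/4}$, arriving at $U^{6}X^{-1/4}T^{5}$ in place of your sharper $U^{6}X^{-1/4}T^{3}$ obtained via the divisor-sum treatment; both are negative powers of $N$.
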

\pf
Since \eqref{eq:modBndaNeqAp} is such a large savings in $q>X$, we can afford to lose in the much smaller variable $T$. 
Hence put \eqref{eq:modBndaNeqAp} into \eqref{eq:cIQ3def}, estimating $(a-a',q)\le |a-a'|$ (since $a\neq a'$):
\beann
\cI_{Q}^{(\neq)}
&\ll&
{U
X^{3}
\over QT}
\sum_{u<U}
{1\over u^{4}}
\sum_{q\asymp Q}\
\sum_{n,m,n',m'\ll {UQ\over X}}
\sum_{\ff,\ff'\in\fF}
u^{4}
{
a
\cdot
a'
\over q^{5/4}
}
|a-a'|^{1/4}
\\
&\ll&
{U^{6}
X^{3}
\over 
T}
\left( {Q\over X}\right)^{4}
T^{2\gd}
{
T^{2}\over Q^{5/4}}
T^{1/4}
\\
&\ll&
U^{6}
X^{7/4}
T^{2\gd}
T^{4}
=
X^{2}T\,
T^{2(\gd-1)}
\,
\left(
U^{6}
X^{-1/4}
T^{5}
\right)
,
\eeann
where we used \eqref{eq:tilQis}, $Q<M$, and \eqref{eq:Mis}.
Using  \eqref{eq:TXNis} we have  that
\be\label{eq:gS2p}
X^{-1/4}T^{5}=N^{-59/800},
\ee
so together with \eqref{eq:fuIs}, this is clearly a substantial power savings.
\epf

Lastly, we deal with the diagonal term. We no longer save enough from $a=a'$ alone. But recall that here more cancellation can be gotten from \eqref{eq:modBndaEqAp} in the special case that $\ff(m,-n)\neq\ff'(m',-n')$.
Hence we return to 
\eqref{eq:cIQ3def} and, once $n,m,$ and $\ff$ are determined, separate $n',m'$, and $\ff'$ into cases corresponding to whether $\ff(m,-n)=\ff'(m',-n')$ or not.
Accordingly, write
\be\label{eq:cIQ312}
\cI_{Q}^{(=)}\
=\
\cI_{Q}^{(=,=)}
+
\cI_{Q}^{(=,\neq)}
.
\ee

We now estimate $\cI_{Q}^{(=,\neq)}$ using the extra cancellation in \eqref{eq:modBndaEqAp}.
\begin{prop}
Assuming $Q<XT$, there is some $\eta>0$ such that
\be\label{eq:cI42bnd}
\cI_{Q}^{(=,\neq)}
\ll
N\, T^{2(\gd-1)}\, N^{-\eta}
,
\ee
as $N\to\infty$.
\end{prop}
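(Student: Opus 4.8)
The plan is to run essentially the same kind of estimate as for $\cI_{Q}^{(\neq)}$ in the previous proposition, but now extracting the extra cancellation in \eqref{eq:modBndaEqAp}, which is available precisely because we are in the subcase $f(m,-n)\neq f'(m',-n')$ (equivalently $\ff(m,-n)\neq\ff'(m',-n')$, since $a=a'$). I would start from \eqref{eq:cIQ3def} restricted to $a'=a$ and to $f(m,-n)\neq f'(m',-n')$, and substitute \eqref{eq:modBndaEqAp} for $|\cS|$. For the factor $|f(m,-n)-f'(m',-n')|^{1/2}$ appearing there I would use only the trivial bound: the coefficients of $f$ and $f'$ are $\ll T$ by \eqref{eq:ACisT}, and, as in the derivation of \eqref{eq:cIQ3def}, only $n,m,n',m'\ll uQ/X$ contribute, so $|f(m,-n)-f'(m',-n')|\ll T(uQ/X)^{2}$ and hence $|f(m,-n)-f'(m',-n')|^{1/2}\ll T^{1/2}uQ/X$.

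After this substitution the surviving sums are elementary, and I would dispatch them in the following order. First the $q$-sum: bound $(q/q_{0})^{5}=(u^{2},q)^{5}\le u^{10}$ uniformly, and use the divisor estimate $\sum_{q\asymp Q}(a^{2},q_{0})\,q^{-9/8}\le\sum_{q\asymp Q}(a^{2},q)\,q^{-9/8}\ll_{\vep}Q^{-1/8}T^{\vep}$ (valid because $a\ll T$ forces the number of divisors of $a^{2}$ to be $\ll_{\vep}T^{\vep}$, and only $d\ll Q$ can divide some $q\asymp Q$). Then the $n,m,n',m'$-sums contribute $\ll(uQ/X)^{4}$, the $u$-sum converges after collecting powers of $u$ and costs only a fixed power of $U$, the $\ff$-sum is $\ll T^{\gd}$ by \eqref{eq:fFTbnd}, and the innermost $\ff'$-sum over $a'=a$ I would relax to $a'\equiv a\ (\mathrm{mod}\ \lfloor Q_{0}\rfloor)$ and bound by Lemma \ref{lem:spec3}, giving $\ll T^{\gd}Q_{0}^{-\eta_{0}}$. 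Multiplying in the prefactor $UX^{3}/(QT)$, this yields a bound of the shape $\cI_{Q}^{(=,\neq)}\ll_{\vep}U^{O(1)}X^{-2}Q^{31/8}T^{2\gd-1/2}T^{\vep}Q_{0}^{-\eta_{0}}$.

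The final step is to invoke the hypothesis $Q<XT$, so that $Q^{31/8}\le(XT)^{31/8}$; comparing the resulting $X,T$-exponents with the target $N\,T^{2(\gd-1)}=X^{2}T^{2\gd-1}$ leaves a surplus factor of $X^{-1/8}T^{35/8}$ up to powers of $U$, $Q_{0}$ and $T^{\vep}$. Substituting $X=N^{99/200}$ and $T=N^{1/100}$ gives $X^{-1/8}T^{35/8}=N^{-29/1600}$, a genuine negative power; since $U$ and $Q_{0}$ are fixed small powers of $N$ (determined by the spectral gap, with $U$ small enough that $U^{O(1)}N^{-29/1600}\ll N^{-\eta}$ for some $\eta>0$) and $Q_{0}^{-\eta_{0}}$ only helps, this establishes \eqref{eq:cI42bnd} with an effective $\eta>0$.

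The only delicate point I anticipate is checking that \eqref{eq:modBndaEqAp} actually beats the trivial $\ll 1/q$ that one gets from \eqref{eq:modBndaNeqAp} in the diagonal case $a=a'$: this improvement survives only when $|f(m,-n)-f'(m',-n')|$ is not too large, and since $n,m,n',m'$ range up to $\asymp uQ/X$ — as large as $\asymp M/X\asymp T$ — this is exactly what pins down the constraint $Q<XT=M$ and the placement of the $9/8$ and $31/8$ exponents. Everything else is routine lattice-point counting, divisor sums, and one further application of Lemma \ref{lem:spec3}.
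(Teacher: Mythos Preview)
Your proposal is correct and follows essentially the same route as the paper: start from \eqref{eq:cIQ3def}, insert \eqref{eq:modBndaEqAp}, bound $|f(m,-n)-f'(m',-n')|^{1/2}\ll T^{1/2}\cdot uQ/X$ trivially, execute the divisor estimate on the $q$-sum, and arrive at the surplus factor $X^{-1/8}T^{35/8}=N^{-29/1600}$ after using $Q<XT$. The one difference is that you additionally bound the inner $\ff'$-sum over $a'=a$ via Lemma \ref{lem:spec3} (relaxing to $a'\equiv a\pmod{\lfloor Q_{0}\rfloor}$), whereas the paper simply drops the constraint $a'=a$ altogether and sums over all $\ff'\in\fF$, absorbing the loss since $X^{-1/8}T^{35/8}$ already gives the power saving; your extra $Q_{0}^{-\eta_{0}}$ is harmless but unnecessary here.
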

\pf
Returning to \eqref{eq:cIQ3def}, apply \eqref{eq:modBndaEqAp}:
\beann
\cI_{Q}^{(=,\neq)}
&\ll&
{U
X^{3}
\over QT}
\sum_{u<U}
{1\over u^{4}}
\sum_{\ff\in\fF}
\sum_{\ff'\in\fF\atop a'= a}
\sum_{q\asymp Q}\
\sum_{n,m\ll {U Q\over X}}
\sum_{n',m'\ll {U Q\over X}\atop \ff(m,-n)\neq\ff'(m',-n')}
|\cS| 
\\
&\ll&
{U
X^{3}
\over QT}
\sum_{u<U}
{1\over u^{4}}
\sum_{\ff,\ff'\in\fF}
\sum_{\tilde q_{1}\mid a^{2}\atop \tilde q_{1}\ll Q}
\sum_{q\asymp Q\atop q\equiv0(\tilde q_{1})}
\sum_{n,m,n',m'\ll {U Q\over X}}
u^{10}
{
\tilde q_{1}
\over Q^{9/8}
}
\left(T\left({UQ\over X}\right)^{2}\right)^{1/2}
\\
&\ll_{\vep}&
{U^{8}
X^{3}
\over T}
T^{2\gd}\,
T^{\vep}
\left( {U Q\over X}\right)^{4}
{
1\over Q^{9/8}
}
T^{1/2}
{UQ\over X}
\\
&\ll&
X^{2}
T
\
T^{2(\gd-1)}\,
\left(
X^{-1/8}
T^{35/8}
U^{13}
T^{\vep}
\right)
,
\eeann
where we used 
that
 $\ff(m,n)\ll T (UQ/X)^{2}$ and $Q<XT$.
 From \eqref{eq:TXNis}, we have
\be
\label{eq:gS12p}
X^{-1/8}
T^{35/8}
=
N^{-29/1600}
,
\ee
so we have again a power savings, as claimed.
\epf

Lastly, we turn to the case  $\cI_{Q}^{(=,=)}$, with $\ff(m,-n)=\ff'(m',-n')$. We exploit this condition to get savings using \eqref{eq:eqeq}.
\begin{prop}
Assuming $Q<XT$, there is some $\eta>0$ such that
\be\label{eq:cI41bnd}
\cI_{Q}^{(=,=)}
\ll
N\, T^{2(\gd-1)}\, N^{-\eta}
,
\ee
as $N\to\infty$.
\end{prop}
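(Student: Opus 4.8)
The plan is to feed the estimate \eqref{eq:cIQ3def}, specialized to the diagonal case $a'=a$ and with only the terms satisfying $\ff(m,-n)=\ff'(m',-n')$ retained, into the counting bound \eqref{eq:eqeq}. First I would observe that when $a=a'$ one has $(a-a',q)=q$, so \eqref{eq:modBndaNeqAp} collapses to $|\cS|\ll(q/q_{0})^{2}(a^{2},q_{0})q^{-1}$; since $q/q_{0}=(u^{2},q)\le u^{2}$ and $(a^{2},q_{0})\le(a^{2},q)$, this is $\ll u^{4}(a^{2},q)/q$. Substituting into \eqref{eq:cIQ3def} cancels the weight $u^{-4}$ entirely, so that (together with the $U$ already present) the $u$-summation contributes a factor $\asymp U^{2}$; moreover the resulting bound for $|\cS|$ no longer involves $n,m,n',m'$ or $\ff'$, so the $q$-sum may be performed first, and one has $\sum_{q\asymp Q}(a^{2},q)/q\ll_{\vep}N^{\vep}$ (because $(a^{2},q)=d$ forces $d\mid q$, while $\tau(a^{2})\ll_{\vep}N^{\vep}$ as $a\ll T$). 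What remains is
\be
\cI_{Q}^{(=,=)}
\ll_{\vep}
N^{\vep}\,{U^{2}X^{3}\over QT}
\sum_{\ff\in\fF}
\#\{\ff'\in\fF,\ n,m,n',m'\ll UQ/X\ :\ a'=a,\ \ff(m,-n)=\ff'(m',-n')\}
.
\ee

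Next I would invoke \eqref{eq:eqeq} with its parameter $M$ set equal to $UQ/X$: for each fixed $\ff$ the inner cardinality is $\ll_{\vep}(TUQ/X)^{\vep}\left((UQ/X)^{2}+TUQ/X\right)$, and since this is uniform over $\ff\in\fF$, summing over the $\ll T^{\gd}$ forms $\ff\in\fF$ (by \eqref{eq:fFTbnd}) yields $\ll_{\vep}N^{\vep}\,T^{\gd}\left((UQ/X)^{2}+TUQ/X\right)$. Feeding this back, and using that $Q<XT$ holds throughout this section (whence $UQ/X<UT$), the $(UQ/X)^{2}$ term contributes $\ll U^{4}XQ\,T^{\gd-1}\ll U^{4}X^{2}T^{\gd}$ (worst when $Q\asymp XT$) and the $TUQ/X$ term contributes $\ll U^{3}X^{2}T^{\gd}$. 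Since $X^{2}T=N$ by \eqref{eq:TXNis}, this gives
\be
\cI_{Q}^{(=,=)}\ll_{\vep}N^{\vep}\,U^{4}\,X^{2}T^{\gd}=N^{\vep}\,U^{4}\,N\,T^{\gd-1},
\ee
which beats the target $N\,T^{2(\gd-1)}N^{-\eta}$ as soon as $N^{\vep}U^{4}\ll T^{\gd-1}N^{-\eta}$; since $T^{\gd-1}=N^{(\gd-1)/100}$ with $\gd>1$ and $U=N^{\fu}$ with $\fu$ the small fixed exponent of \eqref{eq:fuIs}, this holds for $\vep$ and $\eta$ chosen small enough.

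The argument introduces no genuinely new ingredient beyond the lemmata already assembled; the step I expect to demand the most care is the closing power count. It succeeds only because the restriction $\ff(m,-n)=\ff'(m',-n')$ is exactly what makes \eqref{eq:eqeq} applicable --- without it the number of admissible tuples $(n,m,n',m')$ would be of size $(UQ/X)^{4}$ in place of essentially $(UQ/X)^{2}$, and the estimate would be hopelessly large --- and because $U$ is only a tiny power of $N$, so that the accumulated factors $U^{4}N^{\vep}$ stay below the saved power $T^{\gd-1}$. I would also check that the implied constant in \eqref{eq:eqeq} is uniform over $\ff\in\fF$, which is what licenses replacing the outer sum over $\ff$ by the factor $|\fF|\ll T^{\gd}$.
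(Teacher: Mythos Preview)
Your proof is correct and follows essentially the same route as the paper's: bound $|\cS|$ via \eqref{eq:modBndaNeqAp} with $a=a'$ (so $(a-a',q)=q$) to get $|\cS|\ll u^{4}(a^{2},q)/q$, sum over $q$ via the divisor bound, apply \eqref{eq:eqeq} with $M=UQ/X$, and close with the same final estimate $\cI_{Q}^{(=,=)}\ll_{\vep}N^{\vep}U^{4}X^{2}T^{\gd}$. The only cosmetic difference is that the paper interchanges the $q$-sum and the application of \eqref{eq:eqeq}, but the arithmetic is identical.
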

\pf
Returning to \eqref{eq:cIQ3def}, apply \eqref{eq:modBndaNeqAp}, and \eqref{eq:eqeq}:
\beann
\cI_{Q}^{(=,=)}
&\ll&
{U
X^{3}
\over QT}
\sum_{u<U}
{1\over u^{4}}
\sum_{q\asymp Q}\
\sum_{n,m\ll {UQ\over X}}
\sum_{\ff\in\fF}
\sum_{\ff'\in\fF\atop a'= a}
\sum_{n',m'\ll UQ/X\atop \ff(m,-n)=\ff'(m',-n')}
\hskip-.3in
u^{4}
{(a^{2},q)\over q^{5/4}}q^{1/4}
\\
&\ll&
{U
X^{3}
\over Q^{2}T}
\sum_{u<U}
\sum_{\ff\in\fF}
\sum_{\tilde q_{1}\mid a^{2}\atop\tilde q_{1}\ll Q}
\tilde q_{1}
\sum_{q\asymp Q\atop q\equiv0(\tilde q_{1})}\
\left[
\sum_{n,m\ll {UQ\over X}}
\sum_{\ff'\in\fF\atop a'= a}
\sum_{n',m'\ll UQ/X\atop \ff(m,-n)=\ff'(m',-n')}
1
\right]
\\
&\ll_{\vep}&
N^{\vep}
{U
X^{3}
\over Q^{2}T}
U
T^{\gd}
Q
\left[
\left( {UQ\over X}\right)^{2}
+
T
{UQ\over X}
\right]
\\
&\ll_{\vep}&
N^{\vep}
U^{4}
X^{2}
T^{\gd}
\ll
X^{2}T\,
T^{2(\gd-1)}\,
\left(
T^{1-\gd}
U^{4}
N^{\vep}
\right)
.
\eeann
From \eqref{eq:dim}, this is a power savings.
\epf

Combining
\eqref{eq:cIQ34}, \eqref{eq:cI3bnd}, \eqref{eq:cIQ312}, \eqref{eq:cI42bnd}, and \eqref{eq:cI41bnd}, we have the following
\begin{thm}
\label{thm:IQM}
If $X\le Q<M$, then there is some $\eta>0$ so that
$$
\cI_{Q}\ll N\, T^{2(\gd-1)} \, N^{-\eta},
$$
as $N\to\infty$.
\end{thm}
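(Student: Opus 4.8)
The plan is to assemble the three propositions established earlier in this section. Recall that because $x,y\asymp X/u$ while the denominator $q\asymp Q$ now satisfies $Q\ge X$, the exponential sums on $x,y$ are incomplete; the first step, already carried out in \eqref{eq:glfDef}--\eqref{eq:IQbnd2}, is to complete them, writing $\cI_Q$ as a sum over $q\asymp Q$, over $n,m,n',m'$, and over $\ff,\ff'\in\fF$ of a product of a modular factor (exactly the sum $\cS$ of \eqref{eq:cSdef}) and an archimedean factor which one bounds trivially by $X^{2}/u^{2}$, arriving at \eqref{eq:cIQ3def}. One then decomposes $\cI_Q\ll\cI_Q^{(=)}+\cI_Q^{(\neq)}$ as in \eqref{eq:cIQ34}, according to whether $a_{\g'}=a_{\g}$.

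For the off-diagonal piece $\cI_Q^{(\neq)}$, since $q\asymp Q\ge X$ is large one can afford a lossy treatment of the much smaller auxiliary variables: insert the Kloosterman-type bound \eqref{eq:modBndaNeqAp} for $\cS$, estimate $(a-a',q)\le|a-a'|\ll T$, and sum the ranges $n,m,n',m'\ll UQ/X$ and $\ff,\ff'\in\fF$ trivially using $\#\fF\ll T^{\gd}$. The $q$-saving in $\cS$ is so strong that the outcome is $\ll X^{2}T\cdot T^{2(\gd-1)}\cdot(U^{6}X^{-1/4}T^{5})$, and the factor $X^{-1/4}T^{5}=N^{-59/800}$ from \eqref{eq:gS2p}, together with the smallness of $U$ from \eqref{eq:fuIs}, makes this a genuine power saving; this is Proposition \eqref{eq:cI3bnd}.

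For the diagonal piece $\cI_Q^{(=)}$, saving from $a'=a$ alone is insufficient, so one splits once more, $\cI_Q^{(=)}=\cI_Q^{(=,=)}+\cI_Q^{(=,\neq)}$ as in \eqref{eq:cIQ312}, according to whether $\ff(m,-n)=\ff'(m',-n')$. When $\ff(m,-n)\neq\ff'(m',-n')$ one uses the sharper bound \eqref{eq:modBndaEqAp}, whose extra factor $|\ff(m,-n)-\ff'(m',-n')|^{1/2}\ll(T(UQ/X)^{2})^{1/2}$ is still small enough, using $Q<XT$, to yield the gain $X^{-1/8}T^{35/8}=N^{-29/1600}$ from \eqref{eq:gS12p}, i.e. Proposition \eqref{eq:cI42bnd}. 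When $\ff(m,-n)=\ff'(m',-n')$ the modular sum only furnishes one power of $q$, so one instead counts the coincidences directly: with $\ff$ and $m,n$ fixed, the number of admissible $\ff'\in\fF$ with $a'=a$ together with $m',n'$ is controlled by \eqref{eq:eqeq}, giving Proposition \eqref{eq:cI41bnd}, the power saving here being $T^{1-\gd}$ coming from $\gd>1$ in \eqref{eq:dim}.

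Finally, combining \eqref{eq:cIQ34}, \eqref{eq:cI3bnd}, \eqref{eq:cIQ312}, \eqref{eq:cI42bnd} and \eqref{eq:cI41bnd}, and taking $\eta$ to be the minimum of the three exponents so produced, while noting that the hypothesis $Q<XT$ used in the two diagonal propositions is precisely $Q<M$ since $M=XT$ by \eqref{eq:Mis}, yields the theorem. I expect the genuine obstacle to be the term $\cI_Q^{(=,=)}$: there is no cancellation left to extract from the modular sum $\cS$, and one must instead control how often a fixed integer is represented by two shifted forms in the family $\fF$. This is exactly where the genus-theoretic and class-number inputs behind \eqref{eq:fincK}, \eqref{eq:cKzBnd}, \eqref{eq:nmUpToM}, and hence \eqref{eq:eqeq}, are essential, and it is the reason the dimension bound $\gd>1$ (rather than merely $\gd>0$) must be invoked.
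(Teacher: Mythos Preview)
Your proposal is correct and follows precisely the paper's approach: the theorem is obtained by combining the decompositions \eqref{eq:cIQ34} and \eqref{eq:cIQ312} with the three propositions \eqref{eq:cI3bnd}, \eqref{eq:cI42bnd}, \eqref{eq:cI41bnd}, and your recap of how each of those propositions is proved matches the paper as well. Your observation that the hypothesis $Q<XT$ in the diagonal propositions is exactly $Q<M$ via \eqref{eq:Mis} is also the point that ties everything together.
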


Finally, Theorems \ref{thm:IQ0}, \ref{thm:IQX},  and \ref{thm:IQM} together complete the proof of \eqref{eq:cEN2bnd}, and hence Theorem \ref{thm:Main} is proved.

\newpage

\appendix

\section{Spectral gap for the Apollonian group\\ By P\'eter P. Varj\'u}

In
 recent years some spectacular advances were made on estimating
spectral gaps (to be defined below) of
infinite co-volume subgroups of $\SL(d,\Z)$.
Bourgain and Gamburd \cite{BourgainGamburd2008} proved uniform spectral gap
estimates for Zariski-dense subgroups of
$\SL(2,\Z)$ under the additional assumption that the modulus $q$ is prime.
One of the crucial ideas in their paper is the application of
Helfgott's triple-product theorem \cite{Helfgott2008}.
The result in \cite{BourgainGamburd2008} was generalized in a series of papers
\cite{BourgainGamburd2008a}, \cite{BourgainGamburd2009},
\cite{BourgainGamburdSarnak2010}, \cite{Varju2012}, \cite{BourgainVarju2011} and \cite{SalehiVarju2011}.
Some of these require
the generalization of \cite{Helfgott2008} obtained
independently  by Breuillard, Green and Tao \cite{BreuillardGreenTao2011}
and Pyber and Szab\'o \cite{PyberSzabo2010}.

In particular, Bourgain and Varj\'u \cite[Theorem 1]{BourgainVarju2011} proved the
spectral gap for
Zariski-dense subgroups of $\SL(d, \Z)$
without any restriction for the modulus $q$.
Salehi Golsefidy and Varj\'u \cite[Theorem 1]{SalehiVarju2011} obtained the result
for Zariski-dense subgroups of perfect
arithmetic groups, but only for square-free $q$.
Unfortunately, these results do 
not cover Theorem \ref{thm:specGap}; 
the first one is not applicable to the Apollonian group, the second
one is restricted for the moduli.

In this appendix, we present an approach which differs from those
discussed above.
This is much simpler and probably would give better numerical results,
but we do not
pursue 
explicit bounds.
However, our method depends on special properties of the Apollonian
group and does not
apply to general Zariski-dense subgroups.

Recall from Section \ref{sec:prelim} that the preimage of the Apollonian
group under the homomorphism
\[
\iota :\SL(2,\C)\to \SO_F(\R)
\]
is generated by the matrices
\be\label{eq_generators1}
\pm\mat{1}{4i}{0}{1},\quad\pm\mat{2}{-i}{-i}{0},
\quad\pm\mat{2+2i}{4+3i}{-i}{-2i}.
\ee

We describe an automorphism of $\SL(2,\Z[i])$ which
transforms the above generators to matrices that will be
more convenient to work with.
Set
$A:=\mat{1}{i}{0}{1}$.
A simple calculation shows that the image of the matrices
(\ref{eq_generators1}) under the map $g\mapsto A^{-1}gA$
are
\[
\pm\mat{1}{4i}{0}{1},\quad\pm\mat{1}{0}{-i}{1},
\quad\pm\mat{1+2i}{4i}{-i}{1-2i}.
\]

We put
\be\label{eq_generators2}
\g_1=\mat{1}{4}{0}{1},\quad\g_2=\mat{1}{0}{1}{1},
\quad\g_3=\mat{1+2i}{4}{1}{1-2i}.
\ee
These are the image of (\ref{eq_generators1}) under the product of two
isomorphism: first conjugation by $A$
and then multiplication of the off-diagonal elements
by $-i$ and $i$.
We denote by $\bar \Ga$ the group generated by
$\bar S=\{\pm\g_1^{\pm1},\pm\g_2^{\pm1},\pm\g_3^{\pm1}\}$.
This is isomorphic to the group denoted by the same symbol in the paper.

First we recall two different notions of spectral gap.
The notion, ``geometric'' spectral gap, has already been explained
in Section \ref{sec:specGap}.
Recall that for an integer $q$, $\bar \Ga(q)$ denotes the kernel of
the projection map $\bar\Ga\to\SL(2,\Z[i]/(q))$.
We consider the Laplace Beltrami operator $\Delta$
on the hyperbolic orbifolds $\bar\Ga(q)\backslash\bH^3$.
We denote by $\gl_0(q)\le\gl_1(q)$ the two
smallest eigenvalues of $\Delta$ on $\bar\Ga(q)\backslash\bH^3$.
The geometric spectral gap
is an inequality of the form
$\gl_1(q)>\gl_0(q)+\vep$ for some $\vep>0$ independent
of $q$.

The other notion, ``combinatorial'' spectral gap is defined as follows.
Let $G$ be a finite group, and $S$ a symmetric set of generators.
Let $T_{G,S}$ be the Markov operator on the space $L^2(G)$
defined by
\[
T_{G,S}f(g)=\frac{1}{|S|}\sum_{\g\in S} f(\g g)
\]
for $f\in L^2(G)$ and $g\in G$.
We denote by
\[
\gl_n'(G,S)\le\ldots\le\gl_1'(G,S)\le\gl_0'(G,S)=1
\]
the eigenvalues of $T_{G,S}$ in increasing order.

The operator $Id-T_{\bar\Ga/\bar\Ga(q)}$
is a discrete analogue of the Laplacian $\Delta$
on $\bar\Ga(q)\backslash\bH^3$.
So by combinatorial spectral gap we mean the inequality
\[
\gl_1'(\bar\Ga/\bar\Ga(q),\bar S)<1-\vep
\]
for some $\vep>0$ independent of $q$.
To simplify notation, we will write
$\gl_1'(q)=\gl_1'(\bar\Ga/\bar\Ga(q),\bar S)$.

The relation between the two notions is not just an analogy.
It was shown by Brooks \cite[Theorem 1]{Brooks1986}
and Burger \cite{BurgerThesis}, \cite{Burger1986}, \cite{Burger1988} that they are equivalent
for the fundamental groups of a family of covers of
a compact manifold.
The orbifolds $\bar\Ga(q)\backslash\bH^3$ are not compact,
they even have infinite volume, however the
equivalence can be extended to cover our example,
see \cite[Theorems 1.2 and 2.1]{BourgainGamburdSarnak2011}.

We show that the congruence subgroups $\bar\Ga(q)$
of the Apollonian group have combinatorial spectral
gap which implies Theorem \ref{thm:specGap} in light of \cite[Theorems 1.2 and 2.1]{BourgainGamburdSarnak2011}.
\begin{thm}\label{th_sg}
Let $\bar\Ga$ be the Apollonian group and $\gl'_1(q)$ be as above.
There is an absolute constant $c>0$ such that $\gl_1'(q)<1-c$ for all $q$.
I.e. the Apollonian group has combinatorial spectral gap.
\end{thm}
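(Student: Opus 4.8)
The plan is to follow the Bourgain--Gamburd paradigm for establishing combinatorial spectral gap, but exploiting the special structure of the Apollonian group $\bar\Ga$ to sidestep the difficulties that force the general machinery. Recall the standard three-step scheme. First, one needs a \emph{flattening} or \emph{$\ell^2$-mixing} input: iterated convolutions of the generating measure $\mu$ supported on $\bar S$ spread out, so that after $O(\log |G|)$ steps the measure $\mu^{*k}$ has $\ell^2$-norm close to that of uniform on $\bar\Ga/\bar\Ga(q)$. Second, at an intermediate scale one needs a \emph{non-concentration} estimate: $\mu^{*k}$ does not concentrate on any proper algebraic subgroup (or coset thereof) of $\SL(2,\Z[i]/(q))$; this is where Zariski density of $\bar\Ga$ and a classification of subgroups of $\SL_2$ over $\Z/q\Z$ enter. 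Third, one applies a \emph{product theorem} (Helfgott \cite{Helfgott2008} for prime $q$, Breuillard--Green--Tao \cite{BreuillardGreenTao2011} / Pyber--Szab\'o \cite{PyberSzabo2010} and the sum-product technology of \cite{BourgainGamburdSarnak2010, Varju2012} for general $q$) to convert non-concentration into genuine growth, which combined with flattening yields the spectral gap.

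The key simplification here — the reason the appendix claims a \emph{much simpler} argument — is the presence of the unipotent generators. From \eqref{eq_generators2}, $\g_1 = \mattwo 1401$ and $\g_2 = \mattwo 1011$ are unipotent, and their reductions generate large unipotent (and, via commutators, diagonal torus) pieces of $\SL(2,\Z/q\Z)$ already in boundedly many steps; in fact $\langle \g_1,\g_2\rangle$ contains a finite-index subgroup of $\SL(2,\Z)$ coming from the congruence structure. The strategy I would pursue is therefore: (i) reduce modulo $q$ and use multiplicativity (Lemma~\ref{lem:GGq}) to treat prime powers $q=p^\ell$ separately, then (ii) for each prime $p$, observe that the image of $\bar\Ga$ in $\SL(2,\Z[i]/(p^\ell))$ already contains the image of the congruence subgroup $\langle \g_1,\g_2\rangle < \SL(2,\Z)$, whose spectral gap over $\SL(2,\Z/p^\ell\Z)$ (uniformly in $p,\ell$) is the known arithmetic case — Selberg's $3/16$ theorem in the classical guise, or its combinatorial counterpart via \cite{BourgainGamburd2008, BourgainGamburdSarnak2010}. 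Then (iii) a \emph{comparison} or \emph{induced-representation} argument: since $\bar S$ contains a generating set of a subgroup with uniform gap, and $\bar\Ga/\bar\Ga(q)$ is a bounded-degree extension situation controlled by the Goursat-type description of \cite{FuchsThesis}, the gap transfers to the full Apollonian quotient. Concretely one compares the Markov operator $T_{\bar\Ga/\bar\Ga(q),\bar S}$ with that of the sub-semigroup, using that any $\bar S$-walk of bounded length hits the $\{\g_1,\g_2\}$-subgroup with positive probability, so a power of $T_{\bar\Ga/\bar\Ga(q),\bar S}$ dominates a multiple of the sub-walk operator plus a positive operator.

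The main obstacle I anticipate is step (iii): the subgroup $\langle \bar\g_1, \bar\g_2\rangle \pmod q$ is not all of $\SL(2,\Z[i]/(q))$ — it lies in the $\SL(2,\Z/q\Z)$ block (up to the $i$-twist) — so one must be careful that ``spectral gap on a subgroup's quotient'' genuinely implies ``spectral gap on the big quotient.'' This requires either (a) an explicit understanding, from \cite{FuchsStrongApprox}/Lemma~\ref{lem:GGq}, that $\bar\Ga/\bar\Ga(q)$ is an extension of $\SL(2,\Z/q\Z)$-type groups by bounded pieces coming from the Gaussian integer structure, so one can induct over a composition series whose factors are near-simple of Lie type, and invoke quasirandomness (Gowers) to rule out almost-invariant vectors on abelian factors; or (b) a direct decomposition of $L^2(\bar\Ga/\bar\Ga(q))$ under the subgroup action and a check that the third generator $\g_3$ mixes the remaining directions. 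I would pursue route (a): reduce to the case $q = p^\ell$, use that for $p$ large the relevant quotient is perfect with bounded abelianization, apply the Bourgain--Gamburd--Sarnak sum--product and product-theorem input \emph{only} in the already-handled $\SL_2(\Z/p\Z)$ setting, lift to $p^\ell$ by the standard level-lifting argument (as in \cite{BourgainGamburd2009}), and handle the finitely many small primes $p \in \{2,3\}$ by hand using Lemma~\ref{lem:GGq}(3), since there the tower stabilizes at $8$ and $3$ respectively and only a fixed finite group need be checked. Assembling these, together with the equivalence of combinatorial and geometric gap from \cite[Theorems 1.2, 2.1]{BourgainGamburdSarnak2011}, yields $\gl_1'(q) < 1-c$ uniformly, proving Theorem~\ref{th_sg} and hence Theorem~\ref{thm:specGap}.
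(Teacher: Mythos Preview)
Your step (iii) is where the argument fails. The subgroup $\Ga_1/\Ga_1(q)$ with $\Ga_1=\langle\g_1,\g_2\rangle$ has index $\asymp q^3$ in $\bar\Ga/\bar\Ga(q)$ (for a prime $p$ inert in $\Z[i]$ one is embedding $\SL(2,\Z/p\Z)$ into $\SL(2,\Z[i]/(p))$, and similarly in the split and ramified cases), so your ``bounded-degree extension'' premise is simply false. A spectral gap on a subgroup of unbounded index yields nothing for the ambient group: your comparison reduces to showing that the single element $\g_3$ mixes a coset space of size $\asymp q^3$ in $O(\log q)$ steps, and there is no mechanism for this short of re-running the full Bourgain--Gamburd machine on that coset space, which defeats the purpose. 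Route (a) via quasirandomness does not help either, since Gowers-type arguments still require the $\ell^2$-flattening input you were trying to avoid. Separately, your reading of Lemma~\ref{lem:GGq}(3) is incorrect: ``stabilizes at $8$'' means $\bar\Ga/\bar\Ga(2^\ell)$ is the \emph{full preimage} of a fixed group under the reduction map, so its order still grows with $\ell$ and there is no fixed finite check.

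The missing idea is that $\bar\Ga$ contains a \emph{second} arithmetic lattice: $\Ga_2:=\langle\g_1,\g_3\rangle$ is again a lattice in a real form $\bG_2\cong\SL(2,\R)$ inside $\SL(2,\C)$, so it too has a uniform Selberg gap on its congruence quotients. The appendix then proves two elementary lemmas. Lemma~\ref{lm_transference} (a few lines): if every $g\in G$ is a product $g_1\cdots g_k$ with $g_i\in G_i$ for a \emph{fixed} $k$, then $1-\gl_1'(G,S)\gg_k\min_i\big(1-\gl_1'(G_i,S\cap G_i)\big)$. Lemma~\ref{lm_generation} (the real work): every element of $\bar\Ga/\bar\Ga(q)$ is a product of at most $2\cdot10^{13}$ factors alternating between $\Ga_1/\Ga_1(q)$ and $\Ga_2/\Ga_2(q)$, uniformly in $q$. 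This bounded generation is established by reducing to prime powers; for $p\ge5$ an explicit matrix identity produces the missing $i$-direction in the unipotent subgroups, and for $p=2,3$ one inducts on the congruence filtration $\bar\Ga(p^{m-1})/\bar\Ga(p^m)$ by hand. No product theorem, sum-product, or quasirandomness is invoked.
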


Denote by $\Ga_1$ and $\Ga_2$ respectively, the groups
generated by $\{\g_1,\g_2\}$ and $\{\g_1,\g_3\}$ respectively.
Denote by $\bG_1$ and $\bG_2$ the Zariski-closures of $\Ga_1$
and $\Ga_2$ in $\Res_{\R|\C}\SL(2,\C)$, i.e. in $\SL(2,\C)$
considered an algebraic group over $\R$.

As we will see later, $\bG_1$ and $\bG_2$ are isomorphic to $\SL(2,\R)$.
Moreover $\Ga_1$ and $\Ga_2$ are lattices inside them.
This feature of the
Apollonian group was pointed out by Sarnak \cite{SarnakToLagarias}.
We exploit it heavily in our
approach.

Due to a result going back to
Selberg \cite{Selberg1965}, $\Ga_1$ and $\Ga_2$ have geometric spectral gaps with
respect to the congruence subgroups.
From here we can deduce the combinatorial spectral gap using
Brooks \cite[Theorem 1]{Brooks1986} (see also \cite[Theorem 1]{Brooks2007}, where
the non-compact case is considered.)

We transfer the combinatorial spectral gap property
of $\Ga_1$ and $\Ga_2$ to the Apollonian  group $\bar\Ga$
and conclude Theorem \ref{th_sg}.
This is done in following two Lemmata:

\begin{lem}\label{lm_transference}
Let $G$ be a finite group and $S\subset G$ a finite symmetric
generating set.
Let $G_1,G_2,\ldots,G_k$ be subgroups of $G$ such that
for every $g\in G$ there are $g_1\in G_1,\ldots,g_k\in G_k$
such that $g=g_1\cdots g_k$.
Then
\[
1-\gl_1'(G,S)\ge\min_{1\le i\le k}
\left\{\frac{|S\cap G_{i}|}{|S|}\cdot
\frac{1-\gl_1'(G_{i},S\cap G_{i})}{2k^2}\right\}.
\]
\end{lem}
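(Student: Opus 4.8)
The plan is to prove Lemma \ref{lm_transference} by a standard Poincar\'e-inequality / path-counting argument for the Dirichlet form of the Markov operator $T_{G,S}$, comparing it with the Dirichlet forms of the operators $T_{G_i, S\cap G_i}$ restricted to cosets. Recall that for a symmetric generating set $S$ one has the variational formula
\[
1-\gl_1'(G,S)=\min_{f}\frac{\frac{1}{|S||G|}\sum_{g\in G}\sum_{s\in S}|f(sg)-f(g)|^2}{\frac{1}{|G|}\sum_{g\in G}|f(g)-\bar f|^2},
\]
where $\bar f$ is the average of $f$ over $G$ and the minimum is over non-constant $f$. So I fix a mean-zero $f\in L^2(G)$ and must bound the variance $\|f\|_2^2$ by a constant (depending on $k$ and the individual gaps) times the Dirichlet energy $\cE_S(f):=\frac1{|S|}\sum_{g}\sum_{s\in S}|f(sg)-f(g)|^2$ (I will not normalize by $|G|$, keeping it consistent on both sides).

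First I would use the factorization hypothesis: every $g\in G$ can be written $g=g_1\cdots g_k$ with $g_i\in G_i$. Pick such a factorization for each $g$. For two elements $g,h\in G$, interpolate along the telescoping chain $g=g_1\cdots g_k$, $h=h_1\cdots h_k$ by changing one factor at a time; this expresses $f(g)-f(h)$ as a sum of $k$ differences, each of which is a difference of $f$-values at two points differing by left-multiplication within a single $G_i$-coset. Squaring and using Cauchy--Schwarz (the factor $k^2$, or $k$ times $k$, is where the $k^2$ in the statement originates) reduces the variance of $f$ to a sum over $i$ of ``within-$G_i$-coset'' variances. Concretely, for each $i$ I get a term controlled by $\frac1{|G|^2}\sum_{g,h:\ g\equiv h\ \mathrm{mod}\ G_i\text{-coset}}|f(g)-f(h)|^2$, i.e.\ the total variance of $f$ restricted to each right coset $G_i x$, summed over cosets. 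Here ``$g\equiv h$'' means $gh^{-1}\in G_i$; one has to be a little careful about whether it is left or right cosets, but the symmetry of $S$ and of the Dirichlet form lets this go through with the indexing in the Lemma.

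Next, on each coset $G_i x$, the restriction of $f$ is a function on a copy of $G_i$, and by the spectral gap for $(G_i, S\cap G_i)$ its variance is bounded by $\frac{1}{1-\gl_1'(G_i,S\cap G_i)}$ times its Dirichlet energy with respect to the generating set $S\cap G_i$ acting on that coset. Summing over all cosets of $G_i$ in $G$, the coset Dirichlet energies add up to $\frac{|G|}{|G_i|}\cdot$(something), and crucially each edge used is of the form $\{g, sg\}$ with $s\in S\cap G_i\subset S$, hence is an edge of the Cayley graph of $(G,S)$ — but counted with the weight $\frac1{|S\cap G_i|}$ instead of $\frac1{|S|}$. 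This is exactly where the factor $\frac{|S\cap G_i|}{|S|}$ enters: converting the $(S\cap G_i)$-normalized energy on cosets into the $S$-normalized global energy $\cE_S(f)$ costs that ratio. Taking the worst $i$ gives the $\min_{1\le i\le k}$, and assembling the constants $\frac{|S\cap G_i|}{|S|}\cdot\frac{1}{2k^2}\cdot(1-\gl_1'(G_i,S\cap G_i))$ yields the claimed inequality; the harmless constant $2$ absorbs slack from the Cauchy--Schwarz step and from possible double-counting of edges.

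The main obstacle I anticipate is purely bookkeeping rather than conceptual: getting the combinatorics of the telescoping argument exactly right so that (a) each intermediate difference genuinely lives inside a single $G_i$-coset and corresponds to multiplication by an element of $G_i$ (not just of $G$), and (b) when I sum the coset-wise Poincar\'e inequalities back up over all cosets of all the $G_i$, every Cayley edge of $(G,S)$ is used a bounded number of times with the right weight, so no hidden factor of $|G|$ or $|G_i|$ survives. Once the factorization $g=g_1\cdots g_k$ is used to route a ``path'' from $g$ to a fixed basepoint through the chain of subgroups, the rest is the standard comparison-of-Dirichlet-forms lemma (as in Diaconis--Saloff-Coste type arguments), and the constant $1/(2k^2)$ falls out of the two applications of Cauchy--Schwarz (once over the $k$ steps, once within estimating each step). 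I would present it as: (1) variational characterization; (2) path decomposition via the factorization; (3) Cauchy--Schwarz to split into $k$ single-subgroup pieces; (4) apply the gap of each $G_i$ coset-by-coset; (5) repackage the energy as a fraction of $\cE_S(f)$ and take the minimum over $i$.
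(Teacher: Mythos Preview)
Your Dirichlet-form framing is sound, and the ratio $|S\cap G_i|/|S|$ does arise exactly as you say when you compare $\langle(I-T_{G,S})f,f\rangle$ with $\langle(I-T_{G,S\cap G_i})f,f\rangle$. The real problem is the step you flag as (b) and call ``purely bookkeeping'': it is not. In the telescoping-to-basepoint version, the path from $g$ to $e$ passes at stage $i$ through the right coset $G_i\cdot P_i(g)$, where $P_i(g)=g_{i+1}(g)\cdots g_k(g)$ depends on the \emph{chosen} factorization of $g$. As $g$ ranges over $G$, nothing forces the map $g\mapsto P_i(g)$ to be anywhere near uniform, so the intermediate points can pile up on a small set of cosets; when you sum the coset-wise Poincar\'e inequalities the congestion can cost a factor as large as $|G|$, not a harmless $2$. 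Averaging over all factorizations does not rescue this, since the hypothesis is only $G_1\cdots G_k=G$, not that the partial products $G_{i+1}\cdots G_k$ cover $G$. (Your first version, interpolating between factorizations of both $g$ and $h$, has a more basic defect: the consecutive points $h_1\cdots h_{i-1}g_ig_{i+1}\cdots g_k$ and $h_1\cdots h_ig_{i+1}\cdots g_k$ lie in a two-sided translate $pG_iq$, not in a $G_i$-coset, so the $G_i$-gap does not apply.)

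The paper's proof avoids path-counting altogether. It works directly with an eigenfunction $f_0$ for $\gl_1'(G,S)$: since $f_0\perp\mathbf 1$, there is a single $g_0\in G$ with $\langle\pi(g_0)f_0,f_0\rangle\le0$, hence $\|\pi(g_0)f_0-f_0\|_2\ge\sqrt2$. Factor $g_0=g_1\cdots g_k$ once; by the triangle inequality some single step $g_{i_0}\in G_{i_0}$ already satisfies $\|\pi(g_{i_0})f_0-f_0\|_2\ge\sqrt2/k$, whence the component of $f_0$ orthogonal to the $G_{i_0}$-invariants has norm at least $1/(\sqrt2\,k)$. Writing $T_{G,S}=\frac{|S\cap G_{i_0}|}{|S|}T_{G,S\cap G_{i_0}}+\frac{|S\setminus G_{i_0}|}{|S|}T_{G,S\setminus G_{i_0}}$ and applying the gap of $(G_{i_0},S\cap G_{i_0})$ to that component yields the constant $1/(2k^2)$ exactly. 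No congestion issue ever arises, because only one factorization of one group element is used.
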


The above Lemma and its proof below is closely related to the well-known
fact that if $G$ is generated by $S$ in $k$ steps
then one has $\gl'_1(G,S)\le1-1/|S|k^2$.
This can be found for example in \cite[Corollary 1 on page 2138]{DiaconisSaloffCoste1993}.
After circulating an earlier version of this appendix, it was pointed out to me
that an idea similar to Lemma \ref{lm_transference} has been used by
Sarnak \cite[Section 2.4]{Sarnak1990}, by Shalom
\cite{Shalom1999},  and also by
Kassabov, Lubotzky and Nikolov \cite{KLN2006}.

\begin{lem}\label{lm_generation}
Let $q\ge2$ be an integer.
Then for every $g\in \bar\Ga/\bar\Ga(q)$,
there are $g_1,\ldots, g_{10^{13}}\in\Ga_1/\Ga_1(q)$ and
$h_1,\ldots, h_{10^{13}}\in\Ga_2/\Ga_2(q)$ such that
$g=g_1h_1\cdots g_{10^{13}}h_{10^{13}}$.
\end{lem}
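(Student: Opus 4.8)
\textbf{Proof proposal for Lemma \ref{lm_generation}.}

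The plan is to reduce the claim to a statement purely about the two arithmetic subgroups $\Ga_1,\Ga_2$ and their congruence quotients, using the Strong Approximation theorem (as worked out by Fuchs, cf. Lemma \ref{lem:GGq}) to identify $\bar\Ga/\bar\Ga(q)$ with an explicit product of matrix groups. First I would recall that, since the $\g_i$ generate a Zariski-dense subgroup, strong approximation gives a good handle on $\bar\Ga/\bar\Ga(q)$; by the multiplicativity in Lemma \ref{lem:GGq}(1) it suffices to treat $q=p^\ell$ a prime power, and in fact by the stabilization statement (3) it is enough to deal with the genuinely ``large'' primes together with the fixed finite set of bad primes $\{2,3\}$ (for those the quotients are of bounded size, so a crude ``generation in boundedly many steps'' argument applies and one simply absorbs the bound into the constant $10^{13}$).

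The heart of the matter is the case $(q,6)=1$, where $\bar\Ga/\bar\Ga(q)\cong\SO_F(\Z/q\Z)$, equivalently a quotient of $\SL(2,\Z[i]/(q))$. Here I would use that, as Sarnak observed, $\Ga_1=\<\g_1,\g_2\>$ and $\Ga_2=\<\g_1,\g_3\>$ are lattices in conjugates of $\SL(2,\R)$ sitting inside $\SL(2,\C)$; concretely $\Ga_1$ is (conjugate to) a finite-index subgroup of $\SL(2,\Z)$ — note $\g_1=\mat1401$, $\g_2=\mat1011$ generate a congruence subgroup of $\SL(2,\Z)$ — while $\Ga_2$ is a finite-index subgroup of an $\SL(2,\Z[\sqrt{-1}])$-type arithmetic lattice in a second copy of $\SL(2,\R)$ obtained by a different real embedding. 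The key group-theoretic input is then: for $p$ a large prime, the images of $\Ga_1/\Ga_1(q)$ and $\Ga_2/\Ga_2(q)$ are two copies of $\SL(2,\Z/q\Z)$ (or $\mathrm{PSL}$, up to the harmless $\pm I$), embedded in $\SL(2,\Z[i]/(q))$ via the two ``coordinates'' of $\Z[i]/(q)\cong (\Z/q\Z)\times(\Z/q\Z)$ when $p\equiv1(4)$, or embedded as $\SL(2,\F_{p^2})$-type subgroups when $p\equiv3(4)$. In either case one checks, by a direct matrix computation in $\SL_2$, that the product of a bounded number of elements from these two subgroups exhausts all of $\SL(2,\Z[i]/(q))$: one reduces to the classical fact that in $\SL(2,R)$ every element is a product of a bounded number of elementary (unipotent) matrices, and the two lattices $\Ga_1,\Ga_2$ between them supply unipotents in enough directions (upper- and lower-triangular, in both real/imaginary coordinates) to generate everything in a universally bounded number of steps. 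Counting these steps generously gives a constant, which one may take to be $10^{13}$.

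The main obstacle I anticipate is precisely this uniform-in-$q$ bounded generation step: one must verify that a \emph{fixed} number of factors — not growing with $q$ — suffices, uniformly over all prime powers, which forces one to work with explicit unipotent generators and the classical identities expressing an arbitrary $\SL_2$ matrix over $\Z/q\Z$ as a bounded product of elementaries (e.g. via the two-step factorization $\mat abcd$ into lower-times-upper-times-lower triangular, valid after multiplying by at most one more unipotent to make the relevant entry invertible). The $p\in\{2,3\}$ and small-$q$ cases are genuinely finite and cause no difficulty beyond bookkeeping; the $p\equiv3(4)$ case needs a little extra care since $\Z[i]/(p)\cong\F_{p^2}$ is a field extension and $\Ga_1,\Ga_2$ land in different $\F_p$-forms, but the bounded-generation conclusion is the same. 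Once Lemma \ref{lm_generation} is in hand, combining it with the geometric (hence, via Brooks, combinatorial) spectral gap for the lattices $\Ga_1,\Ga_2$ — which is classical by Selberg — and feeding the data into Lemma \ref{lm_transference} with $k=2\cdot10^{13}$ yields Theorem \ref{th_sg}.
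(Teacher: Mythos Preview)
Your reduction to prime powers and the overall strategy for $p\ge5$ are in the right spirit and close to the paper's argument, but there is a genuine gap in your treatment of the primes $2$ and $3$.

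You assert that for $p\in\{2,3\}$ ``the quotients are of bounded size, so a crude `generation in boundedly many steps' argument applies.'' This misreads the stabilization statement of Lemma \ref{lem:GGq}(3). Stabilization says that $\bar\Ga/\bar\Ga(2^{\ell})$ is the \emph{full preimage} of $\bar\Ga/\bar\Ga(8)$ under $\SO_{F}(\Z/2^{\ell}\Z)\to\SO_{F}(\Z/8\Z)$; the size of this group therefore grows like a fixed power of $2^{\ell}$ and is \emph{not} bounded as $\ell\to\infty$. The same is true at $p=3$. Consequently a ``crude'' generation argument does not give a bound independent of $\ell$, and this is exactly the case where uniform bounded generation is delicate: at these primes $\Ga_{1}/\Ga_{1}(p^{\ell})$ is a proper subgroup of $\SL(2,\Z/p^{\ell}\Z)$ (see \eqref{eq_Gamma1} at $p=2$), and one cannot simply invert $2$ or $3$ in the unipotent identities. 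The paper handles this by an induction on $\ell$: one shows that $A_{3}(2^{m})\supseteq\bar\Ga(2^{7})/\bar\Ga(2^{m})$ via explicit trace-zero lifts $x_{1},x_{2},x_{3}$ satisfying $x_{1}+\g_{3}x_{2}\g_{3}^{-1}+\g_{3}^{2}x_{3}\g_{3}^{-2}\equiv 2^{m-1}x\pmod{2^{m}}$, i.e.\ a linearized (Lie-algebra-style) calculation showing the conjugates of $\Ga_{1}$ by $1,\g_{3},\g_{3}^{2}$ span all of $\mathfrak{sl}(2,\Z[i]/(2))$. The fixed-index cosets of $\bar\Ga(2^{7})$ are then absorbed into the constant $10^{13}$. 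You need \emph{some} such argument; without it the lemma is unproved for $q$ divisible by high powers of $2$ or $3$.

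For $p\ge5$ your sketch is closer to correct, though your description of the images of $\Ga_{1},\Ga_{2}$ mod $q$ as ``two copies of $\SL(2,\Z/q\Z)$ via the two coordinates of $\Z[i]/(q)$'' is not quite how the argument runs. The paper does not analyze $\Ga_{2}/\Ga_{2}(q)$ directly; instead it uses only that $\Ga_{1}/\Ga_{1}(p^{m})=\SL(2,\Z/p^{m}\Z)$ (valid since $p\neq2$) together with a single explicit conjugation identity by $\g_{3}\in\Ga_{2}$ to produce a unipotent with purely imaginary lower-left entry. A four-squares argument then yields all such unipotents, and the standard three-unipotent factorization of $\SL_{2}$ finishes the job in $A_{72}(p^{m})$. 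Your split into $p\equiv1,3\pmod4$ is unnecessary here.
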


Lemma \ref{lm_generation} enables us to apply Lemma
\ref{lm_transference} with $k=2\cdot 10^{13}$ and
$G_i=\Ga_1/\Ga_1(q)$ for odd $i$ and
$G_i=\Ga_2/\Ga_2(q)$ for even $i$.
Now \cite{Selberg1965} and \cite[Theorem 1]{Brooks2007} provides us with 
lower bounds
on
\[
1-\gl_1'(\Ga_1/\Ga_1(q),\{\pm\g_1^{\pm 1},\pm\g_2^{\pm1}\})
\quad {\rm and}\quad
1-\gl_1'(\Ga_2/\Ga_2(q),\{\pm\g_1^{\pm 1},\pm\g_3^{\pm1}\}).
\]
Therefore Theorem \ref{th_sg} is proved once the two Lemmata are proved.

Before we proceed with the proofs, we make two remarks.
First, we note that instead of \cite{Selberg1965}
we could just as well use \cite[Theorem 1]{BourgainVarju2011}.
Second, we suggest that the constant $10^{13}$ in Lemma \ref{lm_generation}
is not optimal.
In particular, the argument we present would give 72 if the statement
is checked for $q=2^7\cdot3$, e.g. by a computer program.
Certainly there is further room for improvement but we make no
efforts to optimize the constants.

\begin{proof}[Proof of Lemma \ref{lm_transference}]
Denote by $\pi$ the
regular representation of $G$, i.e. we write
\[
\pi(g_0)f(g)=f(g_0^{-1}g)
\]
for $f\in L^2(G)$ and $g,g_0\in G$.
Let $T_{G,S}$ be the Markov operator defined above.
Let $f_0\in L^2(G)$ be an eigenfunction
with $\|f_0\|_2=1$ corresponding to $\gl_1'(G,S)$.
It is orthogonal to the constant and
\[
\langle T_{G,S}f_0,f_0\rangle=\gl_1'(G,S).
\]

Since $f_0$ is orthogonal to the constant, we have
\[
\sum_{g\in G}\langle \pi(g)f_0,f_0 \rangle=|\langle f_0,1\rangle|^2=0.
\]
Thus there is $g_0\in G$ such that
$\langle \pi(g_0)f_0,f_0 \rangle\le0$ and hence
$\|\pi(g_0)f_0-f_0\|_2\ge\sqrt2$.

By the hypothesis of the lemma, there are $g_i\in G_i$ for $1\le i\le k$
such that $g_0=g_1\cdots g_k$.
By the triangle inequality, there is some $1\le i_0\le k$
such that
\[
\|\pi(g_1\cdots g_{i_0-1})f_0-\pi(g_1\cdots g_{i_0})f_0\|_2\ge\sqrt{2}/k.
\]
Since $\pi$ is unitary, we have $\|f_0-\pi(g_{i_0})f_0\|_2\ge\sqrt{2}/k$.

We write $f_0=f_1+f_2$ such that $f_1$ is invariant under the elements
of $G_{i_0}$ in the regular representation $\pi$ and $f_2$ is orthogonal
to the space of functions invariant under $G_{i_0}$.
Then
\[
\sqrt{2}/k\le\|f_0-\pi(g_{i_0})f_0\|_2
=\|f_2-\pi(g_{i_0})f_2\|_2\le 2\|f_2\|_2.
\]
Thus $\|f_2\|_2\ge1/\sqrt{2}k$.

Now we can write
\bea
\langle T_{G,S\cap G_{i_0}}f_0,f_0\rangle
&=&\|f_1\|_2^2+\langle T_{G,S\cap G_{i_0}}f_2,f_2\rangle\nonumber\\
&\le&\|f_1\|_2^2+\gl_1'(G_{i_0},S\cap G_{i_0})\|f_2\|_2^2\nonumber\\
&=& 1-(1-\gl_1'(G_{i_0},S\cap G_{i_0}))\|f_2\|_2^2.\label{eq_TSG1}
\eea
Since
\[
T_{G,S}=\frac{|S\cap G_{i_0}|}{|S|}T_{G,S\cap G_{i_0}}
+\frac{|S\backslash G_{i_0}|}{|S|}T_{G,S\backslash G_{i_0}},
\]
we have
\be\label{eq_TSG2}
\langle T_{G,S}f_0,f_0\rangle\le1-
\frac{|S\cap G_{i_0}|}{|S|}(1-\langle T_{G,S\cap G_{i_0}}f_0,f_0\rangle).
\ee

We combine (\ref{eq_TSG1}), (\ref{eq_TSG2})
and the estimate on $\|f_2\|_2$ and get
\[
\langle T_{G,S}f_0,f_0\rangle\le1-
\frac{|S\cap G_{i_0}|}{|S|}\cdot
\frac{1-\gl_1'(G_{i_0},S\cap G_{i_0})}{2k^2}
\]
which was to be proved.
\end{proof}

Now we turn to the proof of Lemma \ref{lm_generation}.
It will be convenient to write
\[
A_k(q)=\{g_1h_1\cdots g_kh_k:g_1,\ldots g_k\in \Ga_1/\Ga_1(q),
h_1,\ldots h_k\in \Ga_2/\Ga_2(q)\}
\]
First we consider the case when $q$ is the power of a prime;
the general case will be easy to deduce from this.

\begin{lem}\label{lm_local}
Let $p$ be a prime and $m$ a positive integer.
Then $A_{10^{13}}(p^m)=\bar\Ga/\bar\Ga(p^m)$.
\end{lem}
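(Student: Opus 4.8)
The plan is to establish Lemma \ref{lm_local} by a strong approximation / lifting argument, reducing the statement about $\bar\Ga/\bar\Ga(p^m)$ to a statement about $\bar\Ga/\bar\Ga(p)$ (or a small fixed power of $p$), and then verifying the latter at the level of the finite simple group $\PSL(2,\F_p)$ (or its $\Z[i]$-analogue, i.e. $\SL(2,\F_p)$ or $\SL(2,\F_{p^2})$ depending on whether $-1$ is a square mod $p$). The key structural input is that $\Ga_1$ and $\Ga_2$ are lattices in copies of $\SL(2,\R)$ inside $\Res_{\R|\C}\SL(2,\C)$; in particular $\Ga_1,\Ga_2$ are Zariski-dense in their respective $\SL_2$'s, so by strong approximation their reductions mod $p^m$ surject onto the corresponding mod-$p^m$ points (for all but finitely many $p$, and the finitely many bad $p$ are handled separately, e.g. by direct computation as the author indicates for $q=2^7\cdot 3$).

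First I would reduce to $m=1$: by a standard level-lifting lemma for $\SL_2$ over $\Z/p^m$, a subgroup of $\SL(2,\Z/p^m\Z)$ (or the relevant $\Z[i]$-quotient) that surjects onto $\SL(2,\Z/p\Z)$ and contains one nontrivial element of the congruence kernel at each level already equals the whole group; since $\bar\Ga$ contains unipotents (e.g. $\g_1=\smat 1401$), raising them to $p$-power levels feeds the congruence filtration, so it suffices to show $A_k(p)=\bar\Ga/\bar\Ga(p)$ for a bounded $k$. Then I would argue that $\bar\Ga/\bar\Ga(p)$ is (for $p\nmid$ a fixed bad set) a product of copies of $\SL_2$ over the residue ring, and the images of $\Ga_1/\Ga_1(p)$ and $\Ga_2/\Ga_2(p)$ are each $\SL_2$ over the appropriate residue field sitting inside it. The point is that in a finite group of Lie type of bounded rank, a bounded number of products of two fixed generating subgroups (here conjugate copies of $\SL(2,\F_p)$ generating the ambient group) covers the whole group — this is a standard consequence of a Gowers-type / Nikolov–Pyber product growth estimate, or can be extracted from the Bourgain–Gamburd machinery, giving a covering number that is an \emph{absolute} constant independent of $p$. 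That absolute constant is what gets recorded (generously) as $10^{13}$.

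In more detail, the steps in order: (i) identify $\Ga_1,\Ga_2$ with lattices in $\SL(2,\R)$ and note Zariski-density, hence strong approximation: $\Ga_i\to\SL(2,\Z[i]/(p^m))$ has image the full $p^m$-points of the corresponding $\R$-form, for $p$ outside a finite set; (ii) prove or cite the level-lifting lemma so that $A_k(p)=\bar\Ga/\bar\Ga(p)$ for bounded $k$ implies $A_{k'}(p^m)=\bar\Ga/\bar\Ga(p^m)$ for a comparable bounded $k'$ (using the unipotents $\g_1,\g_2$ to hit the congruence kernels); (iii) at level $p$, use the explicit description of $\bar\Ga/\bar\Ga(p)$ from Fuchs's work (Lemma \ref{lem:GGq}, essentially $\SO_F(\Z/p\Z)$) together with the fact that $\Ga_1/\Ga_1(p)$ and $\Ga_2/\Ga_2(p)$ are full $\SL_2$'s over the residue field, and invoke a product-covering estimate in finite groups of Lie type of rank $1$ to get the absolute covering constant; (iv) dispose of the finitely many excluded primes (and the prime $2$, where $\Z[i]$ is ramified and the quotients stabilize at $8$ by Lemma \ref{lem:GGq}) by a direct finite check, as the author notes can be done by computer for $q=2^7\cdot3$.

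The main obstacle I expect is step (iii): getting an \emph{absolute, $p$-independent} bound on the number of two-subgroup products needed to fill $\bar\Ga/\bar\Ga(p)$. Zariski-density gives generation but not a uniform number of steps; one genuinely needs either a Nikolov–Pyber/Gowers product theorem for quasirandom finite simple groups or an escape-from-subvarieties argument with effective constants, and one must be careful that the two generating subgroups are not contained in a common proper (e.g. Borel or normalizer-of-torus) subgroup — which follows here because $\Ga_1$ and $\Ga_2$ already generate $\bar\Ga$, which is Zariski-dense in $\SO_F$, so their mod-$p$ images cannot lie in a proper subgroup for large $p$. A secondary technical nuisance is bookkeeping the ramified prime $2$ and the component-group structure of $\bar\Ga/\bar\Ga(p^m)$ when $\Z[i]/(p)$ splits versus stays inert, but these are finite-data checks rather than conceptual difficulties.
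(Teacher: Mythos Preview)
Your outline is a genuinely different route from the paper's, and it is worth contrasting the two.  For $p\ge5$ the paper does \emph{not} reduce to level $p$ and then invoke product-growth: it works directly over $\Z/p^m\Z$, using that $\Gamma_1$ (being the level-$4$ congruence subgroup of $\SL(2,\Z)$) already surjects onto $\SL(2,\Z/p^m\Z)$ for odd $p$.  An explicit three-step conjugation identity involving $\gamma_3$ then produces the lower-unipotent $\left(\begin{smallmatrix}1&0\\ -3ia^2/2&1\end{smallmatrix}\right)\in A_3(p^m)$; four squares (plus Hensel) turns this into all unipotents over $\Z[i]/(p^m)$ inside $A_{12}$, and the Bruhat-type identity $\left(\begin{smallmatrix}1&a\\0&1\end{smallmatrix}\right)\left(\begin{smallmatrix}1&0\\b&1\end{smallmatrix}\right)\left(\begin{smallmatrix}1&c\\0&1\end{smallmatrix}\right)$ then gives $A_{72}(p^m)=\bar\Gamma/\bar\Gamma(p^m)$.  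For $p=2,3$ the paper \emph{does} use a Lie-algebra lifting of the flavor you describe in step~(ii), but with a crucial structural twist: it fixes a single skeleton $g_1\gamma_3 g_2\gamma_3^{-1}\gamma_3^2 g_3\gamma_3^{-2}$ and shows, via six explicit matrix identities, that perturbing the $g_i$ inside $\Gamma_1$ spans each graded piece $\bar\Gamma(p^{m-1})/\bar\Gamma(p^m)$.  This keeps $k=3$ through \emph{all} levels simultaneously; the large constant $10^{13}$ only enters to cover the finitely many cosets of $\bar\Gamma(2^7)$.

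Against this, your plan has a real soft spot at step~(iii), which you correctly flag.  A naive Nikolov--Pyber/Gowers estimate does not immediately yield a $p$-independent covering number here: for $p$ inert in $\Z[i]$ one has $|\bar\Gamma/\bar\Gamma(p)|\asymp p^6$ with minimal representation dimension $\asymp p^2$, while $|\Gamma_i/\Gamma_i(p)|\asymp p^3$, so three subgroup-factors fall short of the $|G|^3/D$ threshold and one must instead control the growth of iterated products $H_1H_2H_1H_2\cdots$ of two fixed generating subgroups, which requires additional input (e.g.\ Liebeck--Nikolov--Shalom-type covering results or an escape-from-subvarieties argument with uniform constants).  Your step~(ii) is also under-specified: the statement ``$A_k(p)=\bar\Gamma/\bar\Gamma(p)$ implies $A_{k'}(p^m)=\bar\Gamma/\bar\Gamma(p^m)$ with $k'$ comparable'' is about \emph{sets}, not subgroups, and a level-by-level lift would a priori make $k'$ grow with $m$; the paper's fixed-skeleton trick is exactly what prevents this, and you would need an analogous device.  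The upshot is that the paper's proof is entirely elementary and self-contained, whereas your route, while plausible, trades those explicit calculations for machinery (quasirandom product growth, uniform escape) that is heavier than what the lemma ultimately requires.
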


We use different methods when $p$ is 2 or 3 compared to when it is larger.
First we consider the latter situation.
\begin{proof}[Proof of Lemma \ref{lm_local} for $p\ge5$]
It is well-known and easy to check that the group generated by $\g_1$ and
$\g_2$ is
\be\label{eq_Gamma1}
\Ga_1=\left\{\mat{a}{b}{c}{d}\in\SL(2,\Z): b\equiv0\; \mod 4\right\}.
\ee
Thus $\Ga_1/\Ga_1(p^m)=\SL(2,\Z/p^m\Z)$ for $p\neq2$.

By simple calculation:
\[
\mat{a^{-1}}{0}{0}{a}\mat{\frac{1}{2}}{0}{\frac{-1}{8}}{2}
\g_3^2\mat{1}{0}{\frac{1}{8}}{1}\g_3^{-1}
\mat{a}{0}{0}{a^{-1}}
=\mat{1}{0}{\frac{-3ia^2}{2}}{1}.
\]
Since $p\neq2$ we can divide by $2$ in the ring $\Z/p^m\Z$, hence for $(a,p)=1$, the
matrices in the above calculation are in $\Ga_1/\Ga_1(p^m)$ except for
$\g_3$.
Therefore
\[
\mat{1}{0}{\frac{-3ia^2}{2}}{1}\in A_3(p^m).
\]

Using this, we want to show that
\begin{equation}\label{eq:unip}
\mat{1}{0}{a i}{1}\in A_{12}(p^m)
\end{equation}
for all $a\in\Z/p^m\Z$.
To do this, we need to show that for every element $x\in\Z/p^m\Z$,
we can find elements $a_1,\ldots, a_k\in\Z/p^m\Z$ for some $0\le k\le 4$,
such that $a_1,\ldots, a_k$ are not divisible by $p$ and $x=a_1^2+\ldots+a_k^2$.
If $m=1$, this simply follows from the fact that any positive integer
is a sum of at most 4 squares,
and the $a_i$ can not be divisible by $p$ since $0<a_i\le x\le p$ and
at least one of the inequalities are
strict.

Suppose that $m>1$, $x\in\Z/p^m\Z$ and $a_1^2+\ldots+a_k^2\equiv x\;\rm mod\;p$
with none of $a_1\ldots a_k$ divisible by $p$.
Then by Hensel's lemma (recall that $p\neq2$), there is an
$a_1'\in\Z/p^m\Z$ such that
\[
(a_1')^2=a_1^2+(x-a_1^2-\ldots-a_k^2).
\]
This proves the claim for arbitrary $m\ge1$.

Multiplying (\ref{eq:unip}) by a suitable unipotent element of
$\Ga_1/\Ga_1(p^m)$, we
can get
\[
\mat{1}{0}{a}{1}\in A_{12}(p^m)
\]
for $a\in\Z[i]/(p^m)$.
We can prove the same for the upper triangular unipotents by a
very similar argument.

Again, by simple calculation:
\[
\mat{1}{a}{0}{1}\mat{1}{0}{b}{1}\mat{1}{c}{0}{1}
=\mat{1+ab}{a+c+abc}{b}{1+bc}.
\]
This shows that
\[
\mat{a'}{b'}{c'}{d'}\in A_{36}(p^m)
\]
for all $a',b',c',d'\in\Z[i]/(p^m)$, $a'd'-b'c'=1$, provided $c'$ is not divisible
by a prime above $p$.

Thus, $A_{36}(p^m)$ contains more than half of the group
$\bar\Ga/\bar\Ga(p^m)$, hence
\[
A_{72}(p^m)=\bar\Ga/\bar\Ga(p^m).
\]
\end{proof}

\begin{proof}[Proof of Lemma \ref{lm_local} for $p=2$ and 3]
We give the proof for $p=2$ and then explain the differences
for $p=3$.

We prove by induction the following statement.
For every $m\ge7$ and $g\in\bar\Ga(2^7)/\bar\Ga(2^m)$,
there are $g_1,g_2,g_3\in \Ga_1(2^{2})/\Ga_1(2^m)$ such that
\[
g=g_1\g_3g_2\g_3^{-1}\g_3^2 g_3\g_3^{-2}.
\]

For $m=7$ this is clear since we can take $g_1=g_2=g_3=1$.
Now assume that $m>7$ and the statement holds for $m-1$.
In this proof, we denote by $1$ the multiplicative unit (identity matrix)
and by $0$ the matrix with all entries 0.
Let $g\in\bar\Ga(2^7)/\bar\Ga(2^m)$ be arbitrary.
By the induction hypothesis, there is $h_1,h_2,h_3\in \Ga_1(2^{2})/\Ga_1(2^m)$
such that
\[
g-h_1\g_3h_2\g_3^{-1}\g_3^2 h_3\g_3^{-2}=2^{m-1}x,
\]
where $x$ can be considered as an element of $\Mat(2,\Z[i]/(2))$, i.e.
a $2\times 2$ matrix with elements in $\Z[i]/(2)$.
Since $g,h_1,h_2,h_3$ has determinant 1 and congruent to the unit element
mod 2, $x$ has trace 0.

Now we look for suitable $x_1,x_2,x_3\in\Mat(2,\Z)$ such that
\[
x_1+\g_3x_2\g_3^{-1}+\g_3^2 x_3\g_3^{-2}\equiv 2^{m-1}x\; \mod 2^m.
\]
Moreover, we 
ensure
that $x_i\equiv 0\;\mod 2^{m-4}$ and that
$\Tr(x_i)\equiv 0\;\mod 2^{m}$ for all $i=1,2,3$.
Since $m\ge8$, this implies that $h_i+x_i\equiv 1\;\mod 4$ and 
$\det(h_i+x_i)\equiv1 \;\mod 2^m$, hence $h_i+x_i\in \Ga_1(2^{2})/\Ga_1(2^m)$.
Recall (\ref{eq_Gamma1}) from the previous proof.
If the matrices $x_i$ satisfy the claimed properties then
\bean
(h_1+x_1)\g_3(h_2+x_2)\g_3^{-1}\g_3^2 (h_3+x_3)\g_3^{-2}\\
\equiv
h_1\g_3h_2\g_3^{-1}\g_3^2 h_3\g_3^{-2}+
x_1+\g_3x_2\g_3^{-1}+\g_3^2 x_3\g_3^{-2}\equiv g\; \mod 2^m.
\eean

The matrices $x_1,x_2,x_3$ can be chosen to be a suitable linear
combination of the matrices in the following calculations, and this
finishes the induction:
\[
2^{m-1}\mat{0}{1}{0}{0}+\g_3
0\g_3^{-1}+\g_3^2
0\g_3^{-2}
\equiv 2^{m-1}\mat{0}{1}{0}{0} \mod 2^m,
\]
\[
2^{m-1}\mat{0}{0}{1}{0}+\g_3
0\g_3^{-1}+\g_3^2
0\g_3^{-2}
2^{m-1}\equiv \mat{0}{0}{1}{0} \mod 2^m,
\]
\[
2^{m-1}\mat{1}{0}{0}{-1}+\g_3
0\g_3^{-1}+\g_3^2
0\g_3^{-2}
\equiv 2^{m-1}\mat{1}{0}{0}{-1} \mod 2^m,
\]
\[
2^{m-2}\mat{1}{3}{1}{-1}+
\g_3 2^{m-2}\mat{0}{1}{0}{0}\g_3^{-1}+\g_3^2
0\g_3^{-2}
\equiv 2^{m-1}\mat{-i}{0}{0}{i} \mod 2^m,
\]
\[
2^{m-3}\mat{-4}{0}{3}{4}+\g_3
2^{m-3}\mat{0}{0}{1}{0}\g_3^{-1}+\g_3^2
0\g_3^{-2}
\equiv 2^{m-1}\mat{0}{0}{i}{0} \mod 2^m,
\]
\[
2^{m-4}\mat{2}{15}{4}{-2}+\g_3
0\g_3^{-1}+\g_3^2
2^{m-4}\mat{0}{1}{0}{0}\g_3^{-2}
\equiv 2^{m-1}\mat{-i}{i}{0}{i} \mod 2^m.
\]

Now we showed that
\[
A_3(2^{m})\supseteq\bar\Ga(2^7)/\bar\Ga(2^m).
\]
The index of $\bar\Ga(2^7)/\bar\Ga(2^m)$ in
$\bar\Ga/\bar\Ga(2^m)$ is at most
\[
|\SL(2,\Z[i]/(2^7))|=46\cdot64^6.
\]
This shows that
\[
A_{10^{13}}(2^{m})=\bar\Ga/\bar\Ga(2^m).
\]

Now we turn to the case $p=3$.
By the same argument, one can show that
for every $m\ge1$ and $g\in\bar\Ga(3)/\bar\Ga(3^m)$,
there are $g_1,g_2,g_3\in \Ga_1/\Ga_1(3^m)$ such that
\[
g=g_1\g_3g_2\g_3^{-1}\g_3^2 g_3\g_3^{-2}.
\]
The only significant difference is that one needs to use the
following identities:
\[
3^{m-1}\mat{1}{3}{1}{-1}+
\g_33^{m-1}\mat{0}{1}{0}{0}\g_3^{-1}+\g_3^2
0\g_3^{-2}
\equiv 3^{m-1}\mat{i}{i}{0}{-i} \mod 3^m,
\]
\[
3^{m-1}\mat{-4}{16}{3}{4}+\g_33^{m-1}
\mat{0}{0}{1}{0}\g_3^{-1}+\g_3^2
0\g_3^{-2}
\equiv 3^{m-1}\mat{i}{0}{-i}{-i} \mod 3^m,
\]
\[
3^{m-1}\mat{2}{15}{4}{-2}+\g_3
0\g_3^{-1}+\g_3^23^{m-1}
\mat{0}{1}{0}{0}\g_3^{-2}
\equiv 3^{m-1}\mat{i}{-i}{0}{-i} \mod 3^m.
\]

Using this claim, one can finish the proof as above.
\end{proof}

\begin{proof}[Proof of Lemma \ref{lm_generation}]

Let $q$ be an integer and $q=p_1^{m_1}\cdots p_n^{m_n}$ where $p_i$
are primes.
We prove that
\[
 A_{10^{13}}(q)=A_{10^{13}}(p_1^{m_1})\times\ldots\times A_{10^{13}}(p_n^{m_n}).
\]
Let $x\in A_{10^{13}}(p_1^{m_1})\times\ldots\times A_{10^{13}}(p_n^{m_n})$
be arbitrary.
By definition, for each $k$, we can find elements $g_1^{(k)},\ldots
g_{10^{13}}^{(k)}\in \Ga_1/\Ga_1(q)$
and $h_1^{(k)},\ldots h_{10^{13}}^{(k)}\in \Ga_2/\Ga_2(q)$
such that
\[
x\equiv g_1^{(k)}h_1^{(k)}\cdots
g_{10^{13}}^{(k)}h_{10^{13}}^{(k)}\;{\rm mod}\; p_k^{m_k}.
\]

Since $\Ga_1/\Ga_1(p^m)$ and $\Ga_2/\Ga_2(p^m)$ are the direct product
of local factors, we can
find elements $g_1,\ldots, g_{10^{13}}\in\Ga_1/\Ga_1(p^m)$ and
$h_1,\ldots, h_{10^{13}}\in\Ga_2/\Ga_2(p^m)$
such that
\[
g_i\equiv g_i^{(k)}\;{\rm mod}\; p_k^{m_k}\quad{\rm and}\quad
h_i\equiv h_i^{(k)}\;{\rm mod}\; p_k^{m_k}
\]
for each $i$ and $k$.
Thus
\[
x= g_1h_1\cdots g_{10^{13}}h_{10^{13}}\in A_{10^{13}}(q).
\]

Using Lemma \ref{lm_local} 
we get
\[
\bar\Ga/\bar\Ga(q)
\supset A_{10^{13}}(q)
\supset A_{10^{13}}(p_1^{m_1})\times\ldots\times A_{10^{13}}(p_n^{m_n})
=\bar\Ga/\bar\Ga(p_1^{m_1})\times\ldots\times \bar\Ga/\bar\Ga(p_{n}^{m_n}).
\]
Obviously
\[
\bar\Ga/\bar\Ga(q)\subset \bar\Ga/\bar\Ga(p_1^{m_1})\times\ldots\times
\bar\Ga/\bar\Ga(p_{n}^{m_n})
\]
hence all these containments must be equality.
\end{proof}

\newpage

\bibliographystyle{alpha}

\bibliography{AKbibliog}

\end{document}